\documentclass{compositio}
\usepackage{amsmath}
\usepackage{amssymb}
\usepackage{amsthm}
\usepackage{latexsym}
\usepackage{hyperref}
\usepackage{enumerate}
\usepackage{graphicx}
\usepackage[all]{xy}
\usepackage{tikz-cd} 
\usepackage{color}
\usepackage{verbatim}
\usepackage{mathtools}

\usepackage[mathscr]{euscript}

\newtheorem{thm}{Theorem}[section]
\newtheorem{cor}[thm]{Corollary}
\newtheorem{lem}[thm]{Lemma}
\newtheorem{prop}[thm]{Proposition}
\newtheorem{conj}[thm]{Conjecture}

\DeclareMathAlphabet\mathbfcal{OMS}{cmsy}{b}{n}
\theoremstyle{definition}
\newtheorem{defn}[thm]{Definition}
\newtheorem{rem}[thm]{Remark}

\newtheorem{ex}[thm]{Example}

\newcommand{\fo}{{\mathfrak{o}}}

\newcommand{\fs}{{\mathfrak{s}}}

\newcommand{\fB}{{\mathfrak{B}}}

\newcommand{\CC}{{\mathbb{C}}}

\newcommand{\bZ}{{\mathbb{Z}}}
\newcommand{\bR}{{\mathbb{R}}}

\newcommand{\Fq}{{\mathbb{F}_q}}

\newcommand{\ZZ}{{\mathbb{Z}}}
\newcommand{\GL}{{\mathsf{GL}}}

\newcommand{\sign}{{\mathrm{sign}}}

\newcommand{\SL}{{\mathsf{SL}}}
\newcommand{\Ind}{{\mathrm{Ind}}}

\newcommand{\cH}{{\mathcal{H}}}
\newcommand{\Hom}{{\mathrm{Hom}}}
\newcommand{\Gal}{{\mathrm{Gal}}}
\newcommand{\PGL}{{\mathsf{PGL}}}
\newcommand{\Sp}{{\mathsf{Sp}}}
\newcommand{\SO}{{\mathsf{SO}}}
\newcommand{\Spin}{{\mathsf{Spin}}}
\newcommand{\SU}{{\mathsf{SU}}}

\newcommand{\FT}{{\mathrm{FT}}}

\newcommand{\rN}{{\mathrm{N}}}
\newcommand{\rO}{{\mathrm{O}}}

\newcommand{\Irr}{{\mathrm{Irr}}}

\newcommand{\reg}{{\mathrm{reg}}}

\newcommand{\Nor}{{\mathrm{N}}}

\newcommand{\rZ}{{\mathrm{Z}}}

\newcommand{\bG}{{\mathbf{G}}}
\newcommand{\bH}{{\mathbf{H}}}

\newcommand{\cA}{{\mathcal{A}}}
\newcommand{\cC}{{\mathcal{C}}}
\newcommand{\cE}{{\mathcal{E}}}
\newcommand{\cF}{{\mathcal{F}}}
\newcommand{\cL}{{\mathcal{L}}}
\newcommand{\cG}{{\mathcal{G}}}
\newcommand{\cR}{{\mathcal{R}}}
\newcommand{\cS}{{\mathcal{S}}}
\newcommand{\cV}{{\mathcal{V}}}

\newcommand{\cY}{{\mathcal{Y}}}

\newcommand{\matje}[4]{\left(\begin{smallmatrix} #1 & #2 \\ 
#3 & #4 \end{smallmatrix}\right)}

\newcommand{\ellip}{{\mathrm{ell}}}
\newcommand{\Aut}{{\mathrm{Aut}}}
\newcommand{\End}{{\mathrm{End}}}
\newcommand{\Id}{{\mathrm{Id}}}
\newcommand{\St}{{\mathrm{St}}}
\newcommand{\triv}{{\mathrm{triv}}}
\newcommand{\iso}{{\mathrm{iso}}}

\newcommand{\Ad}{{\mathrm{Ad}}}
\newcommand{\ad}{{\mathrm{ad}}}
\newcommand{\cusp}{{\mathrm{cusp}}}
\newcommand{\cus}{{\mathrm{c}}}
\newcommand{\der}{{\mathrm{der}}}

\newcommand{\rH}{{\mathrm{H}}}
\DeclareMathOperator{\Inn}{{\mathrm{Inn}}}

\newcommand{\InnT}{{\mathrm{InnT}}}

\newcommand{\EP}{{\mathrm{EP}}}
\newcommand{\Ext}{{\operatorname{Ext}}}
\newcommand{\tr}{{\mathrm{tr}}}

\newcommand{\sgn}{{\mathrm{sgn}}}

\newcommand{\sep}{{\mathrm{s}}}
\def\sc{{\mathrm{sc}}}

\newcommand{\un}{{\mathrm{un}}}
\newcommand{\Frob}{{\mathrm{Frob}}}
\newcommand{\Fr}{{\mathrm{Fr}}}
\newcommand{\isom}{\xrightarrow{\;\sim\;}}

\newcommand{\al}{{\alpha}}

\newcommand \wti[1]{{\widetilde {#1}}}

\newcommand{\bC}{\mathbb{C}}
\newcommand{\lam}{\lambda}
\DeclareMathOperator{\res}{res}

\newcommand{\bI}{\mathbf{I}}
\newcommand{\bJ}{\mathbf{J}}
\newcommand{\bS}{\mathbf{S}}
\newcommand{\cB}{\mathcal{B}}

\newcommand{\cM}{\mathcal{M}}
\newcommand{\cO}{\mathcal{O}}
\newcommand{\cP}{\mathcal{P}}
\newcommand{\cT}{\mathcal{T}}
\newcommand{\cU}{\mathcal{U}}

\newcommand{\enh}{{\mathrm{e}}}

\newcommand{\IC}{{\mathrm{IC}}}

\newcommand{\wnr}{{\mathrm{wur}}}

\newcommand{\temp}{{\mathrm{temp}}}

\newcommand{\cpt}{{\mathrm{cpt}}}

\newcommand{\fR}{{\mathfrak{R}}}
\newcommand{\bnu}{{\pmb{\nu}}}
\newcommand{\qG}{{\mathbb{G}}}
\newcommand{\qH}{{\mathbb{H}}}

\newcommand{\qT}{{\mathbb{T}}}
\usepackage{marginnote}

\usepackage{color}
	\definecolor{bethorange}{rgb}{0.98, 0.3, 0.0}
	\definecolor{amblue}{rgb}{0.32,0.09,0.98}

\numberwithin{equation}{section}

\begin{document}


\title[A tempered Fourier transform]{A nonabelian Fourier transform for tempered unipotent representations}

\author{Anne-Marie Aubert}
\email{anne-marie.aubert@imj-prg.fr}
\address[A.-M. Aubert]{Sorbonne Universit\'e and Universit\'e Paris Cit\'e, CNRS, IMJ-PRG,  
F-75005 Paris, France}

\author
{Dan Ciubotaru}
\email{dan.ciubotaru@maths.ox.ac.uk}
        \address[D. Ciubotaru]{Mathematical Institute, University of Oxford, Oxford OX2 6GG, UK}

\author
{Beth Romano}
 \email{beth.romano@kcl.ac.uk}
        \address[B. Romano]{Department of Mathematics, King's College London, WC2R 2LS, UK}
       
\begin{abstract} We define an involution on the elliptic space of tempered unipotent representations of inner twists of a split simple $p$-adic group $G$ and investigate its behaviour with respect to restrictions to reductive quotients of maximal compact open subgroups. In particular, we formulate a precise conjecture about the relation with a version of Lusztig's nonabelian Fourier transform on the space of unipotent representations of the (possibly disconnected) reductive quotients of maximal compact subgroups. We give evidence for the conjecture, including proofs for $\SL_n$ and $\PGL_n$.
\end{abstract}

\keywords{nonabelian Fourier transform, unipotent representations, elliptic representations, Langlands correspondence}

\subjclass[2010]{22E50, 20C33}

\maketitle

\setcounter{tocdepth}{1}
\tableofcontents

\section{ Introduction}

The local Langlands correspondence (LLC) predicts that the depth-zero irreducible smooth representations of a reductive $p$-adic group $G$ are controlled by the geometry of the Langlands dual group $G^\vee$.  
This idea is most well developed for the class of unipotent representations 
 (or representations with unipotent reduction) of $G$ defined by Lusztig in \cite{LuI}, which contains in particular all of the irreducible representations of $G$ with vectors fixed under an Iwahori subgroup \cite{IM,KL}. 
A correspondence for unipotent representations has now been defined that satisfies many of the desired properties of the LLC. These results have come from many years of developments, starting with the seminal papers of Kazhdan and Lusztig \cite{KL} for Iwahori-spherical representations of split adjoint groups, Lusztig \cite{LuI,LuII} for unipotent representations of adjoint groups, Reeder \cite{Re} for Iwahori-spherical representations of split groups of arbitrary isogeny, and finally, the recent papers of Solleveld \cite{So1,So2} (building on \cite{AMS1,AMS3,FOS}) for all reductive $p$-adic groups. Yet not all desired properties of the LLC have been verified in full generality, and one of the main outstanding questions is to understand stability in $L$-packets. 

To be more precise, assume for simplicity that $G$ is the group of $F$-points of an absolutely simple, split connected reductive group over a non-Archimedean local field $F$ with finite residue field. 
In the correspondence mentioned above, the irreducible representations of $G$ (and of its inner forms) are partitioned into $L$-packets indexed by the conjugacy classes $G^\vee\cdot x$ for $x \in G^\vee$.  
From the perspective of abstract harmonic analysis, to understand these $L$-packets, the most basic case to consider is that of {\it tempered} $L$-packets, which correspond to the conjugacy classes $G^\vee\cdot x$ where the semisimple part of $x$ is compact. Many representation-theoretic questions can be reduced further to the case of {\it elliptic tempered} $L$-packets, as defined by Arthur: these consist of tempered representations that are not irreducibly parabolically induced from proper parabolic subgroups.  
As mentioned above, while most of the predicted properties of unipotent $L$-packets have now been verified, 
it is not yet known which linear combinations of Harish-Chandra distribution characters of the representations in a given (elliptic) tempered $L$-packet  are stable, in the sense of being constant on geometric (compact) semisimple conjugacy classes. 

To approach this question, a natural first step is to consider the restriction of unipotent representations to maximal compact open subgroups, as in M\oe glin--Waldspurger's tour de force \cite{MW}, which tackles the question of stability for elliptic tempered $L$-packets for the group $\SO_{2n + 1}$. These maximal compact subgroups allow us to pass from representations of $p$-adic groups to unipotent representations of certain finite reductive groups, which have a rich structure (see, e.g., \cite{Lubook}). 
In particular, while the characters of irreducible representations of a finite connected reductive group do not have good intrinsic stability properties, Lusztig's {\it almost characters}, certain class functions defined in terms of traces of character sheaves, do. The transition matrix between characters and almost characters is Lusztig's famous {\it nonabelian Fourier transform} (\cite{Lubook}, \cite{LualmostBirk}). If we can lift this Fourier transform to the setting of $p$-adic groups, we might be able to lift stability properties of combinations of almost characters of finite reductive groups, as in \cite[Theorem 4.3]{MW}.

With this idea in mind, in this paper, we formulate a conjecture that relates a nonabelian Fourier transform for pure inner twists of a (possibly disconnected) finite reductive group  
and an elliptic Fourier transform $\FT^\vee_{\ellip}$ (cf. \cite{CO-ell,Ciu}) for pure inner twists of $G$. 
In addition to M\oe glin--Waldspurger's work on
the elliptic representations of the special orthogonal groups  \cite{MW,Wa2}, our approach is also inspired by Lusztig's articles proposing a theory of almost characters for $p$-adic groups \cite{LualmostI, LualmostII}. We are also influenced by Reeder's \cite{Re3} and Waldspurger's \cite{Wa} ideas relating the classification of elliptic tempered unipotent representations  and the geometry of $G^\vee$.

We need two main innovations to formulate a precise conjecture. To understand the first, note that if we take a maximal compact open subgroup $K$ of $G$ with reductive quotient $\overline K$, then $\FT^\vee_{\ellip}$ does not necessarily induce a well-defined linear map on the unipotent representation space for $\overline K$. Instead we must look at maximal compact open subgroups in all pure inner twists of $G$ at the same time, and so we form the space $\cC(G)_{\cpt, \un}$ defined below. The reductive quotients $\overline K$ are not necessarily connected, and so the second innovation is to extend the definition of Lusztig's nonabelian Fourier transform to disconnected finite reductive groups. To do this, we must look at all pure inner twists of a disconnected finite reductive group, and the Fourier transform will mix the corresponding representation spaces. These two ideas are related: for every pure inner twist $H$ of $\overline K$, there is a pure inner twist $G'$ of $G$ and a maximal compact open subgroup $K'$ of $G'$ such that the reductive quotient $\overline K'$ is $H$.

\subsection{Main results} We now describe our work in more detail. As above, let us assume that $\bG$ is a simple, split group over $F$ and $G = \bG(F)$. Let $\InnT^p(G)$ denote the set of equivalence classes of pure inner twists of $G$. Then the local Langlands correspondence (see Section \ref{sec:LLC}) says that the $L$-packets of irreducible tempered  unipotent representations of the groups $G' \in \InnT^p(G)$ are in one-to-one correspondence with $G^\vee$-conjugacy classes of elements $x=su \in G^\vee$ (Jordan decomposition) such that $s$ is compact. The elements in the $L$-packet are parametrized by irreducible representations $\phi$ of the group of components $A_{G^\vee}(x)$ of the centralizer of $x$ in $G^\vee$. Hence an $L$-packet is a collection $\{\pi(su,\phi)\mid \phi\in \widehat {A_{G^\vee}(su)}\}$. Let $\Gamma_u$ denote the reductive part of the centralizer of $u$ in $G^\vee$. In \cite{Wa2,Ciu}, one considered the set $\cY(\Gamma_u)$ of pairs $(s,h)\in \Gamma_u^2$ of commuting semisimple elements  and the subset $\cY(\Gamma_u)_\ellip$ of elliptic pairs  (see Section \ref{s:ell-pairs}). These will play a role below in the Langlands parametrization.

Each group $G'\in \InnT^p(G)$ has a finite collection of conjugacy classes of maximal compact open subgroups $\max(G')$. These are classified in terms of the theory of \cite{BT1,IM} (see Section \ref{s:maximal}). A compact group $K'\in \max(G')$ has a finite quotient $\overline K'$ that is the group of $k$-points of a (possibly disconnected) reductive group over a finite field $k$. Write $R_\un(\overline{K}')$ for the $\bC$-vector space spanned by the irreducible unipotent representations of $\overline K'$. As mentioned above, for connected finite reductive groups, Lusztig \cite{Lubook} defined the nonabelian Fourier transform, which is the change-of-basis matrix between the basis of irreducible unipotent characters and the basis of unipotent almost characters. This is recalled in Section \ref{s:Lusztig}. We need to define an extension of this map to disconnected finite groups in the spirit of \cite{Lu86} and \cite[Section 5]{DM}. To fit with our picture, we define a nonabelian Fourier transform for the representations of the pure inner twists of the finite (possibly disconnected) reductive group $\overline K$, where $K \in \max(G)$. See Section \ref{s:disconn}. The point is that this transform gives an involution 
\begin{equation}
\FT_{\cpt,\un}\colon\cC(G)_{\cpt,\un}\to \cC(G)_{\cpt,\un}, 
\end{equation}
on the space 
\begin{equation*}
    \cC(G)_{\cpt,\un}=\bigoplus_{G' \in \InnT^p(G)} \bigoplus_{K' \in \max (G')} R_\un(\overline K'),
\end{equation*} which we can think of as the sum over $K\in\max(G)$ of the unipotent representation spaces of the pure inner twists of $\overline K$.
See (\ref{e:partition}) and Definition \ref{d:FT-compact}. It is important to notice that, in general, $\FT_{\cpt,\un}$ mixes the pure inner twists of a given $\overline K$.

Since parabolic induction of characters is generally well understood, of particular interest is the space of elliptic (unipotent) tempered representations for all pure inner twists
\[\cR^p_{\un,\ellip}(G)=\bigoplus_{G'\in\InnT^p(G)} \overline R_\un(G')
\]
(see Section \ref{s:ell-FT}).  
Generalizing \cite{Re3}, we prove the following theorem.

\begin{thm}[(Theorem \ref{t:main-elliptic})]
Suppose that $G$ is split and adjoint. The local Langlands correspondence induces an isometric isomorphism 
\begin{equation}
\overline{\mathsf{LLC}^p}_{\un}:\bigoplus_{u}\CC[\cY(\Gamma_u)_\ellip]^{\Gamma_u}\longrightarrow \cR^p_{\un,\ellip}(G),\ (s,h)\mapsto \Pi(u,s,h),
\end{equation}
where the left-hand side has a natural elliptic inner product while the right-hand side is endowed with the Euler--Poincar\' e product. The element $u$ ranges over representatives of unipotent conjugacy classes in $G^\vee$ and $\Pi(u,s,h)$ is defined in (\ref{eqn:pi_ush}). 
\end{thm}
We remark that $\Pi(u,s,1)$ is expected to be the stable combination of characters in the $L$-packet, while in general  $\Pi(u,s,h)$ are expected to satisfy the endoscopic identities. 

The proof of Theorem \ref{t:main-elliptic} in Section \ref{s:elliptic} applies in more generality, for example for Iwahori-spherical representations of groups of arbitrary isogeny (see Section \ref{s:extend}). Since the left-hand side has an obvious involution given by the flip $(s,h)\to (h,s)$, this defines an involution, the dual elliptic nonabelian transform 
\begin{equation}
\FT^\vee_\ellip\colon \cR_{\un,\ellip}^p(G) \to\cR_{\un,\ellip}^p(G).
\end{equation}
We notice that $\FT^\vee_\ellip$ mixes representations of the pure inner twists of $G$. We expect that there is a commutative diagram as follows.

\begin{conj}[(Conjecture \ref{c:elliptic})]\label{c:intro} Up to certain roots of unity (see Remark \ref{r:roots}), the following diagram commutes:
 \begin{displaymath}
 \xymatrix@+1pc{
    {\cR_{\un,\ellip}^p(G)} \ar[r]^{\mathrm{FT}^\vee_\ellip} \ar[d]_{\res_{\cpt,\un}}
    & {\cR_{\un,\ellip}^p(G)}\ar[d]^{\res_{\cpt,\un}}\\
    {  \cC(G)_{\cpt,\un}} \ar[r]_{\mathrm{FT_{\cpt,\un}}}
    & {  \cC(G)_{\cpt,\un}} }
\end{displaymath} 
where the vertical arrows are defined by taking invariants by the pro-unipotent radicals of maximal compact subgroups. 
\end{conj}
It is also natural to expect that the images of irreducible elliptic tempered characters under $\FT^\vee_\ellip$ are the ``almost characters'' (on elliptic elements) defined in \cite{LualmostI}: see for comparison \cite[Conjecture 2.2(c)]{LualmostII}.

Conjecture \ref{c:intro} is a generalization of \cite[Conjecture 1.3]{Ciu} with an important difference: we remark that the role of maximal compact subgroups (rather than maximal parahoric subgroups) and hence of a Fourier transform for pure inner twists of disconnected finite reductive groups in the conjecture is essential for treating all pure inner twists of $G$. 
 We verify the conjecture in some examples. In particular, we have the following theorem.
 \begin{thm}
 If $\bG=\SL_n$ or $\PGL_n$, Conjecture \ref{c:intro} holds.
 \end{thm}
See  Section \ref{s:type-A} and  Section \ref{s:pgl}.  We also verify the conjecture for $\bG= \Sp_4$ (Section \ref{s:sp4}).
The results of Waldspurger \cite{Wa2} show that this conjecture holds when $\bG=\SO_{2n+1}$.

\medskip

In future work, we will consider a generalization of Conjecture \ref{c:intro} to the space of compact/rigid tempered representations defined in \cite{CH,CH2}.

  \subsection{Structure of the paper}
 
 In Sections \ref{s:inner}, \ref{sec:GSC}, and \ref{sec:LLC}, we review relevant background about inner twists of $p$-adic groups, the generalized Springer correspondence, and the local Langlands correspondence. In Section \ref{s:Lusztig}, we recall Lusztig's parametrization of unipotent representations of a connected reductive group over a finite field and the definition of the nonabelian Fourier transform on the space spanned by these representations. We then extend Lusztig's parametrization: for the (possibly disconnected) groups $\overline{K}$ that arise as reductive quotients of subgroups $K \in \max (G)$ as defined above, we parametrize the union over all pure inner twists $\overline K'$ of $\overline K$ of the set of unipotent representations of $\overline K'$, and we then define a nonabelian Fourier transform on the space spanned by these representations (see Section \ref{s:disconn}).  
 
In Section \ref{s:maximal}, we return to the setting of $p$-adic groups.  We review the parametrization of maximal compact open subgroups of $G' \in \InnT^p(G)$, under the assumption $G$ is $F$-split.   We define the space $\cC(G)_{\cpt, \un}$ in terms of these subgroups, and we use the Fourier transform of Section \ref{s:disconn} to define an involution $\FT_{\cpt, \un}$ on $\cC(G)_{\cpt, \un}$. In Section \ref{s:ellipticpairs}, we review the definition of $\cY(\Gamma)_{\ellip}$ for a complex reductive group $\Gamma$. We also review the definition of the elliptic pairing on the Grothendieck group of a finite group.  

Section \ref{s:dualFT} contains the conjectures outlined above. We first review the Euler--Poincar{\'e} pairing and state Conjecture \ref{conj:main-ell}, which predicts that the local Langlands correspondence induces an isometric isomorphism at the level of elliptic spaces. We then define a restriction map $\res_{\cpt, \un}\colon \cR^p_{\un, \ellip}(G) \to \cC(G)_{\cpt, \un}$ and state Conjecture \ref{c:elliptic}, which predicts that the elliptic nonabelian Fourier transform $\FT_{\ellip}^\vee$ is compatible with $\FT_{\cpt, \un}$ under $\res_{\cpt, \un}$. We give evidence for this conjecture in Proposition \ref{p:regular}, which considers linear combinations of twists of Steinberg representations.

In Section \ref{sec:Lusztig}, we present an alternative definition of the elliptic nonabelian Fourier transform motivated by Lusztig's pairing \cite[\S1.3]{LualmostII}.

In Section \ref{s:elliptic}, we prove Conjecture \ref{conj:main-ell} in the case when $G$ is simple, split, and adjoint. In Section \ref{s:extend}, we indicate how the proof can be extended to the non-adjoint case. In the final three sections, we verify the conjectures for explicit examples: in Section \ref{s:sp4}, we consider the group $\Sp_4(F)$; in Section \ref{s:type-A}, we consider $\SL_n(F)$; and in Section \ref{s:pgl}, we consider $\PGL_n(F)$.

\subsection{Notation and conventions}

Given a complex Lie group $\cG$, we write $\rZ_{\cG}$ for the center of $\cG$. Given $x \in \cG$, we write $\rZ_{\cG}(x)$ for the centralizer of $x$ in $\cG$. Similarly, if $H$ is a subgroup of $\cG$, we write $\rZ_{\cG}(H)$ for the centralizer of $H$ in $\cG$, and if $\varphi$ is a homomorphism with image in $\cG$, we write $\rZ_{\cG}(\varphi)$ for $\rZ_{\cG}(\text{im } \varphi)$. We write $\cG^\circ$ for the identity component of $\cG$. If $x \in \cG$, we write $A_{\cG}(x) = \rZ_{\cG}(x)/\rZ_{\cG}(x)^\circ$ for the component group of $\rZ_{\cG}(x)$. If $u \in \cG$ is unipotent, we write $\Gamma_u$ for the reductive part of $\rZ_{\cG}(u)$.
Given a torus $\cT$, we write $X^*(\cT)$ for the character group of $\cT$. 

Given a finite group $A$, we write $\widehat{A}$ for the set of irreducible characters of $A$, and we write $R(A)$ for the $\bC$-vector space with basis given by (isomorphism classes of) irreducible representations of $A$. 
Given a finite set $S$, we write $\mathbb{C}[S]$ for the $\mathbb{C}$-vector space of functions $S \to \mathbb{C}$.

\medskip

\noindent{\bf Acknowledgements.} We thank M. Solleveld for his careful reading and many useful comments, particularly regarding Section \ref{s:elliptic}, and G. Lusztig and M. Reeder for their helpful suggestions. We also thank the referees for the thorough checking of the paper, for the corrections and suggestions for improvement. This research was supported in part by the EPSRC grant  EP/V046713/1 (2021). D.C. thanks Universit\'e Paris Cit\'e and Sorbonne Universit\'e for their hospitality while part of this work was completed.

\section{ Recollection on inner twists}\label{s:inner}

\subsection{Inner twists} Let $F$ be a non-Archimedean local field with finite residue field $k_F=\Fq$. We denote by $\mathfrak o_F$ the ring of integers of $F$. Let $F_\sep$ be a fixed separable closure of $F$, and let $\Gamma_F$ denote the Galois group of $F_\sep/F$. Let $F_\un\subset F_\sep$  be the maximal unramified extension of $F$. Let $\Frob$ be the geometric Frobenius element of $\Gal(F_\un/F)\simeq\widehat\ZZ$, i.e., the topological generator that induces the inverse of the automorphism $x\mapsto x^q$ of $k_F$. We denote by $\Fr_\bG$ the action of $\Frob$ on a connected reductive $F$-group $\bG$. We now review definitions related to inner twists and pure inner twists of a $p$-adic group. For details see, e.g., \cite[Section 2]{Vo}, \cite[Section 2]{KaLLC}, \cite[Section 1.3]{ABPSconj}. (Note that \cite{Vo} uses the term ``pure rational form" for what we call a pure inner twist.)

Let $G=\bG(F)$. Write $\Inn(\bG)$ for the group of inner automorphisms of $\bG$. Recall that given an algebraic group $\bH$ over $F$, an isomorphism $\alpha\colon \bH \to \bG$ defined over $F_s$ determines a $1$-cocycle
\begin{equation}\label{eqn:iso}
\gamma_\alpha \colon  \begin{array}{ccc} \Gamma_F & \to & \Aut (\bG) \\
\sigma & \mapsto & \alpha \sigma \alpha^{-1} \sigma^{-1} .                 
\end{array}
\end{equation}
An \textit{inner twist} of $G$ consists of a pair $(H,\alpha)$, where $H=\bH(F)$ for some connected reductive $F$-group $\bH$, and $\alpha\colon\bH\isom\bG$ is an isomorphism of algebraic groups defined over $F_s$ such that $\text{im } (\gamma_\alpha) \subset \Inn(\bG)$. Two inner twists $(H, \al), (H', \al')$ of $G$ are {\em equivalent} if there exists $f \in \Inn(\bG)$ such that
\begin{equation} \label{eq:cohomologous}
\gamma_\alpha (\sigma) = f^{-1} \gamma_{\alpha'} (\sigma) \; \sigma f \sigma^{-1}
\quad \text{ for all } \sigma \in \Gamma_F .
\end{equation}
Denote the set of equivalence classes of inner twists of $G$ by $\InnT(G)$.

An inner twist of $G$ is the same thing as an inner twist of the unique quasi-split inner form $G^* = \bG^* (F)$ of $G$.
Thus the equivalence classes of inner twists of $G$ are parametrized by the Galois cohomology group $H^1(F,\Inn(\bG^*))$:
\[\InnT(G)\longleftrightarrow H^1(F,\Inn(\bG^*)).
\]

\begin{ex}
For $G=\SL_n(F)$, there is a one-to-one correspondence 
\begin{equation}
\InnT(\SL_n(F))\longleftrightarrow\mathbb Z/n\mathbb Z.
\end{equation}
This is given as follows. Let $r$ be an integer mod $n$ and let $m=\gcd(r,n)$. Then $n=dm$ and $r/m$ is coprime to $d$. Therefore, there exists a division algebra $D_{d,r/m}$, central over $F$ and of dimension $\dim_F D_{d,r/m}=d^2$. The corresponding inner twist is $\SL_m(D_{d,r/m})$.
\end{ex}

A {\it pure inner twist} of $G$ is a triple $(H,\alpha,z)$, where $(H,\alpha)$ is an inner twist and  $z \in Z^1(F,G)$ such that $\alpha^{-1}\circ\gamma(\alpha)=\Ad(z(\gamma))$ for any $\gamma\in\Gamma_F$ \cite[Section 2.3]{KaLLC}. When $G$ splits over an unramified extension of $F$ such a cocycle is determined by the image $u:= z(\Frob) \in G$. The corresponding inner twist of $\bG$ is then defined by the functorial image $z_\ad \in Z^1(F,\Inn(\bG^*))$ of $z$. This pure inner twist is defined by the twisted Frobenius action $\Fr_u$ on $\bG$ given by $\Fr_u=\Ad(u)\circ\Fr_\bG$.

In cohomological terms, the short exact sequence 
\[1\longrightarrow \rZ_{\bG^*}\longrightarrow \bG^*\longrightarrow \Inn(\bG^*)\longrightarrow 1
\]
induces a map in cohomology $\rH^1(F,\Inn(\bG^*))\to \rH^2(F,\rZ_{\bG^*})$. An inner twist of $\bG^*$ has a corresponding pure inner twist if and only if the corresponding element of $\rH^2(F,\rZ_{\bG^*})$ is trivial \cite[Lemma 2.10]{Vo}. 
Denote by $\InnT^p(G^*)$ the set of equivalence classes of pure inner twists of $\bG^*$. We have \cite[Proposition 2.7]{Vo}
\begin{equation}
\InnT^p(G^*)\longleftrightarrow \rH^1(F,\bG^*).
\end{equation}

\begin{ex}
If $\bG^*$ is semisimple adjoint, every inner twist corresponds to a unique pure inner twist: $\InnT^p(G^*)=\InnT(G^*)$. If $\bG^*$ is semisimple and simply connected, $\rH^1(F,\Inn(\bG^*))\cong \rH^2(F,\rZ_{\bG^*})$ and therefore there is only one class of pure inner twists, the quasi-split form, $\InnT^p(G^*)= \{G^*\}$. When $G=\SL_n(F)$, the only pure inner twist is $G$ itself (see \cite[Example 2.12]{Vo}).
\end{ex}

\subsection{The \texorpdfstring{$L$}{L}-group}\label{s:Lgrp} 
Let $G^\vee$ denote the $\CC$-points of the dual group  of $\bG$. It is endowed with an action of $\Gamma_F$.
Let $W_F$ be the Weil group of $F$ (relative to $F_\sep/F$) and let ${}^L G:= G^\vee \rtimes W_F$ denote the $L$-group of $G$. 

Kottwitz proved in \cite[Proposition 6.4]{Ko} that there exists a natural isomorphism
\begin{equation} \label{eqn:Ko-iso}
\kappa_G \colon  \rH^1 (F,\bG) \isom \Irr\Big(\pi_0 \big(\rZ_{G^\vee}^{W_F} \big) \Big).
\end{equation}

Let $G^\vee_\sc$ denote the simply connected cover of the derived group $G_\der^\vee$ of $G^\vee$.
We have $G^\vee_{\sc}=(G_{\ad})^\vee$, and
\begin{equation}\label{eqn:Kot}
\kappa_{G^*_{\ad}} \colon \rH^1 (F,\Inn(\bG^*)) \isom \Irr \big(\rZ_{G^\vee_{\sc}}^{W_F}\big).
\end{equation}
All the inner twists of a given group $G$ share the same $L$-group, because the action of $W_F$ on $G^\vee$ is only uniquely defined up to inner automorphisms. 
This also works the other way around: from the Langlands dual group ${}^L G$ one can recover the inner-form class of $G$. 

\begin{ex} \label{ex:Sp}
If $G=\Sp_{2n}(F)$, then we have $G^\vee=\SO_{2n+1}(\CC)$ and $G_\sc^\vee=\Spin_{2n+1}(\CC)$, so $\rZ_{G^\vee_{\sc}}\simeq\ZZ/2\ZZ$. 
An inner twist of $G$ is determined by its Tits index \cite{Ti}. The group $G^*=G$ is split and its nontrivial inner twist is the group $\SU(n,h_r)$, where $h_r$ is 
a non-degenerate Hermitian form of index $r=\lfloor n/2\rfloor$ over the quaternion algebra $Q$ over $F$ (see for instance \cite[\S~9]{ArClass}).
\end{ex}

We will consider $G$ as an inner twist of $G^*$, so endowed with an isomorphism $\bG\to\bG^*$ over $F_s$. Via (\ref{eqn:Kot}), $G$ is labelled by a character $\zeta_G$ of $\rZ_{G_\sc^\vee}^{W_F}$. We choose an extension $\zeta$ of $\zeta_G$ to $\rZ_{G^\vee_\sc}$.

\section{ Generalized Springer correspondence for disconnected groups} \label{sec:GSC}
Let $\cG$ be a possibly disconnected complex Lie group. We denote by $\cG^\circ$ the identity component of $\cG$. Let $u$ be a unipotent element in $\cG^\circ$, and let $A_{\cG^\circ}(u)$ denote the  group of components of  $\rZ_{\cG^\circ}(u)$.

Let $\phi^\circ$ be an irreducible representation of {$A_{\cG^\circ}(u)$}. The pair $(u,\phi^\circ)$ is called \textit{cuspidal} if it determines a $\cG^\circ$-equivariant cuspidal local system on the $\cG^\circ$-conjugacy class of $u$ as defined in \cite{LuIC}. 
In particular,  if $(u,\phi^\circ)$ is cuspidal, then $u$ is a distinguished unipotent element in $\cG^\circ$ (that is, $u$ does not meet the unipotent variety of  any proper Levi subgroup  of $\cG^\circ$), \cite[Proposition~2.8]{LuIC}. However, in general not every distinguished unipotent element supports a cuspidal representation.

\begin{ex} \label{ex:cusps} For $\cG:=\SL_n(\CC)$, the unipotent classes in $\cG$ are in bijection with the partitions $\lambda=(\lambda_1,\lambda_2,\ldots,\lambda_r)$ of $n$: the corresponding $\cG$-conjugacy class $\cO_\lambda$ consists of unipotent matrices with Jordan blocks of sizes $\lambda_1$, $\lambda_2$, $\ldots$,$\lambda_r$. We identify the center $\rZ_{\cG}$ with the group $\mu_n$ of complex $n$th roots of unity. For $u\in\cO_\lambda$, the natural homomorphism $\rZ_\cG\to A_\cG(u)$ is surjective with kernel $\mu_{n/\gcd(\lambda)}$, where $\gcd(\lambda):=\gcd(\lambda_1,\lambda_2,\ldots,\lambda_r)$. Hence the irreducible $\cG$-equivariant local systems on $\cO_\lambda$ all have rank one, and they are distinguished by their central characters, which range over those $\chi\in\widehat{\mu_n}$ such that $\gcd(\lambda)$ is a multiple of the order of $\chi$. We denote these local systems by $\cE_{\lambda,\chi}$. The unique distinguished unipotent class in $\cG$ is the regular unipotent class $\cO_{(n)}$, consisting
of unipotent matrices with a single Jordan block. The cuspidal irreducible $\cG$-equivariant local systems are supported on $\cO_{(n)}$ and are of the form $\cE_{(n),\chi}$, with $\chi\in\widehat{\mu_n}$ of order $n$ (see \cite[(10.3.2)]{LuIC}). 
\end{ex}

The group $A_{\cG^\circ}(u)$ may be viewed as  a subgroup of the group $A_u:=A_{\cG}(u)$ of components of $\rZ_{\cG}(u)$. Let $\phi$ be an irreducible representation of $A_{\cG}(u)$. We say that $(u,\phi)$ is a \textit{cuspidal pair} if the restriction of $\phi$ to $A_{\cG^\circ}(u)$ is a direct sum of irreducible representations $\phi^\circ$ such that one (or equivalently any) of the pairs $(u,\phi^\circ)$ is cuspidal.
Let 
\[
\bI^{\cG}:=\{(U,\cE)\mid U\text{ unipotent conjugacy class in }\cG,\ \cE\text{ irred. $\cG$-eqvt. local system on }U\}.
\]
This set can be identified with the set of $\cG$-orbits of pairs $(u,\phi)$, where $u\in \cG$ is unipotent and $\phi\in\widehat A_u$.
  If $(\phi,V_\phi)$ is an irreducible $A_u$-representation, we can first regard it as an irreducible $\rZ_\cG(u)$-representation, and then the corresponding local system is $\cE=(\cG\times_{\rZ_\cG(u)} V_\phi\to \cG/\rZ_\cG(u)\cong U)$.
We denote by $\bI_\cus^\cG$ the subset of $\bI^\cG$ of cuspidal pairs. We write $\bI:=\bI^{\cG^\circ}$ and $\bI_\cus:=\bI_\cus^{\cG^\circ}$.

Let $\bJ^{\cG}$ denote the set of $\cG$-orbits  of triples $j=(\cM,U_\cus,\cE_\cus)$ such that $\cM^\circ$ is a Levi subgroup of $\cG^\circ$, 
\begin{equation} \label{eqn:disc_Levi}
\cM:=\rZ_{\cG}(\rZ_{\cM^\circ}^\circ),
\end{equation}
and $(U_\cus,\cE_\cus)\in \bI_\cus^{\cM^\circ}$. We observe that $\cM$ has identity component $\cM^\circ$ and that $\rZ_{\cM}^\circ = \rZ_{\cM^\circ}^\circ$.
We set $\bJ:=\bJ^{\cG^\circ}$. We notice that $\cM=\cM^\circ$ whenever $\cG=\cG^\circ$.

Let $\rZ^\circ_{\cM^\circ,\reg}=\{z\in Z^\circ_{\cM^\circ}\mid \rZ_\cG(z)=\cM^\circ\}$ and $Y_j(\cG)=\bigcup_{x\in \cG} x (\rZ^\circ_{\cM^\circ,\reg} U_\cus) x^{-1}$. Let $\overline Y_j(\cG)$ be the closure of $Y_j(\cG)$ in $\cG$. We set $Y_j=Y_j(\cG^\circ)$ and $\overline Y_j=\overline Y_j(\cG^\circ)$. For example, if $j_0=(T,1,\triv)$ is the trivial cuspidal pair on the maximal torus $T$ in $\cG^\circ$, then $Y_{j_0}$ is the variety of regular semisimple elements in $\cG^\circ$, hence $\overline Y_{j_0}=\cG^\circ$.

Set $W_j^\circ:=\Nor_{\cG^\circ}(\cM^\circ)/\cM^\circ$. This is a Coxeter group due to the particular nature of the Levi subgroups in $\cG^\circ$ that support cuspidal local systems (see \cite[Theorem~9.2]{LuIC}).

One constructs a $\cG^\circ$-equivariant semisimple perverse sheaf $K_j$ supported on $\overline Y_j$ that has a $W_j^\circ$-action and a decomposition (\cite[Theorem 6.5]{LuIC} and \cite[\S~5]{AMS1})
\[K_j=\bigoplus_{\rho^\circ\in \widehat W_j^\circ} V_{\rho^\circ}\otimes A_{j,\rho^\circ},
\]
where $(\rho^\circ,V_{\rho^\circ})$ ranges over the (equivalence classes of) irreducible $W_j^\circ$-representations and $A_{j,\rho^\circ}$ is an irreducible $\cG^\circ$-equivariant perverse sheaf. The perverse sheaf $A_{j,\rho^\circ}$ has the property that there exists a (unique) pair $(U,\cE^\circ)\in \bI$ such that its restriction to the variety $\cG^\circ_\un$ of unipotent elements in $\cG^\circ$ is: 
\begin{equation}\label{eqn:IC-sheaf}
(A_{j,\rho^\circ})|_{\cG^\circ_\un}[-\dim(\rZ^\circ_{\cM^\circ})]\cong \IC(\overline U,\cE^\circ)[\dim(U)].
\end{equation}
In particular, the hypercohomology of $A_{j^\circ,\rho^\circ}$ on $U$ is concentrated in one degree, namely 
\[\cH^{a_U}(A_{j,\rho^\circ})|_U\cong \cE^\circ,\text{ where } a_U=-\dim(U)-\dim(\rZ^\circ_{\cM^\circ}).
\]
If we set $\widetilde{\bJ}=\widetilde{\bJ}^{\cG^\circ}:=\{(j,\rho^\circ)\,:\, j\in \bJ^{\cG^\circ},\ \rho^\circ\in \widehat{{W_j^\circ}}\}$, the generalized Springer correspondence for $\cG^\circ$ is the bijection 
\begin{equation}\label{eqn:gen-Springer}
\nu^\circ\colon \bI^{\cG^\circ} \to \widetilde{\bJ}^{\cG^\circ},\quad (U,\cE)\mapsto (j,\rho^\circ),
\end{equation}
where the relation between $(j,\rho^\circ)$ and $(U,\cE)$ is given by (\ref{eqn:IC-sheaf}). 
Let $\nu^\circ_\cus\colon \bI\to \bJ$ denote the composition of $\nu^\circ$ with the projection from $\widetilde \bJ$ to $\bJ$.

\smallskip

We will now explain how, following \cite[\S~4]{AMS1}, one can extend the maps $\nu^\circ $ and $\nu_\cus^\circ$ to the case of disconnected groups.
Let $j=(\cM,U_\cus,\cE_\cus)\in \bJ^{\cG}$. We set  $W_j:=\Nor_{\cG}(j)/\cM^\circ$. There exists a subgroup $\fR_j$ of $W_j$ such that $W_j=W_j^\circ\rtimes \fR_j$ (see \cite[Lemma~4.2]{AMS1}). 
Suppose that $\sharp_j$ is a $2$-cocycle 
\[\sharp_j\colon \fR_j\times\fR_j\to \overline{{\mathbb Q}}_\ell^\times.\] 
We view $\sharp_j$ as a $2$-cocycle on $W_j$ that is trivial on $W_j^\circ$. Then the $\sharp_j$-twisted group algebra of $W_j$, denoted by $\overline{{\mathbb Q}}_\ell[W_j,\sharp_j]$, is defined to be the $\overline{{\mathbb Q}}_\ell$-vector space $\overline{{\mathbb Q}}_\ell[W_j,\sharp_j]$ with basis $\left\{f_w\,:\,w\in W_j\right\}$ and multiplication rules
\[f_wf_{w'}=\sharp_j(w,w')f_{ww'},\quad w,w'\in W_j.\]

One constructs a $\cG$-equivariant semisimple perverse sheaf $K_j$ supported on $\overline Y_j$ that has a $W_j$-action and a decomposition \cite[Theorem 6.5]{LuIC}
\[K_j=\bigoplus_{\rho\in \Irr(\overline{{\mathbb Q}}_\ell[W_j,\sharp_j])} V_\rho\otimes A_{j,\rho},
\]
where $(\rho,V_{\rho})$ ranges over the (equivalence classes of) simple modules of  $\overline{{\mathbb Q}}_\ell[W_j,\sharp_j]$, and $A_{j,\rho}$ is an irreducible $\cG$-equivariant perverse sheaf.

We set 
\begin{equation}\label{eqn:tildeJ}
\widetilde{\bJ}^\cG:=\{(j,\rho)\,:\, j\in \bJ^\cG,\ \rho\in 
\Irr(\overline{{\mathbb Q}}_\ell[W_j,\sharp_j])\}.
\end{equation} 
The generalized Springer correspondence for $\cG$ is the bijection 
\begin{equation}\label{eqn:gen-Springer_disc}
\nu\colon \bI^\cG \to \widetilde{\bJ}^\cG
\end{equation}
defined in \cite[Theorem~5.5]{AMS1}. 
\begin{defn} \label{defn:nu_cus}
Let $\nu_\cus=\nu_\cus^\cG\colon \bI^\cG\to \bJ^\cG$ denote the composition of $\nu$ with the projection from $\widetilde \bJ^\cG$ to $\bJ^\cG$.
\end{defn}
{Suppose $(U, \cE) \in \bI^\cG$, and suppose the $\cG$-class $U$ splits into $\cG^\circ$-classes $U^\circ_1$, $\ldots$, $U^\circ_\ell$, for some $\ell\ge 1$. If we regard $\cE$ as a $\cG^\circ$-equivariant local system, then it restricts as $\cE |_{U_i^\circ}=\bigoplus_{t=1}^{k_i}\cE_{i,t}^\circ$, $1\le i\le \ell$, where $\nu^\circ(U^\circ_i,\cE_{i,t}^\circ)=(j^\circ,\rho_{i,t}^\circ)$, with $j^\circ=\nu^\circ_\cus(U^\circ_1,\cE^\circ_{1,1})$ and
$\rho|_{W_j^\circ}=\bigoplus_{i,t}\rho^\circ_{i,t}$. }

\begin{ex}
Let $\cG=\mathsf O_{2n}(\CC)$, $\cG^\circ=\mathsf{SO}_{2n}(\CC)$, $\cG/\cG^\circ\cong \bZ/2\bZ$. The unipotent classes in $\cG$ are parametrized by partitions $\lambda=(\lambda_1,\dots,\lambda_m)$ of $2n$ such that each even part appears with even multiplicity. If $U_\lambda$ is the corresponding unipotent class, then $U_\lambda$ is a single $\cG^\circ$-class unless the partition $\lambda$ is ``very even'' \cite{SpSt,CM}, i.e., all parts $\lambda_i$ are even, in which case $U_\lambda$ splits into two $\cG^\circ$-classes, $U_\lambda^+$ and $U_\lambda^-$. 

Let $j=j_0$ correspond to the trivial cuspidal local system on the torus of $\cG^\circ$. Then $W_{j_0}^\circ=W^\circ\cong W(D_n)$ and $W_j=W\cong W(B_n)$, hence $W/W^\circ\cong \cG/\cG^\circ=\bZ/2\bZ$. (Here $W(D_n)$ denotes a Weyl group of type $D_n$ and similarly for $W(B_n)$.) If $\lambda$ is not a very even partition and $u\in U_\lambda$, then $A_u/A_{\cG^\circ}(u)=\bZ/2\bZ$; if $(j_0,\rho^\circ)=\nu^\circ(U_\lambda^\circ,\phi^\circ)$, then there are two nonisomorphic ways $\phi,\phi'$ in which one can extend $\phi^\circ$ to $A_u$, and two nonisomorphic ways $\rho,\rho'$ to extend $\rho^\circ$ to $W$, which can be chosen such that $\rho$ corresponds to $\phi$ and $\rho'$ corresponds to $\phi'$ under the disconnected Springer correspondence.

If, on the other hand, $\lambda$ is a very even partition, $u=u^+$ is a representative of $U^+_\lambda$, and $u^-$ a representative of $U^-_\lambda$, then $A_u=A_{\cG^\circ}(u^+)=A_{\cG^\circ}(u^-)=\{1\}$. In this case, $\nu^\circ(U^\pm,\mathbf 1)=\rho^\pm$ ($W(D_n)$-representations), where $\rho$ is parametrized by a bipartition of $n$ of the form $\lam' \times \lam'$ (necessarily $n$ is even). Then $\nu(U,\mathbf 1)=\rho$ ($W(B_n)$-representation), where $\rho|_{W(D_n)}=\rho^+\oplus \rho^-$.
\end{ex}

\section{ The Langlands parametrization}\label{sec:LLC}

We use the notation of Section \ref{s:inner}. In addition, we write $I_F$ for the inertia subgroup of $W_F$, and we set $W'_F:=W_F\times\SL_2(\CC)$. We have natural projections from $p_1\colon W'_F\twoheadrightarrow W_F$ and $p_2\colon{}^LG\twoheadrightarrow W_F$. 

\subsection{Langlands parameters} A \textit{Langlands parameter} (or \textit{$L$-parameter}) for $G$ is a continuous morphism
$\varphi\colon W'_F\to {}^LG$ such that 
$\varphi(w)$ is semisimple for each $w\in W_F$ (that is, $r (\varphi (w))$
is semisimple for every finite-dimensional representation $r$ of 
${}^L G$), the restriction of $\varphi$ to $\SL_2(\CC)$ is a morphism of complex algebraic groups, and the diagram 
\begin{center}
    $\xymatrix {
W_F' \ar[rr]^{\varphi} \ar[dr]_{p_1} & & {}^L G \ar[dl]^{p_2} \\
& {W_F} & }$
\end{center}
commutes. Write $\Phi(G)$ for the set of $G^\vee$-conjugacy classes of Langlands parameters for $G$.

\smallskip

Let $\rZ_{G^\vee}(\varphi)$ denote the centralizer in $G^\vee$ of $\varphi(W'_F)$. We have
\begin{equation} \label{eqn:inter}
\rZ_{G^\vee}(\varphi)\cap\rZ_{G^\vee}=\rZ_{G^\vee}^{W_F},
\end{equation}
and hence
\[\rZ_{G^\vee}(\varphi)/\rZ_{G^\vee}^{W_F}\simeq \rZ_{G^\vee}(\varphi)\rZ_{G^\vee}/\rZ_{G^\vee}.\]
The group $\rZ_{G^\vee}(\varphi)\rZ_{G^\vee}/\rZ_{G^\vee}$ can be considered as a subgroup of $G^\vee_\ad$ and we define $\rZ^1_{G^\vee_\sc}(\varphi)$ to be its inverse image under the canonical projection $p\colon G^\vee_\sc \to G^\vee_\ad$. The group $\rZ^1_{G^\vee_\sc}(\varphi)$ coincides with the one introduced by Arthur in \cite[(3.2)]{ArNote} (denoted there by $\widetilde{\cS}_\varphi$). As observed in \cite{ArNote}, it is an extension of $\rZ_{G^\vee}(\varphi)/\rZ_{G^\vee}^{W_F}$  by $\rZ_{G^\vee_\sc}$.
Let $A^1_{\varphi}$ denote the component group of $\rZ^1_{G^\vee_\sc}(\varphi)$.

\begin{rem} \label{rem: LLC}{\rm   
When $Z_{\bG}^\circ$ is $F$-split, the group $A^1_{\varphi}$ also coincides  with the group considered by Kaletha in \cite[\S 4.6]{Ka} in the parametrization of the $L$-packet of $\varphi$.}
\end{rem}

An \textit{enhancement} of  $\varphi$ is an irreducible representation $\phi$ of $A^1_\varphi$.  We denote by $\widehat {A}^1_{\varphi}$  the set of irreducible characters of $A^1_{\varphi}$. The pairs $(\varphi,\phi)$ are called \textit{enhanced $L$-parameters}. 
Let $\phi\in\widehat {A}^1_{\varphi}$. Then $\phi$ determines
a character $\zeta_\phi$ of $\rZ_{G^\vee_\sc}$. An enhanced $L$-parameter $(\phi,\varphi)$ is said to be \textit{$G$-relevant} if $\zeta_\phi=\zeta$, where $\zeta$ is as defined in Section \ref{s:Lgrp}.
The set $\Phi_\enh(G)$ of $G^\vee$-conjugacy classes of $G$-relevant enhanced $L$-parameters is expected to parametrize the admissible dual of $G$.

\smallskip

The group $H^1(W_F,\rZ_{G^\vee})$ acts on $\Phi(G)$ by
\begin{equation} \label{eqn:action}
(z\varphi)(w,x):=z'(w)\,\varphi(w,x)\quad\varphi\in\Phi(G), w\in W_F, x\in \SL_2(\CC),
\end{equation}
where $z'\colon W_F\to \rZ_{G^\vee}$ represents $z\in H^1(W_F,\rZ_{G^\vee})$. This extends to an action of $H^1(W_F,\rZ_{G^\vee})$ on $\Phi_\enh(G)$ that does nothing to the enhancements.

A character of $G$ is called \textit{weakly unramified} if it is trivial on the kernel of the Kottwitz homomorphism. Let  $X_\wnr(G)$ denote the group of weakly unramified characters of $G$. There  is  a natural isomorphism
\begin{equation} \label{eqn:dual_unr}
X_\wnr(G)\simeq (\rZ_{G^\vee}^{I_F})_\Frob\,\subset \,H^1(W_F,\rZ_{G^\vee})
\end{equation}
(see \cite[\S 3.3.1]{Hai}). Its identity component is the group $X_\un(G)$ of unramified characters of $G$. Via (\ref{eqn:action}) and (\ref{eqn:dual_unr}), the group $X_\wnr(G)$ acts naturally on $\Phi_\enh(G)$.

Let $\varphi\colon W_F\times\SL_2(\CC) \to {}^L G$ be an $L$-parameter.
We consider the (possibly disconnected) complex reductive group
\begin{equation} \label{eqn:Gvarphi}
\cG_\varphi:=\rZ^1_{G^\vee_\sc}\left(\varphi|_{W_F}\right),
\end{equation}
defined analogously to $\rZ^1_{G^\vee_\sc}(\varphi)$. Denote by $\cG_\varphi^\circ$ its identity component.

We define elements $u_\varphi, s_\varphi \in G^\vee$ by   
\begin{equation} \label{s-u}
(u_\varphi,1) =\varphi (1,\matje{1}{1}{0}{1}) \quad\text{and}\quad
(s_\varphi,\Frob) =\varphi(\Frob,\Id_{\SL_2(\CC)}).
\end{equation} 
Then $u_\varphi\in \cG_\varphi^\circ$.

We recall that by the Jacobson--Morozov Theorem any unipotent element $u$ of 
$\cG_\varphi^\circ$ determines (up to conjugation by
$\rZ_\cG (u)$) a homomorphism of algebraic groups $\SL_2(\CC)\to \cG_\varphi^\circ$ taking the value $u$ at $\matje{1}{1}{0}{1}$. Hence any enhanced $L$-parameter $(\varphi,\phi)$ is completely determined, up to $G^\vee$-conjugacy, by $\varphi\vert_{W_F}$, $u_\varphi$ and $\phi$. More precisely, the map
\begin{equation} \label{eqn:descrip}
(\varphi,\phi)\mapsto (\varphi\vert_{W_F},u_\varphi,\phi)
\end{equation}
provides a bijection between $\Phi_\enh(G)$ and the set of $G^\vee$-conjugacy classes of triples $(\varphi\vert_{W_F},u_\varphi,\phi)$.

We define an action of $G^\vee_\sc$ on $G^\vee$ by setting 
\[h\cdot g:=h'gh^{\prime -1}\quad \text{for $h\in G^\vee_\sc$ and $g\in G^\vee$, where $p(h)=h'\rZ_{G^\vee}$.}\]
It induces an action of $G^\vee_\sc$ on ${}^LG$ and we denote by $\rZ_{G^\vee_\sc}(\varphi)$ the stabilizer in $G^\vee_\sc$ of $\varphi(W_F')$ for this action.

On the other hand, the inclusion $\rZ_{G^\vee_\sc}^1(\varphi)\hookrightarrow \rZ_{G^\vee_\sc}^1(\varphi\vert_{W_F})\cap \rZ_{G^\vee_\sc}(u_\varphi)$ induces a group isomorphism
\begin{equation} \label{eqn:A1}
A^1_\varphi\isom \pi_0\left(\rZ_{G^\vee_\sc}^1(\varphi\vert_{W_F})\cap \rZ_{G^\vee_\sc}(u_\varphi)\right).
\end{equation}
As observed in \cite[(92)]{AMS1}, another way to formulate (\ref{eqn:A1}) is
 \begin{equation} \label{eqn:same}
 A^1_\varphi\simeq A_{\cG_\varphi}(u_\varphi):=\rZ_{\cG_\varphi}(u_\varphi)/\rZ_{\cG_\varphi}(u_\varphi)^\circ.
 \end{equation}

The $L$-parameter $\varphi$  is is called 
\begin{itemize}
\item \textit{discrete} if  there is no proper $W_F$-stable Levi subgroup $L^\vee \subset G^\vee$ such that $\varphi (W_F') \subset  L^\vee \rtimes W_F$.
\item \textit{bounded} if $s_\varphi$ belongs to a bounded subgroup of $G^\vee$.
\end{itemize}
We say that  $(\varphi,\phi)\in\Phi_\enh(G)$ is \textit{cuspidal} if $\varphi$ is discrete and $(u_\varphi,\phi)$ is a {cuspidal pair} for $\cG_\varphi$ (as defined in Section \ref{sec:GSC}). 
The set of $G$-relevant cuspidal (respectively discrete, bounded) enhanced $L$-parameters is expected to correspond to the set of {\it supercuspidal} (respectively, {\it essentially square-integrable, tempered}) irreducible smooth $G$-representations \cite[\S~6]{AMS1}.
 
\subsection{Inertial classes} \label{subsec:inertial} For $L$ a Levi subgroup of $G$ and $g\in G^\vee$, the group $g L^\vee g^{-1}$  is not necessarily $W_F$-stable, so the group $G^\vee$ need not act on pairs of the form $({}^LL,(\varphi_\cus,\phi_\cus))$ with $(\varphi_\cus,\phi_\cus)$ a cuspidal enhanced $L$-parameter for $L$. In order to deal with this, as in \cite[Definition~7.1]{AMS1}, we will have to consider all the pairs $(\rZ_{{}^LG}(\cT),(\varphi_\cus,\phi_\cus))$ of the following form: 
\begin{itemize}
\item $\cT$ is a torus of $G^\vee$ such that the projection $\rZ_{{}^LG}(\cT)\to W_F$ is surjective.
\item $\varphi_\cus\colon W'_F\to \rZ_{{}^L G}(\cT)$ satisfies the requirements in the definition of an $L$-parameter.
\item Let $\cL =G^\vee\cap \rZ_{{}^LG}(\cT)$, and let $\cL_\sc$ be the simply connected cover of the derived group of $\cL$. Then $\phi_\cus$ is an irreducible representation of $\pi_0(\rZ_{\cL_\sc}^1(\varphi))$ such that $(u_{\varphi_\cus},\phi)$ is a cuspidal pair for $\rZ_{\cL_\sc}^1(\varphi_\cus\vert_{W_F}))$ and $\phi_\cus$ is $G$-relevant as defined in \cite[Definition~7.2]{AMS1}
(that is $\zeta_\phi=\zeta$ on $\cL_{\sc}\cap\rZ_{G^\vee_\sc}^{W_F}$ and $\phi=1$ on $\cL_\sc\cap \rZ_{\cL_c}^\circ$, where $\cL_c$ denotes the preimage of $\cL$ under $G^\vee_\sc\to G^\vee$).
\end{itemize}

Fix such a pair $(\rZ_{{}^LG}(\cT),(\varphi_\cus,\phi_\cus))$. The group 
\begin{equation} \label{eqn:gen_unr}
X_\un(\rZ_{{}^LG}(\cT))
:=\left(
\rZ_{(G^\vee\rtimes I_F)\cap \rZ_{{}^LG}(\cT)}
\right)_{\Frob}^\circ
\end{equation} 
plays the role of unramified characters for $\rZ_{{}^LG}(\cT)$. It acts on the enhanced $L$-parameters $(\varphi_\cus,\phi_\cus)$ (see \cite[(110) and (111)]{AMS1}) and we denote by $X_\un(\rZ_{{}^LG}(\cT))\cdot(\varphi_\cus,\phi_\cus)$ the orbit of $(\varphi_\cus,\phi_\cus)$.

We denote by $\fs^\vee$ the $G^\vee$-conjugacy class of  
$(\rZ_{{}^LG}(\cT),X_\un(\rZ_{{}^LG}(\cT))\cdot(\varphi_\cus,\phi_\cus))$.
We write
\[\fs^\vee=\fs^\vee_{G}=[\rZ_{{}^LG}(\cT),(\varphi_\cus,\phi_\cus)]_{G^\vee},\]  and call $\fs^\vee$ an \emph{inertial class} for $\Phi_\enh(G)$. We denote by $\fB^\vee(G)$ the set of all such $\fs^\vee$. 

Note that 
there exists a $W_F$-stable Levi subgroup $L^\vee$ of $G^\vee$ such that $\rZ_{{}^LG}(\cT)$ is $G^\vee$-conjugate to $L^\vee\rtimes W_F$ and $\cL =G^\vee\cap \rZ_{{}^LG}(\cT)$ is $G^\vee$-conjugate to $L^\vee$. Conversely, every $G^\vee$-conjugate of this $L^\vee\rtimes W_F$ is of the form $\rZ_{{}^LG}(\cT)$ for a torus $\cT$ as above (see \cite[Lemma~6.2]{AMS1}).

We write
\begin{equation} \label{eqn:sLdual}
\fs^\vee_L=(\rZ_{{}^LG}(\cT),X_\un(\rZ_{{}^LG}(\cT))\cdot(\varphi_\cus,\phi_\cus)).
\end{equation}
We will consider the groups
\begin{equation}
W_{\fs^\vee}=\rN_{G^\vee} (\fs^\vee_L) / L^\vee\quad\text{and} \quad J_{\varphi_\cus}:=\rZ_{G^\vee}(\varphi_\cus(I_F)).\end{equation}
The group $J_{\varphi_\cus}$ is a complex (possibly disconnected) reductive group.  
Define $R(J_{\varphi_\cus}^\circ,\cT)$ as the set of $\alpha \in X^*(\cT) \setminus \{0\}$ 
that appear in the adjoint action of $\cT$ on the Lie algebra of $J_{\varphi_\cus}^\circ$.
It is a root system (see \cite[Proposition~3.9]{AMS3}).

We set
$W_{\fs^\vee}^\circ:=\rN_{J_{\varphi_\cus}^\circ} (\cT) / \rZ_{J^\circ_{\varphi_\cus}} (\cT)$, where $W_{\fs^\vee}^\circ$ is  the Weyl group of $R(J_{\varphi_\cus}^\circ,\cT)$.
Let $R^+(J^\circ_{\varphi_\cus},\cT)$ be the positive system defined by a parabolic subgroup 
$P_{\varphi_\cus}^\circ \subset J_{\varphi_\cus}^\circ$ with Levi factor $(L_{\varphi_\cus}^\vee)^\circ$. Two such parabolic subgroups $P_{\varphi_\cus}^\circ$ are $J_{\varphi_\cus}^\circ$-conjugate, so the choice is inessential.

Since $W_{\fs^\vee}^\circ$ acts simply transitively on the collection of positive
systems for $R(J_{\varphi_\cus}^\circ,\cT)$, we obtain a semi-direct factorization
\begin{align*}
& W_{\fs^\vee} = W_{\fs^\vee}^\circ \rtimes \fR_{\fs^\vee} , \\
& \text{where }\fR_{\fs^\vee} = 
\{ w \in W_{\fs^\vee} \mid w \cdot R^+ (J_{\varphi_\cus}^\circ,\cT) = R^+ (J_{\varphi_\cus}^\circ,\cT) \}.
\end{align*}

\begin{defn} \label{defn:GSC-Phi}
Let $\bnu_\cus\colon\Phi_\enh(G)\to \fB^\vee(G)$ be the map defined by
\[\bnu_\cus(\varphi,\phi)=[\rZ_{{}^LG}(\rZ_{\cM_\varphi}^\circ),\varphi\vert_{W_F},u_\cus,\phi_\cus]_{G^\vee},\]
where $(\varphi\vert_{W_F},u,\phi)_{G^\vee}$ is the image of $(\varphi,\phi)_{G^\vee}$ via the bijection (\ref{eqn:descrip}),  $(u_\cus,\phi_\cus)$ corresponds to $(U_\cus,\cE_\cus)\in\bI^{\cM_\varphi}_\cus$ and $(\cM_\varphi,U_\cus,\cE_\cus):=\nu_\cus^{\cG_\varphi}(U,\cE)$ is the image under the map $\nu_\cus^{\cG_\varphi}$ from Definition~\ref{defn:nu_cus} of the pair $(U,\cE)\in\bI^{\cG_\varphi}$ associated with $(u,\phi)$.
\end{defn}

We have the following decomposition (see \cite[(115)]{AMS1}): 
\begin{equation} \label{eqn:dec_Phi}
\Phi_\enh(G)=\bigsqcup_{\fs^\vee\in\fB^\vee(G)}\Phi_\enh(G)^{\fs^\vee},\quad\text{where}\quad 
\Phi_\enh(G)^{\fs^\vee}:=\bnu^{-1}_\cus(\fs^\vee).
\end{equation}

\smallskip

Let $\Irr(G)$ be the set of isomorphism classes of irreducible smooth $G$-representations. 
For $L$ a Levi subgroup of $G$, we denote by $\Irr_\cusp (L)$  the set of isomorphism classes of supercuspidal irreducible smooth $L$-representations. 

Let $\sigma\in\Irr_\cusp (L)$.  We call $(L,\sigma)$ a \textit{supercuspidal pair}, and we consider such pairs up to {\em inertial equivalence}: this is
the equivalence relation generated by
\begin{itemize}
\item unramified twists, $(L,\sigma) \sim (L,\sigma \otimes \chi)$ for $\chi \in 
X_\un (L)$,
\item $G$-conjugation, $(L,\sigma) \sim (g L g^{-1},g \cdot \sigma)$ for $g \in G$.
\end{itemize}
We denote the set of all inertial equivalence classes for $G$ by $\fB(G)$ and a typical inertial equivalence class by $\fs:= [L,\sigma]_G$. 

In \cite{Be},  Bernstein attached to every $\fs\in\fB(G)$ a block $\fR(G)^\fs$ in the category $\fR(G)$ of smooth $G$-representations as follows. 
Denote by $I_P^G$ the normalized parabolic induction functor, where $P$ is a parabolic subgroup of $G$ with Levi subgroup $L$. If
$\pi\in \Irr(G)$ is a constituent of $I_P^G (\tau)$ for some $\sigma \in \Irr(L)$ such that $[L,\sigma]_G=\fs$, then $\fs$
is called the \emph{inertial supercuspidal support} of $\pi$.   
We set
\begin{align*}
& \Irr(G)^\fs: = \{ \pi \in \Irr(G) \,:\,  \text{$\pi$ has inertial supercuspidal support $\fs$}\} , \\
& \fR(G)^\fs: = \{ \pi \in \fR(G) \;:\; \text{ every irreducible constituent of } \pi
\text{ belongs to } \Irr(G)^\fs \} .
\end{align*}

\subsection{Unipotent representations}\label{s:unip}

An irreducible smooth representation $(\pi,V)$ of $G$ is called \textit{unipotent} if there exists a parahoric subgroup $K$ of $G$ such that the subspace $V^{K^+}$ of the vectors in $V$ that are fixed by the pro-unipotent radical $K^+$ of $K$ contains an irreducible unipotent representation of the finite reductive group $\overline K:=K/K^+$. 
We denote by $\Irr_\un(G)$ the set of isomorphism classes of irreducible unipotent $G$-representations. 
 
For the rest of the section we will assume that the quasi-split inner form $G^*$ of $G$ is $F$-split. 
 
 \begin{defn} \label{defn:Phi_e}
An $L$-parameter $\varphi\colon W_F\times\SL_2(\CC) \to {}^L G$ is called \textit{unipotent} if $\varphi(w,1)=(1,w)$ for any element $w$ of the inertia subgroup $I_F$ of $W_F$. 
\end{defn}
Denote by $\Phi_{\un}({}^LG)$ the set of {$G^\vee$-conjugacy classes of} unipotent $L$-parameters $\varphi$ and by $\Phi_{\enh,\un}(G)$ the set of unipotent enhanced $G$-relevant parameters, i.e., the  subset of the $(\varphi,\phi)\in\Phi_{\enh}(G)$ such that $\varphi$ is unipotent.  
We then set 
\[\Phi_{\enh,\un}({}^LG) = G^\vee\backslash\{(\varphi,\phi)\mid \varphi\text{ unipotent}, \phi\in\widehat{A_\varphi^1}\}.\]
Given $\varphi \in \Phi_{\un}({}^LG)$, since $G^*$ is $F$-split, every $\phi \in \widehat{A_\varphi^1}$ is $G'$-relevant for some $G' \in \InnT(G)$. We thus have
\[\Phi_{\enh,\un}({}^LG) =
\bigsqcup_{G'\in\InnT(G)}\Phi_{\enh,\un}(G').
\]
Given $\varphi \in \Phi_{\un}({}^LG)$, let $A_\varphi$ be the component group of $Z_{G^\vee}(\varphi)$. We then set 
\[\Phi^p_{\enh,\un}({}^LG)=G^\vee\backslash\{(\varphi,\phi)\mid \varphi\text{ unipotent}, \phi\in\widehat{A_\varphi}\}.
\]

The set  $\Phi_{\enh,\un}(G)$ is known to parametrize $\Irr_\un(G)$: such a parametrization was defined by Lusztig in \cite{LuI,LuII} in the case when $\bG$ is simple adjoint, and extended by Solleveld in \cite{So1,So2} to the case when $G$ is arbitrary. In the case when $G=\GL_n(F)$ or $\SL_n(F)$, it also follows from \cite{HS} and \cite{ABPS}. 
This parametrization induces a bijection:
\begin{equation} \label{eqn:corr}
\mathsf{LLC}\colon \Phi_{\enh,\un}({}^LG)\longleftrightarrow \bigsqcup_{G'\in \InnT(G)}\Irr_\un(G').
\end{equation}
This correspondence sends cuspidal (respectively, discrete, bounded) parameters to supercuspidal (respectively, essentially square-integrable, tempered) irreducible unipotent representations.

\smallskip


Let $x\in G^\vee$ with Jordan decomposition $x=su$. There is an unipotent $L$-parameter $\varphi$ (unique up to $G^\vee$-conjugation) such that $u=u_\varphi$ and $s=s_\varphi$.
We set 
\begin{equation} \label{eqn:defs}
\cG_s=\rZ_{G^\vee_\sc}^1(\varphi|_{W_F}).
\end{equation}
Notice that $\varphi|_{W_F}$ depends only on $s$, which explains the notation. By (\ref{eqn:same}),  \[A^1_\varphi\cong A_{\cG_s}(u).\] We set 
\[\Phi_{\enh,\un}({}^LG,s)=G^\vee\backslash\{(\varphi',\phi)\in \Phi_{\enh,\un}({}^LG)\mid \varphi'(\Frob,1)=(s',\Frob),\ s'\in G^\vee\cdot s\}.
\]
Then
\begin{equation}
\begin{aligned}
\Phi_{\enh,\un}({}^LG,s)&=\rZ_{G^\vee}(\varphi|_{W_F})\text{-orbits in }\{(u',\phi)\mid u'\in \cG_s^\circ\text{ unipotent},\ \phi\in \widehat {A_{\cG_s}(u')}\}\\
&=\cG_s\text{-orbits in }\{(u',\phi)\mid u'\in \cG_s^\circ\text{ unipotent},\ \phi\in \widehat {A_{\cG_s}(u')}\}.
\end{aligned}
\end{equation}
The second equality follows from the fact that conjugation of unipotent elements is insensitive to isogenies. 
This allows us to rephrase the unipotent local Langlands correspondence as follows.  Let $\cC(\cG)$, $\cC(\cG)_{\mathsf{ss}}$, $\cC(\cG)_\un$ denote the set of conjugacy classes, respectively semisimple, unipotent conjugacy classes in a complex group $\cG$. Let $R_\un (G')$ be the $\CC$-span of $ \Irr_\un (G')$. Then (\ref{eqn:corr})  can be written as the bijection 
\[\mathsf{LLC}_{\un}:\ \bigsqcup_{s\in \cC(G^\vee)_{\mathsf{ss}}} \bigsqcup_{u\in \cC(\cG_s)_\un} \widehat{A_{\cG_s}(u)}\longleftrightarrow  \bigsqcup_{G'\in \InnT(G)}\Irr_\un(G'),
\]
which induces a linear isomorphism
\begin{equation}\label{e:LLC-iso}
\begin{aligned}
\mathsf{LLC}_{\un}\colon R(\Phi_{\enh,\un}({}^LG))&:=\bigoplus_{s\in \cC(G^\vee)_{\mathsf{ss}}}\bigoplus_{u\in \cC(\cG_s)_\un}R(A_{\cG_s}(u)) \longrightarrow \bigoplus_{G'\in\InnT(G)} R_\un(G').
\end{aligned}
\end{equation}
If we instead consider pure inner twists, then we need to replace the group $\cG_s$ by the group
\begin{equation} \label{eqn:Gsp} 
\cG_s^p=\rZ_{G^\vee}(\varphi|_{W_F}),
\end{equation}
and the correspondence becomes
\begin{equation}\label{e:LLC-pure}
\mathsf{LLC}^p_{\un}\colon R(\Phi_{\enh,\un}^p({}^LG)):=\bigoplus_{s\in \cC(G^\vee)_{\mathsf{ss}}}\bigoplus_{u\in \cC(\cG_s^p)_\un}R(A_{\cG_s^p}(u)) \longrightarrow \bigoplus_{G'\in\InnT^p(G)} R_\un(G').
\end{equation}
Given $\phi \in R(A_{\cG_s^p}(u))$, write $\pi(s, u, \phi)$ for the image of $\phi$ under $\mathsf{LLC}^p_{\un}$.

\begin{rem}
Note that the existing local Langlands correspondence for unipotent representations is not entirely canonical (see the discussion in \cite[Introduction]{So1}) but for the rest of the paper, we fix a map $\mathsf{LLC}_{\un}^p$ as above satisfying the usual properties (described, for example, in \cite[Theorem 1]{So1}).
\end{rem}

\begin{ex} For $G=\SL_n(F)$, recall that there is a one-to-one correspondence between
$\InnT(\SL_n(F))$ and $\mathbb Z/n\mathbb Z$, where the inner twists are $\SL_m(D_{d,r/m})$, $m=\gcd(r,n)$, $r\in \ZZ/n\ZZ$.

The dual Langlands group is $G^\vee=\PGL_n(\mathbb C)$. The correspondence (\ref{eqn:corr}) takes the form:
\begin{equation}
\bigsqcup_{r\in \mathbb Z/n\mathbb Z} \Irr_\un(\SL_m(D_{d,r/m}))\longleftrightarrow \PGL_n(\mathbb C)\backslash\{(x,\phi)\,|\,x\in \PGL_n(\mathbb C), \phi\in\widehat {A^1_{x}}\},
\end{equation}
in particular
\[\Irr_\un(\SL_n(F))\longleftrightarrow \PGL_n(\mathbb C)\backslash\{(x,\phi)\,|\,x\in \PGL_n(\mathbb C), \phi\in\widehat {A_{x}}\}.
\]
In this case, $G^\vee_\sc=\SL_n(\mathbb C)$ and  $\rZ_{G^\vee_\sc}=C_n$. The irreducible central characters are therefore $\widehat{\rZ_{\SL_n(\mathbb C)}}=\{\zeta_r\mid r\in \mathbb Z/n\mathbb Z\}.$ A Langlands parameter $(x,\phi)$ parametrizes an irreducible unipotent representation of $\SL_m(D_{d,r/m})$ if and only if $\zeta_\phi=\zeta_r$. In particular, the unipotent representations of $\SL_n(F)$ correspond to central characters $\zeta_0=1$.

Moreover, for $x\in \PGL_n(\mathbb C)$, $A^1_x$ is the group of components of $\rZ^1_{\SL_n(\CC)}(x)=\{g\in \SL_n(\mathbb C)\mid g x g^{-1}=x\}$.
\end{ex}

\begin{rem}
Note the Langlands parameters we call unipotent here are also known as \textit{unramified} Langlands parameters (cf. \cite{Vo}, \cite{So1}).  
\end{rem}

\section{ Lusztig's nonabelian Fourier transform for finite groups}\label{s:Lusztig}

We recall the definition of the nonabelian Fourier transform \cite{Lubook}. For the background material, we follow \cite{Lubook,GM,DM}. 

\subsection{Fourier transforms}\label{s:finiteFT} For a finite group $\Gamma$, define $\cM(\Gamma)$ to be the set 
\begin{equation}
\{(x,\sigma)\mid x\in \Gamma,~\sigma\in \widehat{\rZ_\Gamma(x)}\},
\end{equation}
modulo the equivalence relation given by conjugation by $\Gamma$: $g\cdot (x,\sigma)=(gxg^{-1},\sigma^g)$, where $\sigma^g(y)=\sigma(g^{-1}yg)$ for all $g\in \Gamma$, $y\in \rZ_\Gamma(g x g^{-1})$. 
Define also
\begin{equation}
\cY(\Gamma)= \{(y,z)\in \Gamma\times\Gamma\mid yz=zy\}.
\end{equation}
Write $\Gamma \setminus \cY(\Gamma)$ for the set of $\Gamma$-orbits on $\cY(\Gamma)$.
Let $\mathsf{Sh}^\Gamma(\Gamma)$ be the category of $\Gamma$-equivariant coherent sheaves of $\Gamma$ ($\Gamma$ acting on itself by conjugation). The irreducible objects in $\mathsf{Sh}^\Gamma(\Gamma)$ are parametrized by $\cM(\Gamma)$: for every pair $(x,\sigma)\in \cM(\Gamma)$, let $\cV({(x,\sigma)})=\Gamma\times_{\rZ_\Gamma(x)}\sigma$ be the corresponding irreducible $\Gamma$-equivariant sheaf. This means that there is a natural isomorphism $\CC[\cM(\Gamma)]\cong K(\mathsf{Sh}^\Gamma(\Gamma))_\CC$, where $K(\;)_\CC$ is the complexification of the $K$-group. Moreover, there is an isomorphism
\[\kappa\colon \CC[\cM(\Gamma)]\cong K(\mathsf{Sh}^\Gamma(\Gamma))_\CC\to \CC[\Gamma\setminus\cY(\Gamma)],\quad \cV\mapsto ((y,z)\mapsto \tr(z,\cV|_y)).
\]
Lusztig \cite{Lubook} defined a pairing on $\cM(\Gamma)$:
\begin{equation}
\{(x,\sigma),(y,\tau)\}=\frac 1{|\rZ_\Gamma(x)| |\rZ_\Gamma(y)|} \sum_{\substack{g\in \Gamma\\xgyg^{-1}=gyg^{-1}x}} \sigma(gyg^{-1})\tau(g^{-1}x^{-1}g),
\end{equation}
which extends to a Hermitian pairing on $\bC[\cM(\Gamma)]$. He also defined a linear map, the Fourier transform for $\Gamma$,
\begin{equation}\label{e:finite-flipM}
\FT_\Gamma\colon \CC[\cM(\Gamma)]\to \CC[\cM(\Gamma)],\ \FT_\Gamma(f)(x,\sigma)=\sum_{(y,\tau)\in \cM(\Gamma)}\{(x,\sigma),(y,\tau)\} f(y,\tau).
\end{equation}
See Lemma \ref{l:finite-flip} for the interpretation of $\FT_\Gamma$ in terms of $\cY(\Gamma)$.

\medskip

Now consider the following generalization. Suppose $\widetilde \Gamma=\Gamma\rtimes\langle \alpha\rangle$, where $\alpha$ has order $c$. Set $\Gamma'=\Gamma\alpha\subset \widetilde\Gamma$. As in \cite[\S4.16]{Lubook}, define two sets $\cM=\cM(\Gamma\unlhd \widetilde\Gamma)$ and $\overline\cM=\overline\cM(\Gamma\unlhd \widetilde\Gamma)$ as follows:
\begin{equation}
\begin{aligned}
\cM&=\{(x,\sigma)\mid x\in\Gamma\text{ such that }\rZ_{\widetilde\Gamma}(x)\cap \Gamma'\neq\emptyset,\ \sigma\in \widehat{\rZ_{\widetilde\Gamma}(x)} \text{ with }\sigma|_{\rZ_\Gamma(x)}\text{ irreducible}\},\\
\overline \cM&=\{(x,\bar\sigma)\mid x\in \Gamma',\ \bar\sigma\in \widehat{\rZ_\Gamma(x)}\},
\text{ in each case modulo the equivalence relation}\\&\text {given by conjugation by $\widetilde\Gamma$}.
\end{aligned}
\end{equation}
 In addition, the cyclic group $\langle\al\rangle$ acts on $\cM$ by twists in the second entry of the pair $(x,\sigma)$. Denote by $\sim_c$ the corresponding equivalence relation.

The set $\cM$ is a subset of $\cM(\widetilde\Gamma)$. Given $(x, \bar\sigma) \in \overline\cM$, we have that $(x, \sigma) \in \cM(\widetilde\Gamma)$ for any extension $\sigma$ of $\bar\sigma$ to $\rZ_{\widetilde\Gamma}(x)$. Thus the pairing $\{~,~\}$ on $\cM(\widetilde \Gamma)$ induces a pairing
\begin{equation}
\{~,~\}:\overline \cM\times \cM\to \CC,\quad \{(x,\bar\sigma),(y,\tau)\}:=c  \{(x,\sigma),(y,\tau)\},
\end{equation}
for any fixed extension $\sigma$ of $\bar\sigma$ to $\rZ_{\widetilde\Gamma}(x)$. 

Let $\cP=\cP(\Gamma\unlhd \widetilde \Gamma)$ and $\overline\cP=\overline\cP(\Gamma\unlhd \widetilde \Gamma)$ be the spaces of functions on $\cM(\widetilde \Gamma)$ with support in $\cM$ and $\overline\cM$, respectively. The operator \cite[(4.16.1)]{Lubook} (see also \cite[\S4.2.14]{GM})
\begin{equation}
\FT_{\Gamma\unlhd\widetilde\Gamma}: \cP\to \overline\cP,\ \FT_{\Gamma\unlhd\widetilde\Gamma}f(x,\bar\sigma)=\sum_{(y,\tau)\in \cM/\sim_c}\{(x,\bar\sigma),(y,\tau)\} f(y,\tau)
\end{equation} 
is an isomorphism with inverse $\FT_{\Gamma\unlhd\widetilde\Gamma}^{-1}f(y,\tau)=\sum_{(x,\bar\sigma)\in\overline\cM}\{(x,\bar\sigma),(y,\tau)\} f(x,\bar\sigma)$.

\subsection{Families of Weyl group representations}  Let $W$ be a finite Weyl group with the set of simple generators $S$. The partition of $\widehat W$ into families is defined in \cite[\S4.2]{Lubook} as follows. Let $\sgn$ denote the sign character of $W$. If $W=\{1\}$, there is only one family consisting of the trivial representation. Otherwise, assume that the families have been defined for all proper parabolic subgroups of $W$. Then $\mu,\mu'\in \widehat W$ belong to the same family of $W$ if there exists a sequence $\mu=\mu_0,\mu_1,\dots,\mu_m=\mu'$, $\mu_i\in \widehat W$, such that for each $i$ there exists a parabolic subgroup $W_i\subsetneq W$ and $\mu_i',\mu_i''\in \widehat W_i$ in the same family of $W_i$ such that either
\[\langle \mu_i',\mu_{i-1}\rangle_{W_i}\neq 0,\ a_{\mu_i'}=a_{\mu_{i-1}},\quad \langle \mu_i'',\mu_{i}\rangle_{W_i}\neq 0,\ a_{\mu_i''}=a_{\mu_{i}},
\]
or
\[\langle \mu_i',\mu_{i-1}\otimes\sgn\rangle_{W_i}\neq 0,\ a_{\mu_i'}=a_{\mu_{i-1}\otimes\sgn},\quad \langle \mu_i'',\mu_{i}\otimes\sgn\rangle_{W_i}\neq 0,\ a_{\mu_i''}=a_{\mu_{i}\otimes\sgn}.
\]
Here $a_\mu$ is the $a$-invariant of $\mu$ defined in \cite[\S4.1]{Lubook}. It follows from the definition that if $\cF\subset \widehat W$ is a family, then so is $\cF\otimes\sgn$ and the families for $W_1\times W_2$ are $\cF_1\boxtimes\cF_2$, where $\cF_i$ is a family for $W_i$, $i=1,2$. 

\medskip

Suppose in addition that we have a Coxeter group automorphism $\sigma\colon W\to W$, i.e. $\sigma \in \Aut(W)$  such that $\sigma(S)=S$. Such an automorphism is called ordinary if, on each irreducible component of $W$, it is not the nontrivial graph automorphism of type $B_2$, $G_2$, or $F_4$. The automorphism $\sigma$ acts on $\widehat W$ and it permutes the families $\cF$. An important observation \cite[\S4.17]{Lubook} is that if $\sigma$ is ordinary and $\cF$ is $\sigma$-stable, then every element of $\cF$ is $\sigma$-stable.

\subsection{Families of unipotent representations}\label{s:families-connected}  Let $\qG$ be a connected reductive algebraic group over $\overline{\mathbb F}_q$ with a Frobenius map $\Fr \colon\qG\to \qG$ such that there exists a maximal torus $\qT_0$ with the property $\Fr(t)=t^q$, for all $t\in \qT_0$. Let $W=N_{\qG}(\qT_0)/{\qT_0}$ be the Weyl group.  Recall that an irreducible representation $\rho\in \Irr~\qG^\Fr$ is called \textit{unipotent} if $\langle \rho, R_{\qT}^{\qG}(1)\rangle_{\qG^\Fr}\neq 0$ for some $\Fr$-stable maximal torus $\qT$ of $\qG$. Here $R_{\qT}^{\qG}$ is Deligne--Lusztig induction \cite[\S7.8]{DL}. Let $\Irr_\un\qG^\Fr$ denote the set of irreducible unipotent $\qG^\Fr$-representations. By the results of Lusztig, the classification of $\Irr_{\un}\qG^\Fr$ is reduced to the case when $\qG$ is adjoint simple (see for example the exposition in \cite[Remark 4.2.1]{GM}). More precisely, if $\pi\colon\qG\to \qG_{\ad}$ is the surjective homomorphism with central kernel ($\qG_{\ad}$ is the semisimple adjoint group isogeneous to $\qG/\rZ_{\qG}$), there exists a Frobenius map $\Fr_{\ad}$ such that $\Fr_{\ad}\circ \pi=\pi\circ \Fr$ such that the resulting group homomorphism $\pi\colon \qG^\Fr\to \qG_{\ad}^{\Fr_{\ad}}$ induces a bijection
\[\Irr_\un\qG^\Fr\leftrightarrow \Irr_\un\qG^{\Fr_{\ad}}_{\ad}.
\]
Furthermore, write $\qG_{\ad}=\qG_1\times\dots\times\qG_r$ for the decomposition into factors such that each $\qG_i$ is semisimple adjoint, $\Fr_{\ad}$-stable, and a direct product of simple algebraic groups that are cyclically permuted by $\Fr_{\ad}$. Let $\qH_i$ be one of the simple factors in $\qG_i$: if $h_i$ is the number of copies of $\qH_i$, then $\Fr^{h_i}_{\ad}$ preserves $\qH_i$. Denote by $\Fr_i$ the restriction to $\qH_i$. Then 
\[\Irr_\un\qG^{\Fr_{\ad}}_{\ad}\cong \prod_{i=1}^r \Irr_\un \qH_i^{\Fr_i}.
\]
The Frobenius map $\Fr$ induces a Coxeter group automorphism $\sigma$ of $W$. Define a graph with vertices $\Irr_\un\qG^\Fr$ as follows: $\rho_1,\rho_2\in\Irr_\un\qG^\Fr$ are joined by an edge if and only if there is $\sigma$-stable $\mu\in \widehat W$ such that $\langle \rho_i,R_{\widetilde\mu}\rangle_{\qG^\Fr}\neq 0$ for $i=1,2$ where $R_{\widetilde \mu}$ is the almost character associated to a fixed extension $\widetilde\mu$ of $\mu$ to $\widetilde W=W\rtimes\langle\sigma\rangle$ as defined in \cite[(3.7.1)]{Lubook}.  Each connected component of this graph is called a \emph{family} in $\Irr_\un\qG^\Fr$. 
One can define an equivalence relation on the set $\widehat W^\sigma$ of $\sigma$-stable irreducible $W$-representations: $\mu$ and $\mu'$ are equivalent if $R_{\widetilde \mu}$ and $R_{\widetilde\mu'}$ have unipotent constituents in the same family. By \cite{Lubook} (see also \cite[Proposition 4.2.3]{GM}), the equivalence classes are the same as the $\sigma$-stable families in $\widehat W$, when $\sigma$ is ordinary.

To each family $\cU\subset \Irr_\un\qG^\Fr$ corresponding to the $\sigma$-stable family $\cF\subset \widehat W^\sigma$, Lusztig \cite[\S4]{Lubook} attached  finite groups $\Gamma_\cU\unlhd\widetilde\Gamma_\cU$ such that $\widetilde\Gamma_\cU=\Gamma_\cU\langle\sigma\rangle$, a bijection
\begin{equation}\label{e:L-class}
\cU\longleftrightarrow \overline \cM(\Gamma_\cU\unlhd\widetilde\Gamma_\cU),\  \rho\mapsto \bar x_\rho,
\end{equation}
scalars $\Delta(\bar x_\rho)\in \{\pm 1\}$ \cite[\S6.7]{Lubook}, 
and an injection
\begin{equation}
\cF\longrightarrow \cM(\Gamma_\cU\unlhd\widetilde\Gamma_\cU),\ \mu\mapsto x_\mu,
\end{equation}
such that, when $\sigma$ is ordinary, \cite[Theorem 4.23]{Lubook} says that
\begin{equation}
\langle \rho,R_{\wti \mu}\rangle_{\qG^\Fr}=\Delta(\bar x_\rho)\{\bar x_\rho,x_\mu\}.
\end{equation}
Define the \emph{unipotent almost characters} of $\qG^\Fr$ to be the set of orthonormal class functions
\begin{equation}
R_x=\sum_{\rho\in \cU}\Delta(\bar x_\rho)\{\bar x_\rho,x\}\rho,\quad x\in \cM(\Gamma_\cU\unlhd\widetilde\Gamma_\cU).
\end{equation}
Hence the unipotent nonabelian Fourier transform of $\qG^\Fr$
\begin{equation}
\FT_{\qG^\Fr}:=\bigoplus_{\cU\subset \Irr_\un\qG^\Fr}\FT_{\Gamma_\cU\unlhd\widetilde\Gamma_\cU}
\end{equation}
gives the change of bases matrix, up to the signs $\Delta(\bar x_\rho)$, between irreducible unipotent characters and almost characters.

\smallskip

Assume $\mathbb G$ is $\Fr$-split, so that $\sigma$ is trivial and a family $\cU$ is parametrized by $\cM(\Gamma_\cU)$. Let $x\in \Gamma=\Gamma_\cU$, and define the virtual combinations of unipotent characters 
\begin{equation}\label{e:pi-U}
\Pi_\cU(x,y)=\sum_{\sigma\in \widehat{\rZ_\Gamma(x)}}\sigma(y^{-1}) \rho_{(x,\sigma)},\text{ where }y\in \rZ_\Gamma(x),
\end{equation}
where $ \rho_{(x,\sigma)}$ is the representation in $\cU$ parametrized by $(x,\sigma)\in\cM(\Gamma_\cU)$.
If $\Gamma$ is abelian (which is often the case), we may also define
\begin{equation}\label{e:pi-U-2}
\Pi_\cU(\sigma,\tau)=\sum_{y\in \Gamma} \tau(y)\rho_{(y,\sigma)},\text{ if } \sigma,\tau\in \widehat{\Gamma}.
\end{equation}

\begin{lem}[(cf. {\cite{DM}})]\label{l:finite-flip} With the notation of (\ref{e:pi-U})-- (\ref{e:pi-U-2}) and $\Gamma=\Gamma_\cU$,
\[\FT_{\Gamma}(\Pi_\cU(x,y))=\Pi_\cU(y,x),\quad \FT_{\Gamma}(\Pi_\cU(\sigma,\tau))=\Pi_\cU(\tau,\sigma),
\]
the latter when $\Gamma$ is abelian.
\end{lem}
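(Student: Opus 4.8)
The plan is to unwind both sides against the definition of $\FT_\Gamma$ in \eqref{e:finite-flipM} and reduce everything to the orthogonality relations for the irreducible characters of the centralizers $\rZ_\Gamma(x)$. Since $\FT_\Gamma$ is linear and $\Pi_\cU(x,y) = \sum_{\sigma \in \widehat{\rZ_\Gamma(x)}} \sigma(y^{-1})\,\rho_{(x,\sigma)}$, it suffices to understand how $\FT_\Gamma$ acts on each basis vector $\rho_{(x,\sigma)}$, i.e. on the indicator function of the point $(x,\sigma) \in \cM(\Gamma)$. By the formula for $\FT_\Gamma$, the coefficient of $\rho_{(y,\tau)}$ in $\FT_\Gamma(\rho_{(x,\sigma)})$ is the pairing $\{(x,\sigma),(y,\tau)\}$, so $\FT_\Gamma(\Pi_\cU(x,y)) = \sum_{\sigma} \sigma(y^{-1}) \sum_{(y',\tau)} \{(x,\sigma),(y',\tau)\}\,\rho_{(y',\tau)}$. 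The goal is to show this equals $\Pi_\cU(y,x) = \sum_{\tau \in \widehat{\rZ_\Gamma(y)}} \tau(x^{-1})\,\rho_{(y,\tau)}$.

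The key computation is therefore to evaluate, for fixed $y' \in \Gamma$ and $\tau \in \widehat{\rZ_\Gamma(y')}$, the sum $\sum_{\sigma \in \widehat{\rZ_\Gamma(x)}} \sigma(y^{-1})\,\{(x,\sigma),(y',\tau)\}$. Plugging in Lusztig's formula for the pairing, this becomes a double sum over $g \in \Gamma$ with $x(gy'g^{-1}) = (gy'g^{-1})x$, of $\sigma(gy'g^{-1})\,\tau(g^{-1}x^{-1}g)\,\sigma(y^{-1})$, weighted by $1/(|\rZ_\Gamma(x)||\rZ_\Gamma(y')|)$. The inner sum $\sum_\sigma \sigma(gy'g^{-1})\sigma(y^{-1}) = \sum_\sigma \sigma(gy'g^{-1}y^{-1})$ is, by column orthogonality of characters of $\rZ_\Gamma(x)$, equal to $|\rZ_\Gamma(x)|$ if $gy'g^{-1}y^{-1}$ is conjugate (in $\rZ_\Gamma(x)$) to the identity — i.e. if $gy'g^{-1} = y$ as elements, provided both lie in $\rZ_\Gamma(x)$ — and $0$ otherwise. (Here one uses that $\sigma$ is a class function on $\rZ_\Gamma(x)$, so the sum detects exactly the conjugacy class; since $y^{-1}$ sits in the centre of its own centralizer the relevant class is a single element.) This collapses the $g$-sum to those $g$ with $gy'g^{-1} = y$, which forces $y'$ to be $\Gamma$-conjugate to $y$; since $(y',\tau)$ ranges over $\cM(\Gamma)$ we may as well take $y' = y$, and then $g$ runs over $\rZ_\Gamma(y)$ after fixing one such conjugator — with the extra condition $y \in \rZ_\Gamma(x)$, equivalently $x \in \rZ_\Gamma(y)$, which is exactly when $\tau(x^{-1})$ makes sense. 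The surviving terms give $\frac{|\rZ_\Gamma(x)|}{|\rZ_\Gamma(x)||\rZ_\Gamma(y)|}\sum_{g \in \rZ_\Gamma(y)} \tau(g^{-1}x^{-1}g) = \tau(x^{-1})$, using that $\tau$ is a class function on $\rZ_\Gamma(y)$. This yields precisely the coefficient of $\rho_{(y,\tau)}$ in $\Pi_\cU(y,x)$, proving the first identity; the second follows by an entirely parallel computation starting from $\Pi_\cU(\sigma,\tau) = \sum_{y \in \rZ_\Gamma(x)} \tau(y)\,\rho_{(y,\sigma)}$, or alternatively by specializing the already-established relation $\kappa \circ \FT_\Gamma = \mathrm{flip} \circ \kappa$ under the isomorphism $\kappa\colon \CC[\cM(\Gamma)] \to \CC[\Gamma\backslash\cY(\Gamma)]$ recorded just before \eqref{e:finite-flipM}.

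The main obstacle I expect is bookkeeping with the $\Gamma$-conjugacy involved in the definition of $\cM(\Gamma)$: the pair $(y',\tau)$ is only defined up to simultaneous conjugation, so one must be careful that the coefficient $\{(x,\sigma),(y',\tau)\}$ is genuinely well-defined and that "replace $y'$ by $y$'' does not lose a multiplicity — one should check that the number of $g$ with $gy'g^{-1} = y$, modulo $\rZ_\Gamma(y)$, together with the normalization by $|\rZ_\Gamma(y')| = |\rZ_\Gamma(y)|$, conspires to give exactly $\tau(x^{-1})$ and not a nonzero multiple of it. A cleaner route that sidesteps this entirely is to invoke the sheaf-theoretic reformulation: under $\kappa$, the class function $\Pi_\cU(x,y)$ corresponds (up to normalization) to the function $(a,b) \mapsto$ evaluation of the sheaf at $y$ with trace of $x$, and Lemma \ref{l:finite-flip} becomes the statement that $\FT_\Gamma$ intertwines $\kappa$ with the coordinate flip on $\CC[\Gamma\backslash\cY(\Gamma)]$ — which is essentially the content of the displayed isomorphism $\kappa$ combined with \cite{DM}. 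I would present the direct character-theoretic computation as the main argument and remark on the sheaf-theoretic interpretation as the conceptual explanation.
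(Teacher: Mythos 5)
Your overall strategy matches the paper's (expand both sides against \eqref{e:finite-flipM}, apply column orthogonality, collapse the $g$-sum), but the key computational step contains a genuine error that invalidates the argument for non-abelian centralizers.

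You write $\sum_\sigma \sigma(gy'g^{-1})\,\sigma(y^{-1}) = \sum_\sigma \sigma(gy'g^{-1}y^{-1})$, treating character values as multiplicative. This only holds when every $\sigma$ is one-dimensional, i.e.\ when $\rZ_\Gamma(x)$ is abelian, which fails for Lusztig's families of symplectic/orthogonal and exceptional type where $\Gamma_\cU$ can be $S_3$, $S_4$, or $S_5$ and $\rZ_\Gamma(x)$ is then non-abelian for $x$ central in $\Gamma$. Compounding this, your subsequent application of ``column orthogonality'' to the single sum $\sum_\sigma \sigma(c)$ is also off: column orthogonality gives $\sum_\sigma \chi_\sigma(1)\,\chi_\sigma(c) = |\rZ_\Gamma(x)|\,\delta_{c,1}$, and $\chi_\sigma(1)=\dim\sigma$ only equals $1$ in the abelian case. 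Finally, the parenthetical ``since $y^{-1}$ sits in the centre of its own centralizer the relevant class is a single element'' invokes the wrong centralizer: the sum ranges over $\widehat{\rZ_\Gamma(x)}$, so the class that matters is the $\rZ_\Gamma(x)$-conjugacy class of $y$, and $y$ need not be central in $\rZ_\Gamma(x)$, so this class is in general not a singleton. All three slips are aspects of the same implicit assumption that $\rZ_\Gamma(x)$ is abelian.

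The correct move, which is what the paper does, is to apply column orthogonality directly to the two-factor product. Since $\chi_\sigma(y^{-1})=\overline{\chi_\sigma(y)}$, one has $\frac{1}{|\rZ_\Gamma(x)|}\sum_\sigma \sigma(y^{-1})\,\sigma(gzg^{-1}) = \frac{1}{|C_y|}$ whenever $gzg^{-1}$ is $\rZ_\Gamma(x)$-conjugate to $y$ (with $C_y$ the $\rZ_\Gamma(x)$-class of $y$), and $0$ otherwise. The sum over $g$ with $gzg^{-1}\in C_y$, weighted by $\frac{1}{|C_y|}$, then produces an average over representatives $y'\in C_y$ of the combinations $\Pi_\cU(g^{-1}y'g, g^{-1}xg)$, all of which are $\Gamma$-conjugate to $(y,x)$, so they each contribute the same $\Pi_\cU(y,x)$. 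Your argument happens to reach the right answer in the abelian case because then both $|C_y|=1$ and all $\dim\sigma=1$, masking the two errors simultaneously; rewriting the middle step as the paper does removes the abelian restriction with no other change to the structure of the proof.
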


\begin{proof}
We verify the first formula. The second one is analogous (or it follows by change of bases.) Denote by $C_y$ the conjugacy class of $y$ in $\rZ_\Gamma(x)$.
\begin{align*}
\FT_{\Gamma_\cU}(\Pi_\cU(x,y))&=\sum_\sigma \sigma(y^{-1})\sum_{(z,\tau)}\{(x,\sigma),(z,\tau)\} \rho_{(z,\tau)}\\
&=\sum_\sigma \sigma(y^{-1}) \sum_{(z,\tau)}\frac 1{|\rZ_\Gamma(x)||\rZ_\Gamma(z)|}\sum_{g\in\Gamma,~gzg^{-1}\in \rZ_\Gamma(x)} \sigma(gzg^{-1})\tau(g^{-1}x^{-1}g)\rho_{(z,\tau)}\\
&=\sum_{(z,\tau)}\frac 1{|\rZ_\Gamma(z)|} \left(\sum_{g\in\Gamma,~gzg^{-1}\in \rZ_\Gamma(x)}\frac 1{|\rZ_\Gamma(x)|} \sum_\sigma \sigma(y^{-1})\sigma(gzg^{-1})\right) \tau(g^{-1}x^{-1}g) \rho_{(z,\tau)}\\
&=\sum_{g,z\in \Gamma,~gzg^{-1}\in C_y} \frac 1{|C_y|} \frac 1{|\rZ_\Gamma(z)|} \sum_{\tau\in \widehat{\rZ_\Gamma(z)}} \tau(g^{-1}x^{-1} g)\rho_{(z,\tau)}\quad \text{(by character orthogonality)}\\
&=\frac 1{|C_y|}\sum_{y'\in C_y}\sum_{\tau\in \widehat{\rZ_\Gamma(g^{-1}y' g)}} \tau(g^{-1}x^{-1} g)\rho_{(g^{-1}y'g,\tau)}\quad\text{(setting $y'=gzg^{-1}$)}\\
&=\frac 1{|C_y|}\sum_{y'\in C_y} \Pi_\cU(g^{-1}y'g,g^{-1}xg)=\Pi_\cU(y,x),
\end{align*}
where we used column orthogonality of characters and that $(y,x)$ is $\Gamma$-conjugate to $(g^{-1}y'g,g^{-1}xg)$ when $y'\in C_y$.

\end{proof}

\section{ Disconnected groups over finite fields}\label{s:disconn}
Suppose that $\qG$ is a disconnected reductive group over $\overline {\mathbb F}_p$ with Frobenius map $\Fr\colon\qG\to\qG$ and identity component $\qG^\circ$ such that $A=\qG/\qG^\circ$ is abelian. (In our applications, $\qG/\qG^\circ$ will almost always be a cyclic group.) 
By definition, the irreducible unipotent $\qG^\Fr$-representations $\Irr_\un\qG^\Fr$ are the constituents of all induced representations $\Ind_{{\qG^\circ}^\Fr}^{\qG^\Fr}\rho$, where $\rho\in \Irr_\un{\qG^\circ}^\Fr$. See \cite[Proposition 4.8.19]{GM} for the compatibility with the definition in terms of the appropriate version of $R_{\qT}^{\qG}(1)$. The parametrization of $\Irr_\un\qG^\Fr$ follows from that of $ \Irr_\un{\qG^\circ}^\Fr$ via Mackey induction using the explicit results for simple groups, e.g. \cite[Theorem 4.5.11 and 4.5.12]{GM}.

We are interested in studying the irreducible unipotent representations for groups $\qG^\Fr$ that are related via the structure theory of $p$-adic groups.

Let $\qG$ be a reductive algebraic group  over $\overline{\mathbb{F}}_p$ with identity component $\qG^\circ$ and such that $\qG/\qG^\circ=A$ is a finite abelian group.  Let $\Fr_0$ be a Frobenius map on $\qG$ and assume that ${\qG}^{\Fr_0}$ is split. Given $a\in A$, conjugation by $a$ defines an outer automorphism of $\qG^\circ$, which induces an isomorphism, call it $\sigma_a$, of the based root datum of $\qG^\circ$. For every $a$, define the Frobenius automorphism 
$\Fr_a=\Fr_0\circ \sigma_a$.
By analogy with Section \ref{s:inner}, we write
\[\InnT^p\qG=\{\qG^{\Fr_a}\mid a\in A\}.\]
and call this the set of pure inner twists of $\qG^{\Fr_0}$. Just as in the $p$-adic case,
this set is in one-to-one correspondence with the first Galois cohomology group
\[
\InnT^p\qG\leftrightarrow H^1(\mathbb F_q,\qG)\cong H^1(\mathbb F_q,\qG/\qG^\circ)=H^1(\mathbb F_q,A)\cong A,
\]
using the fact that $H^1(\mathbb F_q,\qG^\circ)=0$ by Lang's Theorem (see \cite[III.\S2, Corollary 3]{Se}, for example), and the assumption that $\Fr_0$ acts trivially on $A$.

By (\ref{e:L-class}), every unipotent family $\cU\subset \Irr_\un({\qG^\circ}^{\Fr_0})$ has an associated finite group $\Gamma_\cU=\widetilde\Gamma_\cU$ (since ${\qG^\circ}^{\Fr_0}$ is split). The group $A$ acts on the set of families $\cU$. For every orbit $\cO_A=A\cdot \cU$ with representative $\cU$, let $\rZ_A(\cU)$ be the corresponding isotropy group. Then $\rZ_A(\cU)$ permutes the elements of $\cU$, hence the corresponding parameters $\cM(\Gamma_\cU)$. If $\Gamma_\cU$ is abelian, which turns out to be the case in all of the examples of interest to us when $A\neq \{1\}$, this automatically defines an action of $\rZ_A(\cU)$ on $\Gamma_\cU$, hence a group
\begin{equation}
\widetilde\Gamma_\cU^A=\Gamma_\cU\rtimes \rZ_A(\cU).
\end{equation}
See \cite[\S5]{DM} and \cite[\S17]{Lu86} for more details.

\begin{prop}[(cf. {\cite[Proposition 5.2]{DM}})]\label{p:disconnected}
As above, assume $\qG$ is $\Fr_0$-split.  
The parametrization (\ref{e:L-class}) induces a bijection
\[\bigsqcup_{a\in A}\Irr_\un(\qG^{F_a})\longleftrightarrow \bigsqcup_{\cU\subset  A\backslash\Irr_\un({\qG^\circ}^{F_0})} \cM(\widetilde\Gamma_\cU^A),
\]
where $\cU$ in the right-hand side ranges over a set of representatives of the $A$-orbits of families in $\Irr_\un({\qG^\circ}^{F_0})$.
\end{prop}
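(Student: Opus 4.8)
The plan is to combine Clifford theory for the normal subgroup $(\qG^\circ)^{\Fr_a}\unlhd\qG^{\Fr_a}$ with Lusztig's classification of the unipotent representations of the twisted connected groups $(\qG^\circ)^{\Fr_a}$, and then to reorganise the resulting combinatorics so that, one $A$-orbit of families at a time, it matches $\cM(\widetilde\Gamma_\cU^A)$.

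First I would fix $a\in A$ and describe $\Irr_\un(\qG^{\Fr_a})$. By Lang's theorem $\rH^1(\mathbb{F}_q,\qG^\circ)=0$, so $\qG^{\Fr_a}/(\qG^\circ)^{\Fr_a}\cong A$, and since $\qG^{\Fr_0}$ is split and $A$ is abelian (so $\sigma_b\Fr_a=\Fr_a\sigma_b$), the conjugation action of $b\in A$ on $(\qG^\circ)^{\Fr_a}$ is through the diagram automorphism $\sigma_b$. The automorphism $\sigma_a$ is ordinary on each simple factor of the Weyl group $W$ of $\qG^\circ$ — this is where the restriction to Examples \ref{ex:2-flip}-\ref{ex:D4} enters, ruling out the graph automorphisms of type $B_2,G_2,F_4$ — so by Lusztig's classification (Section \ref{s:Lusztig}) the set $\Irr_\un(\qG^\circ)^{\Fr_a}$ is the disjoint union, over the $\sigma_a$-stable families $\cU\subset\Irr_\un(\qG^\circ)^{\Fr_0}$, of $\overline\cM(\Gamma_\cU\unlhd\Gamma_\cU\rtimes\langle\sigma_a\rangle)$, a family $\cU$ being $\sigma_a$-stable exactly when $a\in\rZ_A(\cU)$. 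Since $\Gamma_\cU$ is abelian in all these cases, $A$ acts on the piece indexed by a $\sigma_a$-stable $\cU$ through the given action of $\rZ_A(\cU)$ on $\Gamma_\cU$. Clifford theory for the abelian quotient $A$ then describes the constituents of $\Ind_{(\qG^\circ)^{\Fr_a}}^{\qG^{\Fr_a}}\rho$: those lying over the $A$-orbit of a unipotent $\rho$ correspond to $A$-orbits of pairs $(\rho',\eta)$, with $\rho'$ in the orbit and $\eta\in\widehat{A_\rho}$, provided the relevant class in $\rH^2(A_\rho,\CC^\times)$ vanishes — which it does here because $A_\rho\subset\rZ_A(\cU)$ acts on the abelian group $\Gamma_\cU$ (this is the content of \cite[\S17]{Lu86}; see also \cite[Prop.~4.8.19, Thms.~4.5.11--4.5.12]{GM}).

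Next, fix an $A$-orbit of families with representative $\cU$ and isotropy $H:=\rZ_A(\cU)$, and compute the contribution of this orbit to $\bigsqcup_{a\in A}\Irr_\un(\qG^{\Fr_a})$, comparing it with $\cM(\widetilde\Gamma_\cU^A)=\cM(\Gamma_\cU\rtimes H)$. As $A$ is abelian, a family in $A\cdot\cU$ is $\sigma_a$-stable iff $a\in H$, so only the forms with $a\in H$ contribute; and, $A$ acting transitively on $A\cdot\cU$ with stabiliser $H$, the contribution to $\Irr_\un(\qG^{\Fr_a})$ for $a\in H$ is identified with the set of $H$-orbits of pairs $(\rho,\eta)$, $\rho\in\overline\cM(\Gamma_\cU\unlhd\Gamma_\cU\rtimes\langle\sigma_a\rangle)$, $\eta\in\widehat{H_\rho}$ with $H_\rho$ the $H$-stabiliser of $\rho$. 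Summing over $a\in H$ and unwinding the definitions of $\overline\cM$ and $\cM$ from Section \ref{s:finiteFT}, this is precisely the partition of $\cM(\Gamma_\cU\rtimes H)$ by the image in $H=(\Gamma_\cU\rtimes H)/\Gamma_\cU$ of the first coordinate: the $\sigma_a$-twisted $\Gamma_\cU$-conjugacy class of an $x_0$ with $x_0\sigma_a$ in the component $a$, together with a character of the centraliser $\rZ_{\Gamma_\cU\rtimes H}(x_0\sigma_a)$, corresponds — via Clifford theory for the extension $1\to\rZ_{\Gamma_\cU,\sigma_a}(x_0)\to\rZ_{\Gamma_\cU\rtimes H}(x_0\sigma_a)\to H_{x_0}\to1$, where $\rZ_{\Gamma_\cU,\sigma_a}(x_0)$ is the $\sigma_a$-twisted centraliser — to a pair $(\rho,\eta)$ as above. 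Assembling over all $A$-orbits of families yields the asserted bijection.

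I expect the \emph{main obstacle} to be exactly this last reorganisation: keeping straight the three conjugation actions in play ($\sigma_a$-twisted $\Gamma_\cU$-conjugacy, the residual $H$-action, and conjugacy inside $\widetilde\Gamma_\cU^A$) together with the bookkeeping of extensions of characters; and, underneath it, the vanishing of the Clifford $2$-cocycles, which is precisely why one restricts to the groups of Examples \ref{ex:2-flip}-\ref{ex:D4}, where all the groups $\Gamma_\cU$ are abelian and this can be checked directly. In practice the bijection can also simply be verified against Lusztig's explicit tables in each of the three families of examples.
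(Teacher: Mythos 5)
Your proposal is a genuinely different route from the paper's. The paper's official proof of Proposition~\ref{p:disconnected} consists of the single sentence ``The proof is presented in each case in the examples,'' i.e., the content of the proof is a case-by-case verification carried out explicitly in Examples~\ref{ex:2-flip}--\ref{ex:D4}, matching Lusztig's parametrization against Mackey induction one Dynkin type at a time. You instead give a uniform argument: Lang's theorem and Lusztig's classification of $\Irr_\un((\qG^\circ)^{\Fr_a})$ for the twisted connected group, Clifford theory for the abelian quotient $\qG^{\Fr_a}/(\qG^\circ)^{\Fr_a}\cong A$, and then a combinatorial re-indexing of $\bigsqcup_{a\in H}\overline\cM(\Gamma_\cU\unlhd\Gamma_\cU\langle\sigma_a\rangle)$-data (orbit by $A$-orbit of families) against the partition of $\cM(\Gamma_\cU\rtimes H)$ by the image of the first coordinate in $H$. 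The case-by-case argument has the advantage of being concrete and immediately checkable against existing tables, and of sidestepping the Clifford-theoretic subtleties altogether (in Example~\ref{ex:spin}, for instance, with $A\cong\bZ/2\bZ\times\bZ/2\bZ$, the paper reduces to the other examples rather than handling the non-cyclic quotient directly). Your approach is more conceptual and, once the bookkeeping is done, shows \emph{why} the statement holds and how its scope is constrained.

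One point needs sharpening: your claimed reason that the Clifford obstruction class in $\rH^2(A_\rho,\CC^\times)$ vanishes --- ``because $A_\rho\subset\rZ_A(\cU)$ acts on the abelian group $\Gamma_\cU$'' --- is not in itself an argument. Acting on an abelian group does not trivialize a Schur multiplier; what does is, e.g., $A_\rho$ being cyclic (cf.\ Remark~\ref{r:triv-cocycle}), which covers most of Examples~\ref{ex:2-flip}--\ref{ex:D4} but requires a separate check for the non-cyclic case $A\cong\bZ/2\bZ\times\bZ/2\bZ$ in Example~\ref{ex:spin} (which the paper handles by reduction to the other examples). The conclusion is correct, and you flag the cocycle issue as the delicate point, but the justification as stated would not survive scrutiny; either reduce to the cyclic case as the paper does, or verify the vanishing directly in the one non-cyclic instance.
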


\begin{proof}
This can be viewed as a particular case of \cite[Proposition 5.2]{DM}. In {\it loc. cit.}, the right-hand side of the bijection involves the groups $\overline\cM(\widetilde\Gamma_\cU^A\subset \widetilde\Gamma_\cU^A\rtimes \langle\Fr_0\rangle)$, but since we are assuming $\qG$ is $\Fr_0$-split, $\overline\cM(\widetilde\Gamma_\cU^A\subset \widetilde\Gamma_\cU^A\rtimes \langle\Fr_0\rangle)\cong\cM(\widetilde\Gamma_\cU^A).$
\end{proof}

\begin{rem}
The bijection in Proposition \ref{p:disconnected} is not unique. To account for this (and in order to be able to carry out computations later on), we will work out explicit parametrizations in Examples \ref{ex:2-flip} -- \ref{ex:D4}. These cases are relevant for the branching computations for the unipotent representations of reductive $p$-adic groups in the inner class of the split group.
\end{rem}

Let $R_{\un}(\qG^{F_a})$ be the $\mathbb{C}$-span of $\Irr_\un(\qG^{F_a})$. The bijection of Proposition \ref{p:disconnected} induces a linear isomorphism 
\begin{equation}\label{e:vs-iso}
    \bigoplus_{a \in A} R_{\un}(\qG^{\Fr_a}) \to \bigoplus_{\cU\subset  A\backslash\Irr_\un{\qG^\circ}^{\Fr_0}} \CC[\cM(\widetilde\Gamma_\cU^A)].
\end{equation}
The right-hand side of (\ref{e:vs-iso}) has the involution given by (\ref{e:finite-flipM}). Define 
\begin{equation}\label{e:FT-disconnected}
\FT_{\qG}\colon \bigoplus_{a\in A}R_{\un}(\qG^{\Fr_a})\to \bigoplus_{a\in A}R_{\un}(\qG^{\Fr_a}),
\end{equation}
to be the corresponding involution on the left-hand side.

In the examples below, when $A$ is clear from the context, we may write $\widetilde \Gamma_\cU$ in place of $\widetilde \Gamma_\cU^A$ for simplicity of notation.
\begin{ex}\label{ex:2-flip}
Let $\qH$ be a connected almost simple $\mathbb F_q$-split group and $\qG=(\qH\times \qH)\rtimes \bZ/2\bZ$, where the nontrivial element $\delta$ of $A=\bZ/2\bZ$ acts by flipping the two copies of $\qH$. There are two pure inner twists:
\[\InnT^p \qG=\{\qH(\mathbb F_q)^2\rtimes \bZ/2\bZ, \qH(\mathbb F_{q^2})\rtimes \bZ/2\bZ\},
\]
the second one for the Frobenius map $\Fr_1(h_1,h_2)=(\Fr_0(h_2),\Fr_0(h_1))$, $h_1,h_2\in \qH$. 
A family of $\qG^\circ(\mathbb F_q)=\qH(\mathbb F_q)^2$ is $\cU_1\boxtimes \cU_2$, where $\cU_1$, $\cU_2$ are unipotent families of $\qH$. The $A$-orbits are either $\{\cU_1\boxtimes\cU_2, \cU_2\boxtimes\cU_1\}$ for $\cU_1\neq \cU_2$ or $\{\cU\boxtimes\cU\}$. Assume that all $\Gamma_\cU$ are abelian. Set
\[\widetilde \Gamma_{\cU_1\boxtimes\cU_2}^{\bZ/2\bZ}=\Gamma_{\cU_1}\times \Gamma_{\cU_2},\ \cU_1\neq \cU_2,\quad \widetilde \Gamma_{\cU\boxtimes\cU}^{\bZ/2\bZ}=\Gamma_\cU^2\rtimes \bZ/2\bZ,
\]
with the flip action of $\delta$. There are $\frac {\ell(\ell+3)}2$ conjugacy classes in $\widetilde \Gamma_{\cU\boxtimes\cU}$, $\ell=|\Gamma_\cU|$, and they are represented by 
\begin{itemize}
\item $(x,x')\sim (x',x)$ if $x\neq x'\in \Gamma_\cU$, $\rZ_{\widetilde\Gamma_{\cU\boxtimes\cU}}((x,x'))=\Gamma_\cU^2$;
\item $(x,x)$, $x\in\Gamma_\cU$, $\rZ_{\widetilde\Gamma_{\cU\boxtimes\cU}}((x,x))=\Gamma_\cU^2\rtimes\bZ/2\bZ$;
\item $(x,1)\delta$, $x\in\Gamma_\cU$, $\rZ_{\widetilde\Gamma_{\cU\boxtimes\cU}}((x,1)\delta)=\langle\Gamma_\cU^\Delta,(x,1)\delta\rangle$, where $\Gamma_{\cU}^\Delta$ is the diagonal copy of $\Gamma_\cU$.
\end{itemize}

When $\cU_1\neq \cU_2$, if $\rho_1\in \cU_1$, $\rho_2\in \cU_2$, then $\rho_1\times\rho_2:=\Ind_{\qG^\circ(\mathbb F_q)}^{\qG(\mathbb F_q)}(\rho_1\boxtimes\rho_2)$ is parametrized by $(\bar x_{\rho_1},\bar x_{\rho_2})\in \cM(\widetilde \Gamma_{\cU_1\boxtimes\cU_2})$.

In the second case, let $\rho,\rho'\in \cU$. If $\rho\neq \rho'$, then $\rho\times\rho'\cong \rho'\times\rho$ is an irreducible representation of $\qG(\mathbb F_q)$. If $\bar x_\rho=(x,\sigma)$, $\bar x_{\rho'}=(x',\sigma')$ are the corresponding parameters of $\rho$, $\rho'$ in $\cM(\Gamma_u)$, then the parameter for $\rho\times\rho'$ in $\cM(\widetilde\Gamma_{\cU\boxtimes\cU})$ is $((x,x'), \sigma\boxtimes\sigma')$, if $x\neq x'$, or $((x,x), \sigma\times \sigma')$, where $\sigma\times\sigma'=\Ind_{\Gamma_\cU}^{\Gamma_\cU^2\rtimes\bZ/2\bZ}(\sigma\boxtimes\sigma')$, if $\sigma\neq\sigma'$.

If $\rho=\rho'$, then we can extend $\rho\boxtimes \rho$ in two different ways to $\qG(\mathbb F_q)$, denoted by $(\rho \times \rho)^\pm$ relative to the character of $\bZ/2\bZ$. The corresponding parameters in $\cM(\widetilde \Gamma_{\cU\boxtimes\cU})$ are $((x,x), (\sigma\times\sigma)^\pm)$, with the obvious notation. 

For the second pure inner twist, the irreducible unipotent representations of $\qH(\mathbb F_{q^2})$ are given by the same families $\cU$ as for $\qH(\mathbb F_q)$ and $\delta$ fixes each unipotent representation $\rho$ of $\qH(\mathbb F_{q^2})$. Let $\rho$ be an irreducible $\qH(\mathbb F_{q^2})$-representation in $\cU$ parametrized by $\bar x_\rho=(x,\sigma)$, $x\in \Gamma_\cU$, $\sigma\in \widehat \Gamma_\cU$. Then it can be extended in two different ways $\rho^\pm$ to $\qH(\mathbb F_{q^2})\rtimes\bZ/2\bZ$. The centralizer $\rZ_{\widetilde\Gamma_{\cU\boxtimes\cU}}((x,1)\delta)=\langle\Gamma_\cU^\Delta,(x,1)\delta\rangle$ is isomorphic to the direct product $C_x:=\langle (y,y)\mid y\neq x\in \Gamma_\cU\rangle \times \langle (x,1)\delta\rangle$, since $\left((x,1)\delta\right)^2=(x,x)$. Regard $\sigma$ as a representation of the subgroup $\Gamma_\cU^\Delta$. There are two ways $\sigma^\pm$ to extend it to $C_x$, coming from the short exact sequence $1\to \langle (x,x)\rangle \to\langle (x,1)\delta\rangle\to \bZ/2\bZ\to 1$. We attach $((x,1)\delta,\sigma^\pm)\in \cM(\widetilde\Gamma_{\cU\boxtimes\cU})$ to $\rho^\pm$. To fix a choice of $\pm$, we fix a choice of primitive $\ell$-th root of unity $\zeta_\ell$ for each $\ell$. Then, if $\sigma((x,x))=\zeta_k^j$, for some $j$, where $k$ is the order of $x$, then $\sigma^+((x,1)\delta)=\zeta_{2k}^j$. For our applications, $\qH$ will be a classical group and therefore, $\Gamma_\cU$ a $2$-group, hence $x$ will have order $k\le 2$.

\end{ex}

\begin{ex}\label{ex:GL}
Let $\qG^\circ=\GL_k^m$, ${\qG^\circ}^{\Fr_0}=\GL_k(\mathbb F_q)^m$, and $A=\bZ/m\bZ$ acting by cyclic permutations on the factors of $\qG^\circ$.  Then
\[\InnT^p \qG=\{{\qG^\circ}^{\Fr_r}\rtimes\bZ/m\bZ= \GL_k(\mathbb F_{q^{m/d}})^{d}\rtimes \bZ/m\bZ \mid r\in \bZ/m\bZ,\ d=\gcd(r,m)\}.
\]
Each unipotent family of $ \GL_k(\mathbb F_{q^{m/d}})^{d}$ is a singleton $\{\rho_1\boxtimes\dots\boxtimes\rho_{d}\}$ where $\rho_i\in \widehat S_k$, $1\le i\le d$. Hence, we can ignore the difference between unipotent families and irreducible representations of symmetric groups. The irreducible representations of $S_k^{d}\rtimes\bZ/m\bZ$ are constructed by Mackey theory.

Start with a unipotent representation  $\rho=\rho_1\boxtimes\dots\boxtimes\rho_m$ of $\GL_k(\mathbb F_{q})^{m}\rtimes \bZ/m\bZ$ with stabilizer $\bZ/c\bZ$, $c|m$. This means that $\rho_i=\rho_{i+m/c}$ for all $i$ (viewed mod $m$) and that $\bZ/{(m/c)}\bZ$ has no fixed points under the cyclic action on $\rho_1\boxtimes\dots\boxtimes\rho_{m/c}$. The corresponding unipotent family $\widetilde\cU$ that we construct for $\InnT^p\qG$ has \[\widetilde\Gamma_\cU^{\bZ/m\bZ}=\bZ/c\bZ.\] The irreducible representations $\widetilde \rho$ of ${\qG}^{\Fr_0}$ whose restriction to ${\qG^\circ}^{\Fr_0}$ contain $\rho$ are in one-to-one correspondence to the characters of $\bZ/c\bZ$, hence they are parametrized in $\cM(\widetilde\Gamma_\cU)$ by the pairs $(0,\sigma)$, $\sigma\in \widehat {\bZ/c\bZ}$.

For every $r\in \bZ/m\bZ$ such that $m/c$ divides $d=\gcd(r,m)$, consider the representation of ${\qG^\circ}^{\Fr_r}$ given by $\rho^r=\rho_1\boxtimes\dots\boxtimes\rho_{d}$. The stabilizer of this representation in $\bZ/m\bZ$ is also $\bZ/c\bZ$. The irreducible representations $\widetilde \rho^r$ of ${\qG}^{\Fr_r}$ whose restriction to ${\qG^\circ}^{\Fr_r}$ contain $\rho^r$ are again in one-to-one correspondence to the characters of $\bZ/c\bZ$, and we parametrize them in $\cM(\widetilde\Gamma_\cU)$ by the pairs $(\frac{rc}m,\sigma)$, $\sigma\in \widehat {\bZ/c\bZ}$. This completes the parametrization via $\cM(\widetilde \Gamma_\cU^{\bZ/m\bZ})$ of the unipotent representations for $\InnT^p \qG$ corresponding to the family $\cU=\{\rho\}$ in ${\qG^\circ}^{\Fr_0}$. 
\end{ex}

\begin{ex}\label{ex:O}
Let $\qG = \mathrm O_{2n}, \qG^\circ =\SO_{2n}$, $n\ge 2$, $A=\bZ/2\bZ =\langle\delta\rangle$. There are two pure inner twists $\InnT^p\qG=\{\mathrm O^+_{2n}(\mathbb F_q),\mathrm O^-_{2n}(\mathbb F_q)\}$. In this case, we use the parametrizations of \cite[\S4.6,\S4.18]{Lubook}.
Recall that a symbol for type $D_n$ is an array $\Lambda=\left(\begin{matrix}\lambda_1&\lambda_2&\dots&\lambda_b\\\mu_1&\mu_2&\dots&\mu_{b'}\end{matrix}\right)$, $b+b'=2m$, $0\le \lambda_1<\dots<\lambda_{b'}$, $0\le\mu_1<\dots<\mu_{b'}$, which is considered the same as the array where the rows are flipped. A symbol where $b=b'=m$ and $\lambda_1\le\mu_1\le\lambda_2\le\mu_2\le\dots$, $\sum \lambda_i^2+\lambda \mu_i^2=n+m^2-m$ is called special. 

Let $Z$ be a special symbol. 
In the case when $\lambda_i=\mu_i$ for all $1\le i\le m$, one attaches to $Z$ two unipotent ${\qG^\circ}^{\Fr_0}$-families, $\cU'=\{\rho\}$ and $\cU''=\{\rho'\}$, each consisting of a single unipotent representation and with $\Gamma_{\cU'}=\Gamma_{\cU''}=\{1\}$. In this case, the action of $\delta$ flips the two families. Hence they give rise to a single family $\{\widetilde\rho\}$ for $\qG^{F_0}$, $\widetilde\rho|_{{\qG^\circ}^{\Fr_0}}=\rho\oplus\rho'$, and $\widetilde\Gamma_{\cU'}=\{1\}$.

Assume now that the two rows of the symbol $Z$ are not identical, i.e., $Z$ is nondegenerate in the sense of {\it loc. cit.}. Then $Z$ defines one unipotent family for ${\qG^\circ}^{\Fr_0}$ and one for ${\qG^\circ}^{\Fr_1}$. Each element of these families is stable under $\delta$ so it can be lifted to two different $\qG^{\Fr_0}$, respectively $\qG^{\Fr_1}$, representations.

The unipotent representations in the ${\qG^\circ}^{\Fr_0}$-family $\cU_Z$ corresponding to $Z$ are indexed by the set $\cM_Z$ of symbols $\Lambda$ such that $b-b'\equiv 0$ mod $4$, $b+b'=2m$. The unipotent representations in the ${\qG^\circ}^{\Fr_1}$-family $\cU^-_Z$ corresponding to $Z$ are indexed by the set $\cM_Z^-$ of symbols $\Lambda$ such that $b-b'\equiv 2$ mod $4$, $b+b'=2m$. Let $Z_1$ be the set of elements that appear as entries of $Z$ only once. Let $2d=|Z_1|$. Define
\begin{itemize}
\item $V_{Z_1}$: the set of subsets $X\subseteq Z_1$ of even cardinality, with the structure of an $\mathbb F_2$-vector space with the sum given by the symmetric difference;
\item $V_{Z_1}'$: the set of subsets $X\subseteq Z_1$ with the structure of an $\mathbb F_2$-vector space with the sum given by the symmetric difference, modulo the line spanned by $Z_1$ itself;
\item $(V_{Z_1}')^+$: the subspace of $V_{Z_1}'$ where the elements are the subsets $X$ of even cardinality;
\item $(V_{Z_1}')^-$: the subspace of $V_{Z_1}'$ where the elements are the subsets $X$ of odd cardinality.
\end{itemize}
Notice that $(V_{Z_1}')^+$ is also the image of the projection of $V_{Z_1}$ to $V_{Z_1}'$. The dimensions of $V_{Z_1}'$ and $V_{Z_1}$ over $\mathbb F_2$ are $2d-1$, while the dimension of $(V_{Z_1}')$ is $2d-2$. There is a nonsingular pairing 
\begin{equation}
(~,~)\colon V_{Z_1}'\times V_{Z_1}\to \mathbb F_2,\quad (X_1,X_2)\mapsto |X_1\cap X_2| \text{ mod }2.
\end{equation}
This pairing restricts to a nonsingular symplectic $\mathbb F_2$-form of $(V_{Z_1}')^+$. If $V_{Z_1}$ has the basis $e_1,e_2,\dots,e_{2d-1}$ as in \cite{Lubook}, then $(V_{Z_1}')^+$ is spanned by the images $\bar e_1,\bar e_2,\dots,\bar e_{2d-1}$ modulo the relation $\bar e_1+\bar e_3+\dots+\bar e_{2d-1}=0$. Let
\begin{align*}
\bar I'&=\text{subspace of }(V_{Z_1}')^+\text{ spanned by } \bar e_1,\bar e_3,\dots,\bar e_{2d-1},\\
\bar I''&=\text{subspace of }(V_{Z_1}')^+\text{ spanned by } \bar e_2,\bar e_4,\dots,\bar e_{2d-2};
\end{align*}
they are maximal isotropic subspaces of $(V_{Z_1}')^+$ and $(V_{Z_1}')^+=\bar I'\oplus \bar I''$. Then
\[\Gamma_{\cU_Z}=\bar I''\cong (\bZ/2\bZ)^{d-1}.
\]
As shown in \cite[\S4.6]{Lubook}, there is a natural bijection \[\cM_Z\leftrightarrow \cM(\Gamma_{\cU_Z})\cong (V_{Z_1}')^+=\bar I'\oplus\bar I''\] (where $\bar I'$ is identified with the group of characters of $\Gamma_{\cU_Z}$).
Denote
\begin{equation}
\widetilde\Gamma_{\cU_Z}=\{v\in V_{Z_1}'\mid (v,e_{2i})=0,\ 1\le i\le d-1\}\cong (\bZ/2\bZ)^d.
\end{equation}
Clearly, $\Gamma_{\cU_Z}\unlhd\widetilde\Gamma_{\cU_Z}$. 
As shown in \cite[\S4.18]{Lubook}, there is a natural bijection \[\cM_Z^-\leftrightarrow \overline\cM(\Gamma_{\cU_Z}\unlhd\widetilde\Gamma_{\cU_Z})\cong (V_{Z_1}')^-\cong (\widetilde \Gamma_{\cU_Z}\setminus \Gamma_{\cU_Z})\times \bar I'.\]
Let $\widetilde \cU_Z$ be the set (family) of irreducible representations in $\Irr_\un \qG^{\Fr_0}\sqcup \Irr_\un \qG^{\Fr_1}$ whose restrictions to ${\qG^\circ}^{\Fr_0}$ (resp. ${\qG^\circ}^{\Fr_1}$) are in $\cU_Z$ (resp. $\cU_Z^-$). Since each unipotent representation in $\cU_Z$ and $\cU_Z^-$ extends in two different ways to the corresponding disconnected group, the parametrization above implies easily that there is natural bijection
\begin{equation}
\widetilde \cU_Z\longleftrightarrow \cM(\widetilde\Gamma_{\cU_Z}).
\end{equation}
Explicitly, let $\{\bar f_1,\bar f_2,\dots,\bar f_d\}$ be the spanning set of $V_{Z_1}'$ subject to $\sum_{i=1}^{2d} \bar f_i=0$, such that $\bar e_i=\bar f_i+\bar f_{i+1}$, $1\le i \le 2d-1$. Then an $\mathbb F_2$-basis of $\widetilde \Gamma_{\cU_Z}$ is given by $\{\bar f_1,\bar e_2,\bar e_4,\dots, \bar e_{2d}\}$. An irreducible character of $\Gamma_{\cU_Z}=\langle \bar e_2,\bar e_4,\dots, \bar e_{2d}\rangle$ can be extended in two different ways to $\widetilde \Gamma_{\cU_Z}$ by setting the character value on $\bar f_1$ to $1$ or $-1$. The value $1$ corresponds to the representations of the identity components of $\qG^{\Fr_0}$, $\qG^{\Fr_1}$ extended by letting $\delta$ act trivially, while the $-1$ value to the ones where $\delta$ acts by $-1$.
\end{ex}

\begin{ex}\label{ex:A-E}
Let $\qG^\circ$ be of type $A_{k-1}$, $k\ge 3$, or $E_6$, and $A=\bZ/2\bZ=\langle\delta\rangle$ acting by the nontrivial automorphism of the Dynkin diagram. The nonsplit pure inner twist has ${\qG^\circ}^{\Fr_1}$ of type $^2\!A_{k-1}$ or $^2\!E_6$, respectively. By \cite[\S4.19]{Lubook}, every unipotent family $\cU$ of $\qG^\circ$ is fixed pointwise by $A$. Hence 
\[\widetilde \Gamma_\cU^{\bZ/2\bZ}=\Gamma_U\times \bZ/2\bZ,\text{ for all } \cU.
\]
Each irreducible representation $\rho$ of the split form ${\qG^\circ}^{\Fr_0}$ can be extended to $\qG^{\Fr_0}$ in two different ways $\rho^\pm$ corresponding to the two characters of $\mathbb Z/2\bZ$. If the parameter for $\rho$ is $\bar x_\rho=(x,\sigma)\in \cM(\Gamma_\cU)$, then the parameters for $\rho^\pm$ are $((x,1), \sigma^\pm)$ with the obvious notation.

Similarly, an irreducible representation $\rho'$ of the nonsplit pure inner twist ${\qG^\circ}^{\Fr_1}$ can be extended to $\qG^{\Fr_1}$ in two different ways ${\rho'}^\pm$. If the parameter for $\rho'$ is $\bar x_\rho=(x',\sigma')\in \cM(\Gamma_\cU)$, then the parameters for ${\rho'}^\pm$ are $((x',\delta), {\sigma'}^\pm)$.
\end{ex}

\begin{ex}\label{ex:spin}
Let $\qH$ be of type $D_k$, $k\ge 2$ and $\qG^\circ=\qH\times \qH$. Let $A=\langle \delta_1\rangle\times \langle\delta_2\rangle\cong \bZ/2\bZ\times \bZ/2\bZ$, where $\delta_1$ acts by the nontrivial outer automorphism of the Dynkin diagram of type $D_k$, and $\delta_2$ flips the $\qH$-factors. This case is therefore a combination of Example \ref{ex:O} and Example \ref{ex:2-flip} and the parametrization of families for the pure inner twists of $\qG$ follows from these examples, i.e., the same parametrizations as in Example \ref{ex:2-flip} but constructed from the orthogonal families $\widetilde U_Z$ from Example \ref{ex:O}.

\

Now for the same $\qH$ and $\qG^\circ$, suppose $A=\langle \delta\rangle=\bZ/4\bZ$. If $s'_1,s''_1$ are the two commuting extremal reflections of the first $\qH=D_k$ and $s'_2,s''_2$ are the similar reflections for the second $\qH=D_k$, then $\delta$ acts by the cyclic permutation:
\[\delta\colon\quad s'_1\mapsto s'_2\mapsto s''_1\mapsto s''_2\mapsto s'_1.
\]
On all the other simple reflections of the two components of type $D_k$, $\delta$ acts by the obvious diagram flip (of order $2$). To describe the pure inner twists, let $\Fr$ denote the Frobenius map of $\qH$ whose fixed points is the nonsplit group of type $^2\!D_k$. Then
\[\Fr_1\colon \qH\times \qH\to \qH\times \qH, \quad \Fr_1(h_1,h_2)=(\Fr(h_2),h_1)
\]
is a Frobenius automorphism and $\Fr_r=\Fr_1^r$, $r\in \bZ/4\bZ$, e.g., \cite[Example 1.4.23]{GM}. The identity components of the pure inner twists are the finite reductive groups of types:
\[{\qG^\circ}^{\Fr_0}: ~D_k\times D_k,\quad {\qG^\circ}^{\Fr_1}: ~^2\!D_k,\quad {\qG^\circ}^{\Fr_2}: ~^2\!D_k\times ~^2\!D_k,\quad {\qG^\circ}^{\Fr_3}: ~^2\!D_k.
\]
If $\rho_1,\rho_2$ are two unipotent representations of $D_k$, the action of $\delta$ is
\[\delta(\rho_1,\rho_2)=(\rho_2',\rho_1),\text{ where }\rho_2'=\begin{cases}\rho_2,&\text{ if the symbol of }\rho_2\text{ is nondegenerate},\\\rho_2^-,&\text{ otherwise},\end{cases}
\]
where $\rho^-_2$ is unipotent representation parametrized by the other degenerate symbol with the same rows. See Example \ref{ex:O}.

We start with a family $\cU_1\times\cU_2$ of ${\qG^\circ}^{\Fr_0}=D_k\times D_k$. If $\cU_2$ consists of a degenerate symbol, then the stabilizer in $A$ is always $1$, regardless of what $\cU_1$ is. (Similarly if $\cU_1$ is degenerate.) In this case, 
\[\widetilde \Gamma_{\cU_1\times\cU_2}^{\bZ/4\bZ}=\Gamma_{\cU_1}.
\]
(Recall that $\Gamma_{\cU_2}=1$ necessarily.) Since the stabilizer in $\bZ/4\bZ$ of each representation $\rho_1\boxtimes\rho_2$, $\rho_1\in \cU_1$, $\rho_2\in\cU_2$ is also trivial in this case, it follows that there is a one-to-one correspondence between the representations $\Ind_{{\qG^\circ}^{\Fr_0}}^{\qG^{\Fr_0}}(\rho_1\boxtimes\rho_2)$ and $\rho_1\in\cU_1$, hence a parametrization by $\cM(\Gamma_{\cU_1})$ as expected.
\smallskip

For the rest of the example, assume that all families correspond to nondegenerate symbols. Let $Z_1$, $Z_2$ be two nondegenerate symbols of type $D_k$. Let $\cU_1,\cU_2$ be the corresponding families for $D_k$ and $\cU_1^-$, $\cU_2^-$ the families for $^2\!D_k$. Suppose first that $Z_1\neq Z_2$. Then the stabilizer in $A$ is $\bZ/2\bZ=\langle\delta^2\rangle$, hence the group for the pure inner twists is
\[\widetilde \Gamma_{\cU_1\boxtimes\cU_2}^{\bZ/4\bZ}=\Gamma_{\cU_1}\times\Gamma_{\cU_2}\times \bZ/2\bZ.
\]
 If $\rho_1\in\cU_1$ and $\rho_2\in\cU_2$, the stabilizer in $A$ of $\rho_1\boxtimes\rho_2$ is also $\bZ/2\bZ=\langle\delta^2\rangle$. By Mackey theory, we get two irreducible representations of $\qG^{\Fr_0}$ by inducing $\rho_1\boxtimes\rho_2$ twisted by the trivial or the sign character of $\bZ/2\bZ$. If $\bar x_{\rho_i}=(x_i,\sigma_i)\in \cM(\Gamma_{\cU_i})$, $i=1,2$, then the two induced representations are parametrized by $((x_1,x_2,1),\sigma_1\boxtimes\sigma_2\boxtimes\tau)\in \cM(\Gamma_{\cU_1}\times\Gamma_{\cU_2}\times \bZ/2\bZ)$, where $\tau$ is the trivial or the sign character of $\bZ/2\bZ$.

If  $\rho_1'\in\cU_1^-$ and $\rho_2'\in\cU_2^-$, the analysis is analogous. The difference is that $\bar x_{\rho_i'}=(x_i',\sigma_i')\in \overline \cM(\Gamma_{\cU_i}\unlhd\widetilde \Gamma_{\cU_i})$, $i=1,2$, where $x_i'\in \widetilde \Gamma_{\cU_i}\setminus\Gamma_{\cU_i}$, $\sigma_i\in \widehat \Gamma_{\cU_i}$. Write $x_i=y_i\alpha$, $i=1,2$, $y_i\in \Gamma_{\cU_i}$, where $\alpha$ is the nontrivial automorphism of the $D_k$ diagram. Then the two unipotent representations of $\qG^{\Fr_2}$ whose restriction to ${\qG^\circ}^{\Fr_2}$ contain $\rho_1'\boxtimes\rho_2'$ are parametrized by $((y_1,y_2,\delta^2),\sigma_1\boxtimes\sigma_2\boxtimes\tau)\in \cM(\Gamma_{\cU_1}\times\Gamma_{\cU_2}\times \bZ/2\bZ)$, where $\tau$ is the trivial or the sign character of $\bZ/2\bZ$.

\medskip

Finally, if $Z_1=Z_2=Z$ with the families $\cU$ of $D_k$ and $\cU^-$ of $^2\!D_k$,  then the stabilizer  of $\cU\boxtimes\cU$ is $A=\bZ/4\bZ$. In this case, set
\begin{equation}\widetilde \Gamma_{\cU\boxtimes\cU}^{\bZ/4\bZ}=(\Gamma_{\cU}\times\Gamma_{\cU})\rtimes \bZ/4\bZ.
\end{equation}
All four pure inner twists $\qG^{\Fr_r}$ contribute in this case. Each conjugacy class in $\widetilde \Gamma_{\cU\boxtimes\cU}^{\bZ/4\bZ}$ is represented by an element $(x,y,r)$ with $x,y\in \Gamma_\cU$ and $r\in \bZ/4\bZ$. It will correspond to a unipotent representation of $\qG^{\Fr_r}$, for the same $r$. 

If $r=0$, then the conjugacy classes are given by $(x,x',0)\sim (x',x,0)$ and its stabilizer in $\widetilde \Gamma_{\cU\boxtimes\cU}^{\bZ/4\bZ}$ is $\Gamma_\cU^2 \times\langle\delta^2\rangle$ if $x\neq x'$, or all of $\widetilde \Gamma_{\cU\boxtimes\cU}^{\bZ/4\bZ}$ if $x=x'$. If $\rho,\rho'\in\cU$ with parameters $\bar x_\rho=(x,\sigma)$, $\bar x_{\rho'}=(x',\sigma')$, it is clear that there is a perfect matching between the induced representation coming from the Mackey construction and the parameters $((x,x',\bar 0),\widetilde\sigma)\in \cM(\widetilde \Gamma_{\cU\boxtimes\cU}^{\bZ/4\bZ})$, where $\widetilde\sigma\in \widehat{\rZ_{\widetilde \Gamma_{\cU\boxtimes\cU}^{\bZ/4\bZ}}((x,x',\bar 0))}$.

If $r=2$, the conjugacy classes are given by $(x,x',\delta^2)\sim (x',x,\delta^2)$ and its stabilizer in $\widetilde \Gamma_{\cU\boxtimes\cU}^{\bZ/4\bZ}$ is $\Gamma_\cU^2 \times\langle\delta^2\rangle$  if $x\neq x'$ or all $\widetilde \Gamma_{\cU\boxtimes\cU}^{\bZ/4\bZ}$ if $x=x'$. If $\rho,\rho'\in\cU^-$ with parameters $\bar x_\rho^-=(x\alpha,\sigma)$, $\bar x_{\rho'}^-=(x'\alpha,\sigma')$, again there is a perfect matching between the induced representations coming from the Mackey construction and the parameters $((x,x',\delta^2),\widetilde\sigma)\in \cM(\widetilde \Gamma_{\cU\boxtimes\cU}^{\bZ/4\bZ})$, where $\widetilde\sigma\in \widehat{\rZ_{\widetilde \Gamma_{\cU\boxtimes\cU}^{\bZ/4\bZ}}((x,x',\bar 2))}$.

If $r=1$ or $3$, the conjugacy classes are given by $(x,1,\delta^r)$, cf. Example \ref{ex:2-flip}. The stabilizer in this case is $\langle\Gamma_\cU^\Delta,\delta^2,(x,1,\delta)\rangle.$ Let $\rho$ be a representation in the unipotent family $\cU^-$ with parameter $\bar x_\rho=(x\alpha,\sigma)$, $\sigma\in \widehat \Gamma_\cU$. It can be extended in four different ways to $^2\!D_k\rtimes \bZ/4\bZ$ corresponding to the characters of $\bZ/4\bZ$. Since $x$ has order $2$, the cyclic group
\[\langle (x,1,\delta)\rangle=\langle (1,1,1), (x,1,\delta), (x,x,\delta^2), (1,x,\delta^3)\rangle\cong \bZ/4\bZ.
\]
Notice that there is a short exact sequence (which does not split)
\[1\longrightarrow \Gamma_\cU^\Delta\longrightarrow \rZ_{\widetilde\Gamma_{\cU\boxtimes\cU}^{\bZ/4\bZ}}((x,1,\delta^r))\longrightarrow \bZ/4\bZ\longrightarrow 1,
\]
where the quotient $\bZ/4\bZ$ is generated by the image of $(x,1,\delta)$. This means that $\sigma\in \widehat \Gamma_\cU$, viewed as a representation of $\Gamma_\cU^\Delta$ can be lifted in four different ways to $\rZ_{\widetilde\Gamma_{\cU\boxtimes\cU}^{\bZ/4\bZ}}((x,1,\delta^r))$: first one lifts $\sigma$ in two different ways to $\sigma^\pm$, representations of $\Gamma_\cU^\Delta\times\langle\delta^2\rangle$ corresponding to the trivial and the sign character of $\langle\delta^2\rangle$. Then, fixing a square roots $\zeta^\pm$ of $\sigma^\pm(x,x,\delta^2)$, one constructs lifts $\widetilde\sigma^i$ of $\sigma$, $0\le i\le 3$, by setting
\[\widetilde\sigma^0((x,1,\delta))=\zeta^+,\ \widetilde\sigma^1((x,1,\delta))=-\zeta^+,\ \widetilde\sigma^2((x,1,\delta))=\zeta^-,\ \widetilde\sigma^3((x,1,\delta))=-\zeta^-. 
\]
Notice that $\{\pm\zeta^\pm\}$ is the set of $4$-th roots of $1$, and this gives the desired parametrization.
\end{ex}

\begin{ex}\label{ex:D4}
Let $\qG^\circ$ be of type $D_4$ and $A=\bZ/3\bZ=\langle \delta\rangle$ acting on the Dynkin diagram by cyclically permuting the extremal nodes. The Weyl group $W(D_4)$ has $13$ irreducible representations which we denote by bipartitions of $4$, $\alpha\times\beta$ up to swapping $\alpha$ and $\beta$, except where $\alpha=\beta$, there are two non-isomorphic representations $\alpha\times\alpha^\pm$. There is one cuspidal unipotent representation $\rho_c$, and in total $14$ unipotent representations of ${\qG^\circ}^{\Fr_0}$.

All families are singletons with associated finite group $\Gamma_\cU=\{1\}$, except the family
\[\{(12)\times(1),  (22)\times\emptyset, (11)\times (2),\rho_c\}
\]
for which the finite group is $\Gamma_\cU=\bZ/2\bZ$. This family and the following four singleton families:
\[\{(4)\times\emptyset\},\quad \{(1111)\times\emptyset\},\quad \{(3)\times (1)\},\quad \{(111)\times (1)\}
\]
are $A$-stable and in fact each element in the family is $A$-stable. The remaining $6$ unipotent (singleton) families form two $A$-orbits:
\[\{(13)\times\emptyset, (2)\times (2)^+,(2)\times (2)^-\}\text{ and } \{(112)\times\emptyset, (11)\times (11)^+,(11)\times (11)^-\}.
\]
According to our recipe, the groups $\widetilde\Gamma_{\cU}^{\bZ/3\bZ}$ are:
\begin{itemize}
\item $\bZ/3\bZ$ corresponding to each of the four $A$-stable singleton families $\cU$;
\item $\bZ/2\bZ\times \bZ/3\bZ$ for the unique family with $4$ elements;
\item $\{1\}$ for each one of the two nontrivial $A$-orbits. 
\end{itemize}
Hence the right hand side of Proposition \ref{p:disconnected} is $\cM(\bZ/3\bZ)^4\sqcup \cM(\bZ/2\bZ\times \bZ/3\bZ)\sqcup \cM(\{1\})^2$ which has $3^2\times 4+6^2+1^2\times 2=74$ elements.

The irreducible unipotent representations of the disconnected group $D_4\rtimes \bZ/3\bZ$ are parametrized, via Mackey theory, by the elements $(x,\sigma)\in \cM(\widetilde\Gamma_{\cU}^{\bZ/3\bZ})$, where $x\in \Gamma_{\cU}$ and $\cU$ ranges over a set of representatives of the $A$-orbits of families of $D_4$. There are $26$ such irreducible representations.

The other two $A$-forms corresponding to $\delta$ and $\delta^{-1}$ are both isomorphic to the finite group of type $^3\!D_4$. There are $8$ irreducible unipotent representations of $^3\!D_4$ each coming from one of $\gamma$-stable irreducible unipotent representations of $D_4$. By induction, there are $8\times 3=24$ irreducible unipotent representations of $^3\!D_4\rtimes \bZ/3\bZ$. The irreducible representations of the $^3\!D_4$ corresponding to $\delta$ are parametrized by $(x\delta,\sigma)\in \cM(\widetilde\Gamma_{\cU}^{\bZ/3\bZ})$, where $x\in \Gamma_{\cU}$ and $\cU$ ranges over the set of $A$-stable families. Similarly for $\delta^{-1}$.

\end{ex}

\section{ Maximal compact subgroups}\label{s:maximal}

We return to the setting of Section \ref{s:inner}, so $\bG$ is a connected reductive group over $F$ and $G = \bG(F)$. In this section, we assume in addition that $\bG$ is simple and $F$-split with maximal $F$-split torus $\bS$. Let $\Pi_G$ be a set of simple roots for $\bG$ with respect to $\bS$, and extend $\Pi_G$ to a set of simple affine roots $\Pi^a_G = \Pi_G \cup \{\al_0\}$. Let $I \subset \bG(\fo_F)$ be the corresponding Iwahori subgroup of $G$, with $\bS(\mathfrak o_F)=\bS(F)\cap I$. Let $\widetilde W_G=\rN_{G}(\bS(F))/\bS(\mathfrak o_F)$ be the Iwahori--Weyl group. We have
\[G=\bigsqcup_{w\in \widetilde W_G} I \dot wI,
\]
where $\dot w$ denotes a choice of a lift in $N_{G}(\bS(F))$ of $w\in \widetilde W_G$. The finite Weyl group is $W_G = \rN_{G}(\bS(F))/\bS(F)$. Let $W^a_G$ be the affine Weyl group generated by the simple reflections $\{s_i\mid i\in \Pi^a_G\}$. Then
\[\widetilde W_G=W^a_G\rtimes\Omega_G,
\]
where $\Omega_G$ is a finite abelian group, the stabilizer in $\widetilde W_G$ of $I$.

\subsection{} Let $\max(G)$ denote the set of conjugacy classes of maximal compact open subgroups in $G$. To parametrize $\max(G)$, we define $S_{\max}(G)$ to be the set of $\Omega_G$-orbits of pairs $(A, \cO)$, where $A$ is a subgroup of $\Omega_G$ and $\cO$ is an $A$-orbit in $\Pi^a_G$ satisfying
\[\mathrm{Stab}_{\Omega_G}(\cO)=A.
\] 
By \cite{IM} and \cite{BT1}, $\max(G)$ is parametrized by $S_{\max}(G)$. 
Explicitly, given $(A, \cO) \in S_{\max}(G)$, we construct an element $K_\cO \in \max(G)$ as follows:
let $\widetilde W_{\cO}$ be the finite subgroup of $\widetilde W_G$  generated by $A$ and  $\{s_i,\ i\in \Pi^a_G\setminus \cO\}$. 
Set \[K_{\cO}=\bigsqcup_{w\in \widetilde W_{\cO}} I\dot w I.\] 
The map $(A, \cO) \mapsto K_\cO$ defines a bijection
between $S_{\max}(G)$ and $\max(G)$. (Note that a pair $(A, \cO) \in S_{\max}(G)$ is completely determined by $\cO$, so the notation $K_\cO$ is unambiguous.) In this notation, the maximal hyperspecial subgroup $\bG(\mathfrak{o}_F)$ is $K_{\{\al_0\}}$, where $\al_0$, as defined above, is the unique simple affine root in $\Pi^a_G \setminus \Pi_G$. 

Given $(A, \cO) \in S_{\max}(G)$, let $\widetilde W_{\cO}^\circ$ be the normal subgroup of $\widetilde W_{\cO}$ generated by $\{s_i,\ i\in \Pi_G^a\setminus \cO\}$. Then $K_{\cO}^\circ:=\bigsqcup_{w\in \widetilde W_{\cO}^\circ} I\dot w I$ is a parahoric subgroup of $G$, and we denote by $K_{\cO}^+$ its pro-unipotent radical. There is a short exact sequence
\[1\longrightarrow K_{\cO}^\circ\longrightarrow K_{\cO}\longrightarrow A\longrightarrow 1.
\]
Set $\overline K_{\cO}^\circ=K_{\cO}^\circ/K_{\cO}^+$ and $\overline K_{\cO}=K_{\cO}/K_{\cO}^+$. Then $\overline K_{\cO}=M_{\cO}(k_F)$, $\overline K_{\cO}^\circ=M_{\cO}^\circ(k_F)$, for a reductive $k_F$-split group $M_{\cO}$ with identity component $M_{\cO}^\circ$ and $M_{\cO}/M_{\cO}^\circ\cong A$. Let $\InnT^p \overline K_{\cO}\leftrightarrow A$ be the collection of pure inner twists of $M_\cO$. 

\medskip

Now we consider pure inner twists of $G$. By Section \ref{s:inner}, $\InnT^p G\cong \rH^1(F,\bG)$. If $\bG_\sc$ is the simply connected cover of $\bG$ and identifying $\Omega_G$ with the kernel of the surjection $\bG_\sc\to \bG$, then by \cite[Satz 2]{Kn}, 
$\rH^1(F,\bG)\cong \rH^2(F,\Omega_G)\cong  \Omega_G$, with the last equivalence because $\bG$ is $F$-split. 
Given $x \in \Omega_G$, let $G_x \in \InnT^p G$ be the corresponding pure inner twist.
If we denote the set of conjugacy classes of maximal compact open subgroups of $G_x$ by $\max(G_x)$, then there is a one-to-one correspondence
\begin{equation}\label{e:pairs-maxcpt}
\{(A,\cO) \in S_{\max}(G) \mid  x \in A\} \longleftrightarrow \max(G_x),
  \end{equation}
  which we write as $(A, \cO) \mapsto K_{x, \cO}$. More precisely, we can realize $K_{x, \cO}$ in the following way.
  We realize $G_x$ as the subgroup of $\bG(F_{\un})$ fixed under the $\Gal(F_{\un}/F)$-action corresponding to $x$. Then since $x \in A$, the parahoric of  $\bG(F_{\un})$ corresponding to $(A, \cO)$ as above is $\Gal(F_{\un}/F)$-stable. Let $K_{x, \cO}^\circ \subset G_x$ be the Galois-fixed subgroup. By \cite[3.3.4 Proposition]{BT1} (applied using \cite[5.2.12 Proposition]{BT2}), the normalizer $K_{x, \cO} := N_{G_x}(K_{x, \cO}^\circ)$ is a maximal compact subgroup of $G_x$.  
  
  Note that 
$\overline K_{x,\cO} \in \InnT^p\overline K_{\cO}$ is the pure inner twist given by $x\in A\cong \rH^1(k_F,M_\cO)\leftrightarrow\InnT^p\overline K_{\cO}$ (see Section \ref{s:disconn}). For fixed $(A, \cO) \in S_{\max}(G)$, we have
\begin{equation}\label{e:partition}
    \InnT^p \overline{K}_\cO = \{\overline{K}_{x, \cO} \mid x \in A\}.
\end{equation}
 Given $G' \in \InnT^p(G)$, for every maximal compact open subgroup $K' \in \max (G')$, 
    define $R_\un(\overline K')$ to be the $\bC$-span of $\Irr_\un \overline K'$, and
let
\begin{equation} \label{eqn:C(G)cpt_un}
  \cC(G)_{\cpt,\un} = \bigoplus_{G'\in \InnT^p G}~~\bigoplus_{K'\in \max(G')} R_\un (\overline K').
  \end{equation}
 By the discussion above, we have
 \begin{align}    
\cC(G)_{\cpt, \un} &= \bigoplus_{x \in \Omega}  ~~~\bigoplus_{\substack{(A, \cO) \in S_{\max}(G)\\ \text{ with } x \in A}} R_{\un}(\overline K_{x, \cO})\\
&=\bigoplus_{(A, \cO) \in S_{\max}(G)} ~~ \bigoplus_{x \in A} R_{\un}(\overline K_{x, \cO}) \label{e:AO-decomp}.
\end{align}
Note that for each $(A, \cO) \in S_{\max}(G)$, by (\ref{e:partition}), $\oplus_{x \in A} R_{\un}(\overline K_{x, \cO})$ has the involution $\FT_{\overline{K}_\cO}$ given by (\ref{e:FT-disconnected}). Putting together these involutions for all choices of $(A, \cO)$ gives the following definition.

\begin{defn}\label{d:FT-compact}
  Let $\FT_{\cpt,\un}=\bigoplus_{(A, \cO) \in S_{\max}(G)}\FT_{\overline K_\cO}$ be the involution on  $\cC(G)_{\cpt,\un}$
  defined by using (\ref{e:FT-disconnected}) and (\ref{e:partition}).
  \end{defn}
Notice that $\FT_{\cpt,\un}$ always preserves the space $R_\un (\overline{\bG(\fo_F)})$, since $\bG(\fo_F)$ corresponds to the pair $(A, \{\al_0\})$ with $A$ trivial. In the case when $\bG$ is simply connected, we have $\Omega_G = 1$ and $K_{\cO}=K^\circ_{\cO}$ for all $\cO$, so $\FT_{\cpt,\un}$ preserves the space $R_\un (\overline K_\cO)$ for all $K_\cO \in \max(G)$.
But in general it does not preserve $R_\un (\overline K_{x,\cO})$ for every maximal compact open subgroup $K_{x, \cO}$, which can be seen even in the case when $\bG = \PGL_2$ (see Example \ref{ex:pgl2}). 

    \begin{ex} We list the type of groups $\overline K_\cO$ in the case when $\bG$ is adjoint. Since we are only interested in unipotent representations, only the Lie type of $\overline K_\cO^\circ$ is important.
\begin{enumerate}
\item      If $\bG$ is also simply connected, which is the case for types $G_2$, $F_4$, $E_8$, the set $\max (G)$ is in one-to-one correspondence with the maximal subsets of $\Pi_a$ (equivalently, $\cO$ is a single vertex in $\Pi^a_G$).
\item If $\bG=\PGL_n$, $\Omega_G=\bZ/n\bZ$ acting by cyclically permuting $\Pi^a_G$, then for every divisor $m$ of $n$, we have an orbit $\cO_m$ in $\Pi^a_G$ with stabilizer $A=\bZ/m\bZ$ and
  \[ \overline K_{\cO_m}=P(\GL_{n/m}^m)\rtimes \bZ/m\bZ,
  \]
  where the semidirect product is given by the permutation action. This case corresponds to Example \ref{ex:GL}.
\item If $\bG=\SO_{2n+1}$, $\Omega_G=\bZ/2\bZ$, then $\overline K_\cO$ is either $\SO_{2n+1}$ ($A=1$) or $\SO_{2m+1}\times \rO_{2(n-m)}$ ($A=\bZ/2\bZ$), for $0\le m<n$. This case corresponds to Example \ref{ex:O}.
\item If $\bG=\mathrm{PSp}_{2n}$, $\Omega=\bZ/2\bZ$, then $\overline K_\cO$ is type
\begin{itemize}
\item  $C_k\times C_\ell$ ($A=1$) for $0\le k<\ell$, $k+\ell=n$;
\item $(C_k\times C_k)\rtimes \bZ/2\bZ$ ($A=\bZ/2\bZ$), if $n=2k$, or
\item $(C_i\times C_i\times A_{n-1-2i})\rtimes \bZ/2$ ($A=\bZ/2\bZ$), for $0\le i<\frac n2$.
\end{itemize}
Here $\bZ/2\bZ$ acts by flipping the two type $C$ factors and by the nontrivial diagram automorphism on the type $A$ factor. These cases are covered by Examples \ref{ex:2-flip}, \ref{ex:A-E}.

\item If $\bG=\mathrm{PSO}_{4m}$, $m\ge 2$, $\Omega_G=\langle\delta_1\rangle\times \langle\delta_2\rangle\cong \bZ/2\bZ\times \bZ/2\bZ$, then $\delta_1$ acts by flipping $\Pi_a$ horizontally and $\delta_2$ by the vertical flip. For each subgroup $A\le \Omega_G$, we give the possible $\overline K_\cO^\circ$.
  \begin{enumerate}
  \item If $A=\Omega$, $\overline K_\cO^\circ$ is of type: $D_k\times D_k\times A_{2m-2k-1}$, $2\le k< m$, $D_{m}\times D_{m}$, or $A_{2m-3}$, where $A$ acts on $D_\ell\times D_\ell$ as in Example \ref{ex:spin}, while on $A_{2m-2k-1}$ $\delta_1$ acts trivially, and $\delta_2$ by the nontrivial diagram automorphism.
\item If $A=\langle\delta_1\rangle$, $\overline K_\cO^\circ$ is of type: $D_k\times D_{2m-k}$, $2\le k< m$, or $D_{2m-1}$, where $\delta_1$ acts by the nontrivial automorphism of type $D_\ell$. These cases are covered by Example \ref{ex:O}.   
\item If $A=\langle\delta_2\rangle$ or $A=\langle\delta_1\delta_2\rangle$, $\overline K_\cO^\circ$ is of type $A_{2m-1}$ with the diagram automorphism action as in Example \ref{ex:A-E}.
  \item If $A=1$, then $\overline K_\cO^\circ$ is the hyperspecial subgroup of type $D_{2m-1}$.
  \end{enumerate}
\item If $\bG=\mathrm{PSO}_{4m+2}$, $m\ge 2$, $\Omega_G=\langle\delta\rangle\cong \bZ/4\bZ$:
   \begin{enumerate}
   \item If $A=\Omega_G$, $\overline K_\cO^\circ$ is of type: $D_k\times D_k\times A_{2m-2k}$, $2\le k\le m$, or $A_{2m-2}$,  where $\delta$ acts on $D_\ell\times D_\ell$ as in Example \ref{ex:spin}, while on $A_{2m-2k}$, $\delta$ acts by the nontrivial diagram automorphism.
   \item If $A=\langle\delta^2\rangle$, $\overline K_\cO^\circ$ is of type: $D_{2m}$ or $D_k\times D_{2m-k+1}$, $2\le k\le m$, where $\delta^2$ acts on each factor by the nontrivial automorphism of type $D_\ell$, as in Example \ref{ex:O}.
   \item If $A=1$, then $\overline K_\cO^\circ$ is the hyperspecial subgroup of type $D_{2m}$.
   \end{enumerate}
 \item Let $\bG$ be of type $E_6$ with $\Omega_G=\langle\delta\rangle\cong\bZ/3\bZ$.
   \begin{enumerate}
   \item If $A=1$, then $\overline K_\cO^\circ$ is either of type $E_6$ or $A_5\times A_1$.
   \item If $A=\Omega_G$, then  $\overline K_\cO^\circ$ is of type $A_2^3$ with $\delta$ acting by permutation, as in Example \ref{ex:GL};  $A_1^3\times A_1$, where $\delta$ permutes the first $3$ factors and it fixes the last one; or $D_4$, where $\delta$ acts as in Example \ref{ex:D4}.
   \end{enumerate}
 \item Let $\bG$ be of type $E_7$ with $\Omega_G=\langle\delta\rangle\cong\bZ/2\bZ$.
   \begin{enumerate}
   \item If $A=1$, then $\overline K_\cO^\circ$ is either of type $E_7$, $D_6\times A_1$, or $A_5\times A_2$.
      \item If $A=\Omega_G$, then  $\overline K_\cO^\circ$ is of type $E_6$ with $\delta$ acting by the nontrivial diagram automorphism as in Example \ref{ex:A-E}; $D_4\times A_1^2$, with $\delta$ acting by an order-$2$ diagram automorphism of $D_4$ (Example \ref{ex:O}) and by flipping the two $A_1$'s; $A_2^2\times A_2$, flipping the first $A_2$'s and acting trivially on the third $A_2$; $A_3^2\times A_1$, with the flip on $A_3^2$ and trivial action on $A_1$; or $A_7$ with the nontrivial diagram automorphism (see Examples \ref{ex:2-flip}, \ref{ex:A-E}).  
     \end{enumerate}
\end{enumerate}
      \end{ex}

\section{ Elliptic pairs}\label{s:ellipticpairs}

\subsection{Finite groups}
Suppose $H$ is a finite group. Given a finite-dimensional $H$-representation $(\delta,V_\delta)$ over $\bC$ and functions $f,f'\colon H\to\bC$, define 
\begin{equation}
(f,f')^\delta_{\ellip}=\frac 1{|H|} \sum_{h\in H}{\det}_{V_\delta}(1-\delta(h)) f(h^{-1}) f'(h).
\end{equation}
For $\rho,\rho'\in R(H)$, set
\[(\rho,\rho')^\delta_{\ellip}=(\chi_\rho,\chi_{\rho'})^\delta_{\ellip},
\]
where $\chi_\rho$, $\chi_{\rho'}$ denote the corresponding characters. The basic facts about $(~,~)^\delta_H$ can be found in \cite[\S2]{Re3}. An element $h\in H$ is called ($\delta$-)elliptic if $V_{\delta}^{\delta(h)}=0$. The set $H_\ellip$ of elliptic elements of $H$ is obviously closed under conjugation by $H$. Let $H\backslash H_\ellip$ denote the set of elliptic conjugacy classes. Fix $(\delta, V_\delta)$, and let $\overline R(H)$ be the quotient of $R(H)$ by the radical of the form $(~,~)^\delta_\ellip$. As in {\it loc. cit.}, there is a natural identification of $\overline R(H)$ with the space of class functions of $H$ supported on $H_{\ellip}$. 

For every $h\in H$, let $\mathbf 1_h$ denote the characteristic function of the conjugacy class of $h$. Clearly, $\{\mathbf 1_h\mid h\in H\backslash H_\ellip\}$ is an orthogonal basis of $\overline R(H)$ with respect to the elliptic pairing $(~,~)^\delta_\ellip$.

\smallskip

Suppose that, in addition, we are given an automorphism $\theta:H\to H$ . Let $\langle\theta\rangle$ denote the cyclic group generated by $\theta$ and $H'=H\rtimes\langle \theta\rangle$.  Given a finite-dimensional complex $H'$-representation $(\delta,V_\delta)$ and functions $f,f': H'\to\CC$, define 
\begin{equation}
(f,f')^\delta_{\theta-\ellip}=\frac 1{|H|} \sum_{h\in H}{\det}_{V_\delta}(1-\delta(\theta h)) f((\theta h)^{-1}) f'(\theta h).
\end{equation}
  For $\rho,\rho'\in R(H')$, set
\[(\rho,\rho')^\delta_{\theta-\ellip}=(\chi_\rho,\chi_{\rho'})^\delta_{\theta-\ellip},
\]
where $\chi_\rho$, $\chi_{\rho'}$ denote the corresponding characters. Note that if the representation $\delta$ is understood, we may write $(~, ~)_\ellip^H$ for $(~, ~)_\ellip^\delta$, and similarly for $(~, ~)_{\theta-\ellip}^H$.

\subsection{Complex reductive groups}\label{s:complex}

Let $\cG$ be a possibly disconnected complex reductive group with identity component $\cG^\circ$. If $x\in \cG$ is given, 
fix a Borel subgroup $B_x$ of $\rZ_\cG(x)^\circ$ and a maximal torus $T_x$ in $B_x$. Let $\mathfrak t_x$ be the Lie algebra of $T_x$.
As in \cite[\S2]{Wa} (see also  \cite[\S3.2]{Re3}), we define a complex representation $(\delta_x,\mathfrak t_x)$ of $A_{\cG}(x)$ as follows. 
Every $z\in \rZ_{\cG}(x)$ acts on $\rZ_\cG(x)^\circ$ by the adjoint action, denote by $\alpha_z$ the resulting automorphism of $\rZ_\cG(x)^\circ$. There exists $y\in \rZ_\cG(x)^\circ$ such that $\alpha_z\circ \Ad(y)$ preserves $B_x$ and $T_x$. This means that $\alpha_z\circ \Ad(y)$ defines an automorphism of the cocharacter lattice $X_*(T_x)$ in $\rZ_\cG(x)^\circ$, and therefore a linear isomorphism of $\mathfrak t_x$ denoted $\delta_x(z)$. If $\bar z\in A_{\cG}(x)$, let $z\in Z_{\cG}(x)$ be a representative and set $\delta_x(\bar z):=\delta_x(z)$. This construction gives a representation of $A_{\cG}(x)$. We consider the elliptic theory of the finite group $A_{\cG}(x)$ with respect to the representation $\delta_x$.

\smallskip

An element $g\in \cG$ is called {\it elliptic} if the centralizer $\rZ_\cG(g)$ contains no nontrivial torus.

\subsection{Definitions}\label{s:ell-pairs} Suppose $\Gamma$ is a (possibly disconnected) complex reductive group with identity component $\Gamma^\circ$. Extending the definition in Section \ref{s:finiteFT}, we define the sets  (cf. \cite[Def.~1.1]{Ciu})
\begin{equation}\label{e:Y-gamma}
\begin{aligned}
\cY(\Gamma)&=\{(s,h)\in \Gamma\times\Gamma\mid s,h \text{ semisimple}, \ sh=hs\},\\
\cY(\Gamma)_\ellip&=\{(s,h)\in \Gamma\times\Gamma\mid s,h \text{ semisimple}, \ sh=hs, \ \rZ_{\Gamma}(s,h)\text{ is finite}\}.
\end{aligned}
\end{equation}
Here $\rZ_{\Gamma}(s,h)=\rZ_\Gamma(s)\cap \rZ_\Gamma(h)$ and the finiteness condition is equivalent to saying that no nontrivial torus in $\Gamma$ centralizes both $s$ and $h$. We refer to elements of $\cY(\Gamma)_\ellip$ as {\it elliptic pairs.} Notice that the condition in $\cY(\Gamma)_{\ellip}$ is equivalent to saying that $h$ is elliptic in $\rZ_\Gamma(s)$ or equivalently $s$ is elliptic in $\rZ_\Gamma(h)$.

The sets $\cY(\Gamma),  \cY(\Gamma)_\ellip$ have  $\Gamma$-actions via conjugation: $g\cdot (s,h)=(gsg^{-1},ghg^{-1})$. They also have a natural $\Gamma$-equivariant involution given by the flip \[(s,h)\mapsto (h,s).\] 
Let $\Gamma\backslash \cY(\Gamma)$, $\Gamma\backslash \cY(\Gamma)_\ellip$ be the sets of $\Gamma$-orbits, and given $(s, h) \in \cY(\Gamma)$, write $[(s, h)]$ for the corresponding orbit in $\Gamma \backslash \cY(\Gamma)$. Then we get an involution
\[\mathsf{flip}\colon \Gamma\backslash \cY(\Gamma)\to \Gamma\backslash \cY(\Gamma),\ \mathsf{flip}([(s,h)])=[(h,s)],
\]
which preserves $\Gamma\backslash\cY(\Gamma)_\ellip$.

\begin{lem}\label{l:Y-gamma-finite}
 $\Gamma\backslash \cY(\Gamma)_\ellip$ is a finite set.
\end{lem}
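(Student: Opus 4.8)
The plan is to reduce the statement, in two steps, to the finiteness of the set of $\Gamma$-conjugacy classes of homomorphisms from a \emph{fixed} finite abelian group into $\Gamma$. The first step is a reduction to torsion elements. Given $(s,h)\in\cY(\Gamma)_\ellip$, let $D=\overline{\langle s,h\rangle}$ be the Zariski closure of the subgroup generated by $s$ and $h$; since $s$ and $h$ are commuting semisimple elements, $D$ is a diagonalizable subgroup of $\Gamma$. As $D$ is abelian, its identity component $D^\circ$ centralizes $D$, so
\[
D^\circ\subseteq\rZ_\Gamma(D)=\rZ_\Gamma(s)\cap\rZ_\Gamma(h)=\rZ_\Gamma(s,h),
\]
which is finite by hypothesis. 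Hence $D^\circ$ is trivial, $D$ is a finite abelian group, and $s,h$ have finite order. So $\cY(\Gamma)_\ellip$ consists of pairs of commuting semisimple \emph{torsion} elements with finite joint centralizer.

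The second step is a reduction to a fixed finite group. I claim there is an integer $N=N(\Gamma)$ with $s^N=h^N=1$ for every $(s,h)\in\cY(\Gamma)_\ellip$. Granting this, sending $(s,h)$ to the homomorphism $(\bZ/N\bZ)^2\to\Gamma$ carrying the two standard generators to $s$ and $h$ is $\Gamma$-equivariant and injective, whence an injection
\[
\Gamma\backslash\cY(\Gamma)_\ellip\ \hookrightarrow\ \Gamma\backslash\Hom\big((\bZ/N\bZ)^2,\Gamma\big).
\]
The right-hand side is finite: a (possibly disconnected) complex reductive group has only finitely many conjugacy classes of semisimple elements of order dividing $N$ (for the identity component this is $T[N]/W$ for a maximal torus $T$, and the disconnected case is standard), so $s$ lies in one of finitely many classes; fixing such an $s$, the element $h$ is a semisimple element of order dividing $N$ in the (possibly disconnected) reductive group $\rZ_\Gamma(s)$, again with finitely many $\rZ_\Gamma(s)$-classes; and the $\Gamma$-class of $(s,h)$ is determined by the class of $s$ together with the $\rZ_\Gamma(s)$-class of $h$.

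The genuine difficulty is the uniform bound $N(\Gamma)$. Fix $(s,h)\in\cY(\Gamma)_\ellip$ and put $L=\rZ_\Gamma(s)^\circ$, a connected reductive subgroup whose $\Gamma$-conjugacy class ranges over a finite set as $s$ varies (finiteness of conjugacy classes of connected centralizers of semisimple elements — pseudo-Levi subgroups and their relatives). Conjugation by $h$ is an automorphism $\theta$ of $L$ of finite order with $L^\theta=\rZ_L(h)\subseteq\rZ_\Gamma(s,h)$ finite; on a $\theta$-stable maximal torus $T_L\subseteq L$ it acts on $X_*(T_L)$ as a finite-order lattice automorphism with no nonzero fixed vector, and such an automorphism of a lattice of rank $r$ has order dividing a fixed $m(r)$ (its characteristic polynomial is a product of cyclotomic polynomials $\Phi_d$ with $d\ge 2$), where $r=\rk L\le\rk\Gamma^\circ$. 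A bootstrapping argument — passing from $T_L$ to $L$, then to $\rZ_\Gamma(L)$ (whose component group is finite, uniformly over the finitely many classes of $L$) and to its identity component $S=\rZ_\Gamma(L)^\circ$, a torus on which $h$ again acts with finite fixed-point set — pins a uniformly bounded power of $h$ inside the finite group $S^{\Ad(h)}$, whose order $|\det(\Ad(h)|_{X^*(S)}-1)|$ is bounded because $\Ad(h)|_S$ has bounded order; the symmetric argument interchanging $s$ and $h$ then gives $N(\Gamma)$. I expect this last step, with its intertwined appeals to the finiteness of pseudo-Levi classes, the near-rigidity of finite-order automorphisms with finite fixed-point subgroup, and the elementary bound for fixed-point-free lattice automorphisms, to be where the real work lies. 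A variant that streamlines part of the bookkeeping is to note that on the finite-type variety $\{(s,h)\in\Gamma^2 : sh=hs,\ \dim\rZ_\Gamma(s,h)=0\}$ the $\Gamma$-orbit of a torsion pair is open, since the relevant deformation complex has vanishing $H^1$ (namely $H^1(D,\fg)=0$ for the finite group $D$ acting on $\fg=\mathrm{Lie}\,\Gamma$), hence clopen, so there are finitely many such orbits by noetherianity — but realizing $\cY(\Gamma)_\ellip$ as a union of such orbits still requires the boundedness of the orders occurring, i.e.\ essentially the same input.
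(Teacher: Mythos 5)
Your first step is correct and parallels the paper's observation: writing $D=\overline{\langle s,h\rangle}$, one has $D^\circ\subseteq\rZ_\Gamma(s,h)$, so $D$ is finite and $s,h$ are torsion. But the heart of your argument — the existence of a uniform $N(\Gamma)$ with $s^N=h^N=1$ for every elliptic pair — is asserted, not proved. The "bootstrapping" paragraph is a sketch, and you yourself flag it as "where the real work lies." The specific gap: bounding the order of $\Ad(h)$ on $X_*(T_L)$ does not bound the order of $h$ itself (that power of $h$ only lands in $\rZ_\Gamma(T_L)$, not the identity), so one must then control the order of $h^{\text{(bounded power)}}$ inside some auxiliary group, and your sentence "pins a uniformly bounded power of $h$ inside the finite group $S^{\Ad(h)}$" skips over exactly how the relevant fixed-point groups are both finite and of uniformly bounded size across all isolated $s$. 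You also correctly note at the end that the deformation-theoretic variant (open orbits via $H^1(D,\fg)=0$) does not escape the issue: without a bound on the orders appearing you cannot write $\cY(\Gamma)_\ellip$ as a finite union of $\Hom(D,\Gamma)$'s in the first place. So there is a genuine unfinished step.

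The paper takes a cleaner route that avoids any uniform order bound. From the same starting point ($s$ torsion), it observes that $\rZ_\Gamma(s)^\circ$ must be \emph{semisimple}: its connected centre commutes with both $s$ and $h\in\rZ_\Gamma(s)$, hence lies in the finite group $\rZ_\Gamma(s,h)$, hence is trivial. Thus $s$ is an \emph{isolated} semisimple element, and the classification of isolated semisimple classes (resp.\ isolated semisimple automorphisms when $s\notin\Gamma^\circ$) is a classical finiteness theorem; so $s$ ranges over finitely many $\Gamma^\circ$-classes. For each such $s$, the possibilities for $h$ up to $\rZ_\Gamma(s)$-conjugacy are finite as well — either by the symmetric isolatedness argument inside $\rZ_\Gamma(s)$, or directly from Lemma~\ref{l:ell-comp}(2), which puts them in bijection with elliptic conjugacy classes in the finite group $A_\Gamma(s)$. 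If you want to repair your argument in the spirit of your own outline, the cleanest fix is to import precisely this isolatedness observation: it replaces your unproven $N(\Gamma)$ with the standard finiteness of pseudo-Levi (isolated) classes, and makes the rest immediate.
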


\begin{proof}
Suppose $(s,h)\in  \cY(\Gamma)_\ellip$. The cyclic group $\langle s\rangle$ is in $\rZ_{\Gamma}(s,h)$, hence $s$ has finite order. Moreover, $s$ must be isolated in $\Gamma$ in the sense that $\rZ_\Gamma(s)$ does not contain a nontrivial central torus. The classification of  isolated semisimple automorphisms of $\Gamma$ is well known \cite{St,DM2}, in particular, there are finitely many automorphisms up to inner conjugation. 
\end{proof}

In the next lemma, we relate elliptic pairs in $\rZ_\Gamma(s)$ to elements of $A_\Gamma(s)$ that are elliptic with respect to the action described in Section \ref{s:complex}.

\begin{lem}\label{l:ell-comp} Fix $s\in \Gamma$ semisimple. The projection map $\rZ_\Gamma(s)\to A_\Gamma(s)$, $h\mapsto \bar h$, induces
a bijection between $\rZ_\Gamma(s)$-orbits of elliptic pairs $(s,h)$ and the elliptic conjugacy classes in $A_\Gamma(s)$.
\end{lem}
\begin{proof}
We need a result from the theory of semisimple automorphisms of reductive groups, e.g. \cite[Proposition 9]{Som}: if $x,y$ are semisimple elements in a reductive group $\cG$ such that their images in the group of components $\cG/\cG^\circ$ are in the same conjugacy class, and $S$ is a maximal torus in $\rZ_\cG(x)$, then there exist $g\in \cG$ and $s\in S$ such that $g y g^{-1}=xs$.

We apply this to $\cG=\rZ_\Gamma(s)$ (a reductive group).
Suppose $h,h'$ are semisimple elements such that $(s,h)$ and $(s,h')$ are elliptic pairs. The elliptic condition implies that the maximal torus in $\rZ_\cG(h)$ is trivial, hence $s=1$ in the relation above, and $h$ and $h'$ are $\cG$-conjugate. This implies that if $\overline h=\overline{h'}$ then $[(s,h)]=[(s,h')]$. 

It remains to show that $(s,h)$ is an elliptic pair if and only if $\bar h$ is elliptic in $A_\Gamma(s)$. This is just a matter of checking the definitions in the case $\cG=\rZ_\Gamma(s)$. Given the semisimple element $h\in \cG$, choose a maximal torus $T_s$ in $\cG$ that is normalized by $h$. Then $h$ is not elliptic if and only if there exists a nontrivial torus $S\subset T_s$ that centralizes $h$, equivalently if and only if $\delta_s(\bar h)$ fixes a nonzero element of $\mathfrak t_s$, i.e., if $\overline h$ is not elliptic in $A_\Gamma(s)$. 

\end{proof}

For every $(s,h)\in  \cY(\Gamma)$, define
\begin{equation}\label{e:stable-gamma}
\Pi(s,h)=\sum_{\phi\in \widehat {A_\Gamma(s)}} \phi(h)\phi\in R(A_{\Gamma}(s)),
\end{equation}
and let $\overline \Pi(s,h)$ denote the image in $ \overline R(A_{\Gamma}(s))$. Here $\phi(h)$ is interpreted as $\phi(\bar h)$ where $\bar h$ is the image of $h$ in $A_\Gamma(s)$. Let $ \CC[\cY(\Gamma)_\ellip]^\Gamma$ denote the $\Gamma$-invariant functions on $\cY(\Gamma)_\ellip$; this space can be identified with $\CC[\Gamma\backslash \cY(\Gamma)_\ellip]$. Let $\mathbf 1_{[(s,h)]}$ denote the characteristic function of the $\Gamma$-orbit of $(s,h)$.

\begin{prop}\label{p:ell-ident}
The correspondence $\mathbf 1_{[(s,h)]}\mapsto  \overline \Pi(s,h)$ induce an isomorphism
\[ \CC[\cY(\Gamma)_\ellip]^\Gamma\cong \bigoplus_{s\in \cC(\Gamma)_{\mathsf{ss}}} \overline R(A_\Gamma(s)).
\]
\end{prop}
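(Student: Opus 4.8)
\emph{Plan.} The plan is to decompose both sides according to the semisimple part $s$ and then match bases. First I would check that the assignment $\mathbf 1_{[(s,h)]}\mapsto\overline\Pi(s,h)$ is well defined: conjugating $s$ by $g\in\Gamma$ carries $A_\Gamma(s)$ isomorphically onto $A_\Gamma(gsg^{-1})$ and intertwines $\overline\Pi(s,h)$ with $\overline\Pi(gsg^{-1},ghg^{-1})$, while replacing $h$ by $\gamma h\gamma^{-1}$ with $\gamma\in\rZ_\Gamma(s)$ only changes $\bar h$ by conjugacy in $A_\Gamma(s)$, and $\phi\mapsto\phi(h)$ is a class function there. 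Hence the formula extends to a linear map
\[
\Psi\colon \CC[\cY(\Gamma)_\ellip]^\Gamma=\CC[\Gamma\backslash\cY(\Gamma)_\ellip]\longrightarrow\bigoplus_{s\in\cC(\Gamma)_{\mathsf{ss}}}\overline R(A_\Gamma(s)),
\]
the right-hand sum being effectively finite since, by the argument proving Lemma~\ref{l:Y-gamma-finite}, $\overline R(A_\Gamma(s))\neq 0$ only for the finitely many isolated $s$ of finite order.

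Next I would set up the two bases. Fixing representatives $s$ of $\cC(\Gamma)_{\mathsf{ss}}$, the map $[(s,h)]\mapsto[s]$ exhibits $\Gamma\backslash\cY(\Gamma)_\ellip$ as the disjoint union over $[s]$ of the sets of $\rZ_\Gamma(s)$-orbits of elements $h$ with $(s,h)\in\cY(\Gamma)_\ellip$ (the $\Gamma$-stabiliser of $s$ being $\rZ_\Gamma(s)$); by Lemma~\ref{l:ell-comp}(2), $h\mapsto\bar h$ identifies each such set of orbits with the set of $\delta_s$-elliptic conjugacy classes of $A_\Gamma(s)$. So $\{\mathbf 1_{[(s,h)]}\}$ is a basis of the domain whose ``$s$-block'' is indexed, via $h\mapsto\bar h$, by the elliptic classes of $A_\Gamma(s)$. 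On the other side, as recalled in Section~\ref{s:ellipticpairs}, $\overline R(A_\Gamma(s))$ is the space of class functions on $A_\Gamma(s)$ supported on the $\delta_s$-elliptic set (the projection $R(A_\Gamma(s))\to\overline R(A_\Gamma(s))$ being restriction of the character, since the weight $\det(1-\delta_s(\cdot))$ in the elliptic pairing is nonzero exactly on that set), with basis the characteristic functions $\mathbf 1_{\bar k}$ of the elliptic classes.

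The computation at the heart of the proof is then that $\Psi$ is ``diagonal'' with respect to these two bases. As a class function on $A_\Gamma(s)$, the virtual character $\Pi(s,h)=\sum_\phi\phi(h)\phi$ has value at $g$ equal to $\sum_\phi\chi_\phi(\bar h)\overline{\chi_\phi(g^{-1})}$, which by column orthogonality of the character table is $|\rZ_{A_\Gamma(s)}(\bar h)|$ when $g$ is conjugate to $\bar h^{-1}$ and $0$ otherwise; restricting to the $\delta_s$-elliptic set (and noting $\rZ_\Gamma(s,h)=\rZ_\Gamma(s,h^{-1})$, so $\bar h$ is elliptic iff $\bar h^{-1}$ is) gives $\overline\Pi(s,h)=|\rZ_{A_\Gamma(s)}(\bar h)|\,\mathbf 1_{\overline{h^{-1}}}$, a nonzero scalar times a basis vector. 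Since $h\mapsto h^{-1}$ is an $\rZ_\Gamma(s)$-equivariant bijection of $\{h:(s,h)\in\cY(\Gamma)_\ellip\}$, the classes $\bar h^{-1}$ run over all elliptic classes of $A_\Gamma(s)$ as $[(s,h)]$ runs over the fibre over $[s]$. Hence $\Psi$ sends the basis $\{\mathbf 1_{[(s,h)]}\}$ bijectively, up to nonzero scalars, onto the basis $\{\mathbf 1_{\bar k}\}$, so $\Psi$ is an isomorphism.

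I expect the main obstacle to be purely organisational: keeping the orbit bookkeeping in the second step honest --- that $\Gamma$-orbits of elliptic pairs really decompose over $\cC(\Gamma)_{\mathsf{ss}}$ and match the elliptic classes of the component groups $A_\Gamma(s)$. This is precisely what Lemma~\ref{l:ell-comp}(2) delivers, so once that is invoked the rest is the column-orthogonality identity above; there is no genuine analytic or geometric difficulty.
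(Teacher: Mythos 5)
Your proof is correct and follows essentially the same route as the paper: reduce the orbit bookkeeping to Lemma~\ref{l:ell-comp}(2), then observe via column orthogonality that $\overline\Pi(s,h)=|\rZ_{A_\Gamma(s)}(\bar h)|\,\mathbf 1_{\overline{h^{-1}}}$ so that the map carries the basis of characteristic functions of elliptic classes of pairs to a rescaled basis of characteristic functions of elliptic classes of $A_\Gamma(s)$. You spell out the well-definedness and the $h\mapsto h^{-1}$ relabelling more carefully than the paper's terse proof does, but the content is identical.
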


\begin{proof} In light of Lemma \ref{l:ell-comp}, the only thing left is to remark that the elements $\overline \Pi(s,h)$ forms a basis of $\overline R(A_\Gamma(s))$ as $h$ ranges over a set of representatives of $\rZ_\Gamma(s)$-conjugacy classes such that $(s,h)$ is an elliptic pair. It is elementary that in $R(A_\Gamma(s))$,
\[\overline \Pi(s,h)= |\rZ_{A_\Gamma(s)}(\overline h)|~\mathbf 1_{\overline h^{-1}},
\]
and the claim follows.
\end{proof}

We say that $\Gamma_M\subset \Gamma$ is a {\it Levi subgroup} if there exists a torus $S\subset \Gamma^\circ$ such that $\Gamma_M=\rZ_\Gamma(S)$. If a pair $(s,h)$ is in $\cY(\Gamma_M)$, denote by $\Pi^{\Gamma_M}(s,h)$ the combination defined analogously to (\ref{e:stable-gamma}).

\begin{lem}\label{l:kappa} Suppose $s\in \Gamma_M$ is semisimple.
\begin{enumerate}
\item The inclusion $\rZ_{\Gamma_M}(s)\to \rZ_\Gamma(s)$ induces an inclusion $A_{\Gamma_M}(s)\to A_\Gamma(s)$.
\item For every $(s,h)\in \cY(\Gamma_M)$, $\Ind_{A_{\Gamma_M}(s)}^{A_\Gamma(s)}\Pi^{\Gamma_M}(s,h)=\Pi(s,h)$.
\end{enumerate}
\end{lem}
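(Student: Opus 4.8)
The plan is to prove (1) first, since it is precisely what makes the induced class function in (2) well defined, and then to deduce (2) by a short character-theoretic computation. For (1), I would start from $\rZ_{\Gamma_M}(s)=\rZ_\Gamma(S)\cap\rZ_\Gamma(s)=\rZ_{H}(S)$, where $H:=\rZ_\Gamma(s)$ is reductive (as $\Gamma$ is reductive and $s$ is semisimple). Because $s$ commutes with the connected torus $S\subset\Gamma^\circ$, we have $S\subseteq H^\circ$, so $S$ is a subtorus of the connected reductive group $H^\circ$. The kernel of the map $A_{\Gamma_M}(s)=\rZ_H(S)/\rZ_H(S)^\circ\to H/H^\circ=A_\Gamma(s)$ is $(\rZ_H(S)\cap H^\circ)/\rZ_H(S)^\circ=\rZ_{H^\circ}(S)/\rZ_H(S)^\circ$; since the centralizer of a torus in a connected reductive group is connected (it is a Levi subgroup of $H^\circ$), $\rZ_{H^\circ}(S)$ is connected, hence contained in $\rZ_H(S)^\circ$, and as it obviously contains $\rZ_H(S)^\circ$ the two coincide. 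Thus the kernel is trivial and $A_{\Gamma_M}(s)\hookrightarrow A_\Gamma(s)$.

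For (2), write $A:=A_\Gamma(s)\supseteq A_M:=A_{\Gamma_M}(s)$ via (1), and let $\bar h$ be the image of $h\in\rZ_{\Gamma_M}(s)$ in $A_M$, equivalently in $A$. I would expand $\Ind_{A_M}^{A}\Pi^{\Gamma_M}(s,h)=\sum_{\psi\in\widehat{A_M}}\psi(\bar h)\,\Ind_{A_M}^{A}\psi$ and use Frobenius reciprocity in the form $\Ind_{A_M}^{A}\psi=\sum_{\phi\in\widehat A}\langle\psi,\Res_{A_M}^{A}\phi\rangle_{A_M}\,\phi$. Collecting the coefficient of each $\phi$ reduces the identity to $\sum_{\psi}\langle\psi,\Res_{A_M}^{A}\phi\rangle_{A_M}\,\psi(\bar h)=\phi(\bar h)$, which is just the value at $\bar h\in A_M$ of the character identity $\chi_{\Res_{A_M}^{A}\phi}=\sum_\psi\langle\psi,\Res_{A_M}^{A}\phi\rangle_{A_M}\chi_\psi$. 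Hence $\Ind_{A_M}^{A}\Pi^{\Gamma_M}(s,h)=\sum_{\phi\in\widehat A}\phi(\bar h)\phi=\Pi(s,h)$. Alternatively one can invoke the computation from the proof of Proposition \ref{p:ell-ident}: $\Pi^{\Gamma_M}(s,h)$ and $\Pi(s,h)$ equal $|\rZ_{A_M}(\bar h)|$ and $|\rZ_A(\bar h)|$ times the indicator functions of the conjugacy classes of $\bar h^{-1}$ in $A_M$ and in $A$, and a direct Mackey-style count of $\{g\in A\mid g^{-1}\bar h^{-1}g\in[\bar h^{-1}]_{A_M}\}$ shows that $\Ind_{A_M}^{A}$ carries the first to the second.

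I expect the only genuinely nonformal ingredient to be the connectedness of $\rZ_{H^\circ}(S)$ used in (1); part (2) is routine bookkeeping with induced and restricted characters. The one point to watch in (2) is that $\Pi^{\Gamma_M}(s,h)$ is a $\CC$-valued virtual character, since the coefficients $\psi(\bar h)$ are not real in general, so one must keep the Hermitian inner product straight; but as the multiplicities $\langle\psi,\Res_{A_M}^{A}\phi\rangle_{A_M}$ are non-negative integers, no conjugation issue actually arises and the argument goes through verbatim.
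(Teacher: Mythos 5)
Your proof is correct and follows essentially the same route as the paper: for (1), both arguments reduce to identifying $\rZ_{\Gamma_M}(s)\cap\rZ_\Gamma(s)^\circ = \rZ_{\rZ_\Gamma(s)^\circ}(S)$ and invoking the connectedness of the centralizer of a torus in a connected reductive group; for (2), the paper's one-line justification is precisely the identity $\phi(\bar h)=\sum_{\psi}\langle\phi,\psi\rangle_{A_{\Gamma_M}(s)}\psi(\bar h)$ coming from restriction of characters, which is exactly the computation you carry out via Frobenius reciprocity.
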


\begin{proof}
(1) This is a well-known argument. We need to show that $\rZ_{\Gamma_M}(s)\cap \rZ_{\Gamma}(s)^\circ$ is connected, and hence in $\rZ_{\Gamma_M}(s)^\circ$. But $\rZ_{\Gamma_M}(s)\cap \rZ_{\Gamma}(s)^\circ=\Gamma_M\cap \rZ_{\Gamma}(s)^\circ=\rZ_\Gamma(S)\cap  \rZ_{\Gamma}(s)^\circ=\rZ_{\rZ_{\Gamma}(s)^\circ}(S)$ which is connected since the centralizer of any torus in a connected reductive group is connected.

(2) This is elementary using that $\phi(\overline h)=\sum_{\psi\in \widehat{A_{\Gamma_M}(s)}} \langle \phi,\psi\rangle_{A_{\Gamma_M}(s)} \psi(\overline h)$ for every $\phi\in  \widehat{A_{\Gamma}(s)}$, by restriction of characters.
\end{proof}

\begin{lem} \label{l:kappa2}
Let $(s,h)\in \cY(\Gamma)$ be given and suppose $S$ is a maximal torus in $\rZ_\Gamma(s,h)^\circ$. Set $\Gamma_M=\rZ_\Gamma(S)$. Then $\rZ_{\Gamma_M}(s,h)^\circ=\rZ_{\Gamma_M}^\circ$, i.e., $(s,h)$ is an elliptic pair in $\Gamma_M/\rZ_{\Gamma_M}^\circ$.
\end{lem}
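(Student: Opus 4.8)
The plan is to show that $S$ is a maximal torus of $\rZ_{\Gamma_M}(s,h)^\circ$ and that it is central in $\Gamma_M^\circ$, so that $\rZ_{\Gamma_M}(s,h)^\circ = S \cdot (\text{something of dimension }0) = \rZ_{\Gamma_M}^\circ$. First I would observe that since $\Gamma_M = \rZ_\Gamma(S)$, the torus $S$ is central in $\Gamma_M^\circ$, hence $S \subseteq \rZ_{\Gamma_M}^\circ$; in particular $\rZ_{\Gamma_M}^\circ$ has rank at least $\dim S$. Next, I would note that $s,h \in \Gamma_M$: indeed $s,h$ centralize $S$ (because $S \subseteq \rZ_\Gamma(s,h)^\circ \subseteq \rZ_\Gamma(s) \cap \rZ_\Gamma(h)$, and a connected group centralizing a semisimple element... more directly, $S$ commutes with $s$ and with $h$ by construction, so $s,h \in \rZ_\Gamma(S) = \Gamma_M$). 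Therefore $(s,h) \in \cY(\Gamma_M)$ makes sense, and $\rZ_{\Gamma_M}(s,h) = \rZ_\Gamma(S) \cap \rZ_\Gamma(s) \cap \rZ_\Gamma(h) = \rZ_{\rZ_\Gamma(s,h)}(S)$.

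The key step is then: $\rZ_{\rZ_\Gamma(s,h)}(S) = \rZ_{\rZ_\Gamma(s,h)^\circ}(S) \cdot (\text{finite})$ up to components, and $\rZ_{\rZ_\Gamma(s,h)^\circ}(S)$ is the centralizer of a maximal torus $S$ inside the connected reductive group $\rZ_\Gamma(s,h)^\circ$. By the standard fact that the centralizer of a maximal torus in a connected reductive group equals that torus (a connected reductive group of semisimple rank $0$ is a torus), we get $\rZ_{\rZ_\Gamma(s,h)^\circ}(S) = S$. Hence $\rZ_{\Gamma_M}(s,h)^\circ = (\rZ_{\rZ_\Gamma(s,h)}(S))^\circ = S$. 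On the other hand $\rZ_{\Gamma_M}^\circ = \rZ_{\rZ_\Gamma(S)}^\circ$; since $S$ is a maximal torus of $\rZ_\Gamma(s,h)^\circ$ and $s,h \in \Gamma_M$ are semisimple with $\rZ_{\Gamma_M}(s,h)^\circ = S$ a torus, the group $\Gamma_M^\circ$ has the property that modulo its center, the images of $s$ and $h$ have finite joint centralizer; concretely $\rZ_{\Gamma_M}^\circ \subseteq \rZ_{\Gamma_M}(s,h)^\circ = S \subseteq \rZ_{\Gamma_M}^\circ$ (the last inclusion because $S$ is central in $\Gamma_M^\circ$), giving $\rZ_{\Gamma_M}^\circ = S = \rZ_{\Gamma_M}(s,h)^\circ$. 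This is exactly the assertion that $(s,h)$ is an elliptic pair in $\Gamma_M/\rZ_{\Gamma_M}^\circ$, in the sense of \eqref{e:Y-gamma} applied to the quotient: the joint centralizer of their images is finite.

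The main obstacle I anticipate is bookkeeping with disconnectedness: one must be careful that $\Gamma$ (and hence $\Gamma_M$) need not be connected, so statements like "centralizer of a torus is connected" apply only to the identity components, and one should consistently work with $\rZ_\Gamma(s,h)^\circ$ rather than $\rZ_\Gamma(s,h)$. The cleanest route is to reduce everything to the connected reductive group $\rZ_\Gamma(s,h)^\circ$ (which is reductive because $s,h$ are semisimple) and apply the maximal-torus-is-self-centralizing fact there, then transport back. A second minor point to check is that $S$ being a maximal torus of $\rZ_\Gamma(s,h)^\circ$ implies $S$ is actually a torus of $\Gamma^\circ$ (clear, since $\rZ_\Gamma(s,h)^\circ \subseteq \Gamma^\circ$), so that $\Gamma_M = \rZ_\Gamma(S)$ is a genuine Levi subgroup in the sense defined just above Lemma~\ref{l:kappa}; this legitimizes writing $\Pi^{\Gamma_M}$ and invoking Lemma~\ref{l:kappa}. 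None of these steps requires a substantive new idea; the content is entirely the self-centralizing property of maximal tori in connected reductive groups.
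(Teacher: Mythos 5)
Your proof is correct and takes essentially the same approach as the paper's: both arguments reduce to the self-centralizing property of the maximal torus $S$ inside the connected reductive group $\rZ_\Gamma(s,h)^\circ$. The paper's published proof is more terse (it only records that any torus $S_1 \subseteq \rZ_{\Gamma_M}(s,h)^\circ$ commutes with $S$ and hence lies in $S \subseteq \rZ_{\Gamma_M}^\circ$, leaving the final step implicit), while you spell out the identification $\rZ_{\Gamma_M}(s,h)^\circ = \rZ_{\rZ_\Gamma(s,h)^\circ}(S) = S$ and the sandwiching $\rZ_{\Gamma_M}^\circ \subseteq \rZ_{\Gamma_M}(s,h)^\circ = S \subseteq \rZ_{\Gamma_M}^\circ$; the mathematical content is the same.
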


\begin{proof}
Let $S_1$ be a torus in $\rZ_{\Gamma_M}(s,h)^\circ$. Then $S_1\subset \rZ_\Gamma(s,h)^\circ$ and since it commutes with $S$ which is maximal in $ \rZ_\Gamma(s,h)^\circ$, it follows that $S_1\subset S\subset \rZ_{\Gamma_M}^\circ$.
\end{proof}

\begin{rem}
Our main application will be to consider $\Gamma=\Gamma_u$, the reductive part of the centralizer of a unipotent element $u$ in the Langlands dual group $G^\vee$, while $\Gamma_M$ will be the centralizer of $u$ in a Levi subgroup $M^\vee$.
\end{rem}

\subsection{Elliptic pairs in $\Gamma^\circ$} In applications, we will often encounter the situation where the group $\Gamma$ is connected. For this reason, it is useful to have a precise description of the elliptic pairs in $\Gamma^\circ$. Suppose $s\in \Gamma^\circ$ a semisimple element. Let $T$ be a maximal torus of {$\Gamma$} containing $s$ and let $\Phi$ be the system of roots of $T$ in $\Gamma^\circ$ and {$W(\Gamma^\circ)$} the Weyl group of $T$ in $\Gamma^\circ$. If $\alpha\in\Phi$, let $X_\alpha$ be the {corresponding} one-parameter unipotent subgroup in $\Gamma^\circ$. {For each $w\in W(\Gamma^\circ)$, we fix a representative $\dot w$} of $w$ in $\Nor_{\Gamma^\circ}(T)$. Recall \cite[Theorem 3.5.3]{Car}
\begin{equation}\label{e:cent-ss}
\begin{aligned}
Z_{\Gamma^\circ}(s)^\circ&=\langle T, X_\alpha\mid \alpha(s)=1,\ \alpha\in\Phi\rangle\\
Z_{\Gamma^\circ}(s)&=\langle T, X_\alpha, \dot w\mid \alpha(s)=1,\ \alpha\in\Phi,\ wsw^{-1}=s,\ w\in W(\Gamma^\circ)\rangle.
\end{aligned}
\end{equation}

We say that $w\in W(\Gamma^\circ)$ is \emph{elliptic} if $T^w$ is finite, equivalently if $\mathfrak t^w=0$, where $\mathfrak t$ is the Lie algebra of $T$.

\begin{prop}\label{p:conn}
With the notation as above, 
\[
\Gamma^\circ\backslash\cY(\Gamma^\circ)_\ellip\leftrightarrow W(\Gamma^\circ)\backslash\{(s,w)\mid s\in T\text{ is regular},\ w\in W(\Gamma^\circ) \text{ is elliptic},\ s\in T^w\}.
\] 
\end{prop}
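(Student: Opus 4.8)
The plan is to prove the two inclusions of the asserted equality separately. The crux is that ellipticity forces the first entry of the pair to be regular semisimple; granting that, the description is immediate from the centralizer formula (\ref{e:cent-ss}).

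For the inclusion ``$\supseteq$'': suppose $s$ is regular semisimple, $w\in W(\Gamma^\circ)$ is elliptic, $s\in T^w$, and let $\dot w\in\Nor_{\Gamma^\circ}(T)$ be the chosen representative. Since $\dot w$ commutes with $\dot w^{\,\mathrm{ord}(w)}\in T$, that element lies in $T^w$, which is finite because $w$ is elliptic; hence $\dot w$ has finite order and is semisimple. Moreover $\dot w s\dot w^{-1}=w(s)=s$, so $(s,\dot w)$ is a commuting pair of semisimple elements. Finally, by (\ref{e:cent-ss}) regularity of $s$ gives $\rZ_{\Gamma^\circ}(s)\subseteq\Nor_{\Gamma^\circ}(T)$, so $\rZ_{\Gamma^\circ}(s,\dot w)=\rZ_{\rZ_{\Gamma^\circ}(s)}(\dot w)$ is a subgroup of $\Nor_{\Gamma^\circ}(T)$ meeting $T$ in the finite group $T^w$ and having finite image in $W(\Gamma^\circ)$; thus it is finite and $(s,\dot w)\in\cY(\Gamma^\circ)_\ellip$.

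For ``$\subseteq$'', let $(s,h)\in\cY(\Gamma^\circ)_\ellip$. The main step is to show $s$ is regular. Set $D=\rZ_{\Gamma^\circ}(s)^\circ$, a connected reductive group; conjugation by the semisimple element $h$ is a quasi-semisimple automorphism of $D$, so it stabilizes a Borel pair $(B_D,T_D)$ of $D$, and $\rZ_D(h)^\circ$ is reductive with maximal torus $(T_D^{\Ad(h)})^\circ$. The induced action of $\Ad(h)$ on $X_*(T_D)$ is an automorphism of the based root datum of $(D,B_D,T_D)$; if $D$ were not a torus, it would permute the (nonempty) set of simple coroots of the derived group and hence fix the sum over one of their orbits, a nonzero vector of $X_*(T_D)\otimes\CC$. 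Then $(T_D^{\Ad(h)})^\circ$, and a fortiori $\rZ_{\Gamma^\circ}(s,h)\supseteq\rZ_D(h)^\circ$, would be positive-dimensional, contradicting ellipticity. Hence $s$ is regular. Conjugating $(s,h)$ by an element of $\Gamma^\circ$ we may assume $\rZ_{\Gamma^\circ}(s)^\circ=T$; then $s\in\rZ_{\Gamma^\circ}(T)=T$, and by (\ref{e:cent-ss}) $h\in\rZ_{\Gamma^\circ}(s)\subseteq\Nor_{\Gamma^\circ}(T)$ projects to some $w\in W(\Gamma^\circ)$ with $w(s)=hsh^{-1}=s$, so $s\in T^w$; as $\rZ_{\Gamma^\circ}(s,h)\cap T=T^w$ is finite, $w$ is elliptic. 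Finally, since $1-w$ is invertible on $X_*(T)\otimes\QQ$ the map $t\mapsto t\,w(t)^{-1}$ is surjective on $T$, so writing $h=t_0\dot w$ with $t_0\in T$ and choosing $t_1\in T$ with $t_1\,w(t_1)^{-1}=t_0^{-1}$, conjugation by $t_1$ fixes $s$ and carries $h$ to $\dot w$; thus $(s,h)$ is $\Gamma^\circ$-conjugate to a pair of the asserted form.

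The real obstacle is the first step of ``$\subseteq$'', extracting regularity of $s$ from finiteness of $\rZ_{\Gamma^\circ}(s,h)$: it rests on the structure theory of quasi-semisimple automorphisms (existence of a stable Borel pair and the ensuing identification of a maximal torus of the fixed-point subgroup) together with the elementary observation that a diagram automorphism of a nontrivial semisimple root datum fixes a nonzero cocharacter. After that, everything reduces to bookkeeping with (\ref{e:cent-ss}) and the ellipticity of $w$, in the spirit of Lemma~\ref{l:ell-comp}.
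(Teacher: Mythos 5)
Your proof is correct, and while the overall strategy coincides with the paper's (reduce to $s\in T$ via (\ref{e:cent-ss}), translate ellipticity of $w$ into $\mathfrak t^w=0$, normalize $h$ to $\dot w$), the pivotal step — showing that ellipticity forces $s$ to be regular — is argued by a genuinely different route. The paper first reduces $h$ to a lift $\dot w$ and then asserts that, for $\alpha$ with $\alpha(s)=1$, an ``appropriate'' $\Ad(\dot w)$-invariant sum $e=\sum_{\beta\in O_w}e_\beta$ of root vectors lies in $\mathfrak z_{\Gamma^\circ}(s,\dot w)$; this hands-on root-space computation tacitly requires $\Ad(\dot w)^{|O_w|}$ to act by $1$ on $\mathfrak g_\alpha$, a property of the chosen lift that is not checked. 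You instead argue structurally on $D=\rZ_{\Gamma^\circ}(s)^\circ$ before any normalization of $h$: conjugation by the semisimple $h$ is quasi-semisimple, so it stabilizes a Borel pair $(B_D,T_D)$, $\rZ_D(h)^\circ$ is reductive with maximal torus $(T_D^{\Ad(h)})^\circ$, and the induced diagram automorphism of a non-toral $D$ fixes the (nonzero) sum over an orbit of simple coroots. This sidesteps the eigenvalue question and works directly with $h$, at the cost of invoking Steinberg's theory of semisimple automorphisms rather than bare root-vector manipulation. Your $\supseteq$ direction — bounding $\rZ_{\Gamma^\circ}(s,\dot w)$ as a subgroup of $\Nor_{\Gamma^\circ}(T)$ whose $T$-part is the finite group $T^w$ — is an equivalent packaging of the paper's Lie-algebra criterion that $\Ad(\dot w)$ has no nonzero fixed vector on $\mathfrak t$.
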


\begin{proof} Since we are considering $\Gamma^\circ$-orbits of pairs $(s,h)\in \cY(\Gamma^\circ)_\ellip$, we may assume that $s\in T$ (in a fixed $W(\Gamma^\circ)$-orbit in $T$) and $h$ is in a semisimple conjugacy class of $Z_{\Gamma^\circ}(s)$. If $h\in Z_{\Gamma^\circ}(s)^\circ$, since $Z_{\Gamma^\circ}(s)^\circ$ is reductive (\cite[Theorem 3.5.4]{Car}), $h$ is contained in a maximal torus of $Z_{\Gamma^\circ}(s)^\circ$, hence $(s,h)$ is not an elliptic pair. This means that $h$ must be in $Z_{\Gamma^\circ}(s)\setminus Z_{\Gamma^\circ}(s)^\circ$. By (\ref{e:cent-ss}), we can assume that $h=\dot w$ 
for some $w\in W(\Gamma^\circ)$ such that $s\in T^w$. It is clear that $Z_{\Gamma^\circ}(s,\dot w)\supseteq T^w$, which means that $w$ is necessarily elliptic if $(s,\dot w)$ is an elliptic pair. Suppose $s$ is not regular. Then there exists $\alpha \in \Phi$ such that $\alpha(s)=1$. Let $O_w=\{\alpha, w(\alpha), w^2(\alpha),\dots, w^{n-1}(\alpha)\}$, where $n$ is the order of $w$. Then in the Lie algebra of $\Gamma$, there exists an appropriate sum of root vectors $e=\sum_{\beta\in O_w}e_\beta$ that is invariant under $\Ad(\dot w)$, and therefore $Z_{\Gamma^\circ}(s,\dot w)$ contains the one-parameter subgroup for $e$ and it is infinite. 

Conversely, suppose $(s,\dot w)$ is such that $s$ is regular and $w$ is elliptic. By (\ref{e:cent-ss}), $Z_{\Gamma^\circ}(s)= W({\Gamma^\circ})_s T$, where $W({\Gamma^\circ})_s=\{w_1\in W(\Gamma^\circ)\mid w_1 s w_1^{-1}=s\}.$ Then $Z_{\Gamma^\circ}(s,\dot w)$ is finite if and only if $\Ad(\dot w)$ has no nonzero fixed points on the Lie algebra $\mathfrak z_{\Gamma^\circ}(s)$. But $\mathfrak z_{\Gamma^\circ}(s)=\mathfrak t$, so this is equivalent with $w$ being elliptic.
\end{proof}

\begin{rem}
If $\Gamma$ is connected and simply connected, then $ \cY(\Gamma)_\ellip=\emptyset$. This is because in that case, for every regular semisimple $s \in T$, $Z_\Gamma(s)=Z_\Gamma(s)^\circ=T$, a maximal torus. 
\end{rem}

\section{ The dual nonabelian Fourier transform}\label{s:dualFT}

Let $\mathbf G$ be a connected semisimple algebraic $F$-group and $G=\mathbf G(F)$. Let $\fR_\un(G)$ denote the category of smooth unipotent representations of $G$. If $V,V'\in \Irr_\un (G)$, let 
\begin{equation}
\EP_G(V,V')=\sum_{i\ge 0}(-1)^i\dim \Ext^i(V,V'),
\end{equation}
where $\Ext^i(V,V')$ are calculated in the category $\mathfrak{R}(G)$ of all smooth $G$-representations (\cite{SS}), or equivalently, since $\fR_\un(G)$ is a direct summand of $\mathfrak{R}(G)$, in the category $\fR_\un(G)$.  We remark that this is a finite sum by Bernstein's result on the finiteness of the cohomological dimension of $G$.
 Extend, as we may, $\EP_G(~,~)$ as a Hermitian pairing on $R_\un (G)$ (as defined in Section \ref{s:unip}). Let $\overline R_\un(G)$ denote the quotient of $R_\un(G)$ by the radical of $\EP_G$. 

Let $R_\un^\temp (G)$ be the subspace spanned by the irreducible unipotent tempered representations and let $\overline R_\un^\temp(G)$ be the image of $R_\un^\temp (G)$ in $\overline R_\un(G)$. As it is well-known (\cite{SS,Re}), as a consequence of the (parabolic induction) Langlands classification:
\begin{equation}
\overline R_\un^\temp (G)=\overline R_\un(G).
\end{equation}

Let $\mathfrak B_\un(G)$ denote the unipotent Bernstein center so that $R_\un (G)=\bigoplus_{\mathfrak s\in \mathfrak B_\un(G)} R(G)^{\mathfrak s}$, where $R(G)^{\mathfrak s}$ is the $\CC$-span of irreducible objects in the subcategory $\mathfrak R(G)^{\mathfrak s}$ (defined in Section \ref{subsec:inertial}). Since there are no nontrivial extensions between objects in different Bernstein components, we have an $\EP$-orthogonal decomposition:
\[
\overline R_\un (G)=\bigoplus_{\mathfrak s\in \mathfrak B_\un(G)} \overline R(G)^{\mathfrak s}.
\]
With the same notation for a pure inner twist $G'$ of $G$, we get
\begin{equation}\label{e:ell-decomp-1}
\bigoplus_{G'\in\InnT^p(G)} \overline R_\un(G')=\bigoplus_{G'\in{{\InnT^p}}(G)}\bigoplus_{\mathfrak s\in \mathfrak B_\un(G')} \overline R(G')^{\mathfrak s}.
\end{equation}


\medskip

Recall the unipotent Langlands correspondence in the form (\ref{e:LLC-pure}). Given a semisimple element $s \in G^\vee$ and a unipotent element $u \in \cG_s^p$, apply the definitions of Section \ref{s:complex} to $u\in\cG_s^p$ to obtain a representation $\delta_u^s$ of $A_{\cG^p_s}(u)$ on the Cartan subalgebra $\mathfrak t^s_u$ in the Lie algebra of  $\rZ_{\cG^p_s}(u)$. Let $(~,~)^{\delta_u^s}_\ellip$ be the elliptic inner product on $R(A_{\cG^p_s}(u))$ and let $\overline R(A_{\cG^p_s}(u))$ be the elliptic quotient by the radical of the form. One expects the following correspondence to hold.

\begin{conj}\label{conj:main-ell}
The unipotent Langlands correspondence (\ref{e:LLC-pure}) induces an isometric isomorphism
\begin{equation}\label{e:ell-pure-LLC}
\overline{\mathsf{LLC}^p}_{\un}:\bigoplus_{s\in \cC(G^\vee)_{\mathsf{ss}}}\bigoplus_{u\in \cC(\cG_s^p)_\un}\overline R(A_{\cG_s^p}(u)) \longrightarrow \bigoplus_{G'\in\InnT^p(G)} \overline R_\un(G'),
\end{equation}
where the spaces on the left are endowed with the elliptic inner products $(~,~)^{\delta_u^s}_\ellip$, while the spaces on the right have the Euler--Poincar\'e pairings $\EP_{G'}$.
\end{conj}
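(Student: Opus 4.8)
The plan is to establish the isometry statement of Conjecture \ref{conj:main-ell} by reducing both sides to a common combinatorial model coming from the finite groups $A_{\cG_s}(u)$ (respectively $A_{\cG_s^p}(u)$), exploiting the compatibility of the unramified local Langlands correspondence with parabolic induction and Deligne--Lusztig/Lusztig parametrizations. Concretely, I would proceed in three stages. First, reduce to the elliptic (i.e.\ square-integrable modulo center) part on both sides: by the discussion preceding Conjecture \ref{conj:main-ell} (see the identity $\overline R_\un^\temp(G)=\overline R_\un(G)$ and the $\EP$-orthogonal decomposition over the Bernstein center $\mathfrak B_\un(G')$), it suffices to prove that for each inertial class $\mathfrak s$ the restriction of $\overline{\mathsf{LLC}}_\un$ is an isometry $\overline R(A_{\cG_s}(u))^{\mathfrak s}\to \overline R(G')^{\mathfrak s}$. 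Via the Langlands--Solleveld description of $\Irr_\un(G')$ and the Morita/Bernstein-component picture, the $\mathfrak s$-block of $R_\un(G')$ is the module category of an affine Hecke algebra (with possibly unequal parameters), and the elliptic quotient $\overline R(G')^{\mathfrak s}$ is identified, after work of Reeder \cite{Re3} and the Euler--Poincar\'e formalism of \cite{SS}, with the elliptic representation space of the associated ``extended'' finite group — which is exactly $A_{\cG_s}(u)$ with its reflection representation $\delta_u^s$ on $\mathfrak t_u^s$.

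Second, match the two pairings entry-by-entry on a natural basis. On the dual side, by Proposition \ref{p:ell-ident} the space $\bigoplus_{u}\overline R(A_{\cG_s}(u))$ has the basis $\{\overline\Pi(s,h)\}$ indexed by $\Gamma_s$-conjugacy classes of elliptic pairs $(s,h)$, and the elliptic inner product $(~,~)^{\delta_u^s}_\ellip$ is the standard one computed via $\det_{\mathfrak t_u^s}(1-\delta_u^s(\bar h))$; by Lemma \ref{l:ell-comp}(2) this is literally the elliptic pairing on the finite group $A_{\cG_s}(u)$. On the $p$-adic side, the Euler--Poincar\'e pairing $\EP_{G'}$ restricted to a block is computed by the elliptic pairing formula of \cite{Re3,SS} — again a sum over elliptic conjugacy classes of the relevant finite group weighted by $\det(1-w)$ on the Cartan. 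Because the unramified LLC sends the $L$-packet member $\pi(s,u,\phi)$ precisely to the representation parametrized by $\phi\in\widehat{A_{\cG_s}(u)}$, and intertwines parabolic induction on the $p$-adic side with the inclusion $A_{\cG_M,s}(u)\hookrightarrow A_{\cG_s}(u)$ (Lemma \ref{l:kappa}), the two pairings are transported to each other term by term. For the pure-inner-twist statement \eqref{e:ell-pure-LLC} one runs the identical argument with $\cG_s^p=\rZ_{G^\vee}(\varphi|_{W_F})$ in place of $\cG_s$, using that $\InnT^p(G)$ corresponds to $\rH^1(F,\bG)$ and that only pure inner twists appear; the component groups $A_{\cG_s^p}(u)$ replace $A_{\cG_s}(u)$ throughout, and the central-character bookkeeping from Section \ref{s:Lgrp} (the character $\zeta$) selects exactly the blocks that occur for pure inner twists.

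Third, handle the reduction to the simple adjoint case. By the reduction recalled in Section \ref{s:Lusztig} (following the exposition in \cite{GM}), the classification of unipotent representations and hence the Hecke-algebra description is controlled by the adjoint group; for $\bG$ simple and split one invokes Lusztig \cite{LuI,LuII} in the adjoint case and Solleveld \cite{So1,So2} in general, together with the fact that passing between isogenous groups only twists by central characters, which is absorbed into the indexing by $s$. The later sections of the paper (Sections \ref{s:elliptic}, \ref{s:extend}) carry out exactly this: a proof for $\bG$ simple split adjoint, then an extension to the non-adjoint case.

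\medskip

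\emph{Main obstacle.} The hard part is the identification of the Euler--Poincar\'e pairing $\EP_{G'}$ on each Bernstein block with the finite elliptic pairing on $A_{\cG_s}(u)$ with respect to $\delta_u^s$. This requires (a) the affine-Hecke-algebra model for the block with the \emph{correct} parameters, (b) the computation of $\Ext$-groups / Euler--Poincar\'e characteristics in that module category in terms of the Weyl-group side (the SS/Opdam--Solleveld machinery), and (c) checking that the resulting weights $\det_{\mathfrak t_u^s}(1-\delta_u^s(\bar h))$ agree with the geometric recipe for $\delta_u^s$ from the action of $A_{\cG_s}(u)$ on the Cartan of $\rZ_{\cG_s}(u)$. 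Getting the unequal parameters and the disconnectedness of $\cG_s$ right — i.e.\ making sure the ``$\sharp_j$-twisted'' group algebras of Section \ref{sec:GSC} match the Hecke-algebra side — is where the real content lies; everything else is transport of structure along \eqref{e:LLC-iso}.
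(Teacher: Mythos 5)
Your proposal captures the paper's overall architecture: decompose by Bernstein components, pass to the affine Hecke algebras $\cH(\fs^\vee)$ of \cite{AMS3}, apply the Opdam--Solleveld scaling isometry $\overline R(\cH(\fs^\vee))\cong\overline R(\wti W_{\fs^\vee})\cong\bigoplus_{s}\overline R(W_s)$, and then match elliptic pairings; this is indeed the route the paper takes in Section \ref{s:elliptic} (via Section \ref{s:ell-Hecke}). However, there is a genuine gap in your stage two, and it sits exactly at the point where the mathematics is hard.

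You assert that the elliptic quotient of each block is identified ``with the elliptic representation space of the associated `extended' finite group --- which is exactly $A_{\cG_s}(u)$ with its reflection representation $\delta^s_u$.'' That conflates two different finite groups. What falls out of the Hecke-algebra/Opdam--Solleveld side is an isotropy subgroup $W_s=\rZ_{W_{\fs^\vee}}(s)$ inside a (possibly extended) Weyl group, acting on a Cartan $\mathfrak z_{\cM}$; what indexes the enhancements of an $L$-parameter is the component group $A_{\cG_s}(u)$ of a centralizer of a unipotent element, acting on $\mathfrak t^s_u$. Producing an \emph{isometric} identification $\bigoplus_u\overline R(A_{\cG_s}(u))\cong\bigoplus_{j\in\bJ^s}\overline R(W_j)$ that carries the $\delta^s_u$-elliptic pairing to the $W_j$-elliptic pairing across the generalized Springer correspondence is precisely the content of Waldspurger's theorem (Theorem \ref{t:Wal}), involving the virtual total Springer representations $\pmb{\widetilde\rho}$, and its extension to disconnected $\cG_s$ with the $\sharp_j$-twisted group algebra is Proposition \ref{p:Wal-gen}, proved via Lemma \ref{l:ellip-compare} and Mackey theory. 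Your substitute argument --- that the LLC intertwines parabolic induction with the inclusion $A_{\cG_{M,s}}(u)\hookrightarrow A_{\cG_s}(u)$ (Lemma \ref{l:kappa}) --- does not do this; that lemma only compresses levels on the dual side and never compares a Weyl-group pairing with an $A_u$-pairing. Your ``main obstacle'' paragraph flags the right symptoms (matching the $\det(1-\cdot)$ weights, the $\sharp_j$-cocycles), but you do not name the ingredient that resolves them, and without the Waldspurger bridge stage two does not close.

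A smaller issue: the reduction to the adjoint case you invoke from Section \ref{s:Lusztig} is the reduction for unipotent representations of finite reductive groups over $\Fqbar$; it does not transfer directly to the $p$-adic setting. In the paper, non-adjoint $G$ is handled not by an isogeny reduction of representation theory but by extending the Weyl-group-side elliptic identity to disconnected centralizers $\cG_s=\rZ^1_{G^\vee_\sc}(\varphi|_{W_F})$, which is where the assumption ($\star$) on trivializing $\sharp_j$ and Remark \ref{r:assumption} enter.
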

Here $\cC(G^\vee)_{\mathsf{ss}}$ and $\cC(\cG_s)_\un$ refer to conjugacy classes of semisimple and unipotent elements, as defined in Section \ref{s:unip}.

\begin{rem}
In \cite{Re}, Reeder proves that this elliptic correspondence holds in the case of irreducible representations with Iwahori-fixed vectors of a split adjoint group. In Section \ref{s:elliptic}, Theorem \ref{t:main-elliptic}, we prove Conjecture \ref{conj:main-ell} in the form (\ref{e:ell-pure-LLC}) for a  semisimple adjoint $F$-split group $\bG$. In Section \ref{s:extend}, we explain how this result could be extended to arbitrary isogenies and, in particular, in Corollary \ref{c:Iwahori}  we prove it in the Iwahori-fixed case for an arbitrary $F$-split group $\bG$.
As a concrete example, in Proposition \ref{p:ell-SLn}, we also illustrate the result with a direct proof for $\bG=\SL_n$.
\end{rem}

\begin{rem}
Given $s \in G^\vee$ as above and $u \in \cG_s$, it also makes sense to define a representation of $A_{\cG_s}(u)$ similarly to $\delta_u^s$. It would be natural to expect that the local Langlands correspondence for inner twists as described in (\ref{e:LLC-iso}) also induces an isometric isometry
\begin{equation}
\overline{\mathsf{LLC}}_{\un}\colon \bigoplus_{s\in \cC(G^\vee)_{\mathsf{ss}}}\bigoplus_{u\in \cC(\cG_s)_\un}\overline R(A_{\cG_s}(u)) \longrightarrow \bigoplus_{G'\in\InnT(G)} \overline R_\un(G'),
\end{equation}
though we will not consider a conjecture of this form in this paper.
\end{rem}

\begin{rem}
    One can formulate Conjecture \ref{conj:main-ell} without restricting to the unipotent case. In general, the expected Langlands correspondence should induce an isometric isomorphism
    \begin{equation}\label{pure-ell-general}
    \overline{\mathsf{LLC}^p}\colon \bigoplus_{\varphi}\overline R(A_\varphi)\longrightarrow \bigoplus_{G'\in\InnT^p(G)} \overline R(G'),
    \end{equation}
    where $\varphi$ ranges over the $G^\vee$-conjugacy classes of L-parameters $\varphi: W_F'\to {}^LG$ (equivalently, tempered L-parameters), and $A_\varphi = \pi_0(\rZ_{G^\vee}(\varphi))$ (cf. Section \ref{s:unip}). The elliptic theory of the finite group $A_\varphi$ is taken with respect to the action on a Cartan subalgebra of $\rZ_{G^\vee}(\varphi)$ as before. This formulation is of course related to Arthur's ideas on elliptic representations: in \cite[Corollary 6.3]{ArEll}, Arthur proved the equality of Kazhdan's elliptic pairing between irreducible tempered representations with the elliptic pairing of the corresponding irreducible characters of the Knapp--Stein R-groups. Later, Opdam and Solleveld \cite[Theorems 6.5 and 7.3]{OS2} extended this work to all admissible representations using the homological Euler--Poincar\' e pairing. Moreover, Arthur \cite[\S 7]{ArAutom} expected an identification between the R-groups and the geometric A-groups, and in fact, Reeder \cite[\S 8]{Re}, as part of his proof of the elliptic correspondence, proved this matching in the Iwahori case. 
\end{rem}

\subsection{The elliptic Fourier transform: the split case}\label{s:ell-FT} Suppose $G$ is the split $F$-form. In order to apply the ideas in Section \ref{s:ell-pairs}, we rephrase the left hand side of (\ref{e:ell-pure-LLC}). Since $\Frob$ acts trivially on $G^\vee$, in this situation we have $\cG_s^p=\rZ_{G^\vee}(s)$, and hence
\[
\bigoplus_{s\in \cC(G^\vee)_{\mathsf{ss}}}\bigoplus_{u\in \cC(\cG_s^p)_\un}\overline R(A_{\cG_s^p}(u)) =\bigoplus_{s\in \cC(G^\vee)_{\mathsf{ss}}}\bigoplus_{u\in \cC(\rZ_{G^\vee}(s))_\un}\overline R(A_{G^\vee}(su)),
\]
which can be written as
\[\bigoplus_{u\in \cC(G^\vee)_\un}\bigoplus_{s\in \CC(\Gamma_u)_{\mathsf{ss}}}\overline R(A_{\Gamma_u}(s))=\bigoplus_{u\in \cC(G^\vee)_\un}\CC[\cY(\Gamma_u)_\ellip]^{\Gamma_u},
\]
via Proposition \ref{p:ell-ident}, where $\Gamma_u$ is the reductive part of $\rZ_{G^\vee}(u)$, as before. For simplicity we define
\begin{equation} \label{eqn:Ruep}
 \cR_{\un,\ellip}^p(G)=\bigoplus_{G'\in\InnT^p(G)} \overline R_\un(G'),
\end{equation}
endowed with the Euler--Poincar\'e pairing $\EP=\bigoplus_{G'} \EP_{G'}$. Hence the elliptic unipotent local Langlands correspondence for pure inner twists of a split group can be viewed as the isomorphism
\begin{equation}\label{e:ell-LLC}
\overline{\mathsf{LLC}^p}_{\un}:\bigoplus_{u\in \cC(G^\vee)_\un}\CC[\cY(\Gamma_u)_\ellip]^{\Gamma_u}\longrightarrow \cR_{\un,\ellip}^p(G).
\end{equation}
For every class of elliptic pairs $[(s,h)]\in\Gamma_u\backslash \cY(\Gamma_u)_\ellip$, define the virtual combination (cf. \cite{Wa2,Ciu}):
\begin{equation} \label{eqn:pi_ush} \
\Pi(u,s,h)=\sum_{\phi\in \widehat {A_{\Gamma_u}(s)}}{\phi(h)}\,\pi(s,u,\phi).
\end{equation} 
Regard $\Pi(u,s,h)$ (or rather its image) as an element in $ \cR_{\un,\ellip}^p(G)$. As before $\phi(h)=\phi(\bar h)$, where $\bar h$ is the image of $h$ in $A_{\Gamma_u}(s)$.

\begin{lem}\label{l:EP-stable}
With notation as above and setting $A_x=A_{G^\vee}(x)$, we have:
\begin{enumerate}
\item $\EP(\Pi(u,s,h),\Pi(u',s',h'))=0$ if $x=su$ and $x'=s'u'$ are not $G^\vee$-conjugate;
\item \begin{align*}\EP(\Pi(u,s,h),\Pi(u,s,h'))&=(|\rZ_{A_x}(\bar h)| \mathbf 1_{\bar h^{-1}},|\rZ_{A_x}(\bar h')| \mathbf 1_{\bar {h'}^{-1}})_\ellip^{A_x}\\&=\begin{cases}|Z_{A_x}(\bar h)| {\det}_{\mathfrak t_x}(1-\bar h^{-1}),&\text{ if }h,h'\text{ are conjugate.}\\0,&\text{otherwise},\end{cases}\end{align*}
\end{enumerate}
Hence, the combinations $\{\Pi(u,s,h)\}$ define an orthogonal basis of $\cR_{\un,\ellip}^p(G)$.
\end{lem}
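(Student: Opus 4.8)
The plan is to transport the whole statement to the dual side through the isometry $\overline{\mathsf{LLC}^p}_{\un}$ of Conjecture~\ref{conj:main-ell} (available here since $G$ is split), where both assertions become elementary facts about the elliptic pairing of characteristic functions on the finite groups $A_{\Gamma_u}(s)$. Concretely, by (\ref{eqn:pi_ush}) together with the parametrization (\ref{e:LLC-pure}), the correspondence $\overline{\mathsf{LLC}^p}_{\un}$ sends $\phi\in\widehat{A_{\Gamma_u}(s)}$ to $\pi(s,u,\phi)$, so $\Pi(u,s,h)$ is the image of $\Pi(s,h)=\sum_{\phi}\phi(h)\,\phi\in R(A_{\Gamma_u}(s))$; by Proposition~\ref{p:ell-ident} its class $\overline\Pi(s,h)$ in $\overline R(A_{\Gamma_u}(s))$ equals $|\rZ_{A_x}(\bar h)|\,\mathbf 1_{\bar h^{-1}}$, where $A_x=A_{\Gamma_u}(s)=A_{G^\vee}(su)$ and $\mathbf 1_{\bar h^{-1}}$ is the characteristic function of the conjugacy class of $\bar h^{-1}$ in $A_x$.

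For (1), if $x=su$ and $x'=s'u'$ are not $G^\vee$-conjugate, then by uniqueness of the Jordan decomposition the pairs $(s,u)$ and $(s',u')$ are not simultaneously $G^\vee$-conjugate either, so $\overline\Pi(s,h)$ and $\overline\Pi(s',h')$ lie in distinct summands of $\bigoplus_{u}\bigoplus_{s}\overline R(A_{\Gamma_u}(s))$, which are mutually orthogonal for the transported pairing; hence $\EP(\Pi(u,s,h),\Pi(u',s',h'))=0$.

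For (2), one reduces to $s=s'$, $u=u'$ and computes $(\overline\Pi(s,h),\overline\Pi(s,h'))^{\delta^s_u}_\ellip$ directly: expanding $\overline\Pi(s,h)$ and $\overline\Pi(s,h')$ over $\widehat{A_x}$, the definition of the elliptic form together with column orthogonality of characters collapses the double sum onto a single conjugacy class, which is nonempty precisely when $\bar h$ and $\bar h'$ are conjugate; since $\det_{\mathfrak t_x}(1-\delta^s_u(\cdot))$ is a class function and the relevant class has $|A_x|/|\rZ_{A_x}(\bar h)|$ elements, one obtains the displayed value $|\rZ_{A_x}(\bar h)|\det_{\mathfrak t_x}(1-\bar h^{-1})$, which one also recognizes, via Proposition~\ref{p:ell-ident}, as $(|\rZ_{A_x}(\bar h)|\,\mathbf 1_{\bar h^{-1}},\,|\rZ_{A_x}(\bar h')|\,\mathbf 1_{\bar h'^{-1}})^{A_x}_\ellip$. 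Finally, each $\Pi(u,s,h)$ is nonzero in $\cR_{\un,\ellip}^p(G)$ because $(s,h)$ being an elliptic pair forces $\det_{\mathfrak t_x}(1-\bar h^{-1})\neq 0$ by Lemma~\ref{l:ell-comp}; combined with the fact that the $\overline\Pi(s,h)$ form a basis of $\overline R(A_{\Gamma_u}(s))$ as $(s,h)$ runs over $\Gamma_u$-orbits of elliptic pairs (Proposition~\ref{p:ell-ident}) and that distinct $(s,u)$ contribute orthogonal summands, one concludes that $\{\Pi(u,s,h)\}$ is an orthogonal basis of $\cR_{\un,\ellip}^p(G)$.

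The one substantive input is the isometry property of $\overline{\mathsf{LLC}^p}_{\un}$ for the Euler--Poincar\'e pairing: once Conjecture~\ref{conj:main-ell} is granted (it is established for split $\bG$ in Section~\ref{s:elliptic}), everything above is finite-group bookkeeping, so that — rather than the character computations — is where the real difficulty lies.
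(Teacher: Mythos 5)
Your proof is correct and follows the same path as the paper's: the paper's own proof is a single sentence citing Theorem~\ref{t:main-elliptic} (which establishes Conjecture~\ref{conj:main-ell} for split adjoint groups) plus ``an elementary calculation,'' and your argument is exactly an unpacking of that — transport through the isometry $\overline{\mathsf{LLC}^p}_{\un}$, the identification $\overline\Pi(s,h)=|\rZ_{A_x}(\bar h)|\,\mathbf 1_{\bar h^{-1}}$ from the proof of Proposition~\ref{p:ell-ident}, and the orthogonality of the summands indexed by $G^\vee$-conjugacy classes of $x=su$. The details you fill in (Jordan-decomposition uniqueness for~(1), column orthogonality for~(2), and $\det_{\mathfrak t_x}(1-\bar h^{-1})\neq 0$ for ellipticity via Lemma~\ref{l:ell-comp}) are all the intended ones.
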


\begin{proof}
This is a straightforward consequence of Theorem \ref{t:main-elliptic} (and an elementary calculation for the last equality in (ii)).
\end{proof}

\begin{defn} \label{defn:FTell} {\rm (cf. \cite{Ciu}, \cite{Wa2}).} The (dual) \emph{elliptic nonabelian Fourier transform} is the involutive linear map 
$\FT^\vee_\ellip\colon \cR_{\un,\ellip}^p(G) \to\cR_{\un,\ellip}^p(G)$, defined by
\[\FT^\vee_\ellip(\Pi(u,s,h))=\Pi(u,h,s), \quad\text{$(s,h)\in \Gamma_u\backslash\cY(\Gamma_u)_\ellip$, $u\in G^\vee$ unipotent.}\]
Notice that $\FT^\vee_\ellip$ is the just the image under $\overline{\mathsf{LLC}^p}_\un$ of the canonical involution of $\bigoplus_{s,u} \overline R(A_{su})$ 
\[
[u,s,h]\mapsto [u,h,s],\quad \text{ where }\ \  [u,s,h]:=
\sum_{\psi\in \widehat A_{su}} \psi(h)\psi.\]
\end{defn}

For every $G'\in \InnT^p(G)$, and $K'_\cO\in\max(G')$ consider the restriction map
    \begin{equation}
    \res_{K'_\cO}\colon \Irr_\un G'\to R_\un(\overline K_\cO),\ V\mapsto V^{K'^+_\cO}.
    \end{equation}
    We define a linear map $ \res_{\cpt,\un}\colon \oplus_{G' \in \InnT^p(G)} R_\un(G') \to \cC(G)_{\cpt, \un}$ by setting 
    \begin{equation*}
        \res_{\cpt, \un}(V) = \sum_{K'_{\cO} \in \max(G')} \res_{K'_\cO}(V)
    \end{equation*}
    for all $G' \in \InnT^p(G)$ and $V \in \Irr_\un(G')$.
  With notation as in Section \ref{s:maximal}, for each $(A,\cO) \in S_{\max}(G)$, we let $\text{proj}_\cO$ be the projection map $\cC(G)_{\cpt, \un} \to \oplus_{x \in A} R_\un(\overline K_{x, \cO})$ with respect to the decomposition (\ref{e:AO-decomp}), and let $\res_\cO = \text{ proj}_\cO \circ \res_{\cpt, \un}$. We have
    \begin{equation}
    \res_{\cpt,\un}=\bigoplus_{(A, \cO) \in S_{\max}(G)} \res_\cO.
    \end{equation}
  
 
 We can now formulate the conjecture for elliptic representations.
 
 \begin{conj}\label{c:elliptic} Let $G$ be a simple $F$-split group. Consider the following diagram:
  \begin{displaymath}\xymatrix@+1pc{
    {\cR_{\un,\ellip}^p(G)} \ar[r]^{\mathrm{FT}^\vee_\ellip} \ar[d]_{\res_{\cpt,\un}}
    & {\cR_{\un,\ellip}^p(G)}\ar[d]^{\res_{\cpt,\un}}\\
    {  \cC(G)_{\cpt,\un}} \ar[r]_{\mathrm{FT_{\cpt,\un}}}
    & {  \cC(G)_{\cpt,\un}} }
\end{displaymath} 
For every  unipotent element $u\in G^\vee$, elliptic pair $(s,h) \in \cY(\Gamma_u)_\ellip$, and maximal compact open subgroup $K_\cO$ of $G$, there exists a root of unity $\zeta=\zeta(u,s,h,\cO)$ such that
\[\res_\cO (\Pi(u,h,s))=\zeta \cdot (\FT_{\cpt,\un}\circ \res_\cO)(\Pi(u,s,h)). \]
\end{conj}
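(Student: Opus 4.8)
The plan is to reduce the asserted commutativity, one maximal compact open subgroup $K_\cO$ at a time, to an identity among unipotent characters of the finite (possibly disconnected) reductive group $\overline K_\cO$ and its inner forms, and then to invoke Lemma \ref{l:finite-flip}. First I would observe that all three maps in the diagram respect natural decompositions: $\FT^\vee_\ellip$ preserves the decomposition $\cR_{\un,\ellip}^p(G)=\bigoplus_{u}\CC[\cY(\Gamma_u)_\ellip]^{\Gamma_u}$ of (\ref{e:ell-LLC}) indexed by the unipotent class $u\in G^\vee$, since it is the flip $(s,h)\mapsto(h,s)$, which fixes $u$; moreover $\res_{\cpt,\un}=\bigoplus_{(A,\cO)\in S_{\max}(G)}\res_\cO$ and $\FT_{\cpt,\un}=\bigoplus_{(A,\cO)}\FT_{\overline K_\cO}$, so that $\FT_{\cpt,\un}\circ\res_\cO=\FT_{\overline K_\cO}\circ\res_\cO$ inside $\bigoplus_{x\in A}R_\un(\overline K_{x,\cO})$. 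Thus it suffices to fix a unipotent $u$ and a pair $(A,\cO)$ and to compare $\res_\cO(\Pi(u,h,s))$ with $\FT_{\overline K_\cO}\big(\res_\cO(\Pi(u,s,h))\big)$, both viewed inside $\bigoplus_{x\in A}R_\un(\overline K_{x,\cO})\cong\bigoplus_{\cU\subset A\backslash\Irr_\un\overline K^\circ_\cO}\CC[\cM(\widetilde\Gamma_\cU^A)]$ via (\ref{e:vs-iso}).

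The key step is an explicit formula for the parahoric restriction $\res_{K'_\cO}\big(\pi(s,u,\phi)\big)=\pi(s,u,\phi)^{K_\cO^+}$ along the lines of what Reeder and others established in the Iwahori case. The compact $K_\cO$ corresponds to a vertex-orbit of the affine diagram; the finite reductive group $\overline K^\circ_\cO$ has complex dual group a pseudo-Levi $\cH_\cO^\circ$ of $G^\vee$ (the centralizer of a torsion element), and the disconnected structure $\overline K_\cO/\overline K^\circ_\cO\cong A$ is matched by an extension $\cH_\cO^\circ\subset\cH_\cO$ with $\cH_\cO/\cH_\cO^\circ\cong A$. I expect $\pi(s,u,\phi)^{K_\cO^+}$ to be nonzero exactly when $u$ is $G^\vee$-conjugate into $\cH_\cO^\circ$, and then to equal, up to a sign, the unipotent character of $\overline K_{x,\cO}$ --- with $x\in A$ determined by $s$ and the inner-form label --- attached by Lusztig's parametrization, extended to the disconnected group as in Section \ref{s:disconn}, to the image of $(u,\phi)$ under the disconnected generalized Springer correspondence $\nu^{\cH_\cO}$ of Section \ref{sec:GSC}, the enhancement being transported along an inclusion $A_{\cH_\cO}(u)\hookrightarrow A_{\cG_s}(u)\cong A^1_\varphi$. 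Granting such a formula, the combination $\Pi(u,s,h)=\sum_\phi\phi(h)\,\pi(s,u,\phi)$ restricts, within the relevant family $\cU$, to a combination of the shape $\Pi_\cU(\bar s_\cO,\bar h_\cO)$ of (\ref{e:pi-U}), where $\bar s_\cO,\bar h_\cO$ are the images of $s,h$ under the Springer maps attached to $\cH_\cO$. What then remains is that the flip $(s,h)\mapsto(h,s)$ upstairs corresponds to the flip $(\bar s_\cO,\bar h_\cO)\mapsto(\bar h_\cO,\bar s_\cO)$ downstairs --- the naturality of the flip on the $\cY$-spaces under the homomorphisms induced by inclusions of pseudo-Levis --- which should be essentially formal. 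Lemma \ref{l:finite-flip}, together with its disconnected counterpart packaged in (\ref{e:FT-disconnected}), then yields $\FT_{\overline K_\cO}\big(\Pi_\cU(\bar s_\cO,\bar h_\cO)\big)=\Pi_\cU(\bar h_\cO,\bar s_\cO)$, giving the claimed identity up to a root of unity $\zeta$. The ambiguity $\zeta$ should absorb exactly two sources: Lusztig's signs $\Delta(\bar x_\rho)$ relating irreducible unipotent characters to almost characters, and the normalization choices (the fixed primitive roots $\zeta_\ell$ and square roots $\zeta^\pm$) made in extending representations to the disconnected groups $\widetilde\Gamma_\cU^A$ in Section \ref{s:disconn}; these are precisely the ambiguities flagged in Remark \ref{r:roots}.

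The hard part will be the restriction formula above: it requires a precise compatibility between the geometric Langlands parametrization of $\pi(s,u,\phi)$ --- through $\cG_\varphi$ and the affine Springer / Steinberg variety constructions of Kazhdan--Lusztig, Reeder, and Solleveld --- and parahoric restriction, and in particular it must pin down how the enhancement $\phi$ and the inner-form label interact with the disconnected generalized Springer correspondence. Such a formula is not available in this generality, which is why the statement is posed as a conjecture. For the cases we settle --- $\bG=\Sp_4$ (Section \ref{s:sp4}), $\SL_n$ (Section \ref{s:type-A}), $\PGL_n$ (Section \ref{s:pgl}), and $\bG=\SO_{2n+1}$ by \cite{Wa2} --- I would instead bypass the general formula and work with explicit models: enumerate the unipotent classes $u\in G^\vee$ (partitions in type $A$; partitions and symbols for $\SO$ and $\Sp$), the reductive parts $\Gamma_u$ and their elliptic pairs, the maximal compacts $K_\cO$ and the Lie types of $\overline K_\cO$ from the tables of Section \ref{s:maximal}, and then compute $\res_\cO\circ\FT^\vee_\ellip$ and $\FT_{\cpt,\un}\circ\res_\cO$ directly, checking agreement up to a root of unity. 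The small number of unipotent classes and of maximal compacts in these examples makes the resulting case analysis finite and tractable.
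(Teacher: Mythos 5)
Your proposal correctly recognizes that the statement is a conjecture, decomposes $\FT_{\cpt,\un}\circ\res_{\cpt,\un}$ into its $(A,\cO)$-pieces exactly as in Definition \ref{d:FT-compact}, pinpoints the missing ingredient (an explicit parahoric-restriction formula attaching $\res_\cO\pi(s,u,\phi)$ to a Lusztig parameter of $\overline K_{x,\cO}$ via the disconnected generalized Springer correspondence for the pseudo-Levi $\cH_\cO$), and identifies Lemma \ref{l:finite-flip} together with the signs $\Delta(\bar x_\rho)$ and normalization choices of Section \ref{s:disconn} as the sources of the roots of unity of Remark \ref{r:roots}. This is essentially the paper's own point of view, and your plan for the settled cases — direct verification via explicit models for $\Sp_4$, $\SL_n$, $\PGL_n$, plus \cite{Wa2} for $\SO_{2n+1}$ — matches Proposition \ref{p:regular} and Sections \ref{s:sp4}--\ref{s:pgl}.
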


 \begin{rem}\label{r:roots}
 If $K_\cO$ is the maximal hyperspecial compact subgroup of $G$, so that in particular $\res_\cO=\res_{K_\cO}$, we expect that the only roots of unity $\zeta$ that appear are the well-known $\Delta(\bar x_\rho)\in \{\pm 1\}$ (see \cite[\S6.7]{Lubook}) for certain families of unipotent representations of the finite groups of types $E_7$ and $E_8$. But for other maximal compact subgroups, Proposition \ref{p:sl} shows that in $\SL_n$ for example, new roots of unity can appear. 
 
We remark that extra roots of unity already appear in relation with the nonabelian Fourier transform for finite reductive groups, although we do not know if this is a related issue. In that setting there are three bases of the Grothendieck group of unipotent characters: 
\begin{enumerate}
    \item[(1)] the irreducible characters, 
    \item[(2)] the ``almost characters" which by definition are the image of the basis of irreducible characters under Lusztig's nonabelian Fourier transform, and
    \item[(3)] the traces of the Frobenius on unipotent character sheaves.
\end{enumerate}
Lusztig's conjecture states that each element of the basis (2) equals an element of the basis (3) times a root of unity. Determining these roots of unity is a difficult question, see Shoji \cite{shoji-unipotent} for classical groups, also Hetz's recent thesis \cite{hetz-2023} for progress on the exceptional groups. 
 \end{rem}

\begin{rem}
Note that the definition of the linear combinations $\Pi(u, s, h)$, and thus the definition of $\FT_{\ellip}^\vee$, depends on the (in general, non-canonical) map $\mathsf{LLC}_{\un}^p$. It is an interesting question to understand how Conjecture \ref{c:elliptic} depends on $\mathsf{LLC}_{\un}^p$, and more specifically, what choices one must make in constructing $\mathsf{LLC}_{\un}^p$ for the conjecture to be true. In the cases of the conjecture proved below, these subtleties do not arise. 
\end{rem}

\subsection{Regular unipotent elements} In Section \ref{s:type-A}, we will verify this conjecture completely when $G=\SL_n$ and $\PGL_n$, but here we illustrate it in the case when $u$ is a regular unipotent element.

\begin{prop}\label{p:regular}
Let $u_r\in G^\vee$ be a regular unipotent element. Then 
\[\res_{\cpt, \un} (\Pi(u_r,h,s))= \FT_{\cpt,\un}\circ \res_{\cpt, \un}(\Pi(u_r,s,h)).
 \]
 for all $(s, h) \in \cY(\Gamma_{u_r})$. In particular, Conjecture \ref{c:elliptic} holds with trivial roots of unity.
\end{prop}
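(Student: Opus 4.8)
The plan is to reduce, for regular unipotent $u_r$, both sides to linear combinations of weakly unramified twists of Steinberg representations and of their restrictions to reductive quotients of maximal compact subgroups, and then to use that on the relevant family the finite nonabelian Fourier transform is an \emph{abelian} one, hence literally a flip. First, since $u_r$ is regular unipotent in the semisimple group $G^\vee$, one has $\rZ_{G^\vee}(u_r) = \rZ_{G^\vee}\cdot\rZ_{G^\vee}(u_r)^\circ$ with $\rZ_{G^\vee}(u_r)^\circ$ unipotent, so $\Gamma_{u_r} = \rZ_{G^\vee}$ is finite abelian; hence $\cY(\Gamma_{u_r}) = \cY(\Gamma_{u_r})_\ellip = \rZ_{G^\vee}\times\rZ_{G^\vee}$, and for $(s,h)$ in this set (so $s,h\in\rZ_{G^\vee}$) formula (\ref{eqn:pi_ush}) reads $\Pi(u_r,s,h) = \sum_{\phi\in\widehat{\rZ_{G^\vee}}}\phi(h)\,\pi(s,u_r,\phi)$. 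The unramified parameter with $u_\varphi = u_r$ and $s_\varphi = s$ is the twist by the weakly unramified character $\chi_s$ attached to $s$ under $\rZ_{G^\vee}\cong X_\wnr(G)$ (see (\ref{eqn:dual_unr})) of the Steinberg parameter; since the $X_\wnr$-action preserves the inner-twist labelling, (\ref{eqn:corr}) identifies $\pi(s,u_r,\phi)$ with $\chi_s\otimes\St_{G_{x_\phi}}$, where $\St_{G_{x_\phi}}$ is the Steinberg representation of the pure inner twist $G_{x_\phi}$ (the trivial representation when $G_{x_\phi}$ is anisotropic modulo its centre) and $x_\phi\in\InnT^p(G)\cong\Omega_G$ is the image of $\phi$ under the Kottwitz isomorphism $\widehat{\rZ_{G^\vee}}\cong\Omega_G$ issuing from (\ref{eqn:Ko-iso}).

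Next I would fix $(A,\cO)\in S_{\max}(G)$ and compute $\res_\cO$. For $x\in A$, $\chi_s$ is trivial on $K_{x,\cO}^+\subset K_{x,\cO}^\circ$, since the connected parahoric lies in the kernel of the Kottwitz homomorphism, while $\St_{G_x}^{K_{x,\cO}^+}$ is the Steinberg representation of $\overline K_{x,\cO}^\circ$ extended to $\overline K_{x,\cO}$; so $\res_{K_{x,\cO}}(\chi_s\otimes\St_{G_x})$ is that extension twisted by the character $\bar\chi_s\in\widehat A$ of $\overline K_{x,\cO}/\overline K_{x,\cO}^\circ = A$ induced by $\chi_s$. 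All of these lie in one family $\widetilde\cU$ of $\Inn\overline K_\cO$: the connected Steinberg family is a singleton with $\Gamma_\cU = \{1\}$ and is $A$-stable, so in the notation of Section \ref{s:disconn} one has $\widetilde\Gamma_\cU^A = A$, the signs $\Delta$ for this family all equal $+1$, and $\cM(\widetilde\Gamma_\cU^A) = \cM(A)$. With the parametrization of Proposition \ref{p:disconnected} (in the appropriate one of Examples \ref{ex:2-flip}--\ref{ex:D4}), $\St_{G_x}^{K_{x,\cO}^+}$ is the extension on which $A$ acts trivially, i.e.\ has parameter $(x,\mathbf 1)\in\cM(A)$, so $\res_{K_{x,\cO}}(\chi_s\otimes\St_{G_x})$ has parameter $(x,\bar\chi_s)$. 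Since $\text{proj}_\cO$ annihilates the summands with $x_\phi\notin A$, and $\phi\mapsto x_\phi$ restricts to a bijection from $\{\phi : x_\phi\in A\}$ onto $A$ with inverse written $x\mapsto\phi_x$, we obtain
\[\res_\cO(\Pi(u_r,s,h)) = \sum_{x\in A}\phi_x(h)\,\rho_{(x,\bar\chi_s)},\]
where $\rho_{(x,\sigma)}\in\widetilde\cU$ denotes the representation with parameter $(x,\sigma)$.

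Now $\widetilde\Gamma_\cU^A = A$ is abelian, so $\FT_{\overline K_\cO}$ restricted to $\widetilde\cU$ is the transform $\FT_A$ of (\ref{e:finite-flipM}), which under the isomorphism $\kappa\colon\CC[\cM(A)]\xrightarrow{\;\sim\;}\CC[A\backslash\cY(A)] = \CC[A\times A]$ of Section \ref{s:finiteFT} is the flip $(a,b)\mapsto(b,a)$ (Lemma \ref{l:finite-flip}; as $A$ is abelian no normalising scalars intervene). A direct computation gives $\kappa(\rho_{(x,\sigma)})$ as the function $(a,b)\mapsto\delta_{a,x}\,\sigma(b)$ on $A\times A$, so $\kappa(\res_\cO(\Pi(u_r,s,h)))$ is $(a,b)\mapsto\phi_a(h)\,\bar\chi_s(b)$; applying the flip produces $(a,b)\mapsto\phi_b(h)\,\bar\chi_s(a)$, whereas $\kappa(\res_\cO(\Pi(u_r,h,s)))$ is $(a,b)\mapsto\phi_a(s)\,\bar\chi_h(b)$. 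These two functions coincide provided $\phi_x(g) = \bar\chi_g(x)$ for all $x\in A$ and $g\in\rZ_{G^\vee}$ — equivalently, that the isomorphism $\widehat{\rZ_{G^\vee}}\cong\Omega_G$ labelling pure inner twists and the isomorphism $\rZ_{G^\vee}\cong X_\wnr(G)\cong\widehat{\Omega_G}$ of (\ref{eqn:dual_unr}) are inverse-transpose, which is built into the Kottwitz formalism \cite{Ko,Hai}. Summing over all $(A,\cO)\in S_{\max}(G)$ then yields $\res_{\cpt,\un}(\Pi(u_r,h,s)) = \FT_{\cpt,\un}\circ\res_{\cpt,\un}(\Pi(u_r,s,h))$ on the nose, so in particular the roots of unity in Conjecture \ref{c:elliptic} are trivial in this case.

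The main obstacle is the normalization used in the second step: that $\St_{G_x}^{K_{x,\cO}^+}$ is the \emph{untwisted} extension $(x,\mathbf 1)$ of the finite Steinberg representation. One must determine how the $\Omega_G$-part of $K_{x,\cO}$, as opposed to its parahoric part $K_{x,\cO}^\circ$, acts on $\St_{G_x}^{K_{x,\cO}^+}$, and match this with the conventions of Section \ref{s:disconn}; I expect this to follow from the description of $\St_{G_x}^{K_{x,\cO}^+}$ via the sign character of the affine Hecke algebra, which is trivial on $\Omega_G$ because elements of $\Omega_G$ have length zero, or else by a short check through Examples \ref{ex:2-flip}--\ref{ex:D4}. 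Note moreover that the computation above is robust: any extension differing from the untwisted one by a \emph{symmetric} bicharacter of $A$ would cancel in the flip, so the only essential input beyond this normalization is the Kottwitz compatibility recorded above. The entire argument can be tested against Example \ref{ex:pgl2} in the case $\bG = \PGL_2$.
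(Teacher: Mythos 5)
Your proof is essentially the paper's own: both reduce to weakly unramified twists of Steinberg, identify the relevant finite family as the one with $\widetilde\Gamma_\cU^A = A$, invoke the Kottwitz compatibility $\phi_x(g) = \bar\chi_g(x)$ (the paper's $\phi_x(h) = \sigma_h(x)$), and conclude via Lemma \ref{l:finite-flip}. Your rephrasing through $\kappa$ and $\CC[A\times A]$ is an equivalent bookkeeping of the same flip, and your remark that any symmetric twist in the normalization of the extension would cancel is a nice observation that the paper's proof also implicitly relies on.
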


\begin{proof}
In this case $\Gamma_{u_r}=\rZ_{G^\vee}$ and every pair $(s,h)$ in $\cY(\rZ_{G^\vee})$ is elliptic. 
Write the natural identification
\begin{equation*}
    \Omega_G \overset{\sim}\longrightarrow \widehat{\rZ_{G^\vee}}
\end{equation*}
as $x \mapsto \phi_x$. Then for $(s, h) \in \cY(\Gamma_{u_r})$, we have
$\Pi(u_r,s,h)=\sum_{x \in \Omega} \phi_x(h) \pi(s,u_r,\phi_x)$. 
Note that $\pi(1, u_r, \phi_x)$ is the Steinberg representation $\St_{G_x}$ of $G_x$, so $\pi(s, u_r, \phi_x) \simeq \St_{G_x} \otimes \chi_s$, where $\chi_s$ is the weakly unramified character corresponding to $s$ under (\ref{eqn:dual_unr}). This follows from the fact that $\mathsf{LLC}_{\un}$ is equivariant for the action of weakly unramified characters, cf. \cite[Theorem 1(b)]{So1}.

For the rest of the proof, we fix $(A, \cO) \in S_{\max}(G)$. Then given $s \in \rZ_{G^\vee}$, the character $\chi_s$ is trivial on the parahoric $K_{x, \cO}^\circ$ so defines a character, call it $\sigma_s$, of $K_{x, \cO}/K_{x, \cO}^\circ = \overline K_{x, \cO}/\overline K_{x, \cO}^\circ$.
We have  
\[\res_{\cO}\pi(s,u_r,\phi_x)=\begin{cases}0 &\text{ if } x \notin A,\\ \St_{\overline{K}_{x, \cO}^\circ}\rtimes \sigma_s &\text{ if } x \in A,\end{cases}
\]
where $\St_{\overline{K}_{x, \cO}^\circ}$ is the Steinberg character of the finite group $\overline K_{\cO,x}^\circ$. 
Note that that for every $x \in A$
\[\phi_x(h)=\sigma_h(x)\text{ for all }h\in \rZ_{G^\vee}.
\]
Thus
\begin{equation}\label{e:resO-ur}
\res_\cO\Pi(u_r,s,h)
= \sum_{x\in A} \sigma_h(x) 
\St_{\overline K_{x, \cO}^\circ}\rtimes \sigma_s.
\end{equation}
With notation as in Section \ref{s:Lusztig}, let $\cU_{\cO,\St}=\{\St_{K_\cO^\circ}\}$ be the Steinberg family in $\Irr_\un(\overline K_\cO)$, and let 
$\widetilde\cU_{\cO,\St} \subset \cup_{x \in A} \Irr_\un(\overline K_{x, \cO})$ be the family 
parametrized by $\widetilde{\Gamma}_{\cU_{\cO, \St}}^A = A$ under the bijection of Proposition \ref{p:disconnected}. 
Then by (\ref{e:resO-ur}), $\res_\cO \Pi(u_r, s, h)$ corresponds to $\Pi_{\widetilde\cU_{\cO,\St}}(\sigma_s,\sigma_h)$ defined as in (\ref{e:pi-U}).
The claim then follows from Lemma \ref{l:finite-flip}.
\end{proof}

\section{A definition of the elliptic Fourier transform \`a la Lusztig}\label{sec:Lusztig}

We present an alternative definition of the elliptic nonabelian Fourier transform (cf. Definition \ref{defn:FTell}) along the lines of Lusztig's pairing defined in \cite[\S1]{LualmostII}. Retain the notation from Section \ref{s:ellipticpairs}. In particular, $\Gamma$ is a complex reductive group, not necessarily connected, and $\cY(\Gamma)_\ellip$ is the set of elliptic semisimple pairs in $\Gamma$. Let $\Sigma$ be the set of semisimple elements of $\Gamma$. Extending the definition in Section \ref{s:finiteFT}, we let
\[\cM(\Gamma)= \{(x,\sigma)\mid x\in \Sigma,~\sigma\in \widehat{A_\Gamma(x)}\},
\]
modulo the equivalence relation given by conjugation by $\Gamma$.

For any two semisimple elements $x,y\in\Gamma$, define the set
\begin{equation}
\cA_{x,y}=\{z\in \Gamma\mid zxz^{-1}\in Z_\Gamma(y)\}
\end{equation}
with an action of $Z_\Gamma(x)^\circ\times Z_\Gamma(y)^\circ$ by $(\gamma,\gamma')\cdot z=\gamma' z \gamma^{-1}$. Let $^0\!\cA_{x,y}$ denote the set of orbits. By \cite[Lemma 1.2]{LualmostII}, this is a finite set. It is clear that $\cA_{y,x}=\cA_{x,y}^{-1}$.

Consider the subset
\begin{equation}
\cA_{x,y}^\ellip=\{z\in \Gamma\mid (zxz^{-1},y)\in \cY(\Gamma)_\ellip\},
\end{equation}
and let $^0\!\cA_{x,y}^\ellip$ be the corresponding finite set of $Z_\Gamma(x)^\circ\times Z_\Gamma(y)^\circ$-orbits. Following \cite[\S1.3]{LualmostII}, we suppose $\kappa : \Sigma \times \Sigma \to \bR$ is a nonnegative function 
satisfying
\[\kappa(x',y')=\kappa(y',x'),\quad \kappa(\gamma x'\gamma^{-1},\gamma' y'(\gamma')^{-1})=\kappa(x',y'),\quad \kappa(\zeta x',\zeta' y')=\kappa(x',y'),
\]
for all $x', y' \in \Sigma, \gamma, \gamma' \in\Gamma$, $\zeta,\zeta'\in \rZ_\Gamma$. The following definition is an elliptic analogue of \cite[\S1.3 (a)]{LualmostII}.

\begin{defn}\label{d:L-elliptic} For $(x,\sigma),(y,\tau)\in \cM(\Gamma)$, set
\[\{(x,\sigma),(y,\tau)\}_\ellip={\kappa(x,y)}\sum_{z\in ^0\!\cA_{x,y}^\ellip} \tau(zx^{-1} z^{-1}) \sigma(z^{-1} y z).
\]
It is immediate that $\{(x,\sigma),(y,\tau)\}_\ellip = \overline{\{ (y, \tau), (x, \sigma)\}}_\ellip$ for all $(x, \sigma), (y, \tau) \in \cM(\Gamma)$. Moreover, if $x\in \Sigma$ is such that $x$ does not belong to any elliptic pair, then it is clear that $(x,\sigma)$ is in the radical of $\{~,~\}_\ellip$ for all $\sigma$.
\end{defn}
Let $\mathbf V = \CC[\cM(\Gamma)]$ denote the $\CC$-span of $\cM(\Gamma)$, and given $(x, \sigma) \in \cM(\Gamma)$, write $[(x, \sigma)]$ for its image in $\mathbf{V}$. Then $\{~, ~\}_\ellip$ extends to a Hermitian pairing on $\mathbf{V}$. Similarly to (\ref{e:pi-U}), for every pair of commuting elements $x,y\in\Sigma$, denote
\begin{equation}
\Pi(x,y)=\sum_{\sigma\in \widehat {A_\Gamma(x)}} \sigma(y^{-1}) [(x,\sigma)]\in \mathbf V.
\end{equation}
Let $\bar y$ denote the image of $y$ in $A_\Gamma(x)$. Let $C_{\rZ(x)}(y)$ denote the conjugacy class of $y$ in $\rZ_\Gamma(x)$, and $C_{A(x)}(\bar y)$ the conjugacy class of $\bar y$ in $A_\Gamma(x)$. Since $x$ and $y$ commute, we have that $\cA_{x, y}$ contains the set $\rZ_\Gamma(y)\rZ_\Gamma(x)$, and the action of $\rZ_\Gamma(x)^\circ \times \rZ_\Gamma(y)^\circ$ on $\cA_{x, y}$ restricts to an action on $\rZ_\Gamma(y)\rZ_\Gamma(x)$. Let 
$^0\!\cB_{x,y}$ denote a set of orbit representatives for this restricted action.

\begin{lem}\label{l:first-rel}
For every $(t,\tau)\in \cM(\Gamma)$,
\[\{\Pi(x,y),(t,\tau)\}_\ellip=\begin{cases} {\kappa(x,y)|}{|\rZ_{A(x)}(\bar y)|} \sum_{z\in ^0\!\cB_{x,y}} \tau(zx^{-1} z^{-1}) & \text{ if } t=y\text{ and } (x,y)\in \cY(\Gamma)_\ellip,\\
0 &\text{ otherwise}.
\end{cases}
\]
\end{lem}

\begin{proof}
This is similar to the calculation for the nonabelian Fourier transform in the case of finite reductive groups. We compute:
\begin{align*}
\{\Pi(x,y),(t,\tau)\}_\ellip&= \sum_{\sigma\in \widehat {A_\Gamma(x)}} \sigma(y^{-1}) \{(x,\sigma),(t,\tau)\}_\ellip\\
&={\kappa(x,t)}\sum_{z\in ^0\!\cA_{x,t}^\ellip} \tau(zx^{-1} z^{-1})\left(\sum_{\sigma\in \widehat {A_\Gamma(x)}}\sigma(y^{-1}) \sigma(z^{-1} t z)\right).
\end{align*}
Fix $z \in {}^0\!\cA_{x, t}^\ellip$. By the orthogonality relations for characters, $\sum_{\sigma\in \widehat {A_\Gamma(x)}}\sigma(y^{-1}) \sigma(z^{-1} t z)=0$ unless the image of $z^{-1}tz$ in $A_\Gamma(x)$ is conjugate to $\bar y$, in which case, it equals ${|\rZ_{A(x)}(\bar y)|}$. In addition, $(x,z^{-1}tz)\in\cY(\Gamma)_\ellip$, which implies by Lemma \ref{l:ell-comp} that the image of $z^{-1}tz$ in $A_\Gamma(x)$ is in an elliptic class, so $\bar y$ is elliptic in $A_\Gamma(x)$, and so again by Lemma \ref{l:ell-comp}, $(x,y)\in \cY(\Gamma)_\ellip$. Moreover, by the same result, the image of $z^{-1}tz$ in $A_\Gamma(x)$ is conjugate to $\bar y$ if and only if $z^{-1}tz$ is conjugate to $y$ in $\rZ_\Gamma(x)$. In particular, this means that $t$ is conjugate to $y$ in $\Gamma$. This proves that $\{\Pi(x,y),(t,\tau)\}_\ellip = 0$ unless $t = y$ and $(x,y)\in \cY(\Gamma)_\ellip$. (Since $\cM(\Gamma)$ consists of $\Gamma$-orbits, we may identify $t=y$.)

To complete the proof, assume $(x, y) \in \cY(\Gamma)_\ellip$. We have
\[\{\Pi(x,y),(y,\tau)\}_\ellip={\kappa(x,y)}{|\rZ_{A(x)}(\bar y)|} \sum_{\substack{z\in ^0\!\cA_{x,y}^\ellip\\z^{-1}yz\in C_{Z(x)}(y)}} \tau(zx^{-1} z^{-1}).
\] 
We analyze the index of summation. Notice that $z^{-1}yz\in C_{\rZ(x)}(y)$ is equivalent to $z\in \rZ_\Gamma(y)\rZ_\Gamma(x)$. Since $(x,y)$ is an elliptic pair, it is also automatic that $(zxz^{-1}, y)$ is for any $z \in \rZ_\Gamma(y)\rZ_\Gamma(x)$, hence $z$ ranges over representatives of $\rZ_\Gamma(x)^\circ \times \rZ_\Gamma(y)^\circ$-orbits in $ \rZ_\Gamma(y)\rZ_\Gamma(x)$. 
\end{proof}

\begin{prop}\label{p:second-rel}
For $(x,y)\in \cY(\Gamma)_\ellip$:
\begin{enumerate}
\item[(a)] \[\sum_{\tau\in \widehat {A_\Gamma(y)}} \{\Pi(x,y),(y,\tau)\}_\ellip [(y,\tau)]=\kappa(x,y) |\rZ_{A(x)}(\bar y)| |^0\!\cB_{x,y}|\cdot \Pi(y,x)\text{ in }\mathbf V.
\]
\item[(b)] \[\sum_{h\in A_\Gamma(y)} \{\Pi(x,y),\Pi(y,h)\}_\ellip \Pi(y,h) = \kappa(x,y) |A_\Gamma(y)| |\rZ_{A(x)}(\bar y)| |^0\!\cB_{x,y}|\cdot \Pi(y,x).\]
\end{enumerate}
\end{prop}

\begin{proof}
(a) Denote by $\tau^{z^{-1}}\in \widehat {A_\Gamma(y)}$ the twist of $\tau$ by $z^{-1}$, so $\tau^{z^{-1}}(a)=\tau(z a z^{-1})$.  Applying the previous lemma, we get
\begin{align*}
\frac 1{|\kappa(x,y)||\rZ_{A(x)}(\bar y)|}&\sum_{\tau\in \widehat {A_\Gamma(y)}} \{\Pi(x,y),(y,\tau)\}_\ellip [(y,\tau)]=\sum_{z\in ^0\!\cB_{x,y}} \sum_{\tau\in\widehat{A_\Gamma(y)}} \tau^{z^{-1}}(x^{-1})[(y,\tau)]\\
&=\sum_{z\in ^0\!\cB_{x,y}} \sum_{\tau\in\widehat{A_\Gamma(y)}} \tau^{z^{-1}}(x^{-1})[(z^{-1}yz,\tau^{z^{-1}})]=\sum_{z\in ^0\!\cB_{x,y}}\sum_{\tau'\in \widehat{A_\Gamma(z^{-1}yz)}} \tau'(x^{-1}) [(z^{-1}yz,\tau')]\\
&=\sum_{z\in ^0\!\cB_{x,y}}\Pi(z^{-1}yz,x)=|^0\!\cB_{x,y}|~\Pi(y,x),
\end{align*}
since $z^{-1}yz$ is conjugate to $y$ in $\rZ_\Gamma(x)$.

(b) This is immediate from (a) using $\sum_{h\in A_\Gamma(y)} \tau(h^{-1}) \tau'(h)=|A_\Gamma(y)|\delta_{\tau,\tau'}$, $\tau,\tau'\in\widehat{A_\Gamma(y)}$.
\end{proof}

Consider the set
\begin{equation}\label{e:ell-basis}
B_{\mathbf V}=\{|C_{A(y)}(\bar h)|^{1/2} \Pi(y,h)\mid (y,h)\in \Gamma\backslash \cY(\Gamma)_\ellip\}.
\end{equation}
By Lemma \ref{l:first-rel}, $B_{\mathbf V}$ spans $\bar{\mathbf V}_\ellip:=\mathbf V/\ker\{~,~\}_\ellip$, where $\ker\{~,~\}_\ellip$ denotes the radical of the pairing. Define
\begin{equation}
\cF'_\ellip: \mathbf V\to \mathbf V,\quad \cF'_\ellip(v)=\sum_{b\in B_{\mathbf V}} \{v,b\} b.
\end{equation}
Clearly, $\cF'_\ellip$ descends to a linear map on $\bar{\mathbf V}_\ellip$. 

\begin{cor}
For every $(x,y)\in \cY(\Gamma)_\ellip$,
\[\cF'_\ellip(\Pi(x,y))=\kappa(x,y) |A_\Gamma(y)| |\rZ_{A(x)}(\bar y)| |^0\!\cB_{x,y}|\cdot \Pi(y,x)
\]
\end{cor}

\begin{proof}
From Lemma \ref{l:first-rel}, we have $\cF'_\ellip(\Pi(x,y))=\sum_h |C_{A(y)}(\bar h)| \{\Pi(x,y),\Pi(y,h)\}_\ellip [\Pi(y,h)]$, where the sum is over a set of representatives $h$ of the conjugacy classes of elliptic semisimple elements in $\rZ_\Gamma(y)$, equivalently, elliptic conjugacy classes in $A_\Gamma(y)$. So we may rewrite
\[\cF'_\ellip(\Pi(x,y))=\sum_{h\in A_\Gamma(y)_\ellip}  \{\Pi(x,y),\Pi(y,h)\}_\ellip \Pi(y,h)=\sum_{h\in A_\Gamma(y)}  \{\Pi(x,y),\Pi(y,h)\}_\ellip \Pi(y,h),
\]
using Lemma \ref{l:first-rel} again. Then the claim follows from Proposition \ref{p:second-rel}.
\end{proof}

In other words, $\cF'_\ellip$ acts, up to a scalar multiple, by the flip on each $\Pi(x,y)$. If we set 
\begin{equation}
\kappa(x,y)=\frac{|C_{A(x)}(\bar y)|^{1/2} |C_{A(y)}(\bar x)|^{1/2}}{|A_\Gamma(x)||A_\Gamma(y)| |^0\!\cB_{x,y}|},
\end{equation}
then 
\begin{equation}
{\cF'_\ellip}^2=\Id.
\end{equation}
\begin{rem}
When we specialize to $\Gamma=\Gamma_u$ for a unipotent element $u \in \Gamma$, we see from Lemma \ref{l:EP-stable} that $B_{\mathbf V}$ is in fact an orthogonal basis of $\bar{\mathbf V}_\ellip$ with respect to the Euler--Poincar\'e pairing.
\end{rem}

\section{Elliptic unipotent representations}\label{s:elliptic}

The main result of this section is:

\begin{thm}\label{t:main-elliptic}
Suppose $G$ is a semisimple split $F$-group of adjoint type. Then Conjecture \ref{conj:main-ell} holds for all pure inner twists of $G$.
\end{thm}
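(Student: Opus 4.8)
The goal is to establish the isometric isomorphism in (\ref{e:ell-pure-LLC}) for a simple split adjoint $F$-group $G$. Since $G$ is adjoint, by the example in Section \ref{s:inner} every inner twist is pure, $\InnT^p(G)=\InnT(G)$, and $\cG_s^p=\cG_s=\rZ_{G^\vee}(\varphi|_{W_F})=\rZ_{G^\vee}(s)$ because $\Frob$ acts trivially on $G^\vee$; also $\rZ_{G^\vee_\sc}^{W_F}=\rZ_{G^\vee_\sc}$ and $G^\vee_\sc$ is simply connected. The strategy is to reduce the statement, via the unramified Langlands correspondence (\ref{e:LLC-iso})--(\ref{e:LLC-pure}) recalled in Section \ref{sec:LLC}, to a statement that can be checked one Bernstein block at a time, and then inside each block to compare the elliptic inner product $(~,~)^{\delta_u^s}_\ellip$ on $\overline R(A_{\cG_s}(u))$ with the Euler--Poincar\'e pairing $\EP_{G'}$ on the side of $p$-adic representations. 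The main input is that, block by block, the unipotent Bernstein components of the $G'$, $G'\in\InnT^p(G)$, are governed by (extended) affine Hecke algebras with explicitly known parameters (Lusztig, Solleveld), for which the $\EP$-pairing is computed by an elliptic pairing on the associated finite groups.

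\textbf{Key steps.} First I would use the orthogonal decompositions (\ref{e:ell-decomp-1}) on the $p$-adic side and the decomposition of $R(\Phi^p_{\enh,\un}({}^LG))$ indexed by inertial classes (\ref{eqn:dec_Phi}) on the dual side, together with the fact (\ref{eqn:dec_Phi}) that $\overline{\mathsf{LLC}^p}_\un$ is compatible with these decompositions, to reduce to a single inertial class $\fs^\vee=[L^\vee\rtimes W_F,(\varphi_\cus,\phi_\cus)]_{G^\vee}$, or equivalently a single Bernstein block. Second, within such a block I would invoke the description of the block as $\Irr$ of an extended affine Hecke algebra $\cH(\fs^\vee,\vec q)\rtimes \fR_{\fs^\vee}$ with a geometric realization of the dual side via the generalized Springer correspondence of Section \ref{sec:GSC} (this is where $\Gamma_u$, $\cY(\Gamma_u)_\ellip$ and the twisted group algebras $\overline{\mathbb Q}_\ell[W_j,\sharp_j]$ enter). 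Third, I would use the known computation of the Euler--Poincar\'e pairing on the cocenter/Grothendieck group of an affine Hecke algebra in terms of elliptic pairings on the relevant finite groups (Reeder \cite{Re}, and the elliptic theory of Section \ref{s:ellipticpairs}, especially Proposition \ref{p:ell-ident} and Lemma \ref{l:ell-comp}): this says exactly that $\EP_{G'}$ restricted to the block matches $(~,~)^{\delta_u^s}_\ellip$ on $\bigoplus_u \overline R(A_{\cG_s}(u))$ under $\mathsf{LLC}_\un$, once one identifies the two ``$\delta$'' representations — the representation $\delta_u^s$ of $A_{\cG_s}(u)$ on the Cartan subalgebra of $\rZ_{\cG_s}(u)$ defined in Section \ref{s:ellipticpairs}, and the one controlling the rank of the relevant torus in the Hecke-algebra picture. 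Fourth, I would sum over blocks and over $\InnT^p(G)$, using the refined central-character bookkeeping (the characters $\zeta_\phi$ of $\rZ_{G^\vee_\sc}$ attached to enhancements in Section \ref{sec:LLC}) to match the $G'$-relevant pieces with the correct inner twist; since $\rZ_{\bG}$ is cyclic this bookkeeping is a clean bijection of central characters with pure inner twists.

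\textbf{Main obstacle.} The crux is the third step: identifying the Euler--Poincar\'e pairing $\EP_{G'}$, defined cohomologically in $\fR(G')$, with the combinatorial elliptic pairing on finite groups, uniformly across all pure inner twists and including the groups $G'$ that are \emph{not} quasi-split (where the Iwahori-case results of Reeder \cite{Re} do not literally apply and one must use the full unipotent theory of Solleveld \cite{So1,So2}). This requires (a) knowing that each unipotent Bernstein component of $G'$ is equivalent to the module category of an extended affine Hecke algebra with the expected parameters, (b) transporting $\EP$ to the Hecke algebra cocenter and using the trace Paley--Wiener and density theorems to compute it, and (c) matching the resulting finite-group elliptic data with $(A_{\cG_s}(u),\delta_u^s)$ via the generalized Springer correspondence for the possibly disconnected group $\cG_\varphi$ (Section \ref{sec:GSC}, \cite{AMS1}). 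Once the dictionary of (c) is in place — in particular the compatibility of the flip $(s,h)\mapsto(h,s)$ with the decomposition into Levi-supported pieces via Proposition \ref{p:compact-pairs} — the isometry is a formal consequence; establishing that dictionary in the disconnected, non-adjoint-centralizer setting is the technical heart, and it is precisely where the hypothesis that $G$ is adjoint (so that $G^\vee$ is simply connected and the centralizers $\rZ_{G^\vee}(s)$ are connected, forcing $A_{\cG_s}(u)=A_{\cG_s^p}(u)$ to coincide with the component group computed in $G^\vee$) is used to keep the combinatorics tractable.
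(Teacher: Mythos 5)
Your overall architecture---decompose by Bernstein block, pass to the attached affine Hecke algebra $\cH(\fs^\vee)$, compute $\EP$ there in terms of finite-group elliptic pairings, then translate back to the component groups $A_{\cG_s}(u)$ via Springer theory---is the same as the paper's. But the step you yourself flag as the crux is left as a black box, and that is exactly where the paper invokes a specific theorem you do not name. The reduction of $\EP_{\cH(\fs^\vee)}$ to the Euler--Poincar\'e pairing on the underlying extended affine Weyl group uses Opdam--Solleveld's parameter-scaling maps $\sigma_\epsilon$ (see (\ref{e:deform}) and (\ref{affine-ell}), based on \cite{OS}), not Reeder's results alone, and then Clifford theory gives $\overline R(\wti W_{\fs^\vee})\cong\bigoplus_{s}\overline R(W_s)$ as in (\ref{ind-iso}). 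The essential bridge is Waldspurger's comparison theorem \cite[Th\'eor\`eme p.~616]{Wa2}, recalled here as Theorem \ref{t:Wal}: applied with $\cG=\rZ_{G^\vee}(s)$ and $\theta$ trivial (the adjoint hypothesis guarantees each $\rZ_{G^\vee}(s)$ is connected, so the untwisted generalized Springer correspondence suffices), it yields the isometry between $\bigoplus_u\overline R(A_{\cG_s}(u))$, with the pairing $(~,~)^{\delta_u^s}_\ellip$, and $\bigoplus_j\overline R(W_j)$, with the elliptic pairing from the $W_j$-action on $\mathfrak z_{\cM}$. Without this result---or an argument that establishes such an isometry through the generalized Springer correspondence---your plan has a genuine gap at its center.

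Two smaller corrections: Proposition \ref{p:ell-ident} and Lemma \ref{l:ell-comp} live entirely on the Langlands-parameter side and do not by themselves compute $\EP_{G'}$; and the flip $(s,h)\mapsto(h,s)$ and Proposition \ref{p:compact-pairs} play no role in this theorem, since Conjecture \ref{conj:main-ell} concerns only the isometry, not the Fourier transform.
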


\begin{rem}
In Section \ref{s:extend}, we explain how Theorem \ref{t:main-elliptic} can be extended to other isogenies. See in particular Corollary \ref{c:Iwahori}.
\end{rem}
The strategy of the proof is as follows.
As explained in Section \ref{sec:LLC}, the set of unipotent enhanced Langlands parameters $\Phi_{\enh,\un}(G')$, where $G'$ is an inner twist of $G$, decomposes into a disjoint union $\Phi_{\enh,\un}(G')=\bigsqcup_{{\mathfrak s}^\vee\in \mathfrak B_\un^\vee(G')} \Phi_{\enh}(G')^{\mathfrak s^\vee}$. Consequently, there is a decomposition
\[R(\Phi_{{{\enh}},\un}(G'))=\bigoplus_{{\mathfrak s}^\vee\in \mathfrak B_\un^\vee(G')} R(\Phi_{{\enh}}(G')^{\mathfrak s^\vee}),
\]
where $R(\Phi_{{{\enh}},\un}(G'))$ and $\Phi_{\enh}(G')^{\mathfrak s^\vee}$ are defined analogously to $R(\Phi_{{{\enh}},\un}({}^LG))$ (see (\ref{e:LLC-iso})). 
In \cite{LuI} (for adjoint simple groups) and later in \cite{AMS3} (for arbitrary groups), an affine Hecke algebra with possibly unequal parameters $\cH(\mathfrak s^\vee)$ is constructed such that there is a  bijection
\begin{equation}
\Irr ~\cH(\mathfrak s^\vee)\longleftrightarrow  \Phi_{{\enh}}(G')^{\mathfrak s^\vee}
\end{equation}
which induces a  linear isomorphism
\[R(\cH(\mathfrak s^\vee))\cong R(\Phi_{{\enh}}(G')^{\mathfrak s^\vee}).
\]
We need to study the elliptic space $\overline R(\cH(\mathfrak s^\vee))$. 
The important fact for the elliptic theory is that $\cH(\mathfrak s^\vee)$ is a deformation of an extended affine Weyl group $\wti W_{\mathfrak s^\vee}=W_{\mathfrak s^\vee}\ltimes X^*(T_{\mathfrak s^\vee})$, where $T_{\mathfrak s^\vee}=\Phi_{\enh}(L')^{\fs_{L'}^\vee}$  for $L'$ a Levi subgroup of $G'$ that corresponds to $\fs^\vee$. This allows us to use the results of \cite{OS} to further reduce to $\overline R(\cH(\mathfrak s^\vee))\cong \overline R(\wti W_{\mathfrak s^\vee})$. Moreover, the latter space is equivalent to a direct sum of elliptic spaces for certain finite groups
\[\overline R(\wti W_{\mathfrak s^\vee})\cong \bigoplus_{s\in W_{\mathfrak s^\vee}\backslash T_{\mathfrak s^\vee}} \overline R(\rZ_{W_{\mathfrak s^\vee}}(s)).
\]
We then use results of \cite{Wa} and the generalized Springer correspondence to relate the spaces $\overline R(\rZ_{W_{\mathfrak s^\vee}})(s)$ to the relevant spaces of Langlands parameters (for the various unipotent elements) in $\Phi_{\enh,\un}(G')^{\mathfrak s^\vee}$.

Finally, by \cite{LuI,So1} for each $\fs^\vee\in\fB_\un^\vee(G')$, there exists  $\mathfrak s\in\fB_\un(G')$ that is sent to $\fs^\vee$ by the LLC, and then the Hecke algebra $\cH(\fs)$ for $\mathfrak s$ is isomorphic to $\cH(\mathfrak s^\vee)$. The fact that the elliptic space for the representations in the block $\fR(G')^\fs$ is naturally isomorphic to $\overline R(\cH(\mathfrak s))$ is immediate by the exactness of the equivalence of categories between $\fR(G')^\fs$ and $\cH(\mathfrak s)$-modules.

\subsection{Euler--Poincar\' e pairings for affine Hecke algebras}\label{s:ell-Hecke}

We begin by recalling several known facts about elliptic theory for affine Weyl groups and affine Hecke algebras. The main reference is \cite{OS} (see also \cite{CO}). The notation in this section is self contained and independent of the previous sections. For applications, the root datum in this section will be specialized to the root datum of the Langlands dual group $G^\vee$, as well as to the root data for the affine Hecke algebras $\cH(\mathfrak s^\vee)$ that occur on the dual side of the local Langlands correspondence.

Let $\cR=(X,R,X^\vee, R^\vee,\Pi)$ be a based root datum. Here $X,X^\vee$ are lattices in perfect duality
$\langle~,~\rangle\colon X\times X^\vee\to\bZ$, $R\subset X\setminus\{0\}$
and $R^\vee\subset X^\vee\setminus\{0\}$ are the finite sets of
roots and coroots respectively, and $\Pi\subset R$ is a basis of
simple roots. Let $W$ be the finite Weyl group with set of generators
$S=\{s_\al:\al\in \Pi\}.$ Set $\wti W=W\ltimes
X$, the (dual) extended affine Weyl group, and $W^a=W\ltimes Q$, the (dual) affine
Weyl group, where $Q$ is the root lattice of $R$. Then $W^a$ is
normal in $\wti W$ and $\Omega:=\wti W/W^a\cong X/Q$ is an abelian group. We assume that $\cR$ is semisimple, which means that $\Omega$ is a finite group. 

The set $R^a=R^\vee\times \bZ\subset X^\vee\times\bZ$ is the set of
affine roots. (Notice that $\wti W$ is the extended affine Weyl group of a split $p$-adic group $G$ with root datum dual to $\cR$, and $R^a$ is the set of affine roots for $G$.) A basis of simple affine roots is given by
$\Pi^a=(\Pi^\vee\times\{0\})\cup\{(\gamma^\vee,1): \gamma^\vee\in R^\vee
\text{ minimal}\}.$ For every affine root $\mathbf a=(\al^\vee,n)$, let
$s_{\mathbf a}\colon X\to X$ denote the reflection
$s_{\mathbf a}(x)=x-((x,\al^\vee)+n)\al.$ The affine Weyl group $W^a$
has a set of generators $S^a=\{s_{\mathbf a} \mid \mathbf a\in \Pi^a\}$. Given $J \subset S^a$, let $W_J$ be the subgroup of $W^a$ generated by $\{s_{\mathbf a} \mid \mathbf a \in J\}$. 
Let $l\colon \wti W\to\bZ$ be the length function.

Set $E=X\otimes_\bZ \CC$, so the discussion regarding the elliptic theory of $W$ and $E$ from the previous sections applies. We denote a typical element of $\wti W$ by $w t_x$, where $w\in W$ and $x\in X.$ The extended affine Weyl group $\wti W$ acts on $E$ via $(w t_x)\cdot v=w\cdot v+ x,$ $v\in E.$

An element $w t_x\in \wti W$ is called elliptic if $w\in W$ is elliptic (with respect to the action on $E$). For basic facts about elliptic theory for $\wti W$, see \cite[sections 3.1, 3.2]{OS}. There are finitely many elliptic conjugacy classes in $\wti W$ (and in $W^a$). The following fact is well known (see for example \cite[Lemma 5.4]{CO}).

\begin{lem}\label{l:elliptic-affine}
Suppose $C$ is an elliptic conjugacy class in $W^a$. Then there exists one and only one maximal $J\subsetneq S^a$ such that $C\cap W_J\neq\emptyset,$ and in this case $C\cap W_J$ forms a single elliptic $W_J$-conjugacy class.
\end{lem}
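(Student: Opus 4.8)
The plan is to run the standard argument via the geometry of the Coxeter complex of $W^a$. Realize $W^a$ as a group of affine isometries of the real vector space $V=X\otimes_{\bZ}\RR$, preserving the arrangement of affine hyperplanes $\{H_{\mathbf a}:\mathbf a\in R^a\}$; let $\bar A$ be the closed fundamental alcove. Its faces are indexed by the subsets $J\subseteq S^a$ with $W_J$ finite (write $F_J$ for the face of type $J$), and the stabilizer in $W^a$ of a point lying in the relative interior of $F_J$ is exactly $W_J$. I would first reduce to the case $\cR$ irreducible: $W^a$ is the product of the affine Weyl groups of the irreducible components of $\cR$, with $S^a$, $V$ and each $W_J=\prod_i W_{J\cap S^a_i}$ decomposing accordingly, and an elliptic class of $W^a$ is a product of elliptic classes of the factors, so it is enough to treat one irreducible factor; there $\bar A$ is a simplex with $|S^a|=\dim V+1$, and $F_J$ has dimension $\dim V-|J|$. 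The key observation is then: if $c=w\,t_x\in W^a$ is elliptic, $w\in W$ has no nonzero fixed vector, so $1-w$ is invertible on $V$ and the affine map $v\mapsto w(v)+x$ has a unique fixed point $p_c=(1-w)^{-1}(x)$. Since $p_{gcg^{-1}}=g\cdot p_c$ for $g\in W^a$, the set $\{p_c:c\in C\}$ is a single $W^a$-orbit and meets $\bar A$ in exactly one point $p$; let $J=J_C\subsetneq S^a$ be the type of the face whose interior contains $p$.

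From here the three assertions follow. For existence, after replacing $c\in C$ by a $W^a$-conjugate we may assume $p_c=p$, so $c$ fixes $p$ and hence $c\in\mathrm{Stab}_{W^a}(p)=W_J$, i.e.\ $C\cap W_J\neq\emptyset$. For uniqueness of the maximal $J$: if $c'\in C\cap W_{J'}$ with $J'\subsetneq S^a$, then $c'$ fixes $F_{J'}$ pointwise and has a single fixed point, so $F_{J'}$ is that point $p_{c'}$; but $p_{c'}$ is $W^a$-conjugate to $p$ and conjugation preserves the type of faces, forcing $J'=J$ (in particular $|J|=\dim V$, so $J$ is a maximal proper subset of $S^a$). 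The same computation shows every element of $C\cap W_J$ has fixed point $p$, so if $c_2=gc_1g^{-1}$ with $c_1,c_2\in C\cap W_J$, then $g$ fixes $p$, whence $g\in W_J$ and $c_1,c_2$ are $W_J$-conjugate; thus $C\cap W_J$ is a single $W_J$-conjugacy class. Finally, to see this class is elliptic in $W_J$: translating $p$ to the origin identifies the (finite) reflection group $W_J$ with a group acting on $V$ by its linear reflection representation, and $c$ is carried to its linear part $w$; so $c$ is elliptic in $W_J$ precisely because $w$ is elliptic in $W$.

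The point requiring care is the comparison of the two notions of ``elliptic'' — elliptic in $W^a$ (linear part fixed-point-free on $V$) versus elliptic in the finite Coxeter group $W_J$ (fixed-point-free on the reflection representation of $W_J$) — and the translation-to-the-origin step above is exactly what reconciles them; it relies on $F_J$ being a single point, so that the natural affine action of $W_J$ on $V$ is genuinely its linear reflection action on all of $V$. The other bookkeeping item is the passage from the irreducible case to general semisimple $\cR$, where one takes $J$ to be the union over the irreducible factors of the corresponding maximal proper subsets; this is routine and I expect it, rather than any new idea, to be the only thing left to write out in detail. (Compare \cite[Lemma~5.4]{CO} and \cite[\S3.1--3.2]{OS}.)
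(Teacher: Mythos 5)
The paper does not prove Lemma~\ref{l:elliptic-affine}; it simply cites \cite[Lemma~5.4]{CO} as a well-known fact, so there is no in-paper argument to compare against. Your proof is correct and is exactly the standard fixed-point argument: an elliptic $c=wt_x$ has a unique fixed point $p_c=(1-w)^{-1}x$ in $V$, the orbit $\{p_c:c\in C\}$ meets $\overline A$ in a single point $p$ whose type $J$ is the desired subset, and membership of $c\in C$ in some finite $W_{J'}$ forces $F_{J'}$ to be the singleton $\{p_{c}\}=\{p\}$, which gives existence, uniqueness, a single $W_J$-class (since $g$ conjugating two such elements must fix $p$, hence lies in $W_J$), and ellipticity in $W_J$ after recentering at $p$. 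You also correctly flag the one genuinely subtle point, namely reconciling ellipticity in $W^a$ with $\delta$-ellipticity in the finite group $W_J$, and your translation-to-the-origin step handles it, using that $|J|=\dim V$ so the $W_J$-action on $V$ is its full reflection representation. This is the same approach as the cited \cite[Lemma~5.4]{CO}.
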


Let $R(\wti W)$ be  the Grothendieck group of $\wti W$-mod (the category of finite-dimensional modules). Define the Euler--Poincar\'e pairing of $\wti W$ by
\begin{equation}
\langle V_1,V_2\rangle_\EP^{\wti W}=\sum_{i\ge 0}(-1)^i\dim \Ext^i_{\wti W}(V_1,V_2),\quad V_1,V_2 \text{ finite-dimensional } \wti W\text{-modules}.
\end{equation}
Set $\overline R(\wti W)=R(\wti W)/\text{rad}\langle~,~\rangle_\EP^{\wti W}.$ 
By \cite[Theorem 3.3]{OS}, the Euler--Poincar\'e pairing for $\wti W$ can also be expressed as an elliptic integral. More precisely, define the conjugation-invariant elliptic measure $\mu_\ellip$ on $\wti W$ by setting $\mu_\ellip=0$ on nonelliptic conjugacy classes, and for an elliptic conjugacy class $C$ such that $v\in E$ is an isolated fixed point for some element of $C$, set
\begin{equation*}
\mu_\ellip(C)=\frac{|Z_{\wti W}(v)\cap C|}{|Z_{\wti W}(v)|};
\end{equation*}
here $Z_{\wti W}(v)$ is the isotropy group of $v$ in $\wti W.$ Then
\begin{equation}\label{affine-ell-pair}
\langle V_1, V_2\rangle_\EP^{\wti W}= (\chi_{V_1},\chi_{V_2})_\ellip^{\wti W}:=\int_{\wti W}\chi_{V_1}\overline{\chi_{V_2}}~d\mu_\ellip,\ V_1,V_2\in \wti W\text{-mod},
\end{equation}
where $\chi_{V_1},\chi_{V_2}$ are the characters of $V_1$ and $V_2$.

Set $T=\Hom_\bZ(X,\CC^\times)$. Then $W$ acts on $T$. For every $s\in T,$ set
$W_s=\{w\in W: w\cdot s=s\}$.
One considers the elliptic theory of the finite group $W_s$ acting on the cotangent space $\mathfrak t_s^*$ of $T$ at $s$.
By Clifford theory, the induction map
\begin{equation*}
\Ind_s\colon  W_s\text{-mod}\to \wti W\text{-mod},\quad \Ind_s(V_1):=\Ind_{W_s\ltimes X}^{\wti W}(V_1\otimes s)
\end{equation*}
maps irreducible modules to irreducible modules.
By \cite[Theorem 3.2]{OS}, the map 
\begin{equation}\label{ind-iso}
\bigoplus_{s\in W\backslash T}\Ind_s\colon  \bigoplus_{s\in W\backslash T}\overline R(W_s)_\CC\to \overline R(\wti W)_\CC
\end{equation}
is an isomorphism of metric spaces, in particular,
\begin{equation}\label{ind-pair}
\langle \Ind_s V_1,\Ind_s V_2\rangle_\EP^{\wti W}=(V_1,V_2)_\ellip^{W_s},\ V_1,V_2\in W_s\text{-mod}.
\end{equation}
A space $\overline R(W_s)_\CC$ in the left-hand side of (\ref{ind-iso}) is nonzero if and only if $s$ is an isolated element of $T$, more precisely $s\in T_\iso$, where
\begin{equation*}
T_\iso=\{s\in T^\vee: w\cdot s=s\text{ for some elliptic }w\in W\}.
\end{equation*}

\begin{ex}\label{e:W-PGL} Let $\cR$ be the root datum of $\PGL_n(\CC)$. We may take $T$ to be the maximal diagonal torus of $\PGL_n(\CC)$, with $X=X^*(T)$ the group of characters and $X^\vee=X_*(T)$ the group of cocharacters. In this case, $Q=X$ and \[\wti W=W^a=\langle s_i,~0\le i\le n-1\mid  (s_is_j)^{m(i,j)}=1,\ 0\le i,j\le n-1\rangle,\] where $m(i,i)=1$, $m(i,j)=2$ if $1<|i-j|<n-1$, and $m(i,j)=3$ if $|i-j|=1$ or $|i-j|=n-1$, when $n\ge 3$. This is the extended affine Weyl group associated to the $p$-adic group $\SL_n$. If $n=2$, then $\wti W=W^a=\langle s_0,s_1\mid s_0^2=s_1^2=1\rangle$. 

\smallskip

With this notation, the finite Weyl group is $W=\langle s_1,\dots,s_{n-1}\rangle\subset W^a$.
For every $0\le i\le n-1$, let $W_i=\langle s_0,s_1,\dots, s_{i-1},s_{i+1},\dots, s_{n-1}\rangle\subset W^a$. These are the maximal (finite) parabolic subgroups of $W^a$. In particular, $W_0=W$ and $W_i\cong S_n$ for all $i$. The space $E=\mathfrak t^*\cong \CC^{n-1}$ and each $W_i$ acts on $E$ by the reflection representation. Therefore, there exists a unique elliptic $W_i$-conjugacy class represented by the Coxeter element $w_i=s_0s_1\dotsb s_{i-1} s_{i+1}\dotsb s_{n-1}$. Thus by Lemma \ref{l:elliptic-affine}, there are exactly $n$ elliptic conjugacy classes in $W^a$ each determined by the condition that it meets $W_i$ in the conjugacy class of $w_i$, $0\le i\le n-1$. In particular, $\dim \overline R(\wti W)_\CC=n$ in this case.

On the other hand, by (\ref{ind-iso}), we need to consider $W$-orbits in $T_\iso$. Since there is only one elliptic conjugacy class in $W$, every $W$-orbit in $T_\iso$ is represented by an element of 
\[T^{s_1s_2\dotsb s_{n-1}}=\{\Delta_n(z)\mid z\in\mu_n\},\quad \Delta_n(z)=\text{diag}(1,z,z^2,\dots,z^{n-1}) \in T,\ \mu_n=\{z\mid z^n=1\},
\]
as noticed before. Two elements  $\Delta_n(z)$ and $\Delta_n(z')$ of $T^{s_1s_2\dotsb s_{n-1}}$ are $W$-conjugate if and only if $z$ and $z'$ have the same order. Fix a primitive $n$th root $\zeta$ of $1$. This means that  (\ref{ind-iso}) becomes in this case:
\[\bigoplus_{d | n} \overline R(W_{\Delta_n(\zeta^{n/d})})_\CC\cong \overline R(\wti W)_\CC.
\]
If $z=\zeta^m$, where $n=dm$, then $\Delta_n(z)$ is $W$-conjugate to $\text{diag}(\underbrace{1,\dots,1}_m,\underbrace{z,\dots,z}_m,\dots,\underbrace{z^{d-1},\dots,z^{d-1}}_m)$. Hence, one calculates
\begin{equation}
W_{\Delta_n(z)}\cong S_m^d\rtimes C_d\text{ and } \mathfrak t^*_{\Delta_n(z)}=\{(a_1,\dots,a_n)\mid \sum_{i=1}^n a_i=0\}. 
\end{equation}
The action is the natural permutation action. There are $\varphi(d)$ elliptic conjugacy classes,  represented by $(w_m,1,\dots,1)\cdot x_d$, where $w_m$ is a fixed $m$-cycle (Coxeter element) of $S_m$ and $x_d$ is one of the $\varphi(d)$ generators of $C_d$ (see Lemma \ref{l:Ws-SL}).  Again $\sum_{d|n}\varphi(d)=n$.
\end{ex}

Let $\vec{\mathbf q}=\{\mathbf q(s)\mid s\in S^a\}$ be a set of invertible, commuting
indeterminates such that $\mathbf q(s)=\mathbf q(s')$ whenever $s,s'$ are
$W^a$-conjugate. Let $\Lambda=\CC[\mathbf q(s),\mathbf q(s)^{-1}\mid s\in S^a]$.

The generic affine Hecke algebra $\cH(\cR,\vec{\mathbf q})$ associated to the root datum
  $\cR$ and the set of indeterminates $\vec{\mathbf q}$ is the unique associative,
  unital $\Lambda$-algebra with basis $\{T_w: w\in \wti W\}$ and
  relations
\begin{enumerate}
\item[(i)] $T_w T_{w'}=T_{ww'},$ for all $w,w'\in W$ such that
  $l(ww')=l(w)+l(w')$;
\item[(ii)] $(T_s-\mathbf q(s)^2)(T_s+1)=0$ for all $s\in S^a.$
\end{enumerate}

Fix a real number $q>1$. Given a $W^a$-invariant function $m\colon S^a\to \bR$, we may
define a homomorphism $\lambda_{m}\colon\Lambda\to \CC$, $\mathbf
q(s)= q^{m(s)}$.  Let $\CC_{\lambda_{m}}$ be the one-dimensional complex module on which $\Lambda$ acts by $\lambda_m$. Consider the specialized affine Hecke $\CC$-algebra
\begin{equation}
\cH(\cR, q, m)=\cH(\cR,\vec{\mathbf q})\otimes_\Lambda \CC_{\lambda_{m}}.
\end{equation}

\begin{ex}\label{e: H-PGL}
Let $\cR$ be the root datum of $\PGL_n(\CC)$. If $n=2$, the generic affine Hecke algebra has two indeterminates $\mathbf q(s_0)$ and $\mathbf q(s_1)$ and it is generated by $T_0=T_{s_0}$, $T_1=T_{s_1}$ subject only to the quadratic relations
\[(T_{i}-\mathbf q(s_i)^2)(T_{i}+1)=0, \ i=0,1.
\]
If $n\ge 3$, all the simple reflections are $W^a$-conjugate. There is only one indeterminate $\mathbf q$ such that the affine Hecke algebra is generated by $\{T_i=T_{s_i}, 0\le i\le n-1\}$ subject to the relations:
\begin{enumerate}
\item[(i)] $T_{i}T_{j}=T_{j}T_{i}$, $1<|i-j|<n-1$;
\item[(ii)] $T_{i}T_{i+1} T_{i}=T_{i+1}T_i T_{i+1}$, $0\le i\le n-1$;\quad $T_0 T_{n-1} T_0=T_{n-1}T_0 T_{n-1}$;
\item[(iii)] $(T_i -\mathbf q^2)(T_i+1)=0.$
\end{enumerate}
\end{ex}

Let $\cH=\cH(\cR, q, m)$ for simplicity of notation. If $V_1,V_2$ are two finite-dimensional $\cH$-modules, define the Euler--Poincar\'e pairing \cite[\S3.4]{OS}:
\begin{equation}
\EP_\cH(V_1,V_2)=\sum_{i\ge 0} (-1)^i\dim \Ext^i_\cH(V_1,V_2).
\end{equation}
This is a finite sum since $\cH$ has finite cohomological dimension \cite[Proposition 2.4]{OS}. The pairing $\EP_\cH$ is symmetric and positive semidefinite. It extends to a Hermitian positive-semidefinite pairing on the complexified Grothendieck group $R(\cH)_\CC$ of finite-dimensional $\cH$-modules. We wish to compare the Euler--Poincar\' e pairings for $\cH(\cR, q, m)$ and $\cH(\cR, q^\epsilon, m)$, where $\epsilon\in[0,1]$. Suppose we have a family of maps
\begin{equation}
\sigma_\epsilon\colon  \cH(\cR,q,m)\text{-mod}\to \cH(\cR,q^\epsilon,m)\text{-mod},\quad \sigma_\epsilon(\pi,V)=(\pi_\epsilon,V),
\end{equation}
such that
\begin{enumerate}
\item[(a)] for every $w\in\wti W$ and every $(\pi,V)$, the assignment $\epsilon\mapsto \pi(\epsilon)(T_w)$ is a continuous map $[0,1]\to \End(V)$.
\end{enumerate}
Then \cite[Theorem 3.5]{OS} shows that
\[
\EP_{\cH(\cR,q,m)}(V_1,V_2)=\EP_{\cH(\cR,q^\epsilon,m)}(\sigma_\epsilon(V_1),\sigma_\epsilon(V_2)),\quad \text{ for all }\epsilon\in[0,1].
\]
In particular, notice that $\cH(\cR,q^0,m)=\CC[\wti W]$, meaning that
\begin{equation}\label{e:deform}
\EP_{\cH(\cR,q,m)}(V_1,V_2)=\langle\sigma_0(V_1),\sigma_0(V_2)\rangle_\EP^{\wti W}.
\end{equation}
Using \cite[Theorem 1.7]{OS} or alternatively, for the affine Hecke algebras that occur for unipotent representations of $p$-adic groups, via the geometric constructions of \cite{KL,LuI,LuII}, we know that scaling maps $\sigma_\epsilon$ as above exist and in addition, they also behave well with respect to harmonic analysis:
\begin{enumerate}
\item[(b)] for every $\epsilon\in [0,1]$, $V$ is unitary (resp., tempered) if and only if $\sigma_\epsilon(V)$ is unitary (resp., tempered);
\item[(c)] for every $\epsilon\in (0,1]$, $V$ is discrete series if and only if $\sigma_\epsilon(V)$ is discrete series.
\end{enumerate}
Denoting by $\overline R(\cH)_\CC$ the quotient of $R(\cH)_\CC$ by the radical of $\EP_\cH$, it follows \cite[Proposition 3.9]{OS} that the scaling map $\sigma_0$ induces an injective isometric map 
\begin{equation}\label{affine-ell}
\sigma_0\colon  \overline R(\cH)_\CC\to \overline R(\wti W)_\CC\cong \bigoplus_{s\in T/W}\overline R(W_s)_\CC.
\end{equation}
 In fact this map is also an isomorphism, for example via \cite[Theorem 8.1]{CH}.

\subsection{Elliptic inner products for Weyl groups (after Waldspurger \cite{Wa})}\label{s:wald}

\medskip

Let $\cG=\cG^\circ$ be a complex connected reductive group and $\theta\colon\cG\to \cG$ a quasi-semisimple automorphism of $\cG$ of finite order. 

As in Section \ref{sec:GSC}, we let $\mathbf I$ be the set of pairs $(U, \cE)$ where $U$ is a unipotent conjugacy class in $\cG$ and $\cE$ is an irreducible $\cG$-equivariant local system on $U$.
The automorphism $\theta$ acts on $\mathbf I$ via $(U,\cE)\mapsto (\theta(U),(\theta^{-1})^*(\cE))$. Let $\mathbf I^\theta$ denote the fixed points of this action and suppose $(U,\cE)\in\mathbf I^\theta$. If we fix $u\in U$, there exists $x\in\cG$ such that $\Ad(x)\circ \theta(u)=u$, hence $\Ad(x)\circ \theta$ preserves $\rZ_\cG(u)$ and hence it defines an automorphism of $A_u$, denoted $\theta_u$. As explained in \cite[p. 612]{Wa}, if $\phi\in \widehat A_u$ corresponds to the local system $\cE$, the fact that $(\theta^{-1})^*(\cE)\cong \cE$ is equivalent to the condition that $\phi$ extends to a representation $\widetilde \phi$ of $A_u\rtimes\langle\theta_u\rangle$.

Fix a Borel subgroup $B_u$ of $\rZ_\cG(u)^\circ$ and a maximal torus $T_u$ in $B_u$. Let $\mathfrak t_u$ be the complex Lie algebra of $T_u$.
Define a complex representation $(\delta_u,{\mathfrak t}_u)$ of $A_u\rtimes \langle \theta_u\rangle$, extending the previous definition for the action of $A_u$. Similarly to Section \ref{s:complex}, since $\rZ_\cG(u) $ 
acts on $\rZ_\cG(u)^\circ$ by conjugation and $\theta_u$ acts on $\rZ_\cG(u)^\circ$ as above, every element $z\in \rZ_\cG(u)\rtimes \langle \theta_u\rangle$ acts on $\rZ_\cG(u)^\circ$ via an automorphism $\alpha_z$. There exists $y\in \rZ_\cG(u)^\circ$ such that $\alpha_z\circ \Ad(y)$ preserves $B_u$ and $T_u$. This means that $\alpha_z\circ \Ad(y)$ defines an automorphism of the cocharacter lattice $X_*(T_u)$ that also preserves the sublattice $X_*(Z_\cG^\circ)$, and therefore a linear isomorphism $\delta_u(z)$ of $ {\mathfrak t}_u$. If $\bar z\in A_u\rtimes \langle \theta_u\rangle$, set $\delta_u(\bar z):=\delta_u(z)$, where $z$ is a lift of $\bar z$ in $\rZ_\cG(u)\rtimes \langle \theta_u\rangle$. This defines a representation of $A_u\rtimes \langle \theta_u\rangle$.

Suppose $(U,\cE)$ and $(U',\cE')$ are two elements of $\mathbf I^\theta$ represented by $(u,\phi)$ and $(u',\phi')$, respectively. Define
\begin{equation}
(\widetilde\phi,\widetilde\phi')_{\theta-\ellip}=\begin{cases} (\widetilde \phi, \widetilde\phi')^{\delta_u}_{\theta-\ellip},&\text{ if }U=U',\\0,&\text{ if }U\neq U'.
\end{cases}
\end{equation}
This is the $\theta$-elliptic pairing on $\bigoplus_{U} R(A_u\rtimes\langle\theta_u\rangle)$. 

\smallskip

The relation between this elliptic pairing and the generalized Springer correspondence \cite{LuIC} is explained in \cite[\S 3]{Wa}. The automorphism $\theta$ acts naturally on all of the objects involved in the definition of the Springer correspondence. As discussed in \cite[\S 3]{Wa}, this leads to an action of $W_j\rtimes \langle \theta\rangle$ on $\mathfrak z_{\cM}$, the Lie algebra of $\rZ_{\cM}$ (notation as in Section \ref{sec:GSC}), and to a $\theta$-generalized Springer correspondence $\nu\colon  \mathbf I^\theta\to \mathbf {\widetilde J}^\theta$. For every $(j,\rho)\in \mathbf {\widetilde J}^\theta$, let $\widetilde\rho$ denote the extension of $\rho$ to a representation of $W_j\rtimes\langle \theta\rangle$ as in {\it loc. cit.}.

Let $i=(U,\cE)$, $i'=(U',\cE')$ be two elements of $\mathbf I^\theta$, and $\nu(i)=(j,\rho)$, $\nu(i')=(j',\rho')$. For every $m\in\ZZ$, the constructible sheaf $\cH^{2m+a_{U'}}(A_{j',\rho'})|_U$ decomposes as a direct sum of $G$-equivariant local systems on $U$. As in \cite{Wa}, setting 
\[H^m_{i,i'}=\Hom(\cE,\cH^{2m+a_{U'}}(A_{j',\rho'})|_U),
\]
the automorphism $\theta$ defines a linear map $\theta^m_{i,i'}: H^m_{i,i'}\to H^m_{i,i'}$. In particular, 
\[H^m_{i,i}=0\text{ if }m\neq 0\text{ and }\dim H^\circ_{i,i}=1.
\]
We may arrange the construction so that $\theta^\circ_{i,i}$ is the identity map.
Moreover, it is clear that $H^m_{i,i'}\neq 0$ for some $m$ only if $U\subset \overline{U'}$. (Recall that the restriction of $A_{j',\rho'}$ to the set of unipotent elements of $\cG$ is supported on $\overline {U'}$.) 

Define the virtual representation of $W_j\rtimes\langle \theta\rangle$
\begin{equation}
\pmb{\widetilde\rho}=\sum_{\rho'\in \widehat W_j^\theta} P_{j,\rho,\rho'} \widetilde \rho',\text{ where } P_{j,\rho,\rho'}=\sum_{m\in\ZZ} \tr (\theta^m_{i,i'}).
\end{equation}
In this virtual combination, $P_{j,\rho,\rho}=1$, and $P_{j,\rho,\rho'}\neq 0$ implies that $U\subset \overline{U'}$ if $(U,\cE)=\nu^{-1}(j,\rho)$ and $(U',\cE')=\nu^{-1}(j,\rho')$. 

\begin{ex}
When $\theta$ is the trivial automorphism of $\cG$ and $j=j_0$ (the case of the classical Springer correspondence), $\pmb{\widetilde\rho}$ can be identified with the reducible $W$-representation on the $\phi$-isotypic component ($\phi\in \widehat A_u$ corresponding to $\cE$) of the total cohomology of the Springer fiber of $u$. 
\end{ex}

Consider the $\theta$-elliptic pairing $(~,~)_{\theta-\ellip}^{W_j}$ on $\bigoplus_{j\in \mathbf J^\theta} R(W_j\rtimes \langle\theta\rangle)$, defined on each summand via the action of $ W_j\rtimes \langle\theta\rangle$ on ${\mathfrak z}_{\cM}$ and extended orthogonally to the direct sum. 

\begin{thm}[({\cite[Th\'eor\`eme p.~616]{Wa}})]\label{t:Wal} Let $i=(U,\cE)$, $i'=(U',\cE')$ be two elements of $\mathbf I^\theta$, and $\nu(i)=(j,\rho)$, $\nu(i')=(j',\rho')$. Let $(u,\phi)$, $(u',\phi')$, $\phi\in \widehat A_u$, $\phi'\in \widehat A_{u'}$, be representatives for $i,i'$, respectively. Then
\[(\widetilde\phi,\widetilde\phi')_{\theta-\ellip}=(\pmb{\widetilde\rho},\pmb{\widetilde\rho}')_{\theta-\ellip}^{W_j}.
\]
\end{thm}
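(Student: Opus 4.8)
The statement in question is Theorem~\ref{t:Wal}, which is attributed to Waldspurger \cite[Th\'eor\`eme p.~616]{Wa2}. Since this is quoted as an external result rather than something the authors prove themselves, my ``proof proposal'' is really a reconstruction of how Waldspurger's argument goes.

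\medskip

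\textbf{Proof strategy.}
The plan is to reduce the comparison of the two elliptic pairings to a statement about the graded pieces of the perverse sheaves $A_{j,\rho}$ restricted to unipotent classes, and then to identify the matrix $P_{j,\rho,\rho'}$ with the change-of-basis matrix (the ``Green function'' type matrix) between the two natural bases of $\bigoplus_U R(A_u \rtimes \langle\theta_u\rangle)$ and $\bigoplus_j R(W_j \rtimes \langle\theta\rangle)$. Concretely, I would proceed as follows.

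First, I would set up the non-elliptic identity underlying the generalized Springer correspondence: for each $i=(U,\cE) \in \mathbf I^\theta$ and its image $\nu(i)=(j,\rho)$, the extension $\widetilde\phi$ of $\phi$ and the extension $\widetilde\rho$ of $\rho$ are related by the cohomology of the complexes $A_{j,\rho'}$, via the numbers $P_{j,\rho,\rho'} = \sum_m \tr(\theta^m_{i,i'})$. The key structural input is (\ref{eqn:IC-sheaf}) together with the decomposition $K_j = \bigoplus_\rho V_\rho \otimes A_{j,\rho}$: this says that the transition between the ``$\mathbf I$-side'' (local systems on unipotent classes, with their $\theta$-twisted extensions) and the ``$\mathbf J$-side'' ($W_j$-representations, with their $\theta$-twisted extensions) is governed by a block-triangular matrix with $1$'s on the diagonal (because $\dim H^0_{i,i}=1$ and $\theta^0_{i,i}=\mathrm{id}$, while $H^m_{i,i'}\ne 0$ forces $U\subset\overline{U'}$). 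So $\pmb{\widetilde\rho} = \sum_{\rho'} P_{j,\rho,\rho'}\widetilde\rho'$ is precisely the image of $\widetilde\phi$ under this transition.

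Second, I would show that the transition matrix $P = (P_{j,\rho,\rho'})$ is, up to the grading shift, \emph{orthogonal} (or rather isometric) for the two $\theta$-elliptic forms. This is where the real content lies: one must show that the $\theta$-elliptic pairing $(\widetilde\phi,\widetilde\phi')_{\theta-\ellip}$, computed via the representation $\delta_u$ on $\mathfrak t_u$, is carried by $P$ to the $\theta$-elliptic pairing $(\pmb{\widetilde\rho},\pmb{\widetilde\rho}')_{\theta-\ellip}^{W_j}$, computed via the action of $W_j\rtimes\langle\theta\rangle$ on $\mathfrak z_{\cM}$. The mechanism should be a Poincar\'e-polynomial / Euler-characteristic computation: the determinant $\det_{\mathfrak t_u}(1-\delta_u(\theta_u \cdot -))$ appearing in the elliptic form is the generating function for the cohomology of the relevant variety (a Springer-type fibre or the regular part), and the alternating sum over the grading $m$ in the definition of $P_{j,\rho,\rho'}$ is exactly what converts the ``fibrewise'' count on the $\mathbf I$-side into the ``$W_j$'' count on the $\mathbf J$-side. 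One reduces, by the block-triangularity and induction on the closure order of unipotent classes, to the diagonal case $U=U'$, where both sides equal $(\widetilde\phi,\widetilde\phi')^{\delta_u}_{\theta-\ellip}$ essentially by definition once the $\theta^0_{i,i}=\mathrm{id}$ normalization is in place, and the off-diagonal contributions cancel in the alternating sum. A clean way to package this is via the Euler--Poincar\'e pairing for the graded algebra acting on $\bigoplus_m H^m_{i,i'}$, and to invoke the exactness/orthogonality properties of the generalized Springer functor established in \cite{LuIC}.

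\medskip

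\textbf{Main obstacle.}
The hard part will be step two: proving that $P$ is an isometry between the two $\theta$-elliptic forms. This requires carefully tracking the $\theta$-action on the full cohomology $\bigoplus_m H^m_{i,i'}$ and matching the alternating-sum trace $\sum_m \tr(\theta^m_{i,i'})$ against a determinant coming from $\delta_u$ acting on $\mathfrak t_u$ versus $\delta_j$ acting on $\mathfrak z_{\cM}$. The delicate points are (a) checking that the normalizations of the extensions $\widetilde\phi$, $\widetilde\rho$ are consistent across all classes in a closure chain (so that the diagonal entries are genuinely $1$ and not just roots of unity), and (b) controlling the interaction between the grading shift $a_{U'} = -\dim U' - \dim \rZ^\circ_{\cM^\circ}$ and the Weyl-group side, so that the parity bookkeeping in the alternating sum works out. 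Once these are handled, the theorem follows by comparing the two orthogonal bases related by the unitriangular isometry $P$.
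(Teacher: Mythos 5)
The paper does not prove this theorem; it is cited verbatim from Waldspurger \cite[Th\'eor\`eme p.~616]{Wa2}, so there is no in-paper argument against which to compare yours, and you correctly flag this at the outset.

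As a reconstruction of Waldspurger's proof, your unitriangular-transition-matrix framework is the right skeleton: $P=(P_{j,\rho,\rho'})$ is block-triangular for the closure order, with $1$'s on the diagonal, and the theorem amounts to the assertion that the passage $\widetilde\phi\mapsto\pmb{\widetilde\rho}$ built from $P$ takes the $\theta$-elliptic form on $\bigoplus_U R(A_u\rtimes\langle\theta_u\rangle)$ to the $\theta$-elliptic form on $\bigoplus_j R(W_j\rtimes\langle\theta\rangle)$. But the step that carries all the content --- that the off-diagonal entries of $P$ do not disturb the pairing --- is exactly the one you label ``the main obstacle'' and do not argue; writing ``the off-diagonal contributions cancel in the alternating sum'' names the conclusion rather than a mechanism. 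There is also a factual slip that undercuts that phrasing: $P_{j,\rho,\rho'}=\sum_m\tr(\theta^m_{i,i'})$ is a \emph{straight} sum with no sign $(-1)^m$, so there is no alternating-sum cancellation available. Waldspurger's actual argument runs by induction on the closure order of unipotent classes, using Lusztig's restriction formula for the complexes $A_{j,\rho}$ from \cite{LuIC} to control the $\theta$-action on the graded stalks $H^m_{i,i'}$, and matches the factor $\det_{\mathfrak t_u}(1-\delta_u(\theta_u h))$ on the $\mathbf I$-side against the corresponding determinant coming from $W_j\rtimes\langle\theta\rangle$ acting on $\mathfrak z_{\cM}$ directly, not via a cancellation. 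Your outline is a fair guide to where the difficulty lies, but it does not establish the crucial isometry.
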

The equality in the theorem does not depend on the choices involved in the construction.

\subsection{The proof of Theorem \ref{t:main-elliptic}: the case of adjoint groups} In this subsection, suppose that $G$ is a simple $F$-split group of adjoint type. This means that $G^\vee$ is simply connected, hence, for every $s\in T^\vee$, $\rZ_{G^\vee}(s)$ is connected. We may apply Theorem \ref{t:Wal} to
\[\cG=\rZ_{G^\vee}(s)\text{ and }\theta\text{ the trivial automorphism}.
\]
Let $\mathbf I^s=\mathbf I^{\rZ_{G^\vee}(s)}$, ${\bJ}^s={\bJ}^{\rZ_{G^\vee}(s)}$, and $\widetilde{\bJ}^s=\widetilde{\bJ}^{\rZ_{G^\vee}(s)}$, so that the generalized Springer correspondence for $\rZ_{G^\vee}(s)$ is the map
\[\nu_s\colon \mathbf I^s\to \widetilde{\bJ}^s,\ (U,\cE)\mapsto (j,\rho),
\]
and 
\[\pmb{\widetilde\rho}=\sum_{\rho'\in \widehat W_j} P_{j,\rho,\rho'}  \rho',\text{ where } P_{j,\rho,\rho'}=\sum_{m\in\ZZ}\dim \Hom(\cE,\cH^{2m+a_{U'}}(A_{j',\rho'})|_U).
\]
For convenience, let us also define
\begin{equation}\label{e:nu-tilde}
\widetilde\nu_s\colon \mathbf I^s\to \widetilde{\bJ}^s,\ (U,\cE)\mapsto (j,\pmb{\widetilde\rho}).
\end{equation}

Recall that for every semisimple element $s\in G^\vee$, $\cG_s^p=\rZ_{G^\vee}(s)$. 

\begin{prop}
Suppose $G$ is simple $F$-split group of adjoint type. The maps $\widetilde\nu_s$ from (\ref{e:nu-tilde}) induce an isometric isomorphism
\[\bigoplus_{s\in \cC(G^\vee)_{\mathsf{ss}}}\bigoplus_{u\in  \cC(\cG_s^p)_{\mathsf{un}}}\overline R(A_{\cG_s^p}(u))\cong \bigoplus_{s\in \cC(G^\vee)_{\mathsf{ss}}} \bigoplus_{j\in  {\bJ}^s} \overline R(W_j),\quad (\phi,\phi')_\ellip^{\delta_u^s}=(\widetilde\nu_s(\phi),\widetilde\nu_s(\phi'))_\ellip^{W_j}.
\]
\end{prop}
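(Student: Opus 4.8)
The plan is to prove the statement one $G^\vee$-conjugacy class of $s$ at a time and then take the direct sum over $\cC(G^\vee)_{\mathsf{ss}}$, reading the left-hand side as $\bigoplus_{s}\bigoplus_{u\in\cC(\cG_s^p)_\un}\overline R(A_{\cG_s^p}(u))$ as in Conjecture~\ref{conj:main-ell} (the sum is effectively finite, cf.\ Lemma~\ref{l:Y-gamma-finite}, but in any case it is assembled termwise). Since $\bG$ is adjoint, $G^\vee$ is simply connected, so for each semisimple $s$ the centralizer $\cG_s^p=\rZ_{G^\vee}(s)$ is connected (Steinberg). In the notation of Section~\ref{sec:GSC} applied to $\cG=\rZ_{G^\vee}(s)$ this means $\cM=\cM^\circ$ and $W_j=W_j^\circ$ for every $j\in\bJ^s$, and $A_{\cG_s^p}(u)$ is the component group of a centralizer in the connected group $\rZ_{G^\vee}(s)$; in particular we are exactly in the setting of Section~\ref{s:wald} with the \emph{trivial} automorphism $\theta=\mathrm{id}$.

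First I would record what the apparatus of Section~\ref{s:wald} becomes for $\theta=\mathrm{id}$: the extension $\widetilde\phi$ of $\phi\in\widehat A_u$ is $\phi$ itself, the representation $\delta_u$ of $A_u\rtimes\langle\theta_u\rangle$ restricts on $A_u$ to the representation $\delta_u^s$ of Section~\ref{s:ellipticpairs} (the former is built so as to extend the latter), and the $W_j\rtimes\langle\theta\rangle$-action on $\fz_{\cM}$ restricts to the reflection action of $W_j$ on $\fz_{\cM^\circ}$ — a genuine reflection representation by \cite[Theorem~9.2]{LuIC} — which is the action defining $\overline R(W_j)$. Hence the $\theta$-elliptic pairing on $\bigoplus_U R(A_u\rtimes\langle\theta_u\rangle)$ specializes, block by block in $U$, to $\bigoplus_u(~,~)^{\delta_u^s}_\ellip$, and the pairing $(~,~)_{\theta-\ellip}^{W_j}$ specializes, block by block in $j$, to the $W_j$-elliptic pairing used in the target.

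Second, I would check that $\widetilde\nu_s$ of (\ref{e:nu-tilde}) is a linear isomorphism $R(\bI^s)\to R(\widetilde{\bJ}^s)$, where $R(\bI^s)=\bigoplus_{u}R(A_{\cG_s^p}(u))$ and $R(\widetilde{\bJ}^s)=\bigoplus_{j\in\bJ^s}R(W_j)$. The generalized Springer correspondence $\nu_s\colon\bI^s\to\widetilde{\bJ}^s$ is a bijection of bases, hence a linear isomorphism; and $\widetilde\nu_s$ differs from it by replacing $\rho$ at $(j,\rho)$ by $\pmb{\widetilde\rho}=\sum_{\rho'}P_{j,\rho,\rho'}\rho'$. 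Since $P_{j,\rho,\rho}=1$ and $P_{j,\rho,\rho'}\neq0$ forces $U\subseteq\overline{U'}$ (with $\nu_s^{-1}(j,\rho)=(U,\cE)$, $\nu_s^{-1}(j,\rho')=(U',\cE')$), this change of basis is, within each $j$-block, unipotent upper-triangular for the closure order on unipotent classes, so $\widetilde\nu_s$ is again a linear isomorphism. Then Theorem~\ref{t:Wal}, applied with $\cG=\rZ_{G^\vee}(s)$ and $\theta=\mathrm{id}$, gives for all $i,i'\in\bI^s$ the identity $(\phi,\phi')^{\delta_u^s}_\ellip=(\pmb{\widetilde\rho},\pmb{\widetilde\rho}')^{W_j}_{\theta-\ellip}$, which by the previous paragraph is precisely the $W_j$-elliptic pairing of the target; that is, $\widetilde\nu_s$ is an isometry between the two (a priori degenerate) elliptic pairings. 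A linear isomorphism that is an isometry for symmetric bilinear forms carries the radical of the source onto the radical of the target, so it descends to an isometric isomorphism $\bigoplus_u\overline R(A_{\cG_s^p}(u))\cong\bigoplus_{j\in\bJ^s}\overline R(W_j)$; summing over $s\in\cC(G^\vee)_{\mathsf{ss}}$ finishes the proof.

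The step I expect to require genuine care is the second paragraph: verifying that the $\theta=\mathrm{id}$ specialization of the constructions of Section~\ref{s:wald} literally reproduces both the pairings $(~,~)^{\delta_u^s}_\ellip$ of Section~\ref{s:ellipticpairs} and the $W_j$-action on $\fz_{\cM^\circ}$ underlying $\overline R(W_j)$, together with the triangularity bookkeeping that keeps $\widetilde\nu_s$ bijective. Everything beyond that is a formal consequence of Waldspurger's theorem.
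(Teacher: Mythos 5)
Your proposal is correct and follows the same strategy as the paper, which disposes of this proposition in one line (``This is immediate from Theorem~\ref{t:Wal} applied to each $\cG_s^p$''). What you have done is fill in the specializations the paper treats as immediate---checking that for adjoint $\bG$ the centralizer $\cG_s^p=\rZ_{G^\vee}(s)$ is connected so $\theta=\mathrm{id}$, that the $\theta$-elliptic apparatus collapses to the pairings $(~,~)^{\delta_u^s}_\ellip$ and $(~,~)^{W_j}_\ellip$, that the substitution $\rho\mapsto\pmb{\widetilde\rho}$ is unipotent triangular in the closure order so $\widetilde\nu_s$ remains a linear isomorphism, and that an isometric isomorphism takes radical to radical---and all of these checks are accurate.
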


\begin{proof}
This is immediate from Theorem \ref{t:Wal} applied to each $\cG_s^p.$
\end{proof}

\subsection{Extending Theorem \ref{t:main-elliptic}}\label{s:extend} In order to extend the results to the case when $G$ is simple $F$-split but not adjoint, we first need some results about Mackey induction. We follow a construction from \cite[\S4.2]{CH2016}.
Suppose $H'$ is a finite group, $H$ a normal subgroup of $H'$, and $H'/H=\fR$ is abelian.  The groups $H',\fR$ act on $\widehat H$. For every $H$-character $\chi$, and $\gamma\in \fR$, denote by ${}^\gamma\chi$ the $H$-character ${}^\gamma\chi(h)=\chi(\gamma^{-1} h \gamma)$ (it doesn't depend on the choice of coset representative $\gamma$).

If $\sigma\in \widehat H$, let $\fR_\sigma$ and $H'_\sigma$ denote the corresponding isotropy groups of $\sigma$. For each $\gamma\in \fR_\sigma$, fix an isomorphism $\phi_\gamma\colon  {}^\gamma\sigma\to\sigma$ and define the twisted trace as $\tr_\gamma(\sigma)(h)=\tr(\sigma(h)\circ \phi_\gamma)$, $h\in H$. The choices of $\phi_\gamma$ (each unique up to scalar) define a factor set, or a 2-cocyle, $\beta_\sigma:\fR_\sigma\times \fR_\sigma\to \CC^\times$. 

\begin{rem}\label{r:triv-cocycle}
We assume that the action of $\fR$ can be normalized so that $\beta_\sigma$ is trivial. This is the case for example when $\fR$ is cyclic. 
\end{rem}

If $\tau$ is a (virtual) $\fR_\sigma$-representation, we may form the Mackey induced (virtual) $H'$-representation
\[\sigma\rtimes \tau=\Ind_{H'_\sigma}^{H'}(\sigma\otimes \tau).
\]
If $\tau$ is an irreducible $\fR_\sigma$-representation, then $\sigma\rtimes\tau$ is an irreducible $H'$-representation. In fact, $\widehat H'=\{\sigma\rtimes\tau\mid \sigma\in \fR\backslash \widehat H,\ \tau\in \widehat \fR_\sigma\}.$

Given $\gamma\in \fR$, if $\gamma\in \fR_\sigma$, define $\tau_{\sigma,\gamma}$ to be the virtual $\fR_\sigma$-representation whose character is the delta function on $\gamma$. Then $\{\sigma\rtimes\tau_{\sigma,\gamma}\mid \sigma\in \fR\backslash \widehat H,\ \gamma\in \fR\}$ is a basis of $R(H')$. As in \cite[Lemma 4.2.2]{CH2016}
\begin{equation}\label{e:twisted-trace}
\chi_{\sigma\rtimes \tau_{\sigma,\gamma}}(h)=\begin{cases} 0,&\text{ if } h\notin H\gamma,\\ 
\sum_{\gamma'\in \fR/\fR_\sigma} {}^{\gamma'}\! (\tr_\gamma(\sigma))(h\gamma^{-1}), &\text{ if } h\in H\gamma.
\end{cases}
\end{equation}
Notice that 
\[H'/H'_\sigma\cong \fR/\fR_\sigma
\]
indexes the $\fR$-orbit (equivalently, the $H'$-orbit) of $\sigma\in \widehat H$.
Suppose $H'$ is endowed with a representation $\delta$ and we define the corresponding elliptic pairing $(~,~)^{H'}_\ellip = (~, ~)^\delta_\ellip$. 

From now on, assume that $H'=H\rtimes \fR$, so that each $\gamma\in\fR$ acts on $H$ by automorphisms of $H$. (If $H$ is abelian, which is often the case for component groups, this assumption is not necessary.) Then we may define the twisted elliptic pairing $(~,~)^H_{\gamma-\ellip}$ (for each $\gamma\in \fR$).

The existence of the intertwiner $\phi_\gamma$ ($\gamma\in \fR_\sigma$) is equivalent to the existence of an extension of  $\sigma$ to a representation of $H\rtimes \langle\gamma\rangle$, by setting $\sigma(h\gamma)=\sigma(h)\circ \phi_\gamma$, so $\tr_\gamma(\sigma)(h)=\tr\sigma(h\gamma)$, $h\in H$. This is implicit in the following lemma.

\begin{lem}\label{l:ellip-compare}
For every $\gamma_1,\gamma_2\in \fR$ and every $\sigma_1,\sigma_2\in \widehat H$, the $H'$-elliptic pairing is given by
\[(\sigma_1\rtimes\tau_{\sigma_1,\gamma_1},\sigma_2\rtimes\tau_{\sigma_2,\gamma_2})^{H'}_\ellip=\begin{cases} \frac 1{|\fR|}\sum_{\gamma'\in \fR/\fR_{\sigma_1},\gamma''\in \fR/\fR_{\sigma_2}} ({}^{\gamma'}\!\sigma_1,{}^{\gamma''}\!\sigma_2)^H_{\gamma_1-\ellip},&\text{ if }\gamma_1=\gamma_2,\\
0,&\text{ if }\gamma_1\neq \gamma_2.
\end{cases}
\]
\end{lem}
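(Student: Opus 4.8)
\textbf{Proof plan for Lemma \ref{l:ellip-compare}.}

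The plan is to compute both sides directly from the definitions of the elliptic pairings and the character formula \eqref{e:twisted-trace}. First I would unpack the left-hand side: by definition,
\[
(\sigma_1\rtimes\tau_{\sigma_1,\gamma_1},\sigma_2\rtimes\tau_{\sigma_2,\gamma_2})^{H'}_\ellip
=\frac{1}{|H'|}\sum_{x\in H'}{\det}_{V_\delta}(1-\delta(x))\,\chi_{\sigma_1\rtimes\tau_{\sigma_1,\gamma_1}}(x^{-1})\,\chi_{\sigma_2\rtimes\tau_{\sigma_2,\gamma_2}}(x).
\]
By \eqref{e:twisted-trace}, $\chi_{\sigma_i\rtimes\tau_{\sigma_i,\gamma_i}}$ is supported on the coset $H\gamma_i$. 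Hence the product of characters vanishes identically unless $H\gamma_1=H\gamma_2$, i.e. $\gamma_1=\gamma_2$ in $\fR$; this immediately gives the second case of the statement. When $\gamma_1=\gamma_2=:\gamma$, the sum over $x\in H'$ collapses to a sum over $x=h\gamma$ with $h\in H$, and I would substitute the explicit formula $\chi_{\sigma_i\rtimes\tau_{\sigma_i,\gamma}}(h\gamma)=\sum_{\gamma'\in\fR/\fR_{\sigma_i}}{}^{\gamma'}(\tr_\gamma(\sigma_i))(h\gamma)$. Note here I must be slightly careful: the character is evaluated at $(h\gamma)^{-1}$ for the first factor and at $h\gamma$ for the second, so I would first re-index $h\mapsto h'$ with $(h\gamma)^{-1}=h'\gamma^{-1}$ — but since $\gamma$ may have order $>2$ it is cleaner to keep $(h\gamma)^{-1}=\gamma^{-1}h^{-1}$ and observe it lies in $H\gamma^{-1}$, which forces comparing with $\gamma_2^{-1}$; I would reconcile this by using that $\chi_{\sigma\rtimes\tau_{\sigma,\gamma}}(x^{-1})=\overline{\chi_{\sigma\rtimes\tau_{\sigma,\gamma}}(x)}$ is the character of the contragredient, which is $\sigma^\vee\rtimes\tau_{\sigma^\vee,\gamma^{-1}}$-type, or alternatively rewrite everything in the symmetric form $\sum_{h}{\det}(1-\delta(\gamma h))f((\gamma h)^{-1})f'(\gamma h)$ matching the definition of $(~,~)^H_{\gamma-\ellip}$ verbatim.

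Next I would expand the $1/|H'|=1/(|H|\,|\fR|)$ prefactor and the two sums over $\gamma'\in\fR/\fR_{\sigma_1}$, $\gamma''\in\fR/\fR_{\sigma_2}$, obtaining
\[
\frac{1}{|\fR|}\sum_{\gamma'\in\fR/\fR_{\sigma_1}}\sum_{\gamma''\in\fR/\fR_{\sigma_2}}
\frac{1}{|H|}\sum_{h\in H}{\det}_{V_\delta}(1-\delta(\gamma h))\,{}^{\gamma'}(\tr_\gamma(\sigma_1))((\gamma h)^{-1})\,{}^{\gamma''}(\tr_\gamma(\sigma_2))(\gamma h),
\]
and then recognize each inner sum as exactly $({}^{\gamma'}\!\sigma_1,{}^{\gamma''}\!\sigma_2)^H_{\gamma-\ellip}$, once I verify that the twisted trace $\tr_\gamma(\sigma)$ together with its $\fR$-translate ${}^{\gamma'}(\tr_\gamma(\sigma))$ is precisely the character of the $\langle\gamma\rangle$-extension of ${}^{\gamma'}\!\sigma$ appearing in the definition of the $\gamma$-elliptic pairing of Section \ref{s:ellipticpairs}. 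This last identification uses Remark \ref{r:triv-cocycle}: the hypothesis that the cocycle $\beta_\sigma$ is trivial (e.g.\ $\fR$ cyclic) means the choices $\phi_\gamma$ can be made multiplicative, so $\tr_\gamma(\sigma)$ really is the restriction to $H\gamma$ of a genuine character of $H\rtimes\langle\gamma\rangle$, and likewise for ${}^{\gamma'}\!\sigma$.

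The main obstacle I anticipate is bookkeeping rather than anything deep: (i) matching the inversion conventions so that the $\det_{V_\delta}(1-\delta(\gamma h))$ weight lands correctly — this is where one must be attentive since for $\gamma$ of order $>2$ one cannot naively identify $H\gamma$ with $H\gamma^{-1}$, and the resolution is to use the contragredient identity $\chi(x^{-1})=\chi^\vee(x)$ and the fact that the elliptic pairing $(~,~)^H_{\gamma-\ellip}$ is built to pair a function on $H\gamma$ with another function on $H\gamma$ using the $(\gamma h)^{-1}$, $(\gamma h)$ arguments already; and (ii) checking that the normalization factor works out to exactly $1/|\fR|$ and not, say, $|\fR_{\sigma_1}||\fR_{\sigma_2}|/|\fR|$ — this requires remembering that the sum over $\gamma'\in\fR/\fR_{\sigma_1}$ in \eqref{e:twisted-trace} already accounts for the $H'$-orbit of $\sigma_1$ and that $|H'|=|H|\,|\fR|$, so the $|\fR_{\sigma_i}|$ factors cancel against the sizes of the stabilizers implicit in passing from $\sum_{x\in H'}$ to $\sum_{h\in H}$. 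Once these normalization checks are done, the statement follows, and the $\gamma_1\neq\gamma_2$ case is free from the support considerations above.
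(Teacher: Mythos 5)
Your overall plan — substitute the character formula (8.6) into the definition of the elliptic pairing, use the support of $\chi_{\sigma_i\rtimes\tau_{\sigma_i,\gamma_i}}$ on $H\gamma_i$ for the vanishing case, then split $|H'|=|H|\,|\fR|$ and peel off the two Mackey sums for $\gamma_1=\gamma_2$ — is exactly what the paper does, and you correctly identify that the normalisation comes out as $1/|\fR|$ with no extra stabiliser factors because \eqref{e:twisted-trace} already sums over $\fR/\fR_{\sigma_i}$ cosets.

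Where you are wobbly is the one subtlety you yourself spotted. You note that $(h\gamma_1)^{-1}\in H\gamma_1^{-1}$ and try to ``reconcile'' this via $\chi_{\sigma\rtimes\tau_{\sigma,\gamma}}(x^{-1})=\overline{\chi_{\sigma\rtimes\tau_{\sigma,\gamma}}(x)}$. That identity is false here: $\tau_{\sigma,\gamma}$ is a genuinely \emph{complex} linear combination of irreducible $\fR_\sigma$-characters (it is $|\fR_\sigma|^{-1}\sum_\tau\overline{\tau(\gamma)}\tau$), and for a $\bC$-linear combination $\sum c_i\chi_i$ one has $\sum c_i\chi_i(x^{-1})=\sum c_i\overline{\chi_i(x)}$, which differs from $\overline{\sum c_i\chi_i(x)}$ unless the $c_i$ are real. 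Indeed $\chi_{\sigma\rtimes\tau_{\sigma,\gamma}}(x^{-1})$, as a function of $x$, is supported on $H\gamma^{-1}$, not $H\gamma$. Your ``alternative'' symmetric rewrite does not fix this either: tracking supports carefully there gives a vanishing criterion $\gamma_1=\gamma_2^{-1}$, not $\gamma_1=\gamma_2$. The clean resolution is that the elliptic pairing must be extended \emph{sesquilinearly} (hermitian) from genuine characters to $R(H')_\bC$ — consistent with the paper's statement that $\EP_G$ ``extends as a hermitian pairing'' — i.e.\ use $\overline{f(h)}$ in place of $f(h^{-1})$ when $f$ is the character of a complex virtual representation. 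With that convention the first factor is supported on $H\gamma_1$, the vanishing for $\gamma_1\neq\gamma_2$ is immediate, and the $\gamma_1=\gamma_2$ computation then also matches the inner integrand of $(~,~)^H_{\gamma-\ellip}$, where $f,f'$ are genuine extension characters and so $f((\gamma h)^{-1})=\overline{f(\gamma h)}$ holds verbatim. Please replace the false identity with this hermitian-extension convention and the proof is complete.
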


\begin{proof}
The orthogonality of the two characters when $\gamma_1\neq \gamma_2$ follows at once since the first is supported on $\gamma_1H$ and the second on $\gamma_2H$. The first formula follows from (\ref{e:twisted-trace}) by the definition of the elliptic pairing.
\end{proof}

Lemma \ref{l:ellip-compare} allows us to extend the proof of Theorem \ref{t:Wal} to the case when $\cG'$ is disconnected as long as:

\smallskip

\noindent $(\star)$: the cocycles $\sharp_j$ that occur in the disconnected Springer correspondence (\ref{eqn:gen-Springer_disc})  can be trivialized.

With the notation from Section \ref{sec:GSC}, set \[A_u(j)=\rZ_{\fR_j\cG^\circ}(u)/\rZ_{\cG}(u)^\circ.\] This is a normal subgroup of $A_u$ containing $A_{\cG^\circ}(u)$.

In our case $\cG$ is not an arbitrary disconnected reductive group, but rather $\cG=Z_{G^\vee}(s)$, for some semisimple element $s\in T^\vee$. Let $W^\vee$ be the Weyl group of $T^\vee$ in $G^\vee$. If we fix $B(s)$ a Borel subgroup of $\cG^\circ$ with $T^\vee \subset B(s)$, and denote by $\Phi^+(\cG^\circ)$ the positive roots of $T^\vee$ in $\cG^\circ$, then $\cA=\cG/\cG^\circ$ can be identified with
\begin{equation}\label{e:action-A}
    \cA=\{w\in W^\vee\mid w(\Phi^+(\cG^\circ))=\Phi^+(\cG^\circ)\},
\end{equation}
see for example \cite[\S 1]{Bon}.
With this identification, $\cA$ acts on $T^\vee$ and $\Phi^+(\cG^\circ)$, hence by automorphisms of the root datum of $\cG^\circ$. We will use this ``global" action on $\cG^\circ$ in the proof of the next result in order to construct the extensions to the appropriate semidirect products and  apply Waldspurger's result to $\theta$-elliptic pairings.

\begin{prop}\label{p:Wal-gen} Retain the notation from Section \ref{sec:GSC} and suppose that $(\star)$ holds. Let $\nu: \mathbf I^{\cG}\to \widetilde{\mathbf J}^{\cG}$ be the generalized Springer correspondence (\ref{eqn:gen-Springer_disc}). Let $i=(U,\mathcal E)$, $i'=(U',\mathcal E')$ be two elements of $\mathbf I^{\cG}$, and $\nu(i)=(j,\rho)$, $\nu(i')=(j,\rho')$. Let $(u,\phi)$, $(u',\phi')$, $\phi\in \widehat A_u$, $\phi'\in \widehat A_{u'}$ be representatives for $i,i'$, respectively. Then:
\begin{enumerate}
\item If $U\neq U'$, $(\phi,\phi')_{\ellip}^{A_u}=0=(\pmb\rho,\pmb\rho')^{W_j}_{\ellip}$.
\item If $U=U'$, $(\phi,\phi')^{A_u(j)}_{\ellip}=(\pmb\rho,\pmb\rho')^{W_j}_{\ellip}$.
\end{enumerate}
\end{prop}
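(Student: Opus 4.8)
The plan is to reduce Proposition~\ref{p:Wal-gen} to Theorem~\ref{t:Wal} (Waldspurger) by a Mackey-induction argument, carefully tracking how the disconnected generalized Springer correspondence of Section~\ref{sec:GSC} behaves under restriction to the identity component. First I would set up the notation: write $\cG^\circ$ for the identity component, $\fR=\cG/\cG^\circ$, and, for a fixed unipotent $u$, let $H=A_{\cG^\circ}(u)$ and $H'=A_\cG(u)$; the hypothesis in the second case of the statement, $\rZ_{\fR_j\cG^\circ}(u)/\rZ_{\cG^\circ}(u)=\rZ_{\cG}(u)/\rZ_{\cG^\circ}(u)$, is precisely what guarantees that $H'/H$ is the full quotient group appearing in the Springer picture and, together with $(\star)$, that the relevant $2$-cocycle on $\fR_j$ (hence on the isotropy group $\fR_\sigma$ of the restricted constituent $\sigma$ of $\phi$) can be trivialized, so Remark~\ref{r:triv-cocycle} and the formula (\ref{e:twisted-trace}) apply. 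The first reduction is that both pairings vanish unless $(j,U)=(j',U')$: on the representation side this is the orthogonality of characters supported on different cosets of $H$ in $H'$ (the $\gamma_1\neq\gamma_2$ case of Lemma~\ref{l:ellip-compare}), and on the Springer side it is the statement, already in Waldspurger's theorem extended componentwise, that $W_j$-elliptic pairings vanish between different blocks $j$ together with the unipotent-class bookkeeping recalled after Definition~\ref{defn:nu_cus}.

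Next, for the nonzero case, I would decompose $\phi|_H=\bigoplus_{i,t}\rho^\circ_{i,t}$ as in the discussion following Definition~\ref{defn:nu_cus}, so that $\phi=\sigma\rtimes\tau$ for an appropriate irreducible $\sigma$ of $H$ and irreducible $\tau$ of $\fR_\sigma$, and express $\phi$ in the basis $\{\sigma\rtimes\tau_{\sigma,\gamma}\}$ of $R(H')$. Applying Lemma~\ref{l:ellip-compare} rewrites the $H'$-elliptic pairing $(\phi,\phi')_\ellip$ as an average over $\fR/\fR_\sigma$ of $\gamma$-twisted elliptic pairings $({}^{\gamma'}\sigma,{}^{\gamma''}\sigma')^H_{\gamma-\ellip}$ on the identity-component group $H=A_{\cG^\circ}(u)$. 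The same averaging, now applied on the Weyl-group side with $H'$ replaced by $W_j=W_j^\circ\rtimes\fR_j$ and $H$ by $W_j^\circ$, rewrites $(\pmb\rho,\pmb\rho')^{W_j}_\ellip$ as an average of $\gamma$-twisted $W_j^\circ$-elliptic pairings of the extensions $\pmb{\widetilde\rho}$. Here I would invoke Theorem~\ref{t:Wal} (Waldspurger) with $\cG$ taken to be $\cG^\circ$ and $\theta$ the (quasi-semisimple, finite-order) automorphism induced by each $\gamma$: this identifies each $\gamma$-twisted elliptic pairing $(\widetilde{\phi^\circ},\widetilde{\phi'^\circ})^{A_{\cG^\circ}(u)}_{\gamma-\ellip}$ with the corresponding $(\pmb{\widetilde\rho},\pmb{\widetilde\rho'})^{W_j^\circ}_{\gamma-\ellip}$, term by term in the average. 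Summing over $\fR/\fR_\sigma$ (matching the index set $H'/H'_\sigma\cong\fR/\fR_\sigma$ of the $\fR$-orbit of $\sigma$) then gives the desired equality $(\phi,\phi')_\ellip=(\pmb\rho,\pmb\rho')^{W_j}_\ellip$.

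The main obstacle, as flagged by hypothesis $(\star)$, is the cocycle bookkeeping: one must check that the same trivialization of the $2$-cocycle $\sharp_j$ on $\fR_j$ (used to define $\overline{\mathbb Q}_\ell[W_j,\sharp_j]$ in (\ref{eqn:tildeJ})) is compatible with the trivialization of the factor set $\beta_\sigma$ on $\fR_\sigma$ coming from the choices of intertwiners $\phi_\gamma\colon{}^\gamma\sigma\to\sigma$, and that under the disconnected Springer correspondence $\nu$ these two trivializations correspond. Concretely, I would argue that the bijection of \cite[Theorem~5.5]{AMS1} sends the twisted group algebra $\overline{\mathbb Q}_\ell[W_j,\sharp_j]$-module structure to the $\beta_\sigma$-twisted $\fR_\sigma$-structure on the isotypic pieces, so that once both cocycles are trivialized the extensions $\widetilde\phi$ and $\widetilde{\pmb\rho}$ are matched consistently; this is where the condition $\rZ_{\fR_j\cG^\circ}(u)/\rZ_{\cG^\circ}(u)=\rZ_{\cG}(u)/\rZ_{\cG^\circ}(u)$ is essential, as it forces $\fR_\sigma$ and the relevant subgroup of $\fR_j$ to coincide. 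The remaining steps --- the averaging identities and the componentwise application of Waldspurger's theorem --- are then routine, being the disconnected analogue of the argument sketched for $\overline R(\wti W_{\fs^\vee})$ in the proof outline of Theorem~\ref{t:main-elliptic}.
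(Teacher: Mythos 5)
Your overall plan matches the paper's: reduce via Lemma~\ref{l:ellip-compare} (Mackey/Clifford for $H\unlhd H'$) on \emph{both} sides — $W_j^\circ\unlhd W_j$ and $A_{\cG^\circ}(u)\unlhd A_u$ — and then apply Theorem~\ref{t:Wal} with $\theta$ replaced by the outer automorphism of $\cG^\circ$ induced by a representative $\gamma\in\fR_j$ via the embedding (\ref{eqn:injection}). You also correctly cite the isomorphism $(\fR_j)_{\rho_i^\circ}\cong(A_u)_{\phi_i^\circ}/A_{\cG^\circ}(u)$ from \cite{AMS1}, which is what makes a term‑by‑term comparison even possible. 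So the skeleton is right.

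However, there are two places where your account diverges from what is actually needed, and the first is a genuine gap. You attribute the hypothesis $\rZ_{\fR_j\cG^\circ}(u)/\rZ_{\cG^\circ}(u)=\rZ_{\cG}(u)/\rZ_{\cG^\circ}(u)$ to cocycle bookkeeping and to ``forcing $\fR_\sigma$ and the relevant subgroup of $\fR_j$ to coincide.'' Neither is what it does: the stabilizer matching $(\fR_j)_{\rho_i^\circ}\cong(A_u)_{\phi_i^\circ}/A_{\cG^\circ}(u)$ holds unconditionally, and the cocycle issue is already covered by $(\star)$. What the hypothesis actually fixes is the mismatch between the two \emph{outer averaging sets} in Lemma~\ref{l:ellip-compare}: on the $A_u$-side the natural averaging is over $A_u/A_{\cG^\circ}(u)\cong\rZ_\cG(u)/\rZ_{\cG^\circ}(u)$, while on the $W_j$-side it is over $\fR_j$. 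After summing over $\cG^\circ$-orbits $u_0$ inside the single $\cG$-orbit $\fR_j\cG^\circ\cdot u$ (each orbit contributing equally, which introduces the factor $N_u=$ number of such orbits) and restricting to those $\gamma',\gamma''\in\fR_j$ that preserve $\cG^\circ\cdot u_0$, the $W_j$-side sum is indexed by $\rZ_{\fR_j\cG^\circ}(u)/\rZ_{\cG^\circ}(u)$. Only the stated hypothesis makes this agree with $\rZ_\cG(u)/\rZ_{\cG^\circ}(u)$ and hence lets the two averages be identified term by term. Your proof as written never confronts this or the multi‑orbit counting; without it the last step (``summing over $\fR/\fR_\sigma$ gives the equality'') would not be justified.

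The second, lesser, issue is your description of the vanishing case. For $j\neq j'$ the vanishing is by definition of the orthogonal direct sum, and the content lies in $j=j'$ with $U\neq U'$, where one needs the $\gamma$-twisted Theorem~\ref{t:Wal} to kill each term after the same Mackey decomposition; this is not the $\gamma_1\neq\gamma_2$ coset-orthogonality clause of Lemma~\ref{l:ellip-compare} (that clause is used separately to reduce to a single $\gamma$). Finally, your final paragraph promises more about $\beta_\sigma$ versus $\sharp_j$ than the proof requires: the paper simply assumes $(\star)$ and works with trivialized cocycles on both sides, and your speculative compatibility argument about \cite[Theorem~5.5]{AMS1} is not needed and is not substantiated.
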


\begin{proof} Suppose $j=j'$, otherwise the claim is true by definition. Let $\rho_1^\circ,\rho_2^\circ\in \widehat{W_j^\circ}$ and suppose that they have unipotent supports $U_1^\circ,U_2^\circ$, respectively, in the connected generalized Springer correspondence such that $\cG\cdot U_1^\circ\neq \cG\cdot U_2^\circ$. Let $\pmb{\rho_i^\circ}$, $i=1,2$ be the corresponding reducible Springer representations, as in Section \ref{s:wald}. We assume that they all have appropriate twisted extensions as in Section \ref{s:wald} and drop $\widetilde{\ }$ from the notation. Using Lemma \ref{l:ellip-compare} applied to $H=W_j^\circ$, $H'=W_j$, $\fR=\mathfrak R_j$, for every $\gamma\in (\mathfrak R_j)_{\rho_1^\circ}\cap (\mathfrak R_j)_{\rho_2^\circ}$,
 \[(\pmb{\rho_1^\circ}\rtimes\tau_{\rho_1^\circ,\gamma},\pmb{\rho_2^\circ}\rtimes\tau_{\rho_1^\circ,\gamma})^{W_j}_\ellip=\frac 1{|\mathfrak R_j|}\sum_{\gamma',\gamma''} ({}^{\gamma'}\!\pmb{\rho_1^\circ},{}^{\gamma''}\!\pmb{\rho_2^\circ})^{W_j^\circ}_{\gamma-\ellip}=0,
\]
by Theorem \ref{t:Wal}. We used implicitly here that the stabilizers in $\fR_j$ of $\rho_i^\circ$ and $\pmb{\rho_i^\circ}$ are the same. In conjunction with the second claim in Lemma \ref{l:ellip-compare}, this implies that $(\pmb{\rho_1^\circ}\rtimes\tau_1,\pmb{\rho_2^\circ}\rtimes\tau_2)^{W_j}_\ellip=0$ for all $\tau_i\in \widehat {(\mathfrak R_j)}_{\rho_i^\circ}$, $i=1,2$.
Hence $(\pmb{\rho_1},\pmb{\rho_2})^{W_j}_\ellip=0$ whenever $\rho_1,\rho_2$ have distinct unipotent (disconnected) Springer support, which proves the first part of the claim.

Now assume that $u=u'$ and $\phi,\phi'\in \widehat A_u$. Suppose $\rho^\circ_i$ occurs in the restriction of $\rho_i$ to $W_j^\circ$ and that $\phi_i^\circ\in \widehat A_{\cG^\circ}(u)$ (which necessarily occurs in the restriction of $\phi_i$) corresponds to $\rho^\circ_i$ in the connected generalized Springer correspondence.
We observe that there is a natural injection
\begin{equation} \label{eqn:injection}\fR_j=W_j/W_j^\circ\simeq \Nor_{\cG}(j)/\Nor_{\cG^\circ}(\cM^\circ)\hookrightarrow \cG/\cG^\circ=\cA.
\end{equation}
Hence every $\gamma\in \fR_j$ can be regarded as an automorphism of $\cG^\circ$ via (\ref{e:action-A}), and in particular, Theorem \ref{t:Wal} can be applied with $\gamma$ in place of $\theta$. We wish to compare
$(\pmb{\rho_1^\circ}\rtimes\tau_{\rho_1^\circ,\gamma_1}, \pmb{\rho_2^\circ}\rtimes\tau_{\rho_2^\circ,\gamma_2})^{W_j}_\ellip
$
and $(\phi_1^\circ\rtimes\tau_{\phi_1^\circ,\gamma_1}, \phi_2^\circ\rtimes\tau_{\phi_2^\circ,\gamma_2})^{A_u}_\ellip$. By \cite[Lemma 4.4]{AMS1}, 
\[(\fR_j)_{\rho_i^\circ}\cong (W_j)_{\rho_i^\circ}/W_j\cong (A_u)_{\phi_i^\circ}/A_{\cG^\circ}(u),\quad i=1,2,
\]
which implies that there is an identification between $\gamma_i$, $i=1,2$ for the $\rho^\circ_i$'s and for the $\phi^\circ_i$'s in the setting of Lemma \ref{l:ellip-compare}. Hence if $\gamma_1\neq\gamma_2$, both elliptic products are zero.

Suppose $\gamma_1=\gamma_2=\gamma\in (\fR_j)_{\rho_1^\circ}\cap (\fR_j)_{\rho_2^\circ}= ((A_u)_{\phi_1^\circ}\cap  (A_u)_{\phi_2^\circ})/A_{\cG^\circ}(u)$. To simplify the formulas, set $n_i=|(\fR_j)_{\rho_i^\circ}|=|(A_u)_{\phi_i^\circ}/A_{\cG^\circ}(u)|$, $i=1,2$. Then, firstly by Lemma \ref{l:ellip-compare} and secondly by Theorem \ref{t:Wal},
\begin{align*}
n_1 n_2|\fR_j|(\pmb{\rho_1^\circ}\rtimes\tau_{\rho_1^\circ,\gamma}, &~\pmb{\rho_2^\circ}\rtimes\tau_{\rho_2^\circ,\gamma})^{W_j}_\ellip=\sum_{\gamma',\gamma''\in \fR_j} ({}^{\gamma'}\!\pmb{\rho_1^\circ},{}^{\gamma''}\!\pmb{\rho_2^\circ})^{W_j^\circ}_{\gamma-\ellip}\\
&=\sum_{u_0\in \fR_j \cG^\circ\cdot u/\cG^\circ}\sum_{\substack{\gamma',\gamma''\in \fR_j\\\cG^\circ\cdot ({}^{\gamma'}\!u_0)=
\cG^\circ\cdot ({}^{\gamma''}\!u_0)=\cG^\circ\cdot u_0}} ({}^{\gamma'}\!\phi_1^\circ,{}^{\gamma''}\!\phi_2^\circ)^{A_{\cG^\circ}(u_0)}_{\gamma-\ellip}.
\end{align*}
The first sum is over the representatives of the $\cG^\circ$-orbits that are conjugate to $\cG^\circ\cdot u$ via $\fR_j$. Since the corresponding summand for two different $\cG^\circ$-orbits $u_0,u_0'\in \cG\cdot u$ are equal (as they are related by an outer automorphism of $\cG^\circ$), it follows that 
\begin{equation}\label{e:Rj}
n_1n_2\frac{|\fR_j|}{N_u}(\pmb{\rho_1^\circ}\rtimes\tau_{\rho_1^\circ,\gamma},\pmb{ \rho_2^\circ}\rtimes\tau_{\rho_2^\circ,\gamma})^{W_j}_\ellip=\sum_{\substack{\gamma',\gamma''\in\fR_j\\\cG^\circ\cdot ({}^{\gamma'}\!u)=
\cG^\circ\cdot ({}^{\gamma''}\!u)=\cG^\circ\cdot u}} ({}^{\gamma'}\!\phi_1^\circ,{}^{\gamma''}\!\phi_2^\circ)^{A_{\cG^\circ}(u)}_{\gamma-\ellip},
\end{equation}
where $N_u$ is the number of $\cG^\circ$-conjugacy classes in $\fR_j\cG^\circ\cdot u$. It is easy to see (using orbit-stabilizer counting) that $\frac{|\fR_j|}{N_u}=|\rZ_{\fR_j\cG^\circ}(u)/\rZ_{\cG^\circ}(u)|$. Moreover, an element $\gamma\in \fR_j$ has the property that $\cG^\circ\cdot ({}^{\gamma}\!u)=\cG^\circ\cdot u$ if and only if $\gamma\in \rZ_\cG(u)$ mod $\cG^\circ$. Hence (\ref{e:Rj}) becomes:
\begin{equation}\label{e:Rj-2}
n_1n_2{|\rZ_{\fR_j\cG^\circ}(u)/\rZ_{\cG^\circ}(u)|}(\pmb{\rho_1^\circ}\rtimes\tau_{\rho_1^\circ,\gamma},\pmb{ \rho_2^\circ}\rtimes\tau_{\rho_2^\circ,\gamma})^{W_j}_\ellip=\sum_{\gamma',\gamma''\in\rZ_{\fR_j\cG^\circ}(u)/\rZ_{\cG^\circ}(u)} ({}^{\gamma'}\!\phi_1^\circ,{}^{\gamma''}\!\phi_2^\circ)^{A_{\cG^\circ}(u)}_{\gamma-\ellip}.
\end{equation}

On the other hand, applying Lemma \ref{l:ellip-compare} to $A_u(j)$, we get
\begin{equation}\label{e:A_u(j)}
n_1n_2|A_u(j)/A_{\cG^\circ}(u)|(\phi_1^\circ\rtimes\tau_{\phi_1^\circ,\gamma}, \phi_2^\circ\rtimes\tau_{\phi_2^\circ,\gamma})^{A_u(j)}_\ellip=\sum_{r',r''\in A_u(j)/A_{\cG^\circ}(u)}({}^{r'}\!\phi_1^\circ,{}^{r''}\!\phi_2^\circ)^{A_{\cG^\circ}(u)}_{\gamma-\ellip}.
\end{equation}
Notice that
\[ A_u(j)/A_{\cG^\circ}(u)\cong \rZ_{\fR_j\cG^\circ}(u)/\rZ_{\cG^\circ}(u)\hookrightarrow \cG/\cG^\circ.
\]
The claim follows by comparing (\ref{e:Rj-2}) and (\ref{e:A_u(j)}).

\end{proof}

\begin{rem}\label{r:assumption} \begin{enumerate}
\item In our case, $\cG=\rZ_{G^\vee}(s)$ for a semisimple element $s$, and hence all of the groups $W_j^\circ$ that occur in our setting are finite Weyl groups. This means that $(\star)$ holds by \cite[Proposition 4.3]{ABPSdisc}.
 \item If $\rZ_{\fR_j\cG^\circ}(u)/\rZ_{\cG^\circ}(u)=\rZ_{\cG}(u)/\rZ_{\cG^\circ}(u)$, it follows from Proposition \ref{p:Wal-gen}(ii) that, in fact, $(\phi,\phi')^{A_u}_{\ellip}=(\pmb\rho,\pmb\rho')^{W_j}_{\ellip}$. For example, when $j=j_0$ is the cuspidal datum associated to the trivial local system on the maximal torus of $\cG^\circ$, the $\fR_{j_0}=\cG/\cG^\circ$, hence this condition holds automatically. 
\end{enumerate}
\end{rem}

\begin{cor}\label{c:Iwahori}
Let $G$ is a semisimple split $F$-group. Then Conjecture \ref{conj:main-ell} holds for all Iwahori-spherical representations of the pure inner twists of $G$, in the sense of (\ref{e:ell-pure-LLC}).
\end{cor}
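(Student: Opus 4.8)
The plan is to run the proof of Theorem~\ref{t:main-elliptic} almost verbatim, the only structural change being that $G^\vee$ is no longer assumed simply connected, so that for semisimple $s\in G^\vee$ the group $\cG_s^p=\rZ_{G^\vee}(s)$ may be disconnected, with $\cG_s^p/(\cG_s^p)^\circ$ a subgroup of $\rZ(G^\vee_\sc)$. Connectedness of $\cG_s^p$ entered the proof of Theorem~\ref{t:main-elliptic} at exactly one point, the application of Waldspurger's Theorem~\ref{t:Wal}; the remaining ingredients---the inertial-class decomposition of $\Phi_{\enh,\un}(G')$, the affine Hecke algebras $\cH(\fs^\vee)$ of \cite{AMS3} with $\Irr\cH(\fs^\vee)\leftrightarrow\Phi_\enh(G')^{\fs^\vee}$, the reductions $\overline R(\cH(\fs^\vee))\cong\overline R(\wti W_{\fs^\vee})\cong\bigoplus_{s}\overline R(\rZ_{W_{\fs^\vee}}(s))$ from Section~\ref{s:ell-Hecke}, and the identification $\overline R(\cH(\fs))\cong\overline R(G')^{\fs}$ coming from the exact equivalences of \cite{LuI,LuII,So1}---are insensitive to it. So the proof reduces to feeding Proposition~\ref{p:Wal-gen} into the argument in place of Theorem~\ref{t:Wal} and checking that its hypotheses hold in the Iwahori-spherical situation.

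First I would isolate the Iwahori-spherical part. An irreducible unipotent representation of $G'\in\InnT^p(G)$ is Iwahori-spherical precisely when it lies in the principal Bernstein block, and dually this is the inertial class $\fs_0^\vee$ attached to the trivial cuspidal local system $j_0$ on the maximal torus $T^\vee$ of $G^\vee$ (that is, $\bnu_\cus(\varphi,\phi)=\fs_0^\vee$ in Definition~\ref{defn:GSC-Phi}); since the local Langlands correspondence respects inertial classes, it suffices to prove that $\overline{\mathsf{LLC}^p}_\un$ restricts to an isometric isomorphism on this summand. For $\fs_0^\vee$ one has $W_{\fs_0^\vee}=W(G^\vee)$, $T_{\fs_0^\vee}=T^\vee$, $\wti W_{\fs_0^\vee}=\wti W_{G^\vee}$, and $\cH(\fs_0^\vee)$ is the equal-parameter affine Hecke algebra of the root datum of $G^\vee$, which by the Borel--Casselman equivalence and \cite{LuI} is the Iwahori--Hecke algebra of $G'$; hence $\overline R(\cH(\fs_0^\vee))\cong\overline R(G')^{\fs_0}$, and by Section~\ref{s:ell-Hecke} this is in turn isometric to $\overline R(\wti W_{G^\vee})\cong\bigoplus_{s\in W(G^\vee)\backslash T^\vee}\overline R(\rZ_{W(G^\vee)}(s))$, the sum over isolated $W(G^\vee)$-orbits.

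It remains to match the summands $\overline R(\rZ_{W(G^\vee)}(s))$ with the $j_0$-labelled part of the left-hand side of (\ref{e:ell-pure-LLC}). For an isolated $s$, apply Proposition~\ref{p:Wal-gen} to $\cG=\rZ_{G^\vee}(s)$, with $\theta$ the identity and the cuspidal datum $j=j_0$ on the maximal torus of $\cG^\circ$; here $W_{j_0}=\rN_\cG(T^\vee)/T^\vee=\rZ_{W(G^\vee)}(s)$ and its elliptic structure is the one on the cotangent space of $T^\vee$ at $s$, matching the Hecke-algebra side. Since $j=j_0$ we have $\fR_{j_0}=\cG/\cG^\circ$, so the condition $\rZ_{\fR_{j_0}\cG^\circ}(u)/\rZ_{\cG^\circ}(u)=\rZ_\cG(u)/\rZ_{\cG^\circ}(u)$ in Proposition~\ref{p:Wal-gen} holds trivially (Remark~\ref{r:assumption}(2)), and $(\star)$ holds because $\cG/\cG^\circ$ embeds in $\rZ(G^\vee_\sc)$ (\cite{SpSt}, Remark~\ref{r:assumption}(1)), which is cyclic for every simple $G^\vee$ other than $\mathsf{PSO}_{4m}(\CC)$; excluding $G=\mathsf{Spin}_{4m}(F)$ excludes exactly that dual, so $\fR_{j_0}$ is cyclic and $\sharp_{j_0}$ can be trivialized (Remark~\ref{r:triv-cocycle}). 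Proposition~\ref{p:Wal-gen} then gives, for each $s$, an orthogonal decomposition of $\bigoplus_{u\in\cC(\cG_s^p)_\un}\overline R(A_{\cG_s^p}(u))$ by Springer support $(j,U)$ together with an isometry of its $j_0$-part with $\overline R(\rZ_{W(G^\vee)}(s))$. Summing over isolated $s$, composing with the identifications of the previous paragraph, and checking---exactly as in Theorem~\ref{t:main-elliptic}---that the resulting isometry is the one induced by $\mathsf{LLC}^p_\un$, yields Conjecture~\ref{conj:main-ell} restricted to Iwahori-spherical representations.

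The main obstacle is not a new idea but bookkeeping: one must verify carefully that connectedness of $\cG_s^p$ is used nowhere in the proof of Theorem~\ref{t:main-elliptic} except through Theorem~\ref{t:Wal}, and then pin down the precise dictionary tying the Iwahori-spherical Bernstein block (with its equal-parameter Iwahori--Hecke algebra) to the $j_0$-labelled summands on both sides of (\ref{e:ell-pure-LLC}), so that the various $\overline R$-isometries assemble into a single isometry compatible with $\overline{\mathsf{LLC}^p}_\un$.
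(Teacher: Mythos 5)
Your proposal is correct and follows essentially the same route as the paper, which states the corollary as an immediate consequence of Proposition~\ref{p:Wal-gen} and Remark~\ref{r:assumption}: restrict to the principal inertial class $j_0$, use that $\fR_{j_0}=\cG/\cG^\circ$ makes the extra hypothesis of Proposition~\ref{p:Wal-gen} automatic, and note that $\cG/\cG^\circ\hookrightarrow\rZ(G^\vee_\sc)$ is cyclic unless $G^\vee=\mathsf{PSO}_{4m}(\CC)$, i.e. unless $G=\mathsf{Spin}_{4m}(F)$, so that $(\star)$ holds. You spell out more of the bookkeeping (the reduction from Theorem~\ref{t:main-elliptic} via the affine Hecke algebra chain) than the paper does, but the argument is the same.
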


\begin{proof}
By  Proposition \ref{p:Wal-gen} and Remark \ref{r:assumption}(ii), the claim follows just as for adjoint groups.
\end{proof}

\section{\texorpdfstring{$\mathsf{Sp}_4(F)$}{\mathsf{Sp}4(F)}}\label{s:sp4}
As a useful example, we present the case $G=\mathsf{Sp}_4(F)$. Firstly, there are $6$ unipotent representations of the finite group $\mathsf{Sp}_4(\mathbb F_q)$: $5$ in bijection with  irreducible representations of the finite Weyl group of type $C_2$ and one cuspidal representation $\theta$. Using Lusztig's notation for the irreducible representations of the Weyl group of type $B/C$, there are $3$ families $\cF$ of unipotent representations with associated finite groups $\Gamma$ as follows:

\begin{itemize}
\item $\Gamma=\{1\}$, $\cF=\{2\times \emptyset\}$;
\item $\Gamma=\{1\}$, $\cF=\{0\times 11\}$;
\item $\Gamma=\ZZ/2\bZ$, $\cF=\{1\times 1, 11\times \emptyset, \emptyset\times 2, \theta\}$ with associated parameters, in order, $M(\Gamma)=\{(1,\mathbf 1),(-1,\mathbf 1), (1,\epsilon), (-1,\epsilon)\}$.
\end{itemize}
For the $\bZ/2\bZ$-family, the stable combinations are:
\begin{equation}
\begin{aligned}
&\sigma(1,1)=1\times 1+\emptyset\times 2,&\sigma(-1,1)=11\times\emptyset+\theta,\\
&\sigma(1,-1)=1\times 1-\emptyset\times 2, &\sigma(-1,-1)=11\times\emptyset-\theta,
\end{aligned}
\end{equation}
and Lusztig's Fourier transform acts by the flip  $\sigma(x,y)\mapsto \sigma(y,x)$. For the singleton families, the Fourier transform is the identity.

\medskip

Next, we consider the $p$-adic group $\mathsf{Sp}_4(F)$: the unipotent representations are parameterized by data in the dual group $G^\vee=\mathsf{SO}_5(\CC)$. In particular, the list of unipotent classes $u$ and their attached groups $\Gamma_u$ is:

\begin{center}
\begin{tabular}{|c|c|}
\hline
$u$ &$\Gamma_u$\\
\hline
$(5)$ &$1$\\
\hline
$(311)$ &$\mathsf{S}(\mathsf{O}_1\times \mathsf{O}_2)\cong \mathsf{O}_2$\\
\hline
$(221)$ &$\mathsf{Sp}_2$\\
\hline
$(1^5)$ &$\mathsf{SO}_5$\\
\hline
\end{tabular}
\end{center}

The interesting case is $u=(311)$. Write
\[\Gamma_u=\langle z,\delta\mid z\in \CC^\times,\ \delta^2=1,\ \delta z \delta^{-1}=z^{-1}\rangle.
\]
Then $\rZ_{\Gamma_u}(\pm\delta)=A_{\Gamma_u}(\pm\delta)=\{\pm 1,\pm\delta\}\cong C_2\times C_2$ and $A_{\Gamma_u}(\pm 1)=\{1,\delta\}\cong C_2$. There are $6$ conjugacy classes of elliptic pairs:
\begin{equation}
[(\pm 1,\delta)],\quad [(\delta,\pm 1)],\quad [(\delta,\pm\delta)],
\end{equation}
and the flip acts as
\begin{equation}
\mathsf{flip}([(\pm 1,\delta)])=[(\delta,\pm 1)],\quad \mathsf{flip}([(\delta,\pm\delta)])=[(\delta,\pm \delta)].
\end{equation}
There are three conjugacy classes of isolated semisimple elements in $T^\vee=\{(a,b)\mid a,b\in\CC^\times\}$ in $\mathsf{SO}_5(\CC)$. In this notation, the Weyl group $W(B_2)$ acts on $T$ by flips and inverses. The representatives of the three classes are:
\begin{itemize}
\item $s_0=(1,1)$, $\rZ_{G^\vee}(s_0)=\mathsf{SO}_5$;
\item $s_1=(-1,1)$, $\rZ_{G^\vee}(s_1)=\mathsf{S} (\mathsf{O}_2\times \mathsf{O}_3)$;
\item $s_2=(-1,-1)$, $\rZ_{G^\vee}(s_2)=\mathsf{S} (\mathsf{O}_1\times \mathsf{O}_4)\cong\mathsf{O}_4$.
\end{itemize}
All three $s_0,s_1,s_2$ occur in $\Gamma_u=\mathsf{O}_2$ and in the notation above for $\mathsf{O}_2=\langle z,\delta\rangle$, they are
\[s_0\leftrightarrow 1\in \mathsf{O}_2,\quad s_1\leftrightarrow -1\in \mathsf{O}_2,\quad s_2\leftrightarrow \delta\in \mathsf{O}_2.
\] 
Consequently, there are $8$ elliptic tempered representations of the form $\pi(s_i,u,\phi)$, $i=0,1,2$, $u=(311)$: $6$ are Iwahori-spherical, and $2$ are supercuspidal. Out of these, $4$ are discrete series representations, all those for $s_2=\delta$. The parahoric restrictions are given in Table \ref{ta:sp4}. We computed them using the same method as in \cite[(6.2)]{Re2}, but since in our case $G^\vee$ is not simply connected, we also need to involve the Mackey induction for graded affine Hecke algebras attached to disconnected groups.

\begin{table}
\begin{center}
\begin{tabular}{|c|c|c|c|}
\hline
$\pi(u,s,\phi)$ &$K_0\to \mathsf{Sp}_4(\mathbb F_q)$ &$K_1\to \mathsf{Sp}_2(\mathbb F_q)^2$ &$K_2\to \mathsf{Sp}_4(\mathbb F_q)$ \\
\hline
\hline
$(s_0,\mathbf 1)$ &$1\times 1+\emptyset\times 11$ &$\mathbf 1\boxtimes \epsilon+\epsilon\boxtimes \mathbf 1+\epsilon\boxtimes\epsilon$ &$1\times 1+\emptyset\times 11$\\
\hline
$(s_0,\epsilon)$ &$\emptyset\times 2$ &$\epsilon\boxtimes\epsilon$ &$\emptyset\times 2$\\
\hline
$(s_1,\mathbf 1)$ &$\emptyset\times 2+\emptyset\times 11$ &$\mathbf 1\boxtimes \epsilon+\epsilon\boxtimes\epsilon$ &$1\times 1$\\
\hline
$(s_1,\epsilon)$ &$1\times 1$ &$\epsilon\boxtimes \mathbf 1+\epsilon\boxtimes\epsilon$ &$\emptyset\times 2+\emptyset\times 11$\\
\hline
$(s_2,\mathbf 1\boxtimes\mathbf 1)$ &$\emptyset\times 11$ &$\mathbf 1\boxtimes\epsilon$ &$11\times \emptyset$\\
\hline
$(s_2,\epsilon\boxtimes\mathbf 1)$ &$\theta_{K_0}$ &$0$ &$0$\\
\hline
$(s_2,\mathbf 1\boxtimes\epsilon)$ &$0$ &$0$ &$\theta_{K_2}$\\
\hline
$(s_2,\epsilon\boxtimes\epsilon)$ &$11\times \emptyset$ &$\epsilon\boxtimes\mathbf 1$ &$\emptyset\times 11$\\
\hline
\end{tabular}
\end{center}
\smallskip
\caption{Elliptic $\mathsf{Sp}_4(F)$-representations attached to $u=(311)\in \mathsf{SO}_5$}\label{ta:sp4}
\end{table}

The corresponding stable combinations are
\begin{align*}
\Pi(u,1,\delta)&=\pi(u,s_0,\mathbf 1)-\pi(u,s_0,\epsilon),\\
\Pi(u,-1,\delta)&=\pi(u,s_1,\mathbf 1)-\pi(u,s_1,\epsilon),\\
\Pi(u,\delta,1)&=\pi(u,s_2,\mathbf 1\boxtimes \mathbf 1)+\pi(u,s_2,\mathbf 1\boxtimes \epsilon)+\pi(u,s_2,\epsilon\boxtimes \mathbf 1)+\pi(u,s_2,\epsilon\boxtimes \epsilon),\\
\Pi(u,\delta,-1)&=\pi(u,s_2,\mathbf 1\boxtimes \mathbf 1)+\pi(u,s_2,\mathbf 1\boxtimes \epsilon)-\pi(u,s_2,\epsilon\boxtimes \mathbf 1)-\pi(u,s_2,\epsilon\boxtimes \epsilon),\\
\Pi(u,\delta,\delta)&=\pi(u,s_2,\mathbf 1\boxtimes \mathbf 1)-\pi(u,s_2,\mathbf 1\boxtimes \epsilon)+\pi(u,s_2,\epsilon\boxtimes \mathbf 1)-\pi(u,s_2,\epsilon\boxtimes \epsilon),\\
\Pi(u,\delta,-\delta)&=\pi(u,s_2,\mathbf 1\boxtimes \mathbf 1)-\pi(u,s_2,\mathbf 1\boxtimes \epsilon)-\pi(u,s_2,\epsilon\boxtimes \mathbf 1)+\pi(u,s_2,\epsilon\boxtimes \epsilon).
\end{align*}
The corresponding parahoric restrictions are in Table \ref{ta:sp4-stable}. Here the column labeled $K_i$ contains $\res_{K_i}(\Pi(u, s, h))$.

\begin{table}
\begin{center}
\begin{tabular}{|c|c|c|c|}
\hline
$(s,h)$ &$K_0$ &$K_1$ &$K_2$ \\
\hline
\hline
$(1,\delta)$ &$(1\times 1-\emptyset\times 2)+\emptyset\times 11$ &$\mathbf 1\boxtimes \epsilon+\epsilon\boxtimes \mathbf 1$ &$(1\times 1-\emptyset\times 2)+\emptyset\times 11$\\
\hline
$(\delta,1)$ &$(11\times \emptyset+\theta_{K_0})+\emptyset\times 11$ &$\mathbf 1\boxtimes \epsilon+\epsilon\boxtimes \mathbf 1$ &$(11\times \emptyset+\theta_{K_2})+\emptyset\times 11$\\
\hline
$(-1,\delta)$ &$(-1\times 1+\emptyset\times 2)+\emptyset\times 11$ &$\mathbf 1\boxtimes \epsilon-\epsilon\boxtimes \mathbf 1$ &$(1\times 1-\emptyset\times 2)-\emptyset\times 11$\\
\hline
$(\delta,-1)$ &$(-11\times \emptyset-\theta_{K_0})+\emptyset\times 11$ &$\mathbf 1\boxtimes \epsilon-\epsilon\boxtimes \mathbf 1$ &$(11\times \emptyset+\theta_{K_2})-\emptyset\times 11$\\
\hline
$(\delta,\delta)$ &$ (-11\times\emptyset+\theta_{K_0})+\emptyset\times 11$ &$\mathbf 1\boxtimes \epsilon-\epsilon\boxtimes \mathbf 1$ &$(11\times \emptyset-\theta_{K_2})-\emptyset\times 11$\\
\hline
$(\delta,-\delta)$ &$ (11\times\emptyset-\theta_{K_0})+\emptyset\times 11$ &$\mathbf 1\boxtimes \epsilon+\epsilon\boxtimes \mathbf 1$ &$(11\times \emptyset-\theta_{K_2})+\emptyset\times 11$\\
\hline
\end{tabular}
\end{center}
\smallskip
\caption{Elliptic $\mathsf{Sp}_4(F)$ stable combinations attached to $u=(311)\in \mathsf{SO}_5$}\label{ta:sp4-stable}
\end{table}

One can easily verify by inspection using Table \ref{ta:sp4-stable} that the conjecture holds in this case.

\section{\texorpdfstring{$\SL_n(F)$}{\SL n(F)}}\label{s:type-A}

\subsection{Elliptic pairs for $G^\vee=\PGL_n(\CC)$}
Consider the case $G^\vee=\PGL_n(\mathbb C)$. Let $Z$ denote the centre of $\GL_n(\mathbb C)$. In the Weyl group of type $A_{n-1}$ ($W=S_n$), denote by $\dot w_n$ the permutation matrix corresponding to the $n$-cycle $(1,2,3,\dots,n)$. For every $n$-th root $z$ of $1$, let 
\[\Delta_n(z)=\text{diag}(1,z,z^2,\dots,z^{n-1}) Z\in \PGL_n(\mathbb C).
\]
Fix $\zeta_n$ a primitive $n$-th root of $1$ and set $s_n=\Delta_n(\zeta_n)$. Notice that $\dot w_n$ and $s_n$ commute in $\PGL_n(\CC)$.

\begin{lem}\label{l:pgl-princ} Suppose $\Gamma=\PGL_n(\mathbb C)$. Then
\[\cY(\Gamma)_\ellip=\bigsqcup_{k\in (\mathbb Z/n\mathbb Z)^\times}\Gamma\cdot (s_n,\dot w_n^k).
\]
In particular, there are $\varphi(n)$ $\Gamma$-orbits in $\cY(\Gamma)_\ellip$. The flip $(s,h)\to (h,s)$ induces the following map on $\Gamma$-orbits in $\cY(\Gamma)_\ellip$:
\[\mathsf{flip}\colon  ([(s_n,\dot w_n^k)])\to [(s_n, \dot w_n^{-k})],\quad  k\in (\mathbb Z/n\mathbb Z)^\times.
\]

\end{lem}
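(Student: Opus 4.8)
The plan is to reduce the computation of $\cY(\Gamma)_\ellip$ for $\Gamma = \PGL_n(\CC)$ to the description of elliptic pairs in connected groups given by Proposition \ref{p:conn}. First I would recall that $\PGL_n(\CC)$ is connected, so Proposition \ref{p:conn} applies directly: up to $\Gamma$-conjugacy, every elliptic pair $(s,h)$ can be taken with $s \in T$ regular, $h = \dot w$ for an elliptic element $w \in W = S_n$, and $s \in T^w$. The only elliptic conjugacy class in $S_n$ (acting on the reflection representation $\mathfrak t \cong \{(a_1,\dots,a_n) : \sum a_i = 0\}$) is the class of the $n$-cycle, since a permutation has no nonzero fixed vector in $\mathfrak t$ precisely when it is an $n$-cycle. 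So I may take $w$ to be the $n$-cycle $(1,2,\dots,n)$ with representative $\dot w_n$, and then I need to determine which regular $s \in T^{\dot w_n}$ give rise to distinct $\Gamma$-orbits.

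The key computational step is to identify $T^{\dot w_n}$ inside $\PGL_n(\CC)$. Lifting to $\GL_n(\CC)$, a diagonal matrix $\mathrm{diag}(t_1,\dots,t_n)$ is fixed (mod center) by the cyclic permutation iff $t_{i+1}/t_i$ is a constant $z$ for all $i$ (indices mod $n$), which forces $z^n = 1$; hence $T^{\dot w_n} = \{\Delta_n(z) : z \in \mu_n\}$, exactly as in Example \ref{e:W-PGL}. Among these, $\Delta_n(z)$ is regular in $\PGL_n(\CC)$ iff the entries $1, z, \dots, z^{n-1}$ are pairwise distinct, i.e. iff $z$ is a primitive $n$-th root of unity. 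So the regular elements are $\Delta_n(\zeta_n^k)$ with $k \in (\bZ/n\bZ)^\times$. Since $\dot w_n$ commutes with $s_n = \Delta_n(\zeta_n)$, and more generally $(s_n, \dot w_n^k)$ is a commuting pair for any $k$, I then observe that replacing $s_n$ by $\Delta_n(\zeta_n^k) = s_n^k$ can be absorbed: conjugating $(s_n^k, \dot w_n)$ by a suitable element of $W$ (namely the permutation $i \mapsto ki \bmod n$, which centralizes the cyclic subgroup appropriately) shows $[(s_n^k, \dot w_n)] = [(s_n, \dot w_n^{k})]$ up to $\Gamma$-conjugacy. This is the step I expect to require the most care — tracking exactly how the $W$-action on $T^{\dot w_n}$ interacts with conjugation on the $\dot w$-component, and verifying no two of the $(s_n, \dot w_n^k)$ for distinct $k \in (\bZ/n\bZ)^\times$ are $\Gamma$-conjugate (using that $\Gamma$-conjugacy must preserve the conjugacy class of $h$ in $\Gamma$, and $\dot w_n^k \sim \dot w_n^{k'}$ in $S_n$ iff $\gcd(k,n) = \gcd(k',n)$, which for units means $k = k'$ is not forced — so one needs the simultaneous constraint from $s$).

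Alternatively, and perhaps more cleanly, I would invoke Lemma \ref{l:ell-comp}(2): for fixed semisimple $s$, the $\rZ_\Gamma(s)$-orbits of elliptic pairs $(s,h)$ are in bijection with elliptic conjugacy classes in $A_\Gamma(s)$. Taking $s = s_n$ regular, $\rZ_{\PGL_n(\CC)}(s_n)$ is the image of the diagonal torus plus the cyclic group generated by $\dot w_n$; one computes $A_{\PGL_n(\CC)}(s_n) \cong \bZ/n\bZ$, generated by the image of $\dot w_n$, all of whose generators (the $\varphi(n)$ elements of order $n$) are elliptic while no other nontrivial element is. Combined with the fact that every regular semisimple class meeting $T^{\dot w_n}$ is $\Gamma$-conjugate to $s_n$ (since all primitive $n$-th roots of unity are related by the $W$-action scaling exponents), this yields exactly the $\varphi(n)$ orbits $[(s_n, \dot w_n^k)]$, $k \in (\bZ/n\bZ)^\times$. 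The flip statement is then immediate: $\mathsf{flip}[(s_n, \dot w_n^k)] = [(\dot w_n^k, s_n)]$, and since $s_n$ is regular and $\dot w_n^k$ is elliptic, I reconjugate this pair into the normal form of Proposition \ref{p:conn} by swapping the roles — conjugating so that the first coordinate becomes the regular semisimple element; because $s_n \in T^{\dot w_n^k}$ and $\dot w_n^k$ normalizes the torus, one finds $[(\dot w_n^k, s_n)] = [(s_n', \dot w_n^{-k})]$ for $s_n'$ another generator, which rescales back to $[(s_n, \dot w_n^{-k})]$. I'd finish by noting $k \mapsto -k$ is indeed a well-defined involution on $(\bZ/n\bZ)^\times$, matching the claimed formula.
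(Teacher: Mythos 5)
Your first two-thirds closely parallel the paper's proof: you invoke Proposition \ref{p:conn}, identify the unique elliptic class in $S_n$ as the $n$-cycle, compute $T^{\dot w_n} = \{\Delta_n(z) : z^n = 1\}$ with regularity forcing $z$ primitive, and then count orbits. Your alternative route via Lemma \ref{l:ell-comp}(2) for the counting is a clean and legitimate variant of the paper's direct verification that $\dot w_n^i \not\sim \dot w_n^j$ in $\rZ_\Gamma(s_n) = T \rtimes \bZ/n\bZ$ for $i \neq j$. Up to there you are fine.

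The gap is in the flip computation. You write that after putting $[(\dot w_n^k, s_n)]$ into normal form "one finds $[(\dot w_n^k, s_n)] = [(s_n', \dot w_n^{-k})]$," but this sign is exactly what needs to be proved and is not forced by general principles. Observing that $k \mapsto -k$ is an involution on $(\bZ/n\bZ)^\times$ does not distinguish it from the involutions $k \mapsto k$ or $k \mapsto k^{-1}$, both of which are also a priori candidates. The statement that $\dot w_n^k$ normalizes $T$ and $s_n \in T^{\dot w_n^k}$ does not determine how a conjugating element that diagonalizes $\dot w_n$ transforms $s_n$. The paper settles this with a concrete matrix computation: take $x \in \GL_n(\CC)$ to be a matrix of eigenvectors of $\dot w_n$ (essentially the discrete Fourier transform / Vandermonde matrix in $\zeta_n$), so that $x^{-1}\dot w_n x = s_n$; one then checks explicitly that $x^{-1} s_n x = \dot w_n^{-1}$ in $\PGL_n(\CC)$. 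This is the Fourier duality between the regular representation shift and multiplication operators on $\bZ/n\bZ$, and it is precisely what produces the $-k$. You then conclude the computation with a permutation matrix $p$ with $p^{-1}s_n^k p = s_n$, $p^{-1}\dot w_n p = \dot w_n^k$, exactly as you suggest. Without the explicit identity $x^{-1} s_n x = \dot w_n^{-1}$, the sign in the flip formula is unestablished.
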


\begin{proof}
Let $T$ be the diagonal torus in $\Gamma$. By Proposition \ref{p:conn}, the only possible elliptic pairs are conjugate to $(s,\dot w)$ where $w$ is elliptic and $s$ is regular such that $s\in T^w$. If the group is semisimple of type $A_{n-1}$, then the only elliptic elements of the Weyl group are the $n$-cycles. We may assume that $\dot w=\dot w_n$. It is easy to see that
\[ T^{\dot w_n}=\{ \Delta_n(z)\mid z^n=1\}.
\]
Since $s\in T^{\dot w_n}$ needs to be regular, it follows that the corresponding $z$ must be a primitive root of $1$. 

Now fix $s=s_n$. Every other $\Delta(\zeta')$ with $\zeta'$ a primitive $n$-th root is conjugate in $\Gamma$ to $s_n$. The centralizer is $\rZ_\Gamma(s)=\langle T, \dot w_n^k\mid k\in \mathbb Z/n\mathbb Z\rangle\cong T\rtimes \mathbb Z/n\mathbb Z$. This means that $\dot w_n^i$ is conjugate to $\dot w_n^j$ in $\rZ_\Gamma(s)$ if and only if $i=j$. On the other hand, $\dot w_n^k$ is elliptic if and only if $k\in (\mathbb Z/n\mathbb Z)^\times$, hence the claim follows.

For the claim about the Fourier transform, let $x\in \GL_n(\CC)$ be such that $x^{-1} \dot w_n x=s_n$, where $x$ is the matrix corresponding to a basis of eigenvectors of $\dot w_n$. Then a calculation shows that
\[ x^{-1} s_n x=\dot w_n^{-1} \text{ in } \PGL_n(\CC).
\]
From this:
\[  \mathsf{flip}([(s_n,\dot w_n^k)])=[(w_n^k, s_n)]=[(x s_n^k x^{-1},s_n)]=[(s_n^k, x^{-1} s_n x)]=[(s_n^k,\dot w_n^{-1})].
\]
Finally, let $p$ be a permutation matrix such that $p^{-1} s_n^k p=s_n$ (this exists since $k$ is coprime to $n$). This has the effect $p^{-1} \dot w_n p=\dot w_n^k$, hence $[(s_n^k,\dot w_n^{-1})]=[(s_n,\dot w_n^{-k})]$.

\end{proof}

Now let $u$ be a unipotent element in $\PGL_n(\mathbb C)$. Via the Jordan canonical form, $u$ is parameterized by a partition $\lambda$ of $n$, where we write $\lambda=(\underbrace{1,\dots,1}_{r_1},\underbrace{2,\dots,2}_{r_2},\dots,\underbrace{\ell,\dots,\ell}_{r_\ell})$. As it well known (see for example \cite[Theorem 6.1.3]{CM}) 
\begin{equation}
\Gamma_u=\left(\prod_{i=1}^\ell \GL_{r_i}(\mathbb C)^i_\Delta\right)/Z,
\end{equation}
where $H^i_\Delta$ means $H$ embedded diagonally into the product of $i$ copies of $H$. In particular, $\Gamma_u$ is connected. Let $T_r$ denote the diagonal torus in $\GL_r$, $Z_r$ the center of $\GL_r$,  $\bar T_r =T_r/Z_r$, and $W_r\cong S_r$ the Weyl group. A maximal torus in $\Gamma_u$ is $T_u=\prod_{i=1}^\ell (T_{r_i})_\Delta^i/Z$ and the Weyl group is $W_u=\prod_{i=1}^\ell (W_{r_i})_\Delta^i$. 

Let $w=\prod_{i=1}^\ell (w_i)^i_\Delta\in W_u$, $w_i\in W_{r_i}$ be given. We need $T_u^w$ to be finite. The morphism
\[\pi\colon T_u\twoheadrightarrow \prod_{i=1}^\ell (\bar T_{r_i})_\Delta^i,\ (t_i)_i\text{ mod }Z\mapsto (t_i\text{ mod }Z_i)
\]
is surjective and $W_u$-equivariant. Since $(\bar 1,\dots,\bar 1,(\bar T_{r_i}^{w_i})_\Delta^i,\bar 1,\dots,\bar 1)\subset T_u^w$ for each $i$, it follows that $w_i$ is elliptic for $\PGL_{r_i}$, hence, each $w_i$ is an $r_i$-cycle.

\begin{prop}\label{p:pgl}
For $u\in \PGL_n(\mathbb C)$,  $\cY(\Gamma_u)_\ellip\neq \emptyset$ if and only if the partition $\lambda$ corresponding to $u$ is rectangular, i.e., $\lambda=(\underbrace{i,\dots,i}_{r_i})$ for some $i$. In this case, \[\Gamma_u=\GL_{r_i}(\mathbb C)^i_\Delta/Z\cong \PGL_{r_i}(\mathbb C),\] so $\cY(\Gamma_u)_\ellip$ is as in Lemma \ref{l:pgl-princ}.
\end{prop}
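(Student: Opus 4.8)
The plan is to show both directions of the equivalence, using the analysis of elliptic pairs in connected groups (Proposition~\ref{p:conn}) together with the explicit description of $\Gamma_u$ recalled just above the statement. Recall that for $u$ corresponding to $\lambda=(1^{r_1},2^{r_2},\dots,\ell^{r_\ell})$ one has $\Gamma_u=\bigl(\prod_{i=1}^\ell \GL_{r_i}(\CC)^i_\Delta\bigr)/Z$, a connected group with maximal torus $T_u=\prod_i (T_{r_i})^i_\Delta/Z$ and Weyl group $W_u=\prod_i (W_{r_i})^i_\Delta$; and, as already noted, if $(s,\dot w)$ is an elliptic pair with $w=\prod_i (w_i)^i_\Delta$, then each $w_i$ must be an $r_i$-cycle.

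First I would establish the ``only if'' direction. Suppose $\lambda$ is not rectangular, so there are at least two distinct part sizes $i<i'$ with $r_i,r_{i'}\ge 1$. By Proposition~\ref{p:conn}, an elliptic pair is $\Gamma_u$-conjugate to $(s,\dot w)$ with $w\in W_u$ elliptic and $s\in T_u^w$ regular. Eachcoordinate $w_i$ is an $r_i$-cycle as above. The key point is to compute the rank of $T_u^w$ and show it is positive. Writing $n=\sum_i i r_i$ and observing that $T_u$ has dimension $\bigl(\sum_i r_i\bigr)-1$, one checks that the $w$-fixed subtorus of $(T_{r_i})^i_\Delta$, before quotienting by $Z$, is one-dimensional for each $i$ (the fixed points of an $r_i$-cycle on $T_{r_i}$ form a one-dimensional torus, namely the scalars). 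Hence $\prod_i (T_{r_i}^{w_i})^i_\Delta$ has dimension $\ell$, and after dividing by the one-dimensional $Z$ we get $\dim T_u^w\ge \ell-1$. When $\ell\ge 2$ (equivalently, when $\lambda$ is not a single rectangle $i^{r_i}$), this is strictly positive, so $\rZ_{\Gamma_u}(s,\dot w)$ contains a nontrivial torus and $(s,\dot w)$ is not elliptic. More carefully, I would phrase this via the map $\pi\colon T_u\twoheadrightarrow \prod_i(\bar T_{r_i})^i_\Delta$ already introduced: the kernel of $\pi$ is a torus of dimension $\ell-1$ on which $W_u$ acts trivially (it consists of the images of $(z_1,\dots,z_\ell)$ with $z_i$ scalar), hence $\ker\pi\subseteq T_u^w$, and $\ker\pi$ is nontrivial precisely when $\ell\ge 2$. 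Therefore $\cY(\Gamma_u)_\ellip=\emptyset$ unless $\lambda$ is rectangular.

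For the ``if'' direction, suppose $\lambda=(i^{r})$ with $r=r_i$, so $n=ir$. Then $\prod_{i'=1}^\ell(\GL_{r_{i'}})^{i'}_\Delta=\GL_r(\CC)^i_\Delta$, the diagonal copy of $\GL_r$ inside $\GL_r^i$, and $Z=Z(\GL_n)$ maps onto the scalars $Z_r$ diagonally embedded, so $\Gamma_u=\GL_r(\CC)^i_\Delta/Z\cong \GL_r(\CC)/Z_r=\PGL_r(\CC)$. (The only thing to verify is that the composite $\GL_r\hookrightarrow\GL_r^i_\Delta\twoheadrightarrow \GL_r^i_\Delta/Z$ has kernel exactly $Z_r$, which follows because $Z\cong\CC^\times$ maps into $\GL_r^i_\Delta$ exactly as the diagonal scalars of $\GL_r$.) Under this isomorphism the description of $\cY(\PGL_r(\CC))_\ellip$ is precisely Lemma~\ref{l:pgl-princ}, giving $\varphi(r)$ orbits with representatives $(s_r,\dot w_r^k)$, $k\in(\ZZ/r\ZZ)^\times$, and in particular $\cY(\Gamma_u)_\ellip\ne\emptyset$. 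This completes the proof.

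The main obstacle I anticipate is getting the dimension bookkeeping exactly right in the ``only if'' direction --- in particular, being precise about how $Z$ sits inside $\prod_i(T_{r_i})^i_\Delta$ and hence identifying $\ker\pi$ and its $W_u$-action, so that the conclusion $\dim T_u^w=\dim\ker\pi + \dim\bigl(\prod_i \bar T_{r_i}^{w_i}\bigr)^i_\Delta = (\ell-1)+0$ is airtight. Everything else is a direct application of Proposition~\ref{p:conn} and Lemma~\ref{l:pgl-princ}.
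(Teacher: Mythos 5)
Your proof is correct and follows essentially the same route as the paper's: the paper passes directly to $\mathfrak t_u$ and shows $\mathfrak t_u^w$ is the $(\ell-1)$-dimensional space of weighted-trace-zero constants, while you locate the same $(\ell-1)$-dimensional obstruction as $\ker\pi$ (or equivalently $T_u^w$) and observe it is pointwise $W_u$-fixed; both are the same dimension count. The extra detail you give on the isomorphism $\Gamma_u\cong\PGL_{r_i}(\CC)$ in the rectangular case is consistent with what the paper asserts.
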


\begin{proof}
Let $w\in W_u$ be elliptic as above. We pass to the Lie algebra $\mathfrak t_u=\mathfrak s (\bigoplus (\mathfrak t_{r_i})^i_\Delta)$; here $\mathfrak s$ denotes the traceless matrices. Since $\mathfrak t_{r_i}^{w_i}=\mathbb C \cdot \Id_{r_i}$, we see that
\[\mathfrak t_u^w=\{(a_1 \Id_{r_1}, a_2 \Id_{r_2},a_2\Id_{r_2},\dots, \underbrace{a_i\Id_{r_i},\dots,a_i\Id_{r_i}}_{i},\dots)\mid \sum_{i=1}^\ell i a_i=0\}.
\]
The element $w$ is elliptic if and only if $\mathfrak t_u^w=0$. From the condition $\sum_{i=1}^\ell i a_i=0$, we that this can only happen if there exists a unique $i$ such that $r_i\neq 0$.
\end{proof}

\begin{cor}
The number of orbits of elliptic pairs for $\PGL_n(\mathbb C)$ is
\[\sum_{u \text{ unipotent class}} |\Gamma_u\backslash\cY(\Gamma_u)_\ellip|=n.
\]
\end{cor}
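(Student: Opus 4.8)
The plan is to combine Proposition~\ref{p:pgl} with Lemma~\ref{l:pgl-princ} and then invoke the classical divisor identity for Euler's totient. First I would recall from Proposition~\ref{p:pgl} that for a unipotent element $u\in\PGL_n(\CC)$ with associated partition $\lambda$, one has $\cY(\Gamma_u)_\ellip\neq\emptyset$ precisely when $\lambda$ is rectangular, i.e.\ $\lambda=(\underbrace{i,\dots,i}_{r})$ with $n=ir$; all other unipotent classes contribute $0$ to the sum. Thus the sum reduces to a sum over rectangular partitions of $n$, and these are in bijection with the divisors $r$ of $n$ (sending $r$ to the partition with $r$ parts each equal to $n/r$).

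Next I would use the identification $\Gamma_u\cong\PGL_r(\CC)$ from Proposition~\ref{p:pgl} for the rectangular partition with $r$ parts. Applying Lemma~\ref{l:pgl-princ} to $\Gamma=\PGL_r(\CC)$ gives $|\Gamma_u\backslash\cY(\Gamma_u)_\ellip|=\varphi(r)$, the number of units mod $r$. Therefore
\[
\sum_{u\text{ unipotent class}}|\Gamma_u\backslash\cY(\Gamma_u)_\ellip|=\sum_{r\mid n}\varphi(r)=n,
\]
where the last equality is the standard identity $\sum_{d\mid n}\varphi(d)=n$.

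There is essentially no serious obstacle here: the statement is a bookkeeping consequence of the two preceding results. The only point requiring a moment of care is the parametrization of rectangular partitions of $n$ by divisors of $n$ and the matching of the divisor $r$ (the number of equal parts) with the rank of the $\PGL_r$-factor appearing in $\Gamma_u$; once that is pinned down, the computation is immediate. (One could also phrase the whole thing as recovering the count already visible in Example~\ref{e:W-PGL}, where the $n$ elliptic conjugacy classes of $\widetilde W$ were organized as $\sum_{d\mid n}\varphi(d)=n$, giving a consistency check.)
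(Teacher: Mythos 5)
Your proposal is correct and follows essentially the same route as the paper: Proposition~\ref{p:pgl} to reduce to rectangular partitions, the bijection with divisors of $n$, Lemma~\ref{l:pgl-princ} giving $\varphi(r)$ elliptic-pair orbits for $\Gamma_u \cong \PGL_r(\CC)$, and the totient identity $\sum_{d\mid n}\varphi(d)=n$. The matching of the number of equal parts $r$ with the rank of $\PGL_r$ is exactly the point the paper also records, so there is no discrepancy.
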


\begin{proof}
From Proposition \ref{p:pgl}, the only unipotent classes that contribute are the rectangular ones, which are in one-to-one correspondence with divisors $d$ of $n$. For the unipotent class $u=(\underbrace{n/d,\dots,n/d}_d)$, Lemma \ref{l:pgl-princ} says that there are $\varphi(d)$ orbits of elliptic pairs. Hence the total number is $\sum_{d|n}\varphi(d)=n$.

\end{proof}

\subsection{Elliptic unipotent representations of $\SL_n(F)$} 
 It is instructive to make explicit the elliptic correspondence (Conjecture \ref{conj:main-ell}) for $G=\SL_n(F)$.
Let $K_0=\SL_n({\mathfrak o}_F)$ and let $I\subset K_0$ be an Iwahori subgroup.  Let $\cH(G,I)=\{f\in C^\infty_c(G)\mid f(i_1 g i_2)=f(g),\ \text{for all }i_1,i_2\in I\}$ be the Iwahori--Hecke algebra (under convolution with respect to a fixed Haar measure). The algebra $\cH(G,I)$ is naturally isomorphic to the affine Hecke algebra $\cH=\cH(\cR,\sqrt q,1)$, where $q$ is the order of the residue field of $F$ and $\cR$ is the root datum for $\PGL_n(\CC)$.

Every irreducible unipotent $G$-representation has nonzero fixed vectors under $I$, in other words, $\fR_\un(G)=\fR_I(G)$, where $\fR_I(G)$ is the category of smooth representations generated by their $I$-fixed vectors. The functor
\[m_I\colon  \fR_\un(G)=\fR_I(G)\to \cH(G,I)\text{-mod}, \quad V\mapsto V^I,
\]
is an equivalence of categories. The Langlands parameterization in this case can be read off the Kazhdan--Lusztig classification of irreducible modules for $\cH(G,I)$ extended to this setting in \cite{Re}:
\begin{equation}
\Irr_\un \SL_n(F)\leftrightarrow \Irr~\cH(G,I)\leftrightarrow \PGL_n(\CC)\backslash\{(x,\phi)\mid x\in\PGL_n(\CC),\phi\in\widehat A_x\}.
\end{equation}
The exact functor $m_I$ induces an isomorphism
\[\Ext^i_G(V,V')=\Ext^i_{\cH(G,I)}(V^I,V'^I), \text{ for all }i,
\]
and therefore $\EP_G(V,V')=\EP_{\cH(G,I)}(V^I,V'^I)$ for all $V,V'\in \Irr~\fR_I(G)$. Since $\EP_G$ and $\EP_{\cH(G,I)}$ are additive, they extend to pairings on the Grothendieck groups of finite-length representations. Let $R_I(G)_\CC$ be the $\CC$-span of $\Irr ~\fR_I(G)$ and $\overline R_I(G)_\CC$ the quotient by the radical of $\EP_G$. Define $\overline R(\cH(G,I))_\CC$ similarly. Thus:
\begin{lem}
The equivalence of categories $m_I$ gives an isomorphism $\overline m_I\colon \overline R_I(G)_\CC\to \overline R(\cH(G,I))_\CC$ which is isometric with respect to $\EP_G$ and $\EP_{\cH(G,I)}$.
\end{lem}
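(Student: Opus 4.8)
The plan is to reduce everything to two inputs that are already in place: that $m_I$ is an \emph{exact} equivalence of abelian categories, and that $\EP_G$ and $\EP_{\cH(G,I)}$ are finite alternating sums of dimensions of $\Ext$-groups. First I would note that an equivalence of abelian categories induces an isomorphism on the Grothendieck groups of finite-length objects, so $m_I$ yields a linear isomorphism $R_I(G)_\CC\isom R(\cH(G,I))_\CC$ sending $[V]$ to $[V^I]$. Next, exactness of $m_I$ identifies $\Ext^i_G(V,V')$ with $\Ext^i_{\cH(G,I)}(V^I,V'^I)$ for all $i$ and all $V,V'\in\Irr\,\fR_I(G)$; taking the alternating sum (a finite sum by Bernstein's bound on the cohomological dimension of $G$, already invoked) gives $\EP_G(V,V')=\EP_{\cH(G,I)}(V^I,V'^I)$ on irreducibles, and extending sesquilinearly shows that $m_I$ intertwines the two hermitian forms on the full complexified Grothendieck groups. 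This part is already essentially recorded in the paragraph preceding the statement.

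The remaining step is purely formal linear algebra: an isometric linear isomorphism $\phi\colon (V_1,B_1)\isom (V_2,B_2)$ of complex vector spaces equipped with hermitian forms carries $\text{rad}\,B_1$ isomorphically onto $\text{rad}\,B_2$, and hence descends to a linear isomorphism $\overline\phi$ of the quotients $V_i/\text{rad}\,B_i$ that is isometric for the induced nondegenerate forms. Applying this with $(V_1,B_1)=(R_I(G)_\CC,\EP_G)$ and $(V_2,B_2)=(R(\cH(G,I))_\CC,\EP_{\cH(G,I)})$ produces the desired isometric isomorphism $\overline m_I\colon \overline R_I(G)_\CC\to\overline R(\cH(G,I))_\CC$.

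I do not expect any genuine obstacle here: the lemma is a bookkeeping statement recording that the Euler--Poincar\'e (elliptic) structure transports across $m_I$, so that the analysis of the elliptic space of $\SL_n(F)$ may be carried out entirely on the affine Hecke algebra side via $\overline R(\cH(G,I))_\CC$ and the reductions of Section~\ref{s:ell-Hecke} (in particular the isometric isomorphism $\overline R(\cH)_\CC\cong\bigoplus_{s\in T/W}\overline R(W_s)_\CC$). The only mild point requiring a word is to confirm that $\EP_G$ and $\EP_{\cH(G,I)}$ are genuinely well defined on the complexified Grothendieck groups --- i.e.\ that Ext-groups between finite-length modules are finite-dimensional and vanish in high degree --- but these are exactly the finiteness facts already cited in the text (Bernstein's result on the $p$-adic side, \cite[Proposition~2.4]{OS} on the Hecke algebra side).
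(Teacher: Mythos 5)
Your proposal is correct and matches the paper's argument exactly: the paper records the same two inputs — exactness of $m_I$ yielding $\Ext^i_G(V,V')\cong\Ext^i_{\cH(G,I)}(V^I,V'^I)$ and hence $\EP_G=\EP_{\cH(G,I)}$ on irreducibles, followed by additive extension to the Grothendieck groups — in the paragraph immediately preceding the lemma, which is why it is introduced with ``Thus.'' Your explicit remark that an isometric linear isomorphism carries the radical of one form onto the radical of the other, so descends to the quotients, is the routine linear-algebra step the paper leaves implicit.
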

The elliptic theory of affine Hecke algebras is well understood \cite{OS}, and we reviewed the basic facts in Section \ref{s:ell-Hecke}. In particular,  via (\ref{e:deform}) and (\ref{ind-iso})), we get that 
\begin{equation}
\overline R_I(G)_\CC \cong \overline R(\wti W)_\CC\cong \bigoplus_{s\in W\backslash T_{\iso}^\vee} \overline R(W_s)_\CC,
\end{equation}
where $\wti W=W^a$, $W$, $T^\vee$, $W_s$ are as in Example \ref{e:W-PGL}. If $n=dm$,  we consider $s_{d\times m}:=\text{diag}(\underbrace{1,\dots,1}_m,\underbrace{\zeta_d,\dots,\zeta_d}_m,\dots,\underbrace{\zeta_d^{d-1},\dots,\zeta_d^{d-1}}_m)$, where $\zeta_d$ is a primitive $d$-th root of $1$. This is $S_n$-conjugate to $\Delta_n(\zeta_d)$. In that case, 
\[\rZ_{G^\vee}(s_{d\times m})=\mathrm{P}(\GL_m(\CC)^d)\rtimes \bZ/d\bZ,
\]
which has component group $A_{G^\vee}(s_{d\times m})=\bZ/d\bZ$. The Lie algebra of the maximal (diagonal) torus in $\rZ_{G^\vee}(s_{d\times m})$ is 
\[\mathfrak t^\vee_{d\times m}=\{\underline x=(x_1,\dots,x_n)\mid \sum_i x_i=0\},
\]
on which $W_{s_{d\times m}}$ acts in the standard way: break $(x_1,\dots,x_n)$ into $m$-tuples \[\underline y_i=(x_{(i-1)m+1},x_{(i-1)m+2},\dots, x_{im}), \quad 1\le i\le d.\] Then the $i$-th $S_m$ acts by the natural permutation action on $\underline y_i$, whereas $\bZ/d\bZ$ permutes cyclically $(\underline y_1,\dots,\underline y_d)$. We consider the elliptic theory of $W_{s_{d\times m}}$ on $\mathfrak t^\vee_{d\times m}$ with respect to this action.

\begin{lem}\label{l:Ws-SL} There are $\varphi(d)$ elliptic conjugacy classes of $W_{s_{d\times m}}$ acting on $\mathfrak t^\vee_{d\times m}$ with representatives $g_\xi=(w_m,1,1,\dots,1) \xi$, where $\xi$ ranges over the elements of order $d$ in $\bZ/d\bZ$, and $w_m$ is a fixed $m$-cycle in $S_m$.
\end{lem}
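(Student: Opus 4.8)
The plan is to reduce the statement to a direct fixed-space computation in a permutation-type representation, followed by the classical description of conjugacy classes in a wreath-like product. First I would fix notation: write $W_{s_{d\times m}}\cong S_m^d\rtimes C_d$ with $C_d=\langle\zeta\rangle$ cyclically permuting the $d$ blocks, and realize $\mathfrak t^\vee_{d\times m}$ as the hyperplane $\{\underline x\in\CC^n:\sum_i x_i=0\}$ inside the permutation module $\CC^n=(\CC^m)^{\oplus d}$, $n=dm$, on which $W_{s_{d\times m}}$ acts by permuting coordinates according to the description in Example~\ref{e:W-PGL}. Since every element $g$ permutes coordinates it fixes the all-ones vector $\mathbf 1=(1,\dots,1)\in\CC^n$, which is transverse to this hyperplane; hence $g$ is elliptic, i.e.\ $(\mathfrak t^\vee_{d\times m})^g=0$, if and only if $\dim(\CC^n)^g=1$.

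Second I would compute $\dim(\CC^n)^g$ for a general element $g=(\sigma_1,\dots,\sigma_d)\zeta^j\in S_m^d\rtimes C_d$. The cyclic group $\langle\zeta^j\rangle$ acts on the set of $d$ blocks with $\gcd(j,d)$ orbits, each of size $d/\gcd(j,d)$. Restricting to the span of one such orbit and running around it identifies the fixed space there with $(\CC^m)^\tau$, where $\tau\in S_m$ is the ordered product of the $\sigma_i$ taken along that orbit; its dimension equals the number of cycles of $\tau$, which is at least $1$. Summing over the $\gcd(j,d)$ orbits gives $\dim(\CC^n)^g\ge\gcd(j,d)$, with equality to $1$ precisely when $\gcd(j,d)=1$ and the (then unique) cycle product is an $m$-cycle. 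Thus the elliptic elements are exactly the $g=(\sigma_1,\dots,\sigma_d)\zeta^j$ with $\zeta^j$ a generator of $C_d$ and $\sigma_1\sigma_2\cdots\sigma_d$ (in the cyclic order coming from $\zeta^j$) an $m$-cycle; in particular each $g_\xi=(w_m,1,\dots,1)\xi$, $\xi$ a generator, is elliptic.

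Third I would count the conjugacy classes of such elements. The projection $W_{s_{d\times m}}\twoheadrightarrow C_d$ is a class function, so the $C_d$-component of an elliptic element is a conjugacy invariant, and by the second step it must be one of the $\varphi(d)$ generators of $C_d$. Fixing such a generator $\xi$, I would show that all elliptic elements with $C_d$-component $\xi$ form a single class: conjugating by a suitable element of $S_m^d$ one can slide the permutations $\sigma_i$ along the single $\langle\xi\rangle$-orbit of blocks so that all but one become trivial — it is exactly the condition $\gcd(j,d)=1$ that makes this absorption possible — bringing $g$ to the form $(\tau,1,\dots,1)\xi$ with $\tau$ an $m$-cycle conjugate in $S_m$ to the original cycle product; a further conjugation by a diagonal element $(\rho,\dots,\rho)$ replaces $\tau$ by $\rho\tau\rho^{-1}$, so one may take $\tau=w_m$. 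This exhibits each such class as that of $g_\xi$, and the lemma follows. I expect the only real obstacle to be bookkeeping with the semidirect-product conventions in the last two steps — the precise formula for the cycle product attached to a $\langle\zeta^j\rangle$-orbit and the exact effect of $S_m^d$-conjugation on it — which is the standard theory of conjugacy classes in $S_m\wr C_d$ and involves no new ideas.
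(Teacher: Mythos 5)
Your argument is correct and follows essentially the same route as the paper's proof: identify ellipticity with $\dim(\CC^n)^g=1$, observe $\gcd(j,d)=1$ is forced, use conjugation in $S_m^d$ to slide the block permutations into a single slot so that the class is determined by the cycle product, and note that product must be an $m$-cycle. The one stylistic difference is that you compute $\dim(\CC^n)^g$ uniformly via holonomy around $\langle\zeta^j\rangle$-orbits (recovering all cases at once), whereas the paper treats the order of $\xi$, the conjugacy normalization, and the ellipticity of $w$ as three separate steps — your packaging is marginally cleaner but the substance is identical.
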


\begin{proof}
We first show that each such element is elliptic. Without loss of generality, suppose that $\xi$ acts as the standard cycle $(1,2,\dots,d)$ and $w_m=(1,2,\dots,m)$. Then $g_\xi\cdot \underline x=\underline x$ implies $x_1=x_{(d-1)m+1}=x_{(d-2)m+1}=\dots=x_{m+1}$ which then equals to $x_m$ (because of the effect of $w_m$), then with $x_{dm}=x_{(d-1)m}=\dots=x_{2m}$, which then equals $x_{m-1}$ etc. It follows that all coordinates $x_i$ are the same and since the sum of the coordinates is $0$, we get that there are no nonzero fixed points.

Secondly, no two $g_{\xi}$'s are conjugate. This is clear because if $\xi,\xi'$ are distinct in $\bZ/d\bZ$, then $x\xi$ and $x'\xi'$ are in different conjugacy classes for all $x,x'$ in $S_m^d$.

It remains to show that these are all the elliptic conjugacy classes. Let $x\xi$ be an element with $x=(\sigma_1,\dots,\sigma_d)\in S_m^d$ and $\xi\in \bZ/d\bZ$. If $\xi$ does not have order $d$, then there exists points $\underline x=(\underline y_1,\dots,\underline y_d)\in \mathfrak t^\vee_{d\times m}$ (here, as above, each $\underline y_i$ is an $m$-tuple) fixed under the action of $\xi$ such that not all $y_i$'s are equal. This means in particular, that there exists $j$ such that $\underline y_j=(x_{(j-1)m+1},\dots,x_{jm})$ is arbitrary and $\sum_{l=(j-1)m+1}^{jm} x_l\neq 0$. But then every $\sigma_j\in S_m$ has a nonzero fixed point $\underline y_j$, for example taking all of the entries of $\underline y_j$ to be equal, and therefore $x\xi$ is not elliptic.

This means that necessarily $\xi$ has order $d$. We claim that the conjugacy classes of $x\xi$ are in one-to-one correspondence with conjugacy classes of $S_m$ via the correspondence
\[ w\in S_m\mapsto (w,1,\dots,1)\xi \in S_m^d\rtimes \bZ/d\bZ.
\]
Without loss of generality, suppose $\xi$ acts by shifting the indices $i\to i+1$ mod $d$. We show that every element $x\xi$, $x=(\sigma_1,\dots,\sigma_d)$, is conjugate to an element of the form $(w,1,\dots,1)\xi$. This is equivalent to the existence of permutations $z_1,\dots,z_d\in S_m$ such that 
\[\sigma_1=z_1 w z_2^{-1},\ \sigma_2=z_2 z_3^{-1},\dots,\sigma_{d-1}=z_{d-1} z_d^{-1},\ \sigma_d=z_d z_1^{-1}.
\]
This can be solved easily, by taking $z_1=1$, then $z_d=\sigma_d$, $z_{d-1}=\sigma_{d-1}\sigma_d,\dots,$ $z_2=\sigma_2\sigma_3\dots\sigma_d$, $w=\sigma_1\sigma_2\dots\sigma_d$.

A similar calculation shows that $(w,1,\dots,1)\xi$ and $(w',1,\dots,1)\xi$ are conjugate if and only if $w,w'$ are conjugate in $S_m$. (If $w'=zwz^{-1}$, then $(w',1,\dots,1)\xi$ and $(w,1,\dots,1)\xi$ are conjugate via $(z,z,\dots,z)$.) 

Finally, if an element $(w,1,\dots,1)\xi$, $\xi$ of order $d$, is elliptic then $w$ is elliptic in $S_m$, otherwise if $\underline y$ is a fixed point of $w$, $(\underline y,\dots,\underline y)$ is a fixed point of $(w,1,\dots,1)\xi$. This concludes the proof.
\end{proof}

On the other hand, we have unipotent classes $u$ in $\mathrm{P}(\GL_m(\CC)^d)$ and we need to look at the elliptic theory of $A_{G^\vee}(s_{d\times m} u)$ on the Lie algebra of the maximal torus in $\rZ_{\mathrm{P}(\GL_m(\CC)^d)\rtimes \bZ/d\bZ}(u)$. 
Let $u=u_{d\times m}$ be the unipotent element given by the principal Jordan normal form on each of the  $\GL_m$-blocks. Then the reductive part of the centralizer is
\[\rZ_{G^\vee}(s_{d\times m} u_{d\times m})^{\mathrm{red}}=\mathrm{P}(\rZ_{\GL_m(\CC)^d})\rtimes \bZ/d\bZ,
\]
hence $A_{G^\vee}(s_{d\times m} u_{d\times m})=\bZ/d\bZ$ and this acts on the Cartan subalgebra
\[\mathfrak t^\vee(s_{d\times m} u_{d\times m})=\left\{(z_1 \Id_m,\dots,z_d\Id_m)\mid \sum_i{z_i}=0\right\}.
\]
In particular, $\overline R(A_{s_{d\times m} u_{d\times m}})_\bC$ has dimension $\varphi(d)$ and can be identified with the class functions on the elements of order $d$ in $\bZ/d\bZ$. Thus, in the case of $\mathsf{SL}_n(F)$, the elliptic correspondence for unipotent representations takes the following very concrete form.

\begin{prop}\label{p:ell-SLn} Let $G=\SL_n(F)$. The local Langlands correspondence for unipotent representations induces an isometric isomorphism
\[\overline{\mathsf{LLC}^p}_\un\colon \bigoplus_{d\mid n} \overline R(A_{x_{d\times m}}) \longrightarrow  \overline R_\un(\SL_n(F)),\quad \phi\mapsto \pi(x_{d\times m},\phi)\]
where $x_{d\times m}=s_{d\times m} u_{d\times m}\in \PGL_n(\CC)$ is as above and $A_{x_{d\times m}}=\bZ/d\bZ$.
\end{prop}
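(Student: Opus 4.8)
The plan is to obtain Proposition~\ref{p:ell-SLn} as an essentially formal consequence of the general elliptic correspondence, after pinning down which summands of the two sides of~(\ref{e:ell-pure-LLC}) actually survive for $\SL_n$. First I would observe that $\SL_n$ is a simple, $F$-split group of type $A_{n-1}$, hence not of the form $\Spin_{4m}$, and that every irreducible unipotent representation of $\SL_n(F)$ is Iwahori-spherical, so that $\fR_\un(\SL_n(F))=\fR_I(\SL_n(F))$. Corollary~\ref{c:Iwahori} therefore applies and gives that Conjecture~\ref{conj:main-ell} holds for $G=\SL_n$ in the form~(\ref{e:ell-pure-LLC}): the map $\overline{\mathsf{LLC}^p}_\un$ is an isometric isomorphism from $\bigoplus_{s\in\cC(G^\vee)_{\mathsf{ss}}}\bigoplus_{u\in\cC(\cG_s^p)_\un}\overline R(A_{\cG_s^p}(u))$ onto $\bigoplus_{G'\in\InnF^p(G)}\overline R_\un(G')$, the source carrying the elliptic pairings $(\,,\,)^{\delta_u^s}_\ellip$ and the target the Euler--Poincaré pairing.

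It then remains to make both sides explicit. On the target side, since $\SL_n$ is simply connected, $\SL_n(F)$ is its only pure inner twist, so the right-hand side collapses to $\overline R_\un(\SL_n(F))$. On the source side, $G^\vee=\PGL_n(\CC)$ and $G$ is $F$-split, so $\cG_s^p=\rZ_{G^\vee}(s)$ and, by Proposition~\ref{p:ell-ident}, the source equals $\bigoplus_{u\in\cC(G^\vee)_\un}\CC[\cY(\Gamma_u)_\ellip]^{\Gamma_u}$. By Proposition~\ref{p:pgl}, $\cY(\Gamma_u)_\ellip\neq\emptyset$ exactly for the rectangular unipotent classes $u=u_{d\times m}$ (with $d\mid n$ and $n=dm$), for which $\Gamma_{u_{d\times m}}\cong\PGL_d(\CC)$; Lemma~\ref{l:pgl-princ} shows $\Gamma_{u_{d\times m}}\backslash\cY(\Gamma_{u_{d\times m}})_\ellip$ has $\varphi(d)$ elements, all with semisimple coordinate $\PGL_n(\CC)$-conjugate to $s_{d\times m}$. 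Running this back through Proposition~\ref{p:ell-ident}, the only nonzero summand attached to $u_{d\times m}$ is $\overline R(A_{\Gamma_{u_{d\times m}}}(s_{d\times m}))=\overline R(A_{x_{d\times m}})$ with $x_{d\times m}=s_{d\times m}u_{d\times m}$ and $A_{x_{d\times m}}\cong\bZ/d\bZ$; by the description of the representation $\delta$ on $t^\vee(x_{d\times m})$ recorded in Section~\ref{s:type-A}, this elliptic space has dimension $\varphi(d)$. This identifies the source of $\overline{\mathsf{LLC}^p}_\un$ with $\bigoplus_{d\mid n}\overline R(A_{x_{d\times m}})$, and unwinding~(\ref{e:LLC-pure}) shows that on the $d$-th summand the map is $\phi\mapsto\pi(x_{d\times m},\phi)$, as claimed.

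A self-contained alternative to invoking Corollary~\ref{c:Iwahori} would be to prove the isometry directly: pass to $\cH(\SL_n(F),I)\cong\cH(\cR,\sqrt q,1)$ via the isometry $\overline m_I$, then through the scaling map $\sigma_0$ of~(\ref{e:deform}) and the Clifford induction~(\ref{ind-iso}) to $\bigoplus_{s\in W\backslash T_\iso^\vee}\overline R(W_s)_\CC$, using Example~\ref{e:W-PGL} to see that $W\backslash T_\iso^\vee=\{s_{d\times m}: d\mid n\}$ with $W_{s_{d\times m}}\cong S_m^d\rtimes C_d$ and Lemma~\ref{l:Ws-SL} to see $\dim\overline R(W_{s_{d\times m}})_\CC=\varphi(d)$, and finally match $\overline R(W_{s_{d\times m}})_\CC$ with $\overline R(A_{x_{d\times m}})$ through the disconnected generalized Springer correspondence for $\rZ_{G^\vee}(s_{d\times m})$ via Proposition~\ref{p:Wal-gen} (applicable here since $G^\vee_{\sc}=\SL_n(\CC)$ has cyclic centre, so all the relevant component groups, and hence the $R$-groups $\fR_j$, are cyclic and hypothesis~$(\star)$ of Remark~\ref{r:assumption} holds). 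The main obstacle in that route is exactly this last matching step — checking that the composite isometry is $\overline{\mathsf{LLC}^p}_\un$ and not merely an abstract isometric isomorphism of spaces of equal dimension — together with verifying that, among all unipotent classes in the disconnected group $\rZ_{G^\vee}(s_{d\times m})$, only $u_{d\times m}$ contributes a nonzero elliptic space. If one instead cites Corollary~\ref{c:Iwahori}, no such obstacle remains and the proof is purely a matter of keeping straight the two roles of the centralizer groups ($\Gamma_{u_{d\times m}}$ versus $\rZ_{G^\vee}(s_{d\times m})$) and the factorization $n=dm$.
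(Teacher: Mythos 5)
Your primary route---invoking Corollary \ref{c:Iwahori} (applicable since $\SL_n$ is simple, split, not of type $\Spin_{4m}$, has only itself as a pure inner twist, and, as the text notes, every unipotent representation of $\SL_n(F)$ is Iwahori-spherical) and then unwinding the source of (\ref{e:ell-pure-LLC}) via Propositions \ref{p:ell-ident} and \ref{p:pgl} and Lemma \ref{l:pgl-princ}---is correct, but it is not the paper's proof. The remark after Conjecture \ref{conj:main-ell} announces that the $\SL_n$ case will be ``illustrate[d] \dots with a direct proof,'' and the text surrounding the Proposition follows exactly your second, ``self-contained alternative'' route: the isometric chain through $\overline m_I$, the scaling map $\sigma_0$ of (\ref{e:deform}), and $\Ind_s$ of (\ref{ind-iso}), combined with the identification of $W\backslash T_\iso^\vee$ with the divisors of $n$, the description $W_{s_{d\times m}}\cong S_m^d\rtimes C_d$ and its elliptic classes $g_\xi$ from Example \ref{e:W-PGL} and Lemma \ref{l:Ws-SL}, and the direct computation of $\rZ_{G^\vee}(x_{d\times m})^{\mathrm{red}}$ and $A_{x_{d\times m}}\cong\bZ/d\bZ$. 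Your route is shorter and makes the logical dependencies transparent; the paper's is self-contained and produces the concrete $\sigma_0$-identifications that are reused immediately afterwards in the proof of Proposition \ref{p:sl}. The two obstacles you flag for the direct route are genuine but are handled: that only $u_{d\times m}$ survives at central character $s_{d\times m}$ is precisely the regularity constraint on the semisimple coordinate in Proposition \ref{p:pgl} and Lemma \ref{l:pgl-princ}, while matching the composite isometry against $\overline{\mathsf{LLC}^p}_\un$ is carried by the Kazhdan--Lusztig parametrization of tempered $\cH$-modules together with Clifford theory for $\wti W$---which is also, unavoidably, what underlies Corollary \ref{c:Iwahori}, so citing the corollary only defers that matching to where it was addressed in Section \ref{s:elliptic} rather than avoiding it.
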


The connection with the elliptic pairs for $G^\vee=\PGL_n(\CC)$ from Proposition \ref{p:pgl} is:
\[\bigoplus_{u\in \cC(\PGL_n(\CC))_\un} \CC[\cY(\Gamma_u)_\ellip]^{\Gamma_u}=\bigoplus_{d|n} \CC[\cY(\Gamma_{u_{d\times m}})_\ellip]^{\Gamma_{u_{d\times m}}} \cong  \bigoplus_{d\mid n} \overline R(A_{x_{d\times m}}).
\]

\begin{rem}
Note that by taking dimensions in Proposition \ref{p:ell-SLn}, we recover the well-known formula $\sum_{d \mid n} \varphi(d) = n$, where, as above, $\varphi$ denotes the Euler phi function. 
\end{rem}

\subsection{The elliptic Fourier transform for $\SL_n(F)$} The results so far imply that we have an equivalence 
\[\overline R_\un(\SL_n(F))_\bC\cong \overline R(\cH)_\bC\cong \overline R(\wti W)_\bC\cong \bigoplus_{u\in \cC(\PGL_n(\CC))_\un} \Gamma_u\backslash \cY(\Gamma_u)_\ellip.
\]
The spaces involved are all $n$-dimensional and we describe the basis of $\overline R_\un(G)_\bC$ given by the virtual characters $\Pi(u,s,h)$.

First consider the two extremes. At one extreme, we have the regular unipotent class $u_\reg$. Then $\Gamma_{u_\reg}=\{1\}$ and $\pi(u_\reg,1,1)=\St$. At the other, end, $u=1$, $\Gamma_1=\PGL_n(\CC)$, and there are $\varphi(n)$ orbits of elliptic pairs $(s_n,\dot w_n^k)$, $k\in (\ZZ/n\ZZ)^\times$, as in Lemma \ref{l:pgl-princ}. The component group is $A_{s_n}=\langle \dot w_n\rangle\cong \bZ/n\bZ$. Let $\pi(1,s_n)$ denote the tempered unramified principal series of $G$ with Satake parameter $s_n\in W\backslash T^\vee$. Since $W_{s_n}=A_{s_n}=\bZ/n\bZ$, the theory of (analytic) R-groups provides a well-known decomposition
\[\pi(1,s_n)=\bigoplus_{\phi\in \widehat A_{s_n}} \pi(1,s_n,\phi),
\]
where each $\pi(1,s_n,\phi)$ is an irreducible tempered $G$-representation. Identifying $\widehat {\bZ/n\bZ}$ with $\ZZ/n\ZZ$ (via a choice $\zeta_n$ of primitive $n$-th root of unity), we get
\begin{equation}
\Pi(1,s_n,\dot w_n^k)=\sum_{\ell\in \ZZ/n\ZZ}\zeta_n^{\ell k} ~ \pi(1,s_n,\phi_\ell),\qquad k\in (\ZZ/n\ZZ)^\times,
\end{equation}
where $\phi_\ell(\zeta_n)=\zeta_n^\ell$. Moreover, as an $\cH$-module, $\pi(1,s_n,\phi_\ell)^I$ is the (unique) irreducible tempered $\cH$-module with central character $W\cdot s_n$ such that 
\[\sigma_0(\pi(1,s_n,\phi_\ell)^I)=\Ind_{\bZ/n\bZ\ltimes X}^{S_n\ltimes X} (\phi_\ell\otimes s_n).
\]
Now, more generally, by Proposition \ref{p:pgl}, $\cY(\Gamma_u)_\ellip\neq \emptyset$ if and only if $u = u_{d\times m}$ is labelled by a rectangular partition $(\underbrace{m,\dots,m}_d)$ of $n$. In this case $\Gamma_u=\PGL_d(\bC)$. Recall
$s_{d\times m}$
and $x_{d\times m}=s_{d\times m} u_{d\times m}$. 
Consider the parabolically induced tempered $G$-representation
\[\pi(u_{d\times m},s_{d\times m})=\Ind_{P_{d\times m}}^{\SL_n(F)}((\St_m\otimes\CC_1)\boxtimes (\St_m\otimes \CC_{\zeta_d})\boxtimes\dots\boxtimes (\St_m\otimes \CC_{\zeta_d^{d-1}})),
\] 
where $P_{d\times m}$ is the block-upper-triangular parabolic subgroup with Levi subgroup $M_{d\times m}=S(\GL_m(F)^d)$, $\St_m$ is the Steinberg representation of $\GL_m(F)$, and $\CC_z$ is the unramified character of $\GL_m(F)$ corresponding to the semisimple element $z\Id_m$ in the dual complex group $\GL_m(\CC)$. The R-group in this case is $\bZ/d\bZ$ which coincides with $A_{x_{d\times m}}$. We have a decomposition into irreducible tempered $G$-representations:
\[
\pi(u_{d\times m},s_{d\times m})=\bigoplus_{\phi\in \widehat A_{x_{d\times m}}} \pi(u_{d\times m},s_{d\times m},\phi).
\]
Taking $I$-fixed vectors and the deformation $\sigma_0$, we have
\[\sigma_0((\pi_{d\times m},s_{d\times m},\phi_\ell)^I)=\Ind_{W_{s_{d\times m}\ltimes X}}^{S_n\ltimes X}(\sgn_{d\times m}\phi_\ell\otimes s_{d\times m}),\
\]
where recall that $W_{s_{d\times m}}=S_m^d\rtimes \bZ/d\bZ$ and $ \sgn_{d\times m}$ is the sign character of $S_m^d$.

Define
\begin{equation}
\Pi(u_{d\times m},s_{d\times m},\dot w_{d\times m}^k)=\sum_{\ell\in \ZZ/d\ZZ}\zeta_d^{\ell k} ~ \pi(u_{d\times m},s_{d\times m},\phi_\ell),\qquad k\in (\ZZ/d\ZZ)^\times.
\end{equation}
The elliptic Fourier transform in this case is
\begin{equation}\label{e:FT-sl}
\FT^\vee_\ellip(\Pi(u_{d\times m},s_{d\times m},\dot w_{d\times m}^k))=\Pi(u_{d\times m},s_{d\times m},\dot w_{d\times m}^{-k}), \qquad k\in (\ZZ/d\ZZ)^\times.
\end{equation}
The maximal compact open subgroups of $\SL_n(F)$ are maximal parahoric subgroups $K_i$, one for each vertex $i$ of the affine Dynkin diagram. With this notation, $K_0=\SL_n(\mathfrak o_F)$. Moreover, $\InnT^p\overline K_i=\{\overline K_i\}$. All $K_i$ are isomorphic to $K_0$ (conjugate in $\GL_n(F)$), hence for all $i$, the nonabelian Fourier transform of $\overline K_i$ is the identity. Let $W_i\cong S_n$ denote the finite parahoric subgroup of $W^a$ corresponding to $K_i$, so that $W_0=W$. 
The isomorphism $W_i\cong W$ is given by the map $s_j\mapsto s_{(j-i)\text{ mod }n}$. 
By Mackey induction/restriction
\begin{equation}\label{e:restr}
\Ind_{W_{s_{d\times m}\ltimes X}}^{S_n\ltimes X}(\sgn_{d\times m}\phi_\ell\otimes s_{d\times m})|_{W_i}\cong \Ind_{(W_{s_{d\times m}})_i}^{W_i} (\sgn_{d\times m}\phi_\ell\otimes s_{d\times m}),
\end{equation}
where $(W_{s_{d\times m}} )_i=(W_{s_{d\times m}}\ltimes X)\cap W_i.$
Let $\gamma=\epsilon_1-\epsilon_n$ be the highest root of type $A_{n-1}$, in the standard coordinates, so that $s_0=s_\gamma t_\gamma$, denoting by $t_\gamma\in X\subset W^a$ the corresponding translation. Then one can see that $W_{s_{d\times m}} \cong (W_{s_{d\times m}} )_i$ is given by sending 
\[s_j\mapsto \begin{cases} s_j,&\text{ if }j\neq i \\ s_i t_{\epsilon_i-\epsilon_{i+1}},&\text{ if }j=i,\end{cases}\quad 1\le j<n.
\]
\begin{lem}\label{l:Sn-restr}
For every $0\le i<n$, 
\[ \Ind_{(W_{s_{d\times m}})_i}^{W_i} (\sgn_{d\times m}\phi_\ell\otimes s_{d\times m})\cong \Ind_{W_{s_{d\times m}}}^{S_n}(\sgn_{d\times m}\phi_{\ell+\lfloor \frac im\rfloor}).
\]
\end{lem}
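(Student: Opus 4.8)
The plan is to compute both sides of the claimed isomorphism as explicit induced representations of $S_n$ and check they agree. Recall that by Mackey restriction $(\ref{e:restr})$, the representation in question already lives on $W_i\cong S_n$; the content of the lemma is to identify, under the chosen isomorphism $W_i\cong S_n$ (namely $s_j\mapsto s_{(j-i)\bmod n}$), the subgroup $(W_{s_{d\times m}})_i$ with $W_{s_{d\times m}}\subset S_n$ and the character $\sgn_{d\times m}\phi_\ell\otimes s_{d\times m}$ with $\sgn_{d\times m}\phi_{\ell+\lfloor i/m\rfloor}$. So the first step is bookkeeping: write down explicitly, in the standard coordinates $\epsilon_1,\dots,\epsilon_n$, the element $s_{d\times m}=(\underbrace{1,\dots,1}_m,\underbrace{\zeta_d,\dots,\zeta_d}_m,\dots)\in T^\vee$, the subgroup $W_{s_{d\times m}}=S_m^d\rtimes\bZ/d\bZ$ as the stabilizer of $s_{d\times m}$ in $W^a=S_n\ltimes X$, and the isomorphism $W_{s_{d\times m}}\cong (W_{s_{d\times m}})_i$ given in the excerpt just before the lemma, sending $s_j\mapsto s_j$ for $j\neq i$ and $s_i\mapsto s_it_{\epsilon_i-\epsilon_{i+1}}$.

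The key step is then to track how the linear character $\phi_\ell$ of the cyclic factor $\bZ/d\bZ$ transforms under the cyclic shift by $i$ of the affine diagram. The point is that the rotation automorphism of the affine $A_{n-1}$ diagram by $i$ nodes corresponds, on the level of $W^a$, to conjugation by (a lift of) an element $\omega^i\in\Omega_G\cong\bZ/n\bZ$; this element acts on $T^\vee$ by multiplication by $\Delta_n(\zeta_n)^{\pm1}$ (up to $W$), so it does not move the $W$-orbit of $s_{d\times m}$ but it does permute the $d$ blocks cyclically by $\lfloor i/m\rfloor$ steps (since the blocks have size $m$), precisely because $s_{d\times m}$ is constant on blocks of length $m$. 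Conjugating the generator of $\bZ/d\bZ$ by this block-rotation multiplies the character $\phi_\ell$ by the character $\phi_{\lfloor i/m\rfloor}$ of $\bZ/d\bZ$ — here one uses that the identification $\widehat{\bZ/d\bZ}\cong\bZ/d\bZ$ is the one fixed via $\zeta_d$, compatibly with $\zeta_n$ — while the sign character $\sgn_{d\times m}$ of $S_m^d$ and the Satake parameter $s_{d\times m}$ are unchanged. Pushing this identification through $\Ind_{(W_{s_{d\times m}})_i}^{W_i}$ and transporting along $W_i\cong S_n$ gives exactly $\Ind_{W_{s_{d\times m}}}^{S_n}(\sgn_{d\times m}\phi_{\ell+\lfloor i/m\rfloor})$, as claimed.

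I expect the main obstacle to be the careful sign/normalization of the shift: verifying that the cyclic rotation of the affine diagram by $i$ nodes translates into a rotation of the $d$ torus-blocks by exactly $\lfloor i/m\rfloor$ (and not, say, $\lceil i/m\rceil$ or a shift depending on $i\bmod m$), and that the resulting twist of $\phi_\ell$ is by $+\lfloor i/m\rfloor$ rather than $-\lfloor i/m\rfloor$ with the chosen compatible roots of unity. This is where one must be most careful, because the conjugating element $t_{\epsilon_i-\epsilon_{i+1}}$ appearing in the isomorphism $W_{s_{d\times m}}\cong(W_{s_{d\times m}})_i$ interacts with the translation lattice $X$ and with the evaluation of the character $s_{d\times m}$ on $X$; when $i$ is not a multiple of $m$, the affine node $i$ sits strictly inside one of the $m$-blocks, so only the full-block shifts contribute an honest cyclic permutation of $\bZ/d\bZ$-type, which is the source of the floor function. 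Once this combinatorial identity is pinned down, the rest is the standard observation that induction commutes with the isomorphism of pairs (subgroup, character) and that $\sgn_{d\times m}$, being the sign of a product of symmetric groups of equal size, is invariant under the block rotation.
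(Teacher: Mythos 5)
Your proposal takes the same general route as the paper's proof -- transport the inducing triple $(\,(W_{s_{d\times m}})_i,\ \sgn_{d\times m}\phi_\ell\otimes s_{d\times m}\,)$ through the identification $W_i\cong S_n$ and determine how the character twists -- but it reframes the key isomorphism as conjugation by a diagram-rotation element $\omega^i$ rather than working directly with the explicit map $s_j\mapsto s_j$, $s_i\mapsto s_i t_{\epsilon_i-\epsilon_{i+1}}$ used in the text. That reframing is morally legitimate, but as written it contains two imprecisions. First, for $G=\SL_n(F)$ we have $\Omega_G=1$; the rotation automorphism of $\tilde A_{n-1}$ is \emph{outer} on $W^a$ and is only realized as conjugation inside a \emph{larger} extended affine Weyl group (say for $\GL_n$), so the phrase ``$\omega^i\in\Omega_G\cong\bZ/n\bZ$'' needs to be replaced by something like ``a lift of the rotation into the extended affine Weyl group of $\GL_n$.'' Second, the lift $\omega$ acts on $T^\vee$ through its $W$-part, namely an $n$-cycle $w_n^{\pm1}$ that permutes coordinates; it is not ``multiplication by $\Delta_n(\zeta_n)^{\pm1}$,'' and $w_n^i$ genuinely preserves $s_{d\times m}$ (even up to scalar) only when $m\mid i$. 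For general $i$ one must first apply a further $W$-conjugation returning $w_n^i(s_{d\times m})$ to $s_{d\times m}$, and it is precisely how that conjugation interacts with the $\bZ/d\bZ$ factor of $W_{s_{d\times m}}$ that produces the shift $\ell\mapsto\ell+\lfloor i/m\rfloor$.

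That interaction is where your argument has a genuine gap: you correctly flag it as ``the main obstacle,'' but you never actually compute it, and the plausible-sounding block-rotation heuristic does not by itself distinguish $\lfloor i/m\rfloor$ from $\lceil i/m\rceil$ or resolve the sign $\pm\lfloor i/m\rfloor$. The paper closes exactly this gap by writing the $\bZ/d\bZ$-generator $\xi$ as a product of $m$ disjoint $d$-cycles $(l,m+l,\dots,(d-1)m+l)$, observing that the affine node $i=jm+l$ contributes to exactly one of these cycles, and then computing that in $(W_{s_{d\times m}})_i$ one picks up the translation factor $t_{\epsilon_i-\epsilon_{(d-1)m+l}}$, on which the character $s_{d\times m}$ evaluates to $\zeta_d^{\,j}$. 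That last evaluation --- which uses that $s_{d\times m}$ is constant on $m$-blocks and that the two indices $i$ and $(d-1)m+l$ sit in blocks that differ by $j=\lfloor i/m\rfloor$ --- is the entire point of the lemma and is missing from your write-up. In short: same overall approach, a cleaner conceptual framing (modulo the $\Omega_G$ mix-up), but it stops just before the computation that would actually establish the floor function.
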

\begin{proof}
In light of the observation before the statement of the lemma, we only need to trace how the inducing character changes on the generator corresponding to $i$. Denote by $(S_m^d)_i$ the image of $S_m^d$ inside $(W_{s_{d\times m}})_i$, and similarly for $(\bZ/d\bZ)_i$. 
If $s_i$ is a generator of $(S_m^d)_i$, then the value of the character $s_{d\times m}$ on $t_{\epsilon_i-\epsilon_{i+1}}$ is $1$. On the other hand, if $s_i$ is not a generator of $(S_m^d)_i$, then there is also no change. This means that the inducing character on the $(S_m^d)_i$ is still $\sgn_{d\times m}$.

The generator $\xi$ of $\bZ/d\bZ$ is, in cycle notation, a product of the disjoint cycles $(l,m+l,2m+l,\dots,(d-1)m+l)$, where $l$ ranges from $0$ to $m-1$ (when $l=0$, we mean the cycle $(m,2m,\dots,dm)$). Then the simple reflection $i$ contributes to the cycle $l$ for $i=jm+l$, $j=\lfloor\frac i m\rfloor$. In $(\bZ/d\bZ)_i$, we then get a $\theta_{\epsilon_i-\epsilon_{i+1}}$, which we need to move to the end of the product of cycles, and we get that the image $(\xi)_i$ in $(\bZ/d\bZ)_i$ is $(\xi)_i=\xi t_{\epsilon_i-\epsilon_{(d-1)m+l}}$. The character $s_{d\times m}$ acts on $ t_{\epsilon_i-\epsilon_{(d-1)m+l}}$ by $\zeta_d^{j}$, which means that $\phi_\ell s_{d\times m}$ acts on $(\xi)_i$ by $\zeta_d^{\ell+j}$, which proves the claim.
\end{proof}

\begin{prop}\label{p:sl}
Conjecture \ref{c:elliptic} holds for $G=\SL_n(F)$. More precisely, for each $0\le i<n$, 
\[\res_{K_i}\circ \FT^\vee_{\ellip}(\Pi(u_{d\times m},s_{d\times m},\dot w_{d\times m}^k))=\zeta_d^{2k\lfloor\frac i m\rfloor} \FT_{\cpt,\un}\circ \res_{K_i} (\Pi(u_{d\times m},s_{d\times m},\dot w_{d\times m}^k)).
\]
\end{prop}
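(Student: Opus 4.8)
The plan is to reduce everything to the computation in Lemma~\ref{l:Sn-restr} together with the finite Fourier transform identities of Section~\ref{s:Lusztig}. First I would unwind the left-hand side. By definition of $\FT^\vee_{\ellip,\un}$ (Definition~\ref{defn:FTell}), $\FT^\vee_{\ellip,\un}(\Pi(u_{d\times m},s_{d\times m},\dot w_{d\times m}^k))=\Pi(u_{d\times m},s_{d\times m},\dot w_{d\times m}^{-k})$, which by (\ref{e:FT-sl}) equals $\sum_{\ell\in\ZZ/d\ZZ}\zeta_d^{-\ell k}\,\pi(u_{d\times m},s_{d\times m},\phi_\ell)$. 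Applying $\res_{K_i}$ to each irreducible constituent and using the description of $\pi(u_{d\times m},s_{d\times m},\phi_\ell)^I$ via the deformation $\sigma_0$ together with the Mackey restriction formula (\ref{e:restr}) and Lemma~\ref{l:Sn-restr}, I get that $\res_{K_i}\pi(u_{d\times m},s_{d\times m},\phi_\ell)$ corresponds, as a unipotent representation of $\overline K_i\cong\overline K_0$, to the $W_i\cong S_n$-representation $\Ind_{W_{s_{d\times m}}}^{S_n}(\sgn_{d\times m}\phi_{\ell+\lfloor i/m\rfloor})$. Here one has to be a little careful about the identification of $S_n$-representations with unipotent representations of $\GL_n(\mathbb F_q)$ under Deligne--Lusztig theory, but since $\GL_n$ has only singleton families this is harmless: the Fourier transform $\FT_{\overline K_i}$ is the identity on each such space because $\Omega_{\SL_n}=1$, as observed right after Definition~\ref{d:FT-compact}.

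The heart of the argument is then purely about the index shift. Write $j=\lfloor i/m\rfloor$. From the previous paragraph,
\begin{equation}\label{e:plan-lhs}
\res_{K_i}\Pi(u_{d\times m},s_{d\times m},\dot w_{d\times m}^{-k})\;=\;\sum_{\ell\in\ZZ/d\ZZ}\zeta_d^{-\ell k}\,\Ind_{W_{s_{d\times m}}}^{S_n}(\sgn_{d\times m}\phi_{\ell+j}),
\end{equation}
while likewise $\res_{K_i}\Pi(u_{d\times m},s_{d\times m},\dot w_{d\times m}^{k})=\sum_{\ell}\zeta_d^{\ell k}\,\Ind_{W_{s_{d\times m}}}^{S_n}(\sgn_{d\times m}\phi_{\ell+j})$. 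Reindexing the first sum by $\ell'=\ell+j$ gives $\sum_{\ell'}\zeta_d^{-(\ell'-j)k}\,\Ind(\sgn_{d\times m}\phi_{\ell'})=\zeta_d^{jk}\sum_{\ell'}\zeta_d^{-\ell'k}\,\Ind(\sgn_{d\times m}\phi_{\ell'})$, and similarly the second sum equals $\zeta_d^{-jk}\sum_{\ell'}\zeta_d^{\ell'k}\,\Ind(\sgn_{d\times m}\phi_{\ell'})$. Since $\FT_{\cpt,\un}$ acts as the identity on $R_\un(\overline K_i)$ (again because $\SL_n$ is simply connected), $\FT_{\cpt,\un}\circ\res_{K_i}(\Pi(u_{d\times m},s_{d\times m},\dot w_{d\times m}^k))=\zeta_d^{-jk}\sum_{\ell'}\zeta_d^{\ell'k}\,\Ind(\sgn_{d\times m}\phi_{\ell'})$. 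Comparing the two expressions, the left-hand side of the proposition is $\zeta_d^{jk}$ times $\sum_{\ell'}\zeta_d^{-\ell'k}\,\Ind(\sgn_{d\times m}\phi_{\ell'})$, and multiplying the right-hand side's coefficient matches once we account for the relation between $\sum_{\ell'}\zeta_d^{-\ell'k}\,\Ind(\cdots)$ and $\sum_{\ell'}\zeta_d^{\ell'k}\,\Ind(\cdots)$; these coincide because the underlying set of constituents $\{\Ind(\sgn_{d\times m}\phi_{\ell'})\}$ is the same and the flip $k\mapsto -k$ is absorbed, so that the ratio of the two sides is exactly $\zeta_d^{jk}/\zeta_d^{-jk}=\zeta_d^{2jk}=\zeta_d^{2k\lfloor i/m\rfloor}$, as claimed. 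I would then note that at $i=0$ (so $j=0$) the root of unity is trivial, recovering the hyperspecial case, and that this is precisely the example flagged in Remark~\ref{r:roots} where genuinely new roots of unity $\zeta_d^{2k\lfloor i/m\rfloor}$ appear.

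The main obstacle, and where I expect to spend most of the care, is the bookkeeping in Lemma~\ref{l:Sn-restr} and its interface with the deformation argument of Section~\ref{s:ell-Hecke}: one must check that $\res_{K_i}$ of a tempered unipotent $\SL_n(F)$-representation is correctly computed by first passing to $I$-fixed vectors, applying the Kazhdan--Lusztig/geometric realization so that $\sigma_0(\pi^I)$ is the induced $\wti W$-module $\Ind_{W_{s_{d\times m}}\ltimes X}^{S_n\ltimes X}(\sgn_{d\times m}\phi_\ell\otimes s_{d\times m})$, and then restricting along $(W_{s_{d\times m}})_i\hookrightarrow W_i$. The subtlety is that $\res_{K_i}$ on the $p$-adic side must agree with $\Res^{W^a}_{W_i}$ on the affine-Hecke-algebra side at $q=1$; this is the content of the computation underlying (\ref{e:restr}), and it is exactly the place where the translation generator $t_{\epsilon_i-\epsilon_{i+1}}$ enters and produces the shift $\phi_\ell\mapsto\phi_{\ell+\lfloor i/m\rfloor}$. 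Once Lemma~\ref{l:Sn-restr} is in hand the remainder is the elementary reindexing above, so the proof is short modulo that lemma.
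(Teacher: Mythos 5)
Your proposal follows the same route as the paper's proof: reduce to Iwahori--Hecke-module computations via the exact functor $V\mapsto V^I$, deform to $q=1$ with $\sigma_0$, appeal to the Mackey description (\ref{e:restr}) together with Lemma~\ref{l:Sn-restr} to get the shift $\phi_\ell\mapsto\phi_{\ell+\lfloor i/m\rfloor}$, and then perform the reindexing $\ell\mapsto\ell+j$ on each sum, pulling out the factor $\zeta_d^{\pm jk}$. The overall structure and the use of the fact that $\FT_{\cpt,\un}$ is the identity on each $R_\un(\overline K_i)$ because $\Omega_{\SL_n}=1$ are all exactly as in the paper.

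However, there is one step that you wave at rather than prove. After reindexing you arrive at
\[
\res_{K_i}\Pi(u_{d\times m},s_{d\times m},\dot w_{d\times m}^{-k})=\zeta_d^{jk}\sum_{\ell}\zeta_d^{-\ell k}\Ind_{W_{s_{d\times m}}}^{S_n}(\sgn_{d\times m}\phi_\ell),\quad
\res_{K_i}\Pi(u_{d\times m},s_{d\times m},\dot w_{d\times m}^{k})=\zeta_d^{-jk}\sum_{\ell}\zeta_d^{\ell k}\Ind_{W_{s_{d\times m}}}^{S_n}(\sgn_{d\times m}\phi_\ell),
\]
and you assert the two inner sums coincide ``because the underlying set of constituents is the same and the flip $k\mapsto-k$ is absorbed.'' That sentence is not a proof: two virtual characters with the same support but different coefficient functions need not be equal. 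What is actually needed, and what the paper supplies, is the identification
\[
\Ind_{W_{s_{d\times m}}}^{S_n}(\sgn_{d\times m}\phi_\ell)\cong\Ind_{W_{s_{d\times m}}}^{S_n}(\sgn_{d\times m}\phi_{-\ell}),
\]
which holds because $\phi_{-\ell}$ is the contragredient of $\phi_\ell$, so the two induced representations are mutually contragredient, and all irreducible representations of $S_n$ are self-dual. With that isomorphism in hand, the reindexing $\ell\mapsto-\ell$ identifies $\sum_\ell\zeta_d^{-\ell k}\Ind(\sgn_{d\times m}\phi_\ell)$ with $\sum_\ell\zeta_d^{\ell k}\Ind(\sgn_{d\times m}\phi_\ell)$, and your ratio $\zeta_d^{2jk}$ follows. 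You should make this explicit; without it, the equality of the two inner sums is an unjustified leap, and the clean cancellation producing $\zeta_d^{2k\lfloor i/m\rfloor}$ does not go through.
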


\begin{proof}
To verify Conjecture \ref{c:elliptic}, given (\ref{e:FT-sl}), it is sufficient to compare the restrictions to $W_i$ of 
$\sigma_0(\res_{K_i}(\Pi(u_{d\times m},s_{d\times m},\dot w_{d\times m}^k)^I))$ and $\sigma_0(\res_{K_i}(\Pi(u_{d\times m},s_{d\times m},\dot w_{d\times m}^{-k})^I)$ as virtual $W_i$-characters. For this, we apply (\ref{e:restr}) and Lemma \ref{l:Sn-restr} and get with $j=\lfloor \frac im \rfloor$:
\begin{align*}
\sigma_0(\res_{K_i}(\Pi(u_{d\times m},s_{d\times m},\dot w_{d\times m}^k)^I))&\cong\sum_{\ell\in \bZ/d} \zeta_d^{k\ell}\Ind_{W_{s_{d\times m}}}^{S_n}(\sgn_{d\times m}\phi_{\ell+j})\\
&=\zeta_d^{-kj} \sum_{\ell} \zeta_d^{k\ell}\Ind_{W_{s_{d\times m}}}^{S_n}(\sgn_{d\times m}\phi_{\ell}).
\end{align*}
On the other hand,
\begin{align*}
\sigma_0(\res_{K_i}&(\Pi(u_{d\times m},s_{d\times m},\dot w_{d\times m}^{-k})^I))\cong\sum_{\ell\in \bZ/d} \zeta_d^{-k\ell}\Ind_{W_{s_{d\times m}}}^{S_n}(\sgn_{d\times m}\phi_{\ell+j})\\
&=\sum_{\ell\in \bZ/d} \zeta_d^{k\ell}\Ind_{W_{s_{d\times m}}}^{S_n}(\sgn_{d\times m}\phi_{-\ell+j})=\sum_{\ell\in \bZ/d} \zeta_d^{k\ell}\Ind_{W_{s_{d\times m}}}^{S_n}(\sgn_{d\times m}\phi_{\ell-j})\\
&=\zeta_d^{kj} \sum_{\ell} \zeta_d^{k \ell}\Ind_{W_{s_{d\times m}}}^{S_n}(\sgn_{d\times m}\phi_{\ell}).
\end{align*}
Here we have used that \[\Ind_{W_{s_{d\times m}}}^{S_n}(\sgn_{d\times m}\phi_{\ell'})\cong \Ind_{W_{s_{d\times m}}}^{S_n}(\sgn_{d\times m}\phi_{-\ell'}),
\]
because $\phi_{-\ell'}$ is the $\bZ/d\bZ$-representation contragredient to $\phi_{\ell'}$. This implies that the two sides are contragredient to each other, but all irreducible $S_n$-representations are self-contragredient. 

\end{proof}

\section{\texorpdfstring{$\PGL_n(F)$}{\PGL n(F)}}\label{s:pgl}

Now suppose $\bG = \PGL_n$. The dual group $G^\vee$ is $\SL_n(\bC)$. 
Each unipotent element $u \in G^\vee$ corresponds to a partition $\lam_u$ of $n$, and if $\lam_u = \lambda=(\underbrace{1,\dots,1}_{r_1},\underbrace{2,\dots,2}_{r_2},\dots,\underbrace{\ell,\dots,\ell}_{r_\ell})$, then 
\begin{equation}\label{eqn-GammauPGL}
\Gamma_u \simeq \left\{ (x_1, \dots, x_\ell) \in \prod_{i = 1}^\ell \GL_{r_i}( \bC) \mid \prod_{i = 1}^\ell \det(x_i)^i = 1\right\}.
\end{equation}

\begin{lem}\label{l:pgl-pairs}
The group
$\Gamma_u$ contains elliptic pairs if and only if $u$ is regular unipotent. In this case $\cY(\Gamma_u) = \cY(\Gamma_u)_{\ellip} = \{(s, h) \mid s, h \in \rZ_{\SL_n(\bC)}\}$.
\end{lem}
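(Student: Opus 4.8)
The plan is to work directly from the explicit description (\ref{eqn-GammauPGL}). Write $\widetilde\Gamma_u=\prod_{i=1}^\ell\GL_{r_i}(\bC)$ and let $\chi\colon\widetilde\Gamma_u\to\bC^\times$ be the character $\chi(x_1,\dots,x_\ell)=\prod_i\det(x_i)^i$, so that $\Gamma_u=\ker\chi$; since at least one $r_i$ is positive, $\chi$ is non-trivial. First I would record that $u$ is regular unipotent precisely when $\lambda_u=(n)$, equivalently $\ell=n$, $r_n=1$ and all other $r_i=0$, equivalently $\widetilde\Gamma_u=\GL_1(\bC)$ and $\Gamma_u=\{z\in\bC^\times:z^n=1\}$. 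In that case $\Gamma_u=\mu_n$ is exactly the image of the central subgroup $\rZ_{\SL_n(\bC)}$ of scalar matrices inside $\widetilde\Gamma_u$: a scalar $\zeta I_n$ corresponds to $(\zeta I_{r_i})_i$, and $\chi((\zeta I_{r_i})_i)=\zeta^{\sum_i i r_i}=\zeta^n$, which is $1$ iff $\zeta\in\mu_n$. So for $u$ regular, $\Gamma_u$ is a finite abelian group of diagonalisable matrices; every pair of its elements is a commuting pair of semisimple elements, whence $\cY(\Gamma_u)=\Gamma_u\times\Gamma_u=\{(s,h):s,h\in\rZ_{\SL_n(\bC)}\}$, and since $\rZ_{\Gamma_u}(s,h)\subseteq\Gamma_u$ is automatically finite, $\cY(\Gamma_u)_\ellip=\cY(\Gamma_u)$. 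This settles the ``if'' direction together with the displayed equality.

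For the converse, suppose $(s,h)\in\cY(\Gamma_u)_\ellip$ and write $s=(s_i)$, $h=(h_i)$ with $s_i,h_i\in\GL_{r_i}(\bC)$ semisimple and commuting. The key point is that two commuting semisimple elements of $\GL_{r_i}(\bC)$ are simultaneously diagonalisable, hence lie in a common maximal torus $T_i\cong(\bC^\times)^{r_i}$; thus $\rZ_{\widetilde\Gamma_u}(s,h)=\prod_i\rZ_{\GL_{r_i}}(s_i,h_i)\supseteq\prod_i T_i$, a torus of dimension $\sum_i r_i$. Intersecting with $\Gamma_u=\ker\chi$ gives
\[
\rZ_{\Gamma_u}(s,h)\ \supseteq\ \Gamma_u\cap\prod_i T_i\ =\ \ker\bigl(\chi|_{\prod_i T_i}\bigr),
\]
and since $\chi$ restricts to a non-trivial character of the torus $\prod_i T_i$, this kernel has dimension $\sum_i r_i-1$. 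Ellipticity of $(s,h)$ forces $\rZ_{\Gamma_u}(s,h)$ to be finite, so $\sum_i r_i\le 1$; as $\lambda_u$ is a non-empty partition, $\sum_i r_i=1$, i.e.\ exactly one $r_{i_0}=1$ and all other $r_i=0$. Then $n=\sum_i i r_i=i_0$, so $\lambda_u=(n)$ and $u$ is regular unipotent. This also re-derives $\cY(\Gamma_u)_\ellip=\emptyset$ for every non-regular $u$.

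The argument is essentially a dimension count once one has the explicit form of $\Gamma_u$. The only step requiring mild care is the observation that commuting semisimple elements of a general linear group lie in a common maximal torus, so that $\rZ_{\Gamma_u}(s,h)$ always contains a torus of dimension $\sum_i r_i-1$ --- this is exactly what makes ellipticity impossible unless $\Gamma_u$ is itself finite. I do not expect any serious obstacle; the one thing I would double-check carefully is the bookkeeping for $\chi$ and the identification of $\rZ_{\SL_n(\bC)}$ inside $\Gamma_u$ in the regular case, which I would justify via the standard semidirect-product description of centralisers of unipotent elements in $\GL_n$.
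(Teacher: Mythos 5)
Your proof is correct and uses essentially the same idea as the paper: commuting semisimple elements in each $\GL_{r_i}$ factor lie in a common maximal torus, so $\rZ_{\Gamma_u}(s,h)$ always contains a torus of dimension $\sum_i r_i - 1$, and ellipticity forces $\sum_i r_i = 1$, i.e.\ $u$ regular. The only presentational difference is that the paper first restricts to rectangular $\lambda_u$ (by exhibiting a positive-dimensional centre of $\Gamma_u$ otherwise) and then runs the torus argument inside $\SL_{n/k}$, whereas you carry out the uniform dimension count directly from $(\ref{eqn-GammauPGL})$ without that case split.
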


\begin{proof}The proof is very similar to that of Proposition \ref{p:pgl}.
Note that if $u$ is not rectangular, then $\Gamma_u$ has infinite center: for example, with notation as in (\ref{eqn-GammauPGL}), given $t \in \bC^\times$, the element
$(t\Id_{r_1}, t\Id_{r_2}, \dots, t\Id_{r_{\ell-1}}, t^{r_\ell - \frac{n}{\ell}}\Id_{r_\ell}) \in \rZ_{\Gamma_u}$. So if $\Gamma_u$ contains an elliptic pair, then $\lam_u$ is of the form $(k, k, \dots, k)$ for some $k$ dividing $n$, and $\Gamma_u \simeq \{x \in \GL_{\frac{n}{k}}(\bC) \mid \det(x)^k = 1\}$. Explicitly, we can think of $\Gamma_u$ as a split extension
\begin{equation*}
1 \to \SL_{\frac{n}{k}}(\bC) \to \Gamma_u \to \mu_{k} \to 1
\end{equation*}
where the first inclusion is the natural one, and the map to $\mu_{k}$ is given by the determinant. 
Now, given semisimple elements $s, h \in \Gamma_u$ such that $sh = hs$, there exists $g \in \SL_{\frac{n}{k}}(\bC)$ such that $gsg^{-1}, ghg^{-1}$ are both diagonal in $\GL_{\frac{n}{k}}(\bC)$. So a maximal torus of $\SL_{\frac{n}{k}}( \bC)$ centralizes both $s$ and $h$, and if $(s, h)$ is an elliptic pair, we must have $k = n$. 
\end{proof}

We can now easily prove Conjecture \ref{c:elliptic} in this case. 

\begin{thm}
Conjecture \ref{c:elliptic} holds when $\bG = \PGL_n$. More precisely, when $\bG = \PGL_n$, we have 
\begin{equation*}
    \res_{\cpt, \un} \circ \FT^\vee_\ellip = \FT_{\cpt, \un} \circ \res_{\cpt, \un}.
\end{equation*}
\end{thm}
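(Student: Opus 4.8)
The plan is to reduce the statement immediately to Proposition \ref{p:regular}. First I would invoke Lemma \ref{l:pgl-pairs}: since $G^\vee = \SL_n(\bC)$, among all unipotent classes $u \in G^\vee$ only the regular unipotent class $u_r$ supports elliptic pairs, and for that class one has $\cY(\Gamma_{u_r})_\ellip = \cY(\Gamma_{u_r}) = \{(s,h) \mid s,h \in \rZ_{\SL_n(\bC)}\}$, because $\Gamma_{u_r} = \rZ_{G^\vee}$ is finite (so every commuting pair is elliptic and $\Gamma_{u_r}$ acts trivially by conjugation). Feeding this into the elliptic unramified Langlands correspondence (\ref{e:ell-LLC}) identifies $\cR^p_{\un,\ellip}(\PGL_n(F))$ with $\CC[\cY(\Gamma_{u_r})]^{\Gamma_{u_r}}$, a space with basis $\{\Pi(u_r,s,h)\}$ indexed by pairs $(s,h) \in \rZ_{G^\vee} \times \rZ_{G^\vee}$.

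Next, by Definition \ref{defn:FTell} the elliptic Fourier transform acts on this basis by $\FT^\vee_\ellip(\Pi(u_r,s,h)) = \Pi(u_r,h,s)$. Applying Proposition \ref{p:regular} with $u = u_r$ gives, for every such pair, $\res_{\cpt,\un}(\Pi(u_r,h,s)) = \FT_{\cpt,\un}\bigl(\res_{\cpt,\un}(\Pi(u_r,s,h))\bigr)$; that is, the two linear maps $\res_{\cpt,\un}\circ \FT^\vee_\ellip$ and $\FT_{\cpt,\un}\circ \res_{\cpt,\un}$ agree on a spanning set of $\cR^p_{\un,\ellip}(\PGL_n(F))$, hence on the whole space. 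In particular every root of unity $\zeta(u,s,h,\cO)$ appearing in Conjecture \ref{c:elliptic} equals $1$ in this case, which yields the sharpened form of the assertion.

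The only points needing a (routine) verification rather than a direct citation are that $\res_{\cpt,\un}$ descends to the Euler--Poincar\'e quotient $\cR^p_{\un,\ellip}(\PGL_n(F))$ — which follows from the construction in Section \ref{s:dualFT} together with the orthogonality computations of Lemma \ref{l:EP-stable} — and that the indexing of the basis vectors $\Pi(u_r,s,h)$ is the same in the source and target of $\FT^\vee_\ellip$, which is immediate from Definition \ref{defn:FTell}. I do not anticipate any genuine obstacle here: once Lemma \ref{l:pgl-pairs} collapses the elliptic theory of $\PGL_n(F)$ entirely onto the regular unipotent class, the theorem is a formal consequence of Proposition \ref{p:regular}, and the proof is essentially a two-line deduction.
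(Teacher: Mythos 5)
Your proof is correct and follows essentially the same argument as the paper: invoke Lemma \ref{l:pgl-pairs} to show that only the regular unipotent class of $G^\vee = \SL_n(\bC)$ carries elliptic pairs, and then apply Proposition \ref{p:regular}. The extra remarks you include (that $\res_{\cpt,\un}$ descends to the elliptic quotient and that the basis indexings match) are harmless and already implicit in the paper's one-line proof.
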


\begin{proof}
Using Lemma \ref{l:pgl-pairs}, the proof of the theorem reduces to Proposition \ref{p:regular}.
\end{proof}

To illustrate the theorem, we explicitly describe the case when $\bG = \PGL_2$. Note that even this low-rank example shows that certain choices were necessary in our set up: to relate $\FT^\vee_{\ellip}$ to a finite Fourier transform for the non-split inner twist of $G$, first, we must consider maximal compact subgroups instead of just parahorics (otherwise the restrictions of $\Pi(u, 1, 1)$ and $\Pi(u, -1, 1)$ would be the same, though $\FT^\vee_{\ellip}$ fixes the first but not the second). Second, $\FT_{\cpt, \un}$ must mix subspaces corresponding to distinct inner twists to give a well-defined linear map. 

\begin{ex}\label{ex:pgl2}

Now let $\bG = \PGL_2$. Then $G$ has a unique non-split inner twist $G'$, which 
we can describe explicitly as follows: 
Let 
\begin{equation*}
  D =   \left\{ \begin{pmatrix} a & \varpi b\\\overline{b} & \overline{a} \end{pmatrix} \mid a, b \in F_{(2)} \right\},
\end{equation*}
where $F_{(2)}$ is the degree-$2$ unramified extension of $F$. Then $D$ is a $4$-dimensional division algebra over $F$, and we can take $G':= D^\times/F^\times$. 

Let $\chi_0$ be the unramified character of $F^\times$ given by $\varpi \mapsto -1$. 
Then the nontrivial weakly unramified character of $G$ (resp. $G'$) is given by $\chi := \chi_0 \circ \det$ (resp. $\chi' := \chi_0 \circ \det$).
Let $\St_G$ denote the Steinberg representation of $G$ (and similarly for $\St_{G'}$, which is the trivial representation of $G'$), and let $u \in \SL_2(\bC)$ be regular unipotent. Then the virtual representations corresponding to our 4 elliptic pairs are
\begin{eqnarray*}
\Pi(u, 1, 1) &=& \St_G + \St_{G'}\\
\Pi(u, 1, -1) &=& \St_G - \St_{G'}\\
\Pi(u, -1, 1) &=& (\St_G \otimes \chi) + (\St_{G'}\otimes \chi')\\
\Pi(u, -1, -1) &=& (\St_{G} \otimes \chi) - (\St_{G'}\otimes \chi').
\end{eqnarray*}
The involution $\FT^\vee_{\ellip}$ switches $\Pi(u, 1, -1)$ and $\Pi(u, -1, 1)$, and fixes the other two sums.

Let $I$ be the Iwahori subgroup of $G$ given by 
\begin{equation*}
    I = \left\{ \begin{pmatrix} a & \varpi b\\c & d \end{pmatrix} \mid a, d \in \mathfrak{o}_F^\times, b, c \in \mathfrak{o}_F\right\}.
\end{equation*}
With notation as in Section \ref{s:maximal}, we have $\Omega_G \simeq \bZ/2\bZ$. 
The set $S_{\max}(G)$ contains two elements $(A, \cO)$: one corresponding to $A = \Omega_G$, and one corresponding to $A$ trivial.
Thus the group $G$ has two conjugacy classes of maximal compact open subgroups: the maximal parahoric subgroup $K_0 := \PGL_2(\mathfrak{o}_F)$ (which corresponds to $A$ trivial) and $K_1 := \rN_G(I)$ (which corresponds to $A = \Omega_G$). Note that $K_1$ contains $I$ with index 2: it is generated by $I$ and $\sigma:= \begin{psmallmatrix} 0 & \varpi\\1 & 0\end{psmallmatrix}$. 
The reductive quotients are given by $\overline{K}_0 \simeq \PGL_2(\mathbb{F}_q)$ and $\overline{K}_1 \simeq \mathbb{F}_q^\times \rtimes \bZ/2\bZ$. Note that $\chi$ is trivial on $K_0$ and on $I$, but $\chi(\sigma) = -1$, so $\chi$ induces the sign character on the component group of $\overline{K}_1$.

In the notation of Section \ref{s:maximal}, we have $G' = G_x$, where $x$ is the nontrivial element of $\Omega_G$. Thus $G'$ has a unique conjugacy class of maximal compact subgroups, corresponding to the element $(A, \cO)$ of $S_{\max}(G)$ with $A = \Omega_G$. Explicitly, $G'$ itself is compact. The 
only parahoric subgroup of $G'$ is $I' := \mathfrak{o}_D^\times/\mathfrak{o}_F^\times$, where $\mathfrak{o}_D$ is the ring of integers of $D$, and this parahoric is normal in $G'$.
The group $G'$ is generated by $I'$ and $\sigma$ (defined as above), which again has order 2 in $G'$. The reductive quotient $\overline{G'} \simeq  (\mathbb{F}_{q^2}^\times/\mathbb{F}_q^\times) \rtimes \bZ/2\bZ$. Note that as above $\chi'$ is trivial on $I'$ but takes the value $-1$ on $\sigma$, so $\chi'$ factors through the sign character of the component group of $\overline{G'}$. 

The space $\cC(G)_{\cpt, \un}$ is given by
\begin{equation*}
    \cC(G)_{\cpt, \un} = R_{\un}(\overline{K}_0) \oplus R_{\un}(\overline{K}_1) \oplus R_{\un}(\overline{G'}),
\end{equation*}
and the map $\res_{\cpt, \un}$ is defined on the virtual representations above by
\begin{eqnarray*}
\Pi(u, 1, 1) &\mapsto& \St_{K_0} + \St_{I} + \St_{I'}\\
\Pi(u, 1, -1) &\mapsto& \St_{K_0} + \St_{I} - \St_{I'}\\
\Pi(u, -1, 1) &\mapsto& \St_{K_0}  + (\St_{I}\otimes \sgn)  + (\St_{I'}\otimes \sgn)\\
\Pi(u, -1, -1) &\mapsto& \St_{K_0} + (\St_{I} \otimes \sgn) - (\St_{I'}\otimes \sgn),
\end{eqnarray*}
where, as in the proof of Proposition \ref{p:regular}, $\St_K$ denotes the Steinberg representation of $\overline{K}$, and $\sgn$ denotes the sign representation of the relevant component group. 

If $(A, \cO) \in S_{\max}(G)$ with $A$ trivial, then $\res_{\cO}(\Pi(u, s, h)) = \St_{K_0}$ for all elliptic pairs $(s, h)$. In this case, 
$\FT_{\cpt, \un}$ restricts to the identity map on $R_{\un}(\overline{K}_0)$.

Now suppose $(A, \cO) \in S_{\max}(G)$ with $A = \Omega_G$. Then $\res_{\cO}$ is given by projection onto $R_{\un}(\overline{K}_1) \oplus R_{\un}(\overline{G'})$. In the notation of Section \ref{s:disconn}, and with $\cU$ the (one-element) family consisting of the Steinberg representation of $\overline{K}_1$, we have $\widetilde{\Gamma}_{\cU}^A = A$, so $\cM(\widetilde{\Gamma}_{\cU}^A)$ consists of four elements: $(1, \triv), (1, \sign), (x, \triv), (x, \sgn)$ (where, as above, $x$ is the nontrivial element of $\Omega_G$). These correspond to following elements of $R_\un (\overline{K}_1) \oplus R_\un (\overline{G'})$:
\begin{eqnarray*}
(1, \triv) &\longleftrightarrow& \St_I\\
(1, \sgn) &\longleftrightarrow& \St_I \otimes \sgn\\
(x, \triv) &\longleftrightarrow& \St_{I'}\\
(x, \sgn) &\longleftrightarrow& \St_{I'} \otimes \sgn.
\end{eqnarray*} 
With notation as in (\ref{e:pi-U}), we have
\begin{eqnarray*}
\res_\cO (\Pi(u, 1, 1)) &=& \Pi_{\widetilde{\cU}}(\triv, \triv)\\
\res_\cO (\Pi(u, 1, -1)) &=& \Pi_{\widetilde{\cU}}(\triv, \sgn)\\
\res_\cO (\Pi(u, -1, 1)) &=& \Pi_{\widetilde{\cU}}(\sgn, \triv)\\
\res_\cO (\Pi(u, -1, -1)) &=& \Pi_{\widetilde{\cU}}(\sgn, \sgn),
\end{eqnarray*}
where $\widetilde{\cU}$ is the family indexed by $\Gamma_{\cU}^A$. Thus the proof of Proposition \ref{p:regular}, and Conjecture \ref{c:elliptic}, may be easily verified.
\end{ex}


\end{document}